\numberwithin{equation}{section}
\theoremstyle{plain}
\newtheorem{definition}{Definition}[section]
\newtheorem{theorem}[definition]{Theorem}
\newtheorem{proposition}[definition]{Proposition}
\newtheorem{lemma}[definition]{Lemma}
\newtheorem{corollary}[definition]{Corollary}
\theoremstyle{definition}
\newtheorem{remark}[definition]{Remark}
\newtheorem{example}[definition]{Example}
\newcommand\red[1]{{\color{black} {#1}}}
\newcommand{\R}{\mathbf R}
\newcommand{\bS}{{\mathbf S}}
\newcommand{\Q}{\mathbf Q}
\newcommand{\N}{\mathbf N}
\renewcommand{\d}{\mathrm{d}}
\newcommand{\n}{\mathsf{n}}
\newcommand{\Z}{\mathbf{Z}}
\newcommand{\eps}{\varepsilon}
\newcommand{\X}{\mathsf{X}}
\newcommand{\cone}{\mathcal{C}}
\newcommand{\s}{{\sf s}}
\newcommand{\h}{{h}}
\newcommand{\enumlabelformat}{\roman}
\newlength{\thelabelsep}
\newcounter{inlineenum}
\renewcommand{\theinlineenum}{\enumlabelformat{inlineenum}}
\let\epsilon\varepsilon
\let\phi\varphi
\newcommand{\push}{*}
\newcommand{\KE}{\mathscr{A}}
\newcommand{\met}{\mathsf{d}}
\newcommand{\meas}{\mathfrak{m}}
\newcommand{\spt}{\supp}
\newcommand{\p}{\partial}
\newcommand{\M}{{\rm M}}
\newcommand{\Prob}{\mathscr{P}}
\DeclareMathOperator{\supp}{spt}
\newcommand{\scrS}{\mathscr{S}}
\newcommand{\bdpi}{\boldsymbol{\pi}}
\newcommand{\eval}{\mathsf{e}}
\newcommand{\CC}{\mathrm{LCC}}
\newcommand{\CD}{\mathsf{CD}}
\newcommand{\RCD}{\mathsf{RCD}}
\newcommand{\MCP}{\mathsf{MCP}}
\newcommand{\restr}{\mathrm{restr}}
\newcommand{\mres}{\mathbin{\vrule height 1.6ex depth 0pt width
0.13ex\vrule height 0.13ex depth 0pt width 1.3ex}}
\newcommand{\rmd}{\mathrm{d}}
\newcommand{\TCD}{\mathsf{TCD}}
\newcommand{\TMCP}{\mathsf{TMCP}}
\newcommand{\LTCD}{\mathsf{LTCD}}
\DeclareMathOperator*{\esssup}{ess\,sup}
\newcommand{\Leb}{\mathscr{L}}
\newcommand{\TGeo}{\mathrm{TGeo}}
\newcommand{\CGeo}{\mathrm{CGeo}}
\newcommand{\AGeo}{\mathrm{AGeo}}
\newcommand{\vg}{\mathrm{vol}_g}
\newcommand{\mm}{\meas}
\renewcommand{\div}{{\rm div}}
\newcommand{\pert}{{\rm Pert}}
\newcommand{\limi}{\liminf}
\newcommand{\lims}{\limsup}
\newcommand{\sfd}{{\sf d}}
\newcommand{\sfD}{\mathsf{D}}
\newcommand{\Pert}{{\rm Pert}}
\newcommand{\dom}{\mathrm{Dom}}
\newcommand{\ppi}{\bdpi}
\newcommand{\e}{\eval}
\newcommand{\nchi}{{\raise.3ex\hbox{$\chi$}}}
\newcommand{\pem}{{\Prob_{\sf em}(\M)}}
\let\@fnsymbol\@arabic
\newcommand{\PP}{{p}}
\newcommand{\QQ}{{q}}
\newcommand{\fff}{initial }
\newcommand{\sym}{{\mathrm{sym}}}
\newcommand{\be}{{\mathrm{be}}}
\newcommand{\emr}{{\mathrm{em}}}
\newcommand{\intm}{{\sf Int}}
\newcommand{\IKN}{\intm^{K,N}}
\def\Xint#1{\mathchoice
{\XXint\displaystyle\textstyle{#1}}%
{\XXint\textstyle\scriptstyle{#1}}%
{\XXint\scriptstyle\scriptscriptstyle{#1}}%
{\XXint\scriptscriptstyle\scriptscriptstyle{#1}}%
\!\int}
\def\XXint#1#2#3{{\setbox0=\hbox{$#1{#2#3}{\int}$ }
\vcenter{\hbox{$#2#3$ }}\kern-.6\wd0}}
\def\dashint{\Xint-}
\begin{document}
\title{
A nonlinear 
d'Alembert comparison theorem\\ and  causal 
differential calculus on 
\\ metric measure spacetimes
}
\author{Tobias Beran\thanks{Faculty of Mathematics, University of Vienna, Oskar-Morgenstern-Platz 1, 1090 Vienna, Austria. \texttt{tobias.beran@univie.ac.at}},\ \ Mathias Braun\thanks{Institute of Mathematics, EPFL, 1015 Lausanne, Switzerland. \texttt{mathias.braun@epfl.ch}},\ \ Matteo Calisti\thanks{Faculty of Mathematics, University of Vienna, Oskar-Morgenstern-Platz 1, 1090 Vienna, Austria. \texttt{matteo.calisti@univie.ac.at}},\\Nicola Gigli\thanks{SISSA, Via Bonomea 265, 34136 Trieste, Italy.
\texttt{ngigli@sissa.it}},\ \ Robert J. McCann\thanks{Department of Mathematics, University of Toronto, 40 St. George Street Room 6290, Toronto, Ontario M5S 2E4, Canada. \texttt{mccann@math.toronto.edu}},\ \ Argam Ohanyan\thanks{Department of Mathematics, University of Toronto, 45. St. George Streem Room PG 111, Toronto, Ontario M5S 2E5, Canada. \texttt{argam.ohanyan@utoronto.ca}},\\Felix Rott\thanks{SISSA, Via Bonomea 265, 34136 Trieste, Italy.
\texttt{frott@sissa.it}},\ \ Clemens S\"amann\thanks{{Faculty of Mathematics, University of Vienna, Oskar-Morgenstern-Platz 1, 1090 Vienna, Austria. \texttt{clemens.saemann@univie.ac.at}}}}

\date{\today}


\maketitle

\setlength{\abstitleskip}{-\absparindent}
\abslabeldelim{\ \ }

\begin{abstract} We introduce a variational first-order Sobolev calculus on metric measure spacetimes. The key object is the \red{maximal} weak subslope of an arbitrary causal function, which plays the role of the (Lorentzian) modulus of its differential.  It is shown to satisfy certain chain and Leibniz rules, certify a locality property, and be compatible with its smooth analog.  In this setup, we propose a quadraticity condition termed {\em infinitesimal Minkowskianity}, which singles out genuinely Lorentzian structures  among Lorentz--Finsler spacetimes. 
Moreover, we establish a comparison theorem for a nonlinear yet elliptic $\PP$-d'Alembertian in a weak form under the timelike measure contraction property. As a particular case, this extends Eschenburg's classical estimate past the timelike cut locus.

\vskip 1em

\noindent
\emph{Keywords}\quad Nonsmooth gravity, general relativity, Lorentzian length space, metric measure spacetime, d'Alembert comparison, $p$-Laplacian, strong energy condition, timelike Ricci curvature bounds, low regularity, Finsler spacetime.
\medskip

\noindent
\emph{2020 Mathematics Subject Classification}\quad 
Primary 
51K10, 
83C75; 
Secondary 
35J92, 
35Q75, 
49Q22, 
53C21, 
53C50. 

\end{abstract}

\clearpage

\tableofcontents

\section{Introduction}

Einstein's theory of gravity is set on a smooth Lorentzian manifold, but the singularity theorems 
of Hawking \cite{Haw66} and Penrose \cite{Pen65} show one cannot expect smoothness to continue
to hold in the far past (and future): the Big Bang and black hole singularities are unavoidable consequences of the theory.
Furthermore, the linearization of the Einstein field equations which relate the Ricci curvature of spacetime to its matter content ---
being a wave equation --- propagates singularities (in the form of gravitational waves) without smoothing them.   For these reasons and others
discussed in \cite{KS:18} and its references,
 it appears highly desirable
to relax the smooth Lorentzian framework of Einstein's theory to allow for rougher, more flexible, settings.

Developments in positive (Riemannian) signature provide a model for such relaxations. The first is the metric geometry of Alexandrov \cite{Aleksandrov51}, Gromov \cite{Gromov1999MetricStructures}, and others,  
where sectional curvature bounds are defined using triangle comparison. The second is the metric measure geometry developed following work of Fukaya \cite{Fuk87a} and Bakry--\'Emery \cite{BK85}, 
in which a measure is also  used to define Ricci bounds, as in 
\begin{itemize}
    \item the Ricci limit spaces of Cheeger--Colding \cite{CC97, CC20_1, CC20_2},  \item the curvature-dimension spaces $\CD(K,N)$ of Sturm \cite{Sturm:2006a,Sturm:2006b} and Lott--Villani \cite{LottVillani:2009}, 
which satisfy a lower bound of $K$ on the Ricci curvature and upper bound of $N$ on the dimension \red{expressed using refined versions of McCann's entropic displacement convexity \cite{McCann97,OttoVillani00,CorderoMcCannSchmuckenschlager01,vonRenesseSturm05},}  and 
\item the Riemannian $\RCD(K,N)$ spaces of Ambrosio--Gigli--Savar\'e and Gigli, which  for $N=\infty$ \cite{AGS:14b} and $N\in (1,+\infty)$ \cite{Gigli:2015} single out among curvature-dimension spaces those which are infinitesimally Hilbertian.  
\end{itemize}
It is in the last setting that several of the most powerful results of the theory have been achieved,  such as the splitting theorem of Gigli \cite{gigli2013} and its consequences \cite{Ket15, MN19}, a second-order calculus~\cite{Gig:18}, and the non-branching result of Deng \cite{Den21}. Here ideas related to calculus of variations, gradient or heat flows, Sobolev calculus and integration by parts play a central role.

Recently, in Lorentzian signature,  several analogous steps have been taken.
Building on earlier works such as \cite{Busemann:1967, KronheimerPenrose:1967, AH98, AB08, Nol04, SW96}, 
Kunzinger--S\"amann proposed a theory of Lorentzian (pre)length spaces --- based largely on a time separation function --- and timelike triangle comparison \cite{KS:18}.
(They also required a causal order,  but this can be embedded into a modified time separation function which takes both signs as in McCann \cite{McCann:2020,McC:23}.)
By adding a measure, 
Cavalletti--Mondino were able develop a  theory of timelike curvature-dimension spaces $\TCD^e_\QQ(K,N)$ which manifest nonsmooth timelike lower Ricci bounds \cite{CM:20}. These are based on entropic convexity estimates from the smooth setting, that had there been shown to be equivalent to the strong energy condition of Hawking--Penrose by McCann \cite{McCann:2020}  
and to the Einstein field equations by Mondino--Suhr \cite{MS:22}.  
A burst of developments have followed, e.g.~Braun \cite{Braun:2023Good,Braun:2023Renyi}, Braun--McCann \cite{BraunMcCann:2023}, McCann \cite{McC:23}, Ketterer~\cite{Ket24}, and Cavalletti--Mondino \cite{CM24} with Manini \cite{CavallettiManiniMondino24+}. 
Of course there is also an abundance of work beyond the scope of Ricci curvature in this setting, including, e.g., by Burtscher--García-Heveling~\cite{BG-H:2024}, Kunzinger--Steinbauer \cite{KS:2022}, Müller \cite{Mue22,Mue24}, Minguzzi--Suhr \cite{MinguzziSuhr:2022},  Beran--Harvey--Napper--Rott~\cite{BHNR23}, Beran--Ohanyan--Rott--Solis  \cite{BeranOhanyanRottSolis23}, etc. 

Just as the $\CD(0,N)$ spaces of Lott--Sturm--Villani include all Banach spaces and Finsler manifolds of dimension $n \le N$ \cite{Oht09}, the $\TCD^e_\QQ(0,N)$ spaces of Cavalletti--Mondino
include all Finsler spacetimes of dimension $n \le N$~\cite{BO:23}. One of the goals of the present manuscript is to develop a nonsmooth criterion which distinguishes those 
timelike curvature-dimension spaces whose time separation function behaves as if it came from a signed inner-product;  we call such spaces {\em infinitesimally Minkowskian.}

\begin{definition}[Lorentzian $\TCD$ spaces] We will say a metric measure spacetime satisfies the \emph{Lorentzian (entropic)  timelike curvature-dimension condition}, briefly $\smash{\LTCD^e_q(K,N)}$, if it is an infinitesimally Minkowskian $\smash{\TCD_q^e(K,N)}$ metric measure spacetime;
{(c.f.  \eqref{eq:definfmink} below).} 
\end{definition}


This terminology is in analogy with the Riemannian curvature dimension condition of \cite{AGS:14b, Gigli:2015}. 
Just as infinitesimal Hilbertianity \cite{Gigli:2015} is essential for the metric splitting theorem of Gigli \cite{gigli2013}, so we expect infinitesimal Minkowskianity
to play an equally crucial role in obtaining a nonsmooth analog of the Eschenburg, Galloway and Newman splitting theorems from Lorentzian geometry 
\cite{Eschenburg:1988, Galloway:89, Newman:90}. 
Having this goal in mind, we also pursue the development of 
a nonsmooth Lorentzian analog of a negative-homogeneity d'Alembert operator,
which --- like the positive-homogeneity Riemannian $p$-Laplace operator and in sharp contrast to the the usual 1-homogeneous wave operator --- is  elliptic, albeit non-uniformly, 
on non-decreasing (viz.~causal) functions.
Indeed, we establish that for $0\ne \PP<1$ the $\PP$-d'Alembert operator satisfies a comparison inequality,  which estimates its value on Lorentz distance functions by a bound in terms of $K$ and $N$ that becomes sharp in constant curvature spaces. This is an important step toward a nonsmooth metric splitting theorem
under timelike Ricci rather than sectional curvature \cite{BeranOhanyanRottSolis23} bounds.
To do so requires us to develop a nonsmooth Sobolev calculus for (future-directed) causal 
curves and functions, which may be of independent interest.
One of the challenges we face is that such 
functions --- due to the monotonicity enforced by the arrow of time --- form a convex Sobolev cone,  rather than a Sobolev space.
In separate work inspired by the present one, five of us \cite{QuintetEllipticsplitting} show our $\PP$-d'Alembert comparison bound leads to a new proof of the Lorentzian splitting theorems due to Eschenburg, Galloway and Newman in the smooth setting. Using this $p$-d'Alembertian approach to the Lorentzian splitting theorems, Caponio--Ohanyan--Ohta \cite{CaponioOhanyanOhto24+} are able to prove a splitting theorem for Finsler spacetimes, generalizing a previous such result due to Lu--Minguzzi--Ohta \cite{LuMinguzziOhtasplitting23}. Moreover, our method can be used to prove Lorentzian splitting theorems for metrics of low regularity, see \cite{QuintetNonsmooth25+}.

\subsection*{Setting and overview}

Let us now give an overview of the strategy, organization and results of the present paper. As detailed in \S\ref{Ch:Curves}, we work in a setting slightly different from that of previous authors; we do not require any form of global hyperbolicity and replace it with some form of order completeness. This allows our framework to accommodate infinite dimensions, possibly facilitating its amenability to an eventual passage to a quantum theory of gravity, in whatever form that may take.
We start by introducing the concept of metric spacetime, namely a set $\M$ equipped with a (signed) time separation function $\ell\colon \M\times \M \to \{-\infty\} \cup [0,+\infty]$ which (is zero on the diagonal and) satisfies the reverse triangle inequality.  We use $\ell$ to define the transitive relations $\{ \ell \ge 0\}$ and $\{ \ell>0\}$;
these relations are called {\em causal} and {\em chronological}, and denoted by $\le$ and $\ll$ respectively. To develop any sort of calculus we need to have a topology $\uptau$ on $\M$: rather than fixing a predetermined one, we shall leave the choice of topology independent from that of time separation, asking only for minimal relations. We shall typically assume that $\uptau$ is Polish and that the set $\{\ell\geq 0\}$ is closed in $\M^2$ and call the resulting structure $(\M,\uptau,\ell)$ a Polish metric spacetime. Then, inspired by the Dedekind rather than Cauchy's strategy for constructing the real numbers, 
we say that  $(\M,\uptau ,\ell)$ is {\em forward} if every non-decreasing
sequence $(y_i)_{i \in \Z} \subset \M$ which is order-bounded $x \le y_i \le y_{i+1} \le z$ admits a future limit $y_{+\infty} = \lim_{i \to +\infty} y_i$.  
Similarly $(\M,\uptau,\ell)$ is {\em backward} if every such sequence admits a past limit
$y_{-\infty} = \lim_{i \to -\infty} y_i$. In principle, one might decide to work with spacetimes that are both forward and backward,  but we eschew this assumption as we are skeptical about its stability (see also the discussion in \cite{Gigli24+}). Our most important existence results (such as the limit curve \Cref{prop-cad-comp}  or the lifting \Cref{Th:Lifting}) are stated on forward spacetimes, typically under the additional relation between $\uptau$ and $\ell$ that the topology is {\em locally causally convex}, i.e.\ every point has a neighbourhood basis made of causally convex sets. All these results  could equally well be formulated in {backward} spacetimes. We show {forward} metric spacetimes lie somewhere between causally simple and globally hyperbolic metric spacetimes on the nonsmooth causal ladder (\Cref{R:causal ladder}). 

A curve $\gamma\colon [0,1]\to M$ is called {\em causal} if $s\le t$ implies $\gamma_s  \le \gamma_t$; dually,  a function $f\colon \M \to [-\infty,+\infty]$ 
is called {\em causal} if $x \le y$ implies $f(x) \le f(y)$.   Since our space $M$ may not be a manifold, we need to develop structures to compensate for the absence
of a tangent and cotangent space.  We rely on causal curves for the former purpose,  and causal functions  for the latter.  
The nonsmoothness of our spaces prevents us from defining tangent or cotangent vectors pointwise;  instead --- inspired by \cite{AGS:14a,Gigli:2015} 
--- we represent causal tangent fields using
probability measures $\bdpi$ on causal curves,  and (exact, future-directed) cotangent fields using causal functions $f$ or more heuristically, their differentials.  The operation of horizontally differentiating
$f$ along $\bdpi$,  which corresponds to a global average of the initial derivative of $f$ along the vector field represented by $\bdpi$, 
defines a positively bilinear pairing of these objects. 
These proxies for causal tangent and cotangent fields  are the subjects of \S\ref{Ch:Curves} and \S\ref{Ch:differential}, respectively.  We shall also need to set up a convex-analytic Legendre transform to identify tangent fields with cotangent fields and vice versa;  
this operation depends on $\ell$ and a conjugate pair of exponents $\PP^{-1}+\QQ^{-1}=1$ with $0\ne \QQ<1$ and 
is homogeneous but --- as in \cite{Gigli:2015} and in contrast to both smooth Lorentzian and nonsmooth infinitesimally Hilbertian geometry --- 
is nonlinear and often multivalued. These challenges --- and the fact that causal functions form a convex cone rather than a vector space  --- are addressed in \S\ref{ch:horizontal and vertical derivatives}, where we also compute the ``vertical'' (i.e.~outer variational) derivative of the $\PP$-Dirichlet energy at $f$,  and relate it to the tangent field proxies $\bdpi$ corresponding to the nonlinear Legendre-transform of  the cotangent field proxied by $f$.  Inspired by analogous results concerning the nonlinear Laplacian which arises in the Finslerian \cite{OS:09}, Hamiltonian \cite{Ohta:2014}, and 
nonsmooth  \cite{Gigli:2015}  settings, in \S\ref{Section: Effects of curvature assumption} we {define}
and  estimate relative to constant curvature models the $\PP$-d'Alembert operator $\square_\PP f$ in terms of the curvature and dimension parameters $(K,N)$ of the forward metric measure spacetime.

\begin{remark}[
Beyond global hyperbolicity and $q<0$]\label{R:q<0,forward}
Although we allow $0\ne q<1$ of both signs,
in the Lorentzian optimal transport literature to date \cite{EM:17, Suh:18, McCann:2020, MS:22, CM:20, CavallettiMondino22, Braun:2023Renyi, Braun:2023Good, BraunMcCann:2023}, only the cost function $\ell^q/q$ with $q \in (0,1)$ (sometimes with $q=1$ included) has been considered. However, since the function $u_q(z):=z^q/q$ (c.f.\ \Cref{Sub:InfConv}) has the same monotonicity and concavity properties for $q \in (0,1)$ as for $q < 0$, the majority of the fundamental results established in the above literature can be generalized to the case $q < 0$. Moreover, global hyperbolicity is almost universally assumed in the references cited above, but it turns out that for most purposes it suffices that the given spacetime be forward (c.f.\ \Cref{D:spacetimes}). Any result we mention in this work for $0 \neq q < 1$ on forward spacetimes while citing previous works belongs to the category of results that can be generalized in this manner; it is mainly in \S\ref{Section: Effects of curvature assumption} that such results are required and each of these few is flagged by a reference to Remark \ref{R:q<0,forward}.
The detailed treatment of these generalizations to the setting $q < 0$ (and even more general Lorentz costs) and to forward spacetimes is the content of an ongoing work in progress~\cite{CalistiOhanyanSalamo}.\hfill$\blacksquare$
\end{remark}

We now lay out the contents of each section in further detail,  with an interlude between sections \S\ref{ss:Curves} and \S\ref{ss:differential} describing more of the smooth motivation for the ensuing constructions. Before doing so, let us note that inspired by the current work, a very satisfactory and more constructive approach to the existence of the $\PP$-d'Alembert operator has been developed by Braun \cite{braun+}. This enables him not only to establish many of its fundamental properties for the first time,  but also to give an exact representation formula covering most cases of interest
using one-dimensional localization and especially \cite{CM:20,CM24,BraunMcCann:2023} in $\QQ$-essentially timelike non-branching spaces \cite{Braun:2023Renyi}.

\subsection[Order-completeness; tangent notions for worldlines,  curves of measures, and measures over worldlines]{Order-completeness; tangent notions for worldlines, curves of\\ measures, and measures over worldlines}
\label{ss:Curves}

\red{Much like Cauchy-completeness is central in classical metric geometry to obtain many of the key existence results in that setting, so is the order-completeness encoded in the definition of forward (or backward) spacetimes in our setting for similar purposes. The first instance where this naturally occurs is in the class of curves we consider: in a forward spacetime any causal curve bounded from above and defined on a dense subset of $[0,1]$ including zero can uniquely be extended to a left continuous causal curve on $[0,1]$ and if the underlying spacetime is Polish, then so is the space of left-continuous causal curves equipped with a natural $L^1$-distance. Under suitable compactness assumptions on the set of curves or the underlying spacetime, this completeness of the space of curves improves to compactness (\Cref{prop-cad-comp}), a result that we interpret as a limit curve theorem in our setting (c.f.~\cite{Min08}) and is reminiscent of Helly’s selection theorem.}

A leitmotiv of this work will be reliance on causal monotonicity 
 in Lorentzian signature instead of the 
 Lipschitz or absolute continuity 
 typically required in Riemannian signature.
 After laying out the details of our setting,
 the remainder of \S\ref{Ch:Curves} is largely devoted to defining the speed of a causal curve.
 Heuristically,  one would expect this to be given pointwise by a limit such as
 \begin{equation}\label{naive causal speed}
|\dot \gamma_t |=  \lim_{h \downarrow 0} \frac{\ell(\gamma_t,\gamma_{t+h})}{h}.
 \end{equation}
Instead,  we take this limit in the distributional sense
and call the resulting measure the {\em causal speed} $|\dot{\bm\gamma}|$ of $\gamma$. Moreover, we show
$|\dot{\bm\gamma}|$ coincides on the interval $(0,1)$ with
the maximum non-negative Radon measure $\nu$  satisfying $\nu((s,t)) \le \ell(\gamma_s,\gamma_t)$
for all $0< s< t<1$.  The latter is shown to exist and to be unique by differentiation of the time separation function restricted to the curve in \S\ref{ss:causal speed};
its absolutely continuous part agrees with the pointwise limit \eqref{naive causal speed} for almost all $t \in [0,1]$.
When $\ell(\gamma_s,\gamma_t) = G(t)-G(s)$ for some non-decreasing function $G$, then $\nu =G'$ coincides with the distributional derivative  $G'$ of $G$.

Associated to the causal speed are various power-law actions (with power $0 \neq \QQ<1$) whose maximizers are by definition {\em causal geodesics.} This point of view has the virtue of being inherited by
curves in the space $\Prob(\M)$ of probability measures on $\M$: using optimal transport to maximize the cost function $\ell(x,y)^\QQ$ over couplings with prescribed marginals $\mu_0$ and $\mu_1$
defines a (signed) time separation function 
$\ell_\QQ(\mu_0,\mu_1)$ 
on the space  $\Prob(\M)$ of Borel probability measures. In \Cref{eq:probmellq} we show that if $(\M,\uptau,\ell)$ is a Polish (forward) metric spacetime, then so is $(\Prob(\M),\ell_q)$ equipped with the narrow topology.  A relevant result of the chapter, and the main one of \Cref{Sub:Lifting} is the  Lorentzified version of a lifting theorem of Lisini \cite{Lisini:2007}, see also \cite{AGS:08}, that
asserts that any suitable causal curve $\mu\colon [0,1]\to \Prob(\M)$  of measures can be lifted to a probability measure $\bdpi$ on causal curves --- called a {\em plan}
(or ``dynamical plan'') ---
such that  the expectation of the $\QQ$-action of the individual curves $\gamma$ under $\bdpi$ agrees with the
$\QQ$-action of the path $\mu$ in $(\Prob(\M),\ell_\QQ)$.  In particular, 
{causal geodesics for $\ell_\QQ$ lift to plans $\bdpi$ concentrated on
 causal geodesic} curves $\gamma$ for $\ell$.  We  use the plan  $\bdpi$ as a proxy for a tangent to the curve of measures:  whereas the causal speed represents only the magnitude of the tangent,  the plan also encodes its direction.
 
The final section of the chapter discusses various non-branching notions. A notable one that arises in this setting is that of `timelike non-branching at 0 (resp.\ 1)'. A spacetime has this property if any two timelike geodesics that agree on the whole $(0,1)$ also agree at 0 (resp.\ at 1). Of course, in positive signature this always occurs by the continuity of geodesics, but in our framework this cannot be excluded, and in some circumstances it is necessary to ask for this property, at least in a suitable $\mm$-a.e.\ version, see \Cref{D:$p$-regularity}.

\subsection{Smooth paradigm: 
Lagrangian-Hamiltonian duality}

In \S\ref{Ch:differential} we explore a cotangent calculus in duality with the tangent concepts from \S\ref{Ch:Curves}.
This must be developed using positive linear rather than linear operations: the need to rely on causality of our functions and curves means their tangent spaces should be represented by convex cones rather than vector spaces.  We therefore resort to concepts from convex analysis.  Let us recall them
in the familiar setting of an $n$-dimensional time-oriented Lorentzian manifold $M$ equipped with a smooth globally hyperbolic metric tensor $g$ of signature $(+,-,\cdots,-)$. A vector $v \in T_xM$ is called called {\em causal} if $g(v,v) \ge 0$ and {\em timelike} if the previous inequality holds strictly. Moreover the analogous terminology applies also to covectors, curves and functions: a curve is said to have property Q if and only if its tangent vectors all have property Q; a smooth function  $f \in C^1(\M)$ is said to have property Q if and only if $\d f(x)$ has property Q for all $x \in \M$. The set of timelike vectors has two connected components, called the {\em future} and {\em past}; the vectors in the closure of the future component are called {\em future-directed}. The time-orientability of $M$ means the future can (and has) been chosen to vary continuously with $x \in \M$.  A causal co-vector is called {\em future-directed} if its action on one (and hence all) timelike future vectors is non-negative; it is called past-directed otherwise. Fix dual exponents $0\ne \QQ<1$ and $\PP^{-1}+ \QQ^{-1}=1$. On the tangent space $T_xM$ to $M$ at $x$ we define the Lagrangian
 \begin{equation*}
 L(v) := 
 \begin{cases}
 \displaystyle\frac1\QQ\, g(v,v)^{\QQ/2} & \mbox{if $v$ is future-directed causal}
 \\ -\infty & {\rm else},
 \end{cases}
 \end{equation*}
 with the convention that $0^\QQ= +\infty$ if $\QQ<0$.
Remarkably, $L$ is a concave function \cite{McCann:2020, Minguzzi2019}; its strict concavity in the timelike future was fundamental for McCann \cite{McCann:2020} and Mondino--Suhr \cite{MS:22}.
We define the $\QQ$-action of a smooth curve $\gamma\colon [0,1] \to  M$ by integrating $L$ over the curve.
We define the time separation $\ell(x,y)$ between two events $x,y \in \M$ by declaring $\tfrac1q \ell(x,y)^\QQ$  to be the supremum of the $\QQ$-actions of curves joining 
$\gamma_0=x$ to $\gamma_1=y$, where it is intended --- and consistent --- that $\ell(x,y)=-\infty$ if no  future-directed causal curve from $x$ to $y$ exists (as in \cite{McCann:2020}); this definition is independent of $0\ne \QQ<1$. Global hyperbolicity is well-known to ensure this supremum is attained unless no future-directed causal curve joins $x$ to $y$; the maximizers have constant speed and, when timelike, satisfy the geodesic equation.
  We define the concave Hamiltonian dual to $L$ on the cotangent space by
 \begin{align*}
 H(\omega) :&= \inf_{v \in T_x M} \big[\omega(v) - L(v)\big]
= \begin{cases}
 \displaystyle\frac1\PP\, g^*(\omega,\omega)^{\PP/2} & \mbox{if $\omega$ is future-directed causal}
 \\ -\infty & {\rm else},
 \end{cases}
 \end{align*}
 where $g^*$ is the cotangent metric induced by $g$. 
 Obviously, $H$ is also concave and together with $L$ satisfies the Fenchel--Young inequality
 \begin{equation}\label{new Fenchel-Young}
 L(v) + H(\omega) \le \omega(v);
 \end{equation}
moreover, equality holds if and only if $\omega = \mathrm{D}L(v)$ --- or equivalently, if and only if $v= \mathrm{D}H(\omega)$. Thus the $v$-derivative $\mathrm{D}L$ of $L$ and its inverse map $\mathrm{D}H$ set up a nonlinear correspondence between (future-directed timelike) tangent and cotangent vectors at $x$. 
Define
\begin{equation}\label{dual norms}
\|v\|:= \lim_{\QQ\uparrow 1} L(v), \quad 
\|\omega\|_* := \lim_{\PP \uparrow 1} H(\omega).
\end{equation}
For $v$ and $\omega$ both future-directed, this allows us to re-express \eqref{new Fenchel-Young} in a form
 \begin{equation}\label{poor Fenchel-Young}
 \frac1\QQ\,\|v\|^\QQ + \frac1\PP\, \|\omega\|^\PP_* \le \omega(v)
 \end{equation}
better suited to the nonsmooth setting:  it requires only the Lorentzian magnitudes of $v$ and $\omega$ as well as their bilinear pairing $\omega(v)$ to be known, but not the directions of $v$ and $\omega$. Since equality holds if and only if $\omega=\mathrm{D}L(v)$,  \eqref{poor Fenchel-Young}
 tests whether or not $v$ and $\omega$ are identified under the nonlinear duality $\mathrm{D}L$.
In the nonsmooth setting of \S\ref{ch:horizontal and vertical derivatives}, 
our Sobolev calculus will use an integrated version of \eqref{poor Fenchel-Young} to establish the analogous nonlinear identification of 
tangent and cotangent field pairs. 
The limits defined by \eqref{dual norms} give examples of objects we refer to as {\em hyperbolic norms}; their properties are further explored in
Appendix \ref{ss:hyperbolic norms} and \cite{Gigli24+}.

Before returning to the nonsmooth setting,  let us also point
out that for causal functions $f \in C^\infty(\M)$ it is the convex functional
\begin{equation}\label{energy}
\mathscr{E}(f) := -\int_M H(\rmd f) \d \vg
\end{equation}
which plays the role of the $\PP$-Dirichlet energy.  Convexity of $\mathscr{E}$ helps to determine when the (vertical) directional derivative
\begin{align}
\label{variation of energy}
\begin{split}
\lim_{t \to 0} \frac{\mathscr{E}(f+t \phi)-\mathscr{E}(f)}{t} 
&= - \int_M \rmd\phi\big(\mathrm{D}H(\rmd f)\big) \d\vg  =: \int_M\,\phi\, \frac{\delta \mathscr{E}}{\delta f}  \d\vg 
\end{split}
\end{align}
exists. It makes the variational derivative 
\[
\frac{\delta \mathscr{E}}{\delta f} = \div_g\big(\big\Vert \rmd f\big\Vert_*^{\PP-2}\, \nabla f\big) =: -\square_\PP f
\]
a non-uniformly elliptic operator that we call the {\em $\PP$-d'Alembertian.} (This is analogous to how uniform convexity of the usual $2$-Dirichlet energy makes the Riemannian Laplacian elliptic.) Notice however a conceptual issue:  
for the two-sided derivative \eqref{variation of energy} to exist,  it is necessary that $f+t\phi$ be causal (i.e. have $\d f + t \d \phi$ future-directed) for sufficiently small $t$ {\em of both signs}.
Since the set of causal functions is a only a convex cone, outside of the smooth setting we will frequently need to be satisfied with a one-sided derivative and an inequality in place of \eqref{variation of energy}.
Nevertheless, this construction allows us to prove a weak nonsmooth $\PP$-d'Alembert comparison theorem in \S\ref{Section: Effects of curvature assumption}. In particular, the latter establishes
the $2$-d'Alembert comparison inequality
\begin{equation}\label{2-d'Alembert comparison}
\square_2 g^o \le \frac{\dim M-1}{|g^o|}
\end{equation}
for the {\em Lorentz distance function} $g^o := -\ell(\cdot,o)$ on a spacetime with non-negative timelike Ricci curvature; notice that $\square_\PP f$ is independent of $\PP$ when $\|\rmd f\|_*=1$. Although 
\eqref{2-d'Alembert comparison} was proved in the region 
where $g^o$ is smooth by \cite{Eschenburg:1988},
our extension across the cut locus seems new;
it can be viewed as an analog of Calabi's result for the Riemannian Laplacian \cite{Calabi:1958}, even though both our formulations --- in the weak as opposed to viscosity sense --- and 
our methods of proof are utterly different.
For a more direct proof of this extension inspired by the present work yet using more classical techniques, see \cite{QuintetEllipticsplitting}. We return now to the nonsmooth setting with one caveat: in the nonsmooth setting,
the notions of causal and timelike will tacitly be taken to mean future-directed unless otherwise mentioned.

\subsection[The Sobolev cone of cotangent fields; infinitesimal Minkowskianity]{The Sobolev cone of cotangent fields; infinitesimal\\Minkowskianity}
\label{ss:differential}

Having established nonsmooth tangent notions  in \S\ref{Ch:Curves}, 
we turn to the problem of developing nonsmooth cotangent notions in \S\ref{Ch:differential}. 
Although the consideration of linear functions on Minkowski space makes it 
tempting to define the slope at $y\in \M$ of a causal function $f\colon \M\to \bar{\R}$ using (backward and forward) 
limits like 


 \begin{equation}\label{naive slope}
 |\partial^{-} f|(y) \doteq 
 \liminf_{x \uparrow y} \frac{f(y)-f(x)}{\ell(x,y)} 
\quad \mbox{or} \quad
|\partial^{+} f|(y)| \doteq
\liminf_{z \downarrow y} \frac{f(z) - f(y)}{\ell(y,z)} 
 \end{equation}
 restricted to $x \ll y \ll z$,  in nonsmooth spaces it is not obvious when these limits should agree.
 It is even less obvious whether such a pointwise definition will cooperate with the kind of integration by parts formulas needed to 
give meaning in a weak sense to the nonsmooth analog of the $\PP$-d'Alembertian $\square_\PP f$.  We focus instead to a different strategy inspired by
developments in positive signature \cite{KMcM:98,Cheeger99,Shanmugalingam00,AGS:14a,Gigli:2015}, as well as by
the above-mentioned Lagrangian-Hamiltonian duality.

We require our forward metric spacetime $(\M,\ell)$ to be equipped with a Radon reference measure $\meas$ to express integration by parts.  
The triple $(\M,\ell,\meas)$ is called a metric measure spacetime.    
Relative to $\meas$,
the plans $\bdpi$ on causal curves which appear in integration by parts formulas
should not concentrate too much mass at any particular point in time and spacetime; 
quantifying a version of this condition leads to the notion of a {\em test plan}, namely a plan $\bdpi$ admitting $C>0$ such that for every $t\in[0,1]$, the push-forward $(\eval_t)_\push\bdpi$ under the evaluation map $\eval_t(\gamma) :=\gamma_t$ is $\meas$-absolutely continuous with Radon--Nikodým density bounded from above by $C$. \red{Remarkably, this Lorentzian notion is even simpler than its Riemannian antecedent \cite{AGS:14a}, which in turn was inspired by the regular Lagrangian flows of Ambrosio--DiPerna--Lions \cite{DiPernaLions89,Ambrosio04} (that, in relation with lower Ricci curvature bounds, turned out to be connected to Cheeger--Colding’s segment inequality \cite[Theorem 2.11]{CheegerColding96}).}

Instead of \eqref{naive slope}, the slope $\vert\rmd f\vert\colon \M\to [0,+\infty]$ that we seek to define 
should heuristically be characterized as the largest function $G$  
satisfying
\begin{align}\label{curvewise}
f(\gamma_1) - f(\gamma_0) \geq \int_0^1 G(\gamma_t)\,\vert\dot\gamma_t\vert\d t
\end{align}
for all causal curves $\gamma$.  However, it is not clear that such a function exists;  moreover,  for rough spaces and discontinuous functions,  
the values of $f$ may be ambiguous at individual points or even along individual curves.  Thus we hope instead that \eqref{curvewise} holds along most curves.
Lacking a preferred reference measure to define a notion of `almost every' curve \cite{Shanmugalingam00,AGS:14a},  
we instead require \eqref{curvewise} to hold when integrated against every test plan, 
and deal with equivalence classes of functions $f$ that agree $\meas$-a.e. For this integration to make sense,
we restrict our attention to $\meas$-measurable causal functions; {if a causal function is not known to be measurable we may therefore call it {\em rough causal}.}
In \S\ref{ss:causal functions} we show that every rough causal function is continuous outside a countable union of achronal sets,  
hence rough causality of $f$ already implies $\meas$-measurability in many spaces of interest, {such as those having synthetic lower Ricci curvature bounds as in \Cref{cor:fmeastmcp}.}

 There are two equivalent ways  of defining a ``candidate'' for $\vert \rmd f\vert$ rigorously. 
 Either pathwise (as in \Cref{Le:Distr der}) or more robustly, in integrated form:
\begin{definition}[Weak subslope] A Borel  function $G\colon \M \to [0,+\infty]$ is a \emph{weak subslope} of $f$ if for all test plans $\bdpi$,
\begin{align}\label{weak subslope}
\int \big[f(\gamma_1) - f(\gamma_0)\big]\d\bdpi(\gamma)\geq \iint_0^1 G(\gamma_t)\,\vert\dot\gamma_t\vert \d t\d\bdpi(\gamma).
\end{align}
\end{definition}
\noindent
Under mild assumptions on $f$ and $\ell$,  both limits \eqref{naive slope} become weak subslopes (\Cref{prop:subslopes2}). Weak subslopes are stable under limits (\Cref{Pr:closed}). Moreover, the set of weak subslopes for $f$ is stable under pointwise maxima (see \eqref{eq:maxsubslopes}). 
Combining these two properties allows us to deduce the existence and uniqueness of a maximal $G$ satisfying \eqref{weak subslope} 
which we call a {\em maximal weak subslope} 
and denote by $|\rmd f|$. This represents a Lorentzian analog of the notions of minimal weak upper gradient introduced and studied in positive signature by 
Koskela--MacManus, Cheeger, Shanmugalingam, and Ambrosio--Gigli--Savar\'e \cite{KMcM:98,Cheeger99,Shanmugalingam00,AGS:14a}.
In the smooth setting we show this notion coincides with $\| \rmd f\|_*$ pointwise {a.e.} in  \Cref{thm:diffcaus}.

Inspired by the notion of infinitesimal Hilbertianity introduced by Gigli \cite{Gigli:2015}, 
we now formulate our definition of infinitesimal Minkowskianity.  It is based on requiring the usual parallelogram law to hold for $f+g$ and $f$.

\begin{definition}[Infinitesimal Minkowskianity]\label{Def:Inf Minkow} We call $(\M,\ell,\meas)$ \emph{infinitesimally Minkowskian} if for every two causal functions $f,g\colon \M\to\bar{\R}$, we have
\begin{equation}
\label{eq:definfmink}
2\, \vert\rmd(f+g)\vert^2 +2\,\vert\rmd f\vert^2  
= \vert \rmd g\vert^2 + \vert\rmd(2f + g)\vert^2\qquad\meas\textnormal{-a.e.}
\end{equation}
\end{definition}

Coupled with the a.e.~concavity  we shall eventually prove for $|\rmd f|$, Theorem \ref{T:VD=HD} shows this parallelogram law is akin to asserting that $|\rmd f|(x)$ can be polarized to produce an indefinite inner product with Lorentzian signature for 
$\meas$-a.e.~$x\in \M$ (see also Lemma~\ref{L:parallelogram law and polarization}). 
\red{Simple variants of the results established in \Cref{ss:compatibility} --- whose proofs we omit for simplicity --- show that a smooth Finsler spacetime is infinitesimally Minkowskian if and only if it is a Lorentzian spacetime.} A related preceding result by Braun--Ohta~\cite{BO:23} shows Finsler spacetimes with timelike sectional curvature bounds are already Lorentzian manifolds, hence infinitesimally Minkowskian. 
Another open problem already expressed in  \cite{CavallettiMondino22}:
for metric measure spaces, infinitesimal Hilbertianity
permits curvature-dimension bounds to be reformulated
equivalently in terms of a Bochner-type inequality \cite{AGS:14b,ErbarKuwadaSturm:2015};
it would be interesting to know whether infinitesimal
Minkowskianity permits an analogous reformulation of timelike curvature-dimension conditions for metric measure spacetimes. \red{In the smooth setting,  one direction of this equivalence can be found in \cite{Ohta:2014,QuintetEllipticsplitting}. In the nonsmooth case, irrespective of infinitesimal Minkowskianity, the timelike curvature-dimension condition is connected to a special Bochner-type inequality for Lorentz distance functions (whose simplifying advantage is that their ``Hessian'' vanishes identically in the tangential direction) by Braun \cite{braun+} based on the localization formalism of  Cavalletti and Mondino \cite{CavallettiMondino20}.}

The remainder of \S\ref{Ch:differential} is devoted to establishing calculus rules in the necessary generality for our applications, including concavity a.e.\ of the maximal weak subslope (\Cref{Pr:Linear comb}), locality properties  in \Cref{Th:Locality}, chain rule  (\Cref{Th:Chain rule II}), and Leibniz rule (\Cref{Cor:Leibniz}). 
As a consequence, although the maximal weak subslope defines only the magnitude $\vert \rmd f\vert$ of the slope and not its direction, we expect our approach paves the way to defining a genuine differential $\rmd f$ for a causal function following the strategies from positive signature by Sauvageot \cite{Sau89, Sau90}, Weaver \cite{Wea00}, 
and Gigli \cite{Gig:18}.

\subsection{Nonsmooth Lagrangian--Hamiltonian duality}

In \S\ref{ch:horizontal and vertical derivatives} we set up the negative-homogeneity correspondence between tangent and cotangent objects induced by
a pair of exponents $\PP^{-1}+\QQ^{-1}=1$ with $0\ne \PP<1$.  Inspired by the smooth Fenchel--Young inequality \eqref{poor Fenchel-Young},  one expects causal curves $\gamma$ and causal functions $f$ to 
satisfy 
\begin{equation}\label{curve Fenchel-Young}
\rmd f(\dot\gamma_0) \geq \frac{1}{\QQ}\,\vert\dot\gamma_0\vert^\QQ + \frac{1}{\PP}\,\vert\rmd f\vert^\PP,
\end{equation}
provided we can assign an appropriate meaning to the positively bilinear pairing on the left which represents the horizontal (inner) derivative of $f$ along the curve $\gamma$. 
In the nonsmooth setting,  we establish this inequality not for individual curves,  but only after integrating against an arbitrary test plan $\bdpi$ over curves 
(\Cref{Pr:Lower bound}). Much as in positive signature \cite{AGS:14b,Gigli:2015}, the positively bilinear pairing which appears on the left of \eqref{curve Fenchel-Young} will be replaced by the so-called ``horizontal derivative'' to obtain 
\begin{align*}
&
\liminf_{t\downarrow 0} \int \frac{f(\gamma_t) - f(\gamma_0)}{t}\d\bdpi(\gamma)
\geq \frac{1}{\PP}\int \vert\rmd f\vert^\PP(\gamma_0)\d\bdpi(\gamma) + \liminf_{t\downarrow 0} \frac{1}{t\QQ}\iint_0^t \vert\dot\gamma_r\vert^\QQ\d r\d\bdpi(\gamma).
\end{align*}
When the opposite inequality also holds, with $\limsup$ replacing $\liminf$ on the left,  we say $\bdpi$ {\em represents the \fff $\PP$-gradient of $f$}
(just as the Hamiltonian gradient $\smash{\dot \gamma_0 = \mathrm{D}H(\rmd f) = \|\rmd f\|_*^{\PP-2}\, \nabla f}$ from the smooth case attains equality in  \eqref{curve Fenchel-Young}). 
Heuristically, we may think of $\bdpi$ as a tangent field also denoted by $|\rmd f|^{\PP-2}\, \nabla f\, \mu_0$ waiting to act linearly on the causal 
proxy $\phi$ for the cotangent field $\rmd \phi$, where $\mu_0  = (\eval_0)_* \bdpi$. 
This follows the strategy used by  
Gigli \cite{Gigli:2015} in positive signature metric measure geometry, which in turn was inspired by De Giorgi's theory of metric gradient flows \cite{DeGiorgi92}.
 Although the plan $\bdpi$ representing the \fff $\PP$-gradient of $f$
is never unique, for a given $f$ the set of such $\bdpi$ is  convex.  Continuing the analogy with $\dot \gamma_0=\mathrm{D}H(\rmd f)$ that has been rigorously developed in
positive signature \cite{AGS:14a},   in the very special case $(\eval_0)_\push \bdpi =\meas$ we may view $\bdpi$ as inducing  a subgradient of the nonsmoothly adapted convex $\PP$-energy 
\begin{equation}\label{nonsmooth energy}
\mathscr{E}(f) := -\frac1\PP \int_M |\rmd f|^\PP \d\meas
\end{equation}
(via horizontal differentiation, which also leads to a weak definition for the divergence).
To pursue this analogy further is beyond the scope of this paper.

This sets up the desired negative-homogeneity correspondence between the tangent objects $\bdpi$ and cotangent objects $\rmd f$. 

The remainder of \S\ref{ch:horizontal and vertical derivatives} is devoted to developing and using more calculus rules to
nonsmoothly compare the two sides of \eqref{variation of energy}, with \eqref{nonsmooth energy} replacing \eqref{energy} in the ``vertical'' derivative on the left,  while the right side is replaced by horizontal differentiation of $\phi$ along $\bdpi$. \Cref{thm:horver} then shows that an analog of identity \eqref{variation of energy} continues to hold as an inequality for single-sided 
derivatives, viz.
\begin{align}\label{VD le HD}
\begin{split}
\lim_{\varepsilon\downarrow 0}  
 \int\frac{\vert\rmd (f+\varepsilon\phi)\vert^\PP(\gamma_0)
 -\vert\rmd f\vert^\PP(\gamma_0)
 }{\PP\varepsilon} \d\bdpi(\gamma)  \le \limi_{t\downarrow0}\int\frac{\phi(\gamma_t)-\phi(\gamma_0)}{t}\d\bdpi(\gamma).
\end{split}
\end{align}
By \emph{linearity} of the right-hand side, when $f+  \varepsilon\phi$ is causal for small $\varepsilon$ of {\em both} signs the complementary inequality
\begin{align*}
\lim_{\varepsilon\uparrow 0}  
 \int\frac{\vert\rmd (f+\varepsilon\phi)\vert^\PP(\gamma_0)
 -\vert\rmd f\vert^\PP(\gamma_0)
 }{\PP\varepsilon}  \d\bdpi(\gamma) \ge \lims_{t\downarrow0}\int\frac{\phi(\gamma_t)-\phi(\gamma_0)}{t}\d\bdpi(\gamma)
\end{align*}
follows. In this case, if one can show the two-sided vertical derivatives on the respective left-hand sides exist and are equal 
--- as we establish when two-sided perturbations are causal and the spacetime is infinitesimally Minkowskian, cf.~\Cref{T:VD=HD} --- 
then \eqref{VD le HD} improves to an equality, meaning the horizontal ($t \downarrow 0$) derivative also exists and coincides with the vertical ($\epsilon\to 0$) derivative. 
Some settings in which this improvement becomes possible are identified in Appendix~\ref{ss:null distance}. Here, one of our contributions is to identify (for a given function $f$) a specific and more tangible class of test functions $\varphi$ satisfying the previous causality obstructions on $f+\varepsilon\varphi$; cf.~\Cref{Pr:1.lip}.  

Note the asymmetric roles played by $f$ and $\phi$ in the estimates above: 
whereas causality implies $f$ is analogous to a non-decreasing function on the real line, being a difference of causal functions,  $\varepsilon\phi = (f+\varepsilon\phi) - f$ is analogous to a function of locally bounded variation on the real line; this analogy proves quite fruitful.
When both $f$ \red{and $\phi$ are  causal, if}  $|\rmd f|$ and $|\rmd \phi|$ have strictly positive real values
$\meas$-a.e., equation \ref{eq:dgnfscal} shows
$\rmd^+f(\nabla \phi)=\rmd^+\phi(\nabla f)$ $\meas$-a.e.~for infinitesimally Minkowskian forward spacetimes. This expression becomes a positively bilinear indefinite quadratic form which can be extended bilinearly to the vector space generated by causal functions. \red{Effectively, in some sense we recover a bilinear metric tensor $\meas$-a.e.~from $\ell$ in this case.}

\subsection{Timelike Ricci curvature bounds and applications}

 The last part of our paper is devoted to applications of our first-order Sobolev calculus (which require additional hypotheses related to continuity of $\ell_+$ and of timelike geodesics).
 More significantly, they require a proxy for timelike lower Ricci curvature bound $K$ and upper dimension bound $N$.  The synthetic version  of this which we adopt is a hybrid combination $\smash{\TMCP^h_+(K,N)}$ of the timelike measure contraction property $\TMCP_+(K,N)$ of Braun~\cite{Braun:2023Renyi} and the entropic variant 
 $\smash{\TMCP^e_+(K,N)}$ of Cavalletti--Mondino \cite{CM:20} upon which it is based.  Although $\TMCP_+(K,N)$ is a priori \emph{stronger} than $\TMCP^e_+(K,N)$, they are conjecturally equivalent under appropriate non-branching hypotheses (as already known for their  positive signature counterparts \cite{cavalletti-sturm,cavalletti-milman2021}), while the reduced variant $\smash{\TMCP_+^*(K,N)}$ of $\TMCP_+(K,N)$ is equivalent to $\TMCP^e(K,N)$ under such circumstances  \cite{Braun:2023Renyi}.  The two conditions $\smash{\TMCP_+(K,N)}$ and $\smash{\TMCP_+^*(K,N)}$ are defined by requiring 
 different $(K,N)$-dependent
 convexity estimates for the $N$-Renyi entropy functional
 \begin{equation}\label{N-Renyi}
    \scrS_N(\mu) := -\int_M \rho^{(N-1)/N}\d\meas,\qquad\text{where}\quad\mu=\rho\mm+\mu^\perp\quad\text{with}\quad \mu^\perp\perp\mm,
\end{equation}
 along suitable $\ell_\QQ$-geodesics $(\mu_t)$ joining an arbitrary probability measure $\mu_0$ to a Dirac mass in its timelike future; when $K=0$, both {follow from} ordinary convexity of the assignment $t  \mapsto \scrS_N(\mu_t)$.   The Cavalletti--Mondino condition instead requires a growth bound for the 
Boltzmann-Shannon entropy
\begin{align}\label{BS}
    \scrS_\infty(\mu) :=
    \begin{cases}
    \int_M \rho \log \rho \d\meas &\text{ if }\quad \mu=\rho\,\meas \quad \text{and} \quad \mm[\spt\mu]<\infty,
    \\ +\infty & \text{otherwise} 
    \end{cases}
\end{align}
along the same curves 
which {follows from} ordinary convexity in the case $N=0$.
Our hybrid condition $\TMCP^\h_+(K,N)$ requires suitable $\ell_q$-geodesics to satisfy both estimates.  We need it only to gain compactness;  whenever the compactness is provided by globally hyperbolicity, $\TMCP_+(K,N)$ would suffice.

 We note that qualitatively, all our results in \Cref{Section: Effects of curvature assumption} will hold irrespective of whether we assume $\smash{\TMCP^\h_+(K,N)}$ or $\smash{\TMCP^{\h,*}_+(K,N)}$ (or 
 $\smash{\TMCP_+(K,N)}$ or $\smash{\TMCP^{*}_+(K,N)}$ when globally hyperbolicity holds). The benefit of the unstarred conditions is that they provide quantitative consequences in \emph{sharp} form even in the absence of any non-branching hypothesis. Notably, an interesting setting where this comparison theory thus applies are vector spaces with hyperbolic norms, cf.~\Cref{ss:hyperbolic norms}. As our \Cref{Pr:TCD COND} below establishes, Euclidean space endowed with \emph{any} appropriate hyperbolic norm has non-negative synthetic timelike Ricci curvature, yet these structures may admit many branching geodesics in general.

 For compatibility with the test plans of our Sobolev calculus, the entropic displacement convexity which defines $\TMCP^\h_+(K,N)$ must be satisfied by geodesics of measures having density bounds.  The construction of these ``good'' geodesics 
 in \Cref{Le:Existence test plans} and \Cref{Le:Existence test plans2}
 involves new subtleties beyond those from the globally hyperbolic case \cite{Braun:2023Good}.  To enforce initial and final conditions (and one-sided continuity) the concept of non-branching at 0 or 1 we already alluded to comes into play; it relaxes concepts from
 \cite{KS:18, McC:23}.  To handle Dirac endpoints with forward-completeness instead of compactness, we exploit the narrow coercivity of 
 $\scrS_\infty$ on finite volume subsets of $\M$ and the growth bounds from $\TMCP^e_+(K,N)$.

 The first result that we show is a nonsmooth converse to the Hawking--King--McCarthy--Malament theorem \cite{HKMcC:76,Malament77}.  
 Working in the smooth context,  with the time separation function
 built out of the metric tensor, they showed that a bijection which preserves the class of continuous timelike curves is in fact a smooth conformal isometry. If it also preserves the time separation then it preserves the metric tensor.  In the nonsmooth context,  we have instead used the time separation to define a maximal weak subslope $|\rmd f|\colon \M \to [0,+\infty]$ for causal functions $\smash{f\colon \M\to \bar{\R}}$.  We begin by showing that $(\M,\ell,\meas)$ satisfying $\TMCP^\h_+(K,N)$ implies the \emph{Sobolev-to-steepness} property (\Cref{Th:SobtoSteep}), inspired by Gigli's Sobolev-to-Lipschitz property in positive signature~\cite{gigli2013}. This means the condition $|\rmd f| \ge 1$ $\meas$-a.e.~implies a ``reverse Lipschitz'' condition we call {\em $1$-steepness}, viz.
 \begin{equation*}
 f(y) - f(x) \ge \ell(x,y)
 \end{equation*}
 for all $x,y \in \supp\meas$.
 This implies $\ell$ can be recovered on the support of $\meas$  from the assignment $f \mapsto |\rmd f|$.  It also implies that if $T\colon\supp \meas_1 \to \supp\meas_2$ is a surjection between two such spaces that preserves measure, causality, and maximal weak subslopes, then it also preserves $\ell$, i.e.\ is a global isometry (\Cref{Th:HawkingKingMcCarthy}). Much like in positive signature, this result makes use of Sobolev notions viable for deriving precise metric conclusions.

Our second application is set in the same class of spaces.  We  connect our first-order Sobolev calculus with the optimal transport of appropriate Borel probability measures 
$\mu_0,\mu_1 \in \Prob(\M)$ by showing a metric \cite{AGS:14a,Gigli:2015} Brenier--McCann \cite{Brenier:1991,McCann95,McCann:01,McCann:2020} theorem.  Under appropriate hypotheses~\cite{CM:20}, Cavalletti--Mondino have shown a coupling of $\mu_0$ and $\mu_1$ is $\ell_\QQ$-optimal if and only if it is supported in the $\ell^\QQ/\QQ$-superdifferential $\partial_{\ell^\QQ/\QQ}f$ of an $\ell^\QQ/\QQ$-concave function $f$.
In  \Cref{Th:Brenier}, we show that if a test plan has marginals $(\eval_0,\eval_1)_* \bdpi$ supported in $\partial_{\ell^\QQ/\QQ}f$, then $\bdpi$ represents the \fff $\PP$-gradient of $f$. Moreover, $\bdpi$-a.e.\ $\gamma$ satisfies the \emph{equality} 
\begin{equation}\label{metric Brenier}
\ell(\gamma_0,\gamma_1) = \vert \rmd f \vert^{\PP-1}(\gamma_0).
\end{equation}
Thus $\mu_0$-a.e., our maximal weak subslope captures the distance transported into the future. This also  opens the door for a natural notion of exponentiation on nonsmooth spacetimes with synthetic curvature bounds akin to \cite{giglirajalasturm}.

This result plays a role in our  
third application,  which is a \emph{sharp} $\PP$-d'Alembert comparison theorem for $\TMCP^\h_+(K,N)$ spaces (Theorems \ref{Cor:Time I} and \ref{Th:Dalem comp}).  The theorem has the cleanest form in the case $K=0$. 
For $\ell^\QQ/\QQ$-concave functions $f$ with $\partial_{\ell^\QQ/\QQ} f \subset \{\ell>0\}$,  it states 
the following inequality in a weak form:
\begin{equation}\label{q-Dalem comp}
\square_\PP f \le N.
\end{equation}
Applying \eqref{q-Dalem comp} to the function $f^o = -\ell(\cdot, o)^\QQ/\QQ$, the chain rule yields a weak expression of the bound
\begin{equation*}
\square_\PP (-\ell(\cdot,o)) \le \frac{N-1}{\ell(\cdot,o)}
\end{equation*}
on $I^-(o)$ for the $\PP$-d'Alembertian of the Lorentz distance to $o \in \supp\meas$. On $I^+(o)$, a similar logic yields a weak \red{reformulation} of
$$
\square_\PP \ell(o,\cdot) \ge -\frac{N-1}{\ell(o,\cdot)}
$$
\red{under the modified hypothesis $\TMCP^\h_-(0,N)$, in which transport involves a Dirac endpoint in the past rather than the future.} 
Along the way, some key inequalities have to be ``reversed''; this is discussed separately in \Cref{Sub:Modiback}.

Based on the analogous result in positive signature \cite{Gigli:2015}, 
the initial idea of our proof is to consider a mass distribution $\mu_0$ given in terms of an appropriate non-negative test function $\varphi$ supported in $I^-(o)$, and to transport $\mu_0$ to $\mu_1 := \delta_o$. The corresponding $\ell^q/q$-concave function is $f=f^o$ on $I^-(o)$. (To simplify the presentation in this introduction, 
we neglect technicalities pertaining to causality, integrability, etc., of appropriate linear combinations of $\varphi$ and $f$ for now.) More precisely, since the entropy functional \eqref{N-Renyi} is given by the convex integrand $s_N(r) = -r^{(N-1)/N}$, 
we will choose $\mu_0$ to be $\meas$-absolutely continuous with density $\rho_0 = (c_0\,\varphi)^{N/(N-1)}$, where $c_0$ is a normalization constant. \red{Theorems}~\ref{Le:Existence test plans} and  \ref{Th:Brenier} plus \Cref{Le:Existence test plans2} establish the existence of a plan $\bdpi$ representing the \fff $\PP$-gradient of $\red f$ such that the assignment $t\mapsto \scrS_N(\mu_t)$ {satisfies an appropriate} $\ell_\QQ$-geodesic convexity estimate, 
where $\mu_t := (\eval_t)_\push\bdpi$. This convexity entails
\begin{align}\label{above-tangent}
\limsup_{t\downarrow 0} \frac{\scrS_N(\mu_t)-\scrS_N(\mu_0)}{t}
& \le \scrS_N(\mu_1)-\scrS_N(\mu_0)
 = c_0 \int_M \varphi \d\meas.
\end{align}
On the other hand, ordinary convexity of $s_N$ gives the complementary bound 
relating \eqref{above-tangent} to a horizontal derivative, which in turn can be bounded by a vertical derivative as in \eqref{VD le HD}:
\begin{align*}
\liminf_{t\downarrow 0}\frac{\scrS_N(\mu_t) -  \scrS_N(\mu_0)}{t} &\ge \liminf_{t\downarrow 0}\int \frac{s_N'\circ \rho_0(\gamma_t) - s_N'\circ\rho_0(\gamma_0)}{t}\d\bdpi(\gamma) \\&\geq \liminf_{\varepsilon \downarrow 0}
\int_M\frac{\vert\rmd (\red f+\varepsilon\, s_N' \circ \rho_0)\vert^\PP
 -\vert\rmd \red f\vert^\PP
 }{\PP\varepsilon} \d \mu_0
\\&= \int_M \rmd^+(s_N'\circ \rho_0)(\nabla \red f)\,\vert\rmd \red f\vert^{\PP-2}\d\mu_0
\\&= \frac{c_0}N \int_M \rmd^+\varphi(\nabla f)\,\vert\rmd 
f\vert^{\PP-2}\d\meas,
\end{align*}
where the last identity follows from a suitable chain rule and the specific choice of $\rho_0$. Canceling $c_0$ gives the desired weak formulation of  \eqref{q-Dalem comp}, viz.
$$
\int_M \rmd^+\varphi(\nabla f)\,\vert\rmd f\vert^{\PP-2}\d\meas \le N\int_M \varphi \d\meas.
$$
For non-zero $K$,  the initial ``chord above tangent'' bound \eqref{above-tangent} is complicated by the presence of additional  distortion coefficients
depending on $\ell(\gamma_0,\gamma_1)$, which are related to  $|\rmd f|^{\PP-1}(\gamma_0)$ by our metric 
 Brenier--McCann theorem \eqref{metric Brenier}.

 In  \Cref{S:defining square}  we discuss how the $\PP$-d'Alembert comparison estimate can be coupled with infinitesimal Minkowskianity (or related notions) to develop a notion of `distributional' $p$-d'Alembertian and start studying its properties. A basic one is that the Riesz--Daniell theorems allow us to represent such an operator as a Radon measure, suitably defined.

 In the final  \Cref{Sub:Modiback} we comment on how our results are modified if we work on $\TMCP^\h_-(K,N)$ spaces rather than on $\TMCP^\h_+(K,N)$ ones (while keeping forward completeness).

\subsection[Two-sided perturbations, hyperbolic norms,  compatibility with smooth notions]{Two-sided perturbations, hyperbolic norms, and compatibility with\\ smooth notions}

We include three appendices. The first establishes compatibility between many of the nonsmooth notions introduced here and their  analogs in smooth spacetimes.  The second  uses Kunzinger--Steinbauer's nonsmooth adaptation~\cite{KS:2022} 
of the Sormani--Vega null distance \cite{SV:16} to give conditions under which a causal function $f$ will admit two-sided linear perturbations, both of which remain causal.  The third introduces hyperbolic norms as a generalization of Lorentz--Finsler norms, and
explores weak lower Ricci bounds in this setting as well as consequences of the parallelogram law.

\color{black}

\section{Curves in the nonsmooth Lorentzian setting}\label{Ch:Curves}


\subsection{Infinity conventions} \label{Sub:InfConv} 

Throughout this paper, we employ the conventions 
\begin{equation}
\label{eq:conventions1}
\begin{split}
+\infty - (+\infty)& := +\infty,\\
-\infty - (-\infty) &:= +\infty,\\
\pm \infty + z &:= \pm \infty \quad \text{for every } z \in [-\infty,+\infty],\\
0 \cdot (\pm \infty)& := 0. 
\end{split}
\end{equation}
The latter is standard in measure theory. The first three will enter into play, for instance, when presenting the reverse triangle inequality for the time separation function \eqref{eq: reverse triangle inequality} or when discussing calculus with causal functions as in \eqref{eq:defwd}, as we do not want to exclude from our analysis cases in which $\ell$ or $f$ take the values $\pm\infty$. We accept that our conventions imply the non-commutativity $-\infty+(+\infty)=-\infty<+\infty=+\infty+(-\infty)$. In connection with the reverse triangle inequality, it is worth to point out that for  $a,b,c\in\bar\R$ the above yields
\begin{equation}
\label{eq:cambiolato}
\min\{a+b,b+a\}\leq c\qquad\Leftrightarrow\qquad a\leq c-b\quad\text{and}\quad b\leq c-a,
\end{equation}
as can be easily checked.


For $0 \ne p\leq 1$, we also define the concave non-decreasing function $u_p:\bar\R\to\bar\R$ by setting $u_p(z):=\tfrac{z^p}p$ for $z>0$ and then extending this by monotonicity and concavity, i.e.\ we put
\begin{equation}
\label{eq:up}
\begin{array}{rll}
u_p(z)&=-\infty\qquad &\text{for $z\in[-\infty,0)$ and any $p\leq 1$},\\
u_p(0)&=-\infty\quad\text{(resp.\ 0)}\qquad &\text{for  $p< 0$ (resp.\ $p\in(0,1]$)},\\
u_p(+\infty)&=\phantom{-}0\quad\text{(resp.\ $+\infty$)}\qquad &\text{for  $p< 0$ (resp.\ $p\in{(}0,1]$)}.
\end{array}
\end{equation}

\subsection[Notions of spacetimes]{Notions of spacetimes}
We shall work in a setting which generalizes Lorentz(--Finsler) spacetimes and their basic causality theory  to possibly nonsmooth spaces. Such an abstract approach was pioneered by Kunzinger--Sämann through their Lorentzian pre-length spaces \cite{KS:18} (inspired by early contributions of Busemann \cite{Busemann:1967} and Kronheimer--Penrose \cite{KronheimerPenrose:1967}). An alternative approach where many topological properties are implied by basic axioms on the time separation function was proposed by Minguzzi--Suhr \cite{MinguzziSuhr:2022}. Sequella of  \cite{CM:20,McC:23,BraunMcCann:2023,BykovMinguzziSuhr:2024+} extend \cite{KS:18,MinguzziSuhr:2022}. (We point out \cite{McC:23,BraunMcCann:2023} use the term ``metric spacetime'' for related yet slightly different structures than we do.) An approach more tied to a distance in the classical metric sense (the so-called null distance) is given in Sormani--Vega \cite{SV:16}. 

Let $\M$  be a set and  $\ell\colon \M\times \M\rightarrow \{-\infty\}\cup [0,+\infty]$.  We  say that  $\ell$ is  a \emph{time separation} if
\begin{subequations}
\begin{align}
\label{eq:ellxx}
&\ell(x,x)=0,\\
 \label{eq: reverse triangle inequality}
&\min\{\ell(x,y)+\ell(y,z), \ell(y,z)+\ell(x,y)\} \leq \ell(x,z).
  \end{align}
\end{subequations}
hold for any  $x,y,z\in \M$. We define the transitive relations \emph{chronology} $\ll$ and \emph{causality} $\leq$ on $\M$ as:
\[
\ll\, := \ell^{-1}((0,+\infty])=\{\ell> 0\},\qquad\qquad \leq\, := \ell^{-1}([0,+\infty]) = \{\ell\ge 0\}.
\]
If $\ell$ is a time separation, so is its {\em time-reversal}   $\ell^*(y,x) := \ell(x,y)$.

We shall denote by $\ell_+:= \max\{\ell,0\} = \ell \vee 0$ the positive part of $\ell$. For $x\in \M$, the sets
 \begin{alignat*}{3}
  I^+(x)&:= \{y\in \M: x\ll y\},& \qquad\qquad  && I^-(x) &:=\{y\in \M: y \ll x\},\\
  J^+(x)&:= \{y\in \M: x\leq y\},\, &&& J^-(x) &:=\{y\in \M: y\leq x\},
 \end{alignat*}
are the \emph{chronological future and past} of $x$ and the \emph{causal future and past} of $x$, respectively. 
Analogously, for $X,Y\subset \M$ we define 
\[
J^\pm(X):=\bigcup_{x\in X} J^\pm(x),\qquad\qquad J(X,Y):=J^+(X)\cap J^-(Y).
\]
If $X$ and $Y$ are both compact we refer to sets of the form $J(X,Y)$ as
\emph{causal emeralds}, or simply emeralds. If $X = \{x\}$ and $Y = \{y\}$, we will refer to these sets as \emph{diamonds} instead of emeralds and write $J(x,y)$ for short. We shall also deal with \emph{chronological diamonds}, i.e.\ sets $I(x,y)$ of the form $I^+(x)\cap I^-(y)$.
A set $X\subset\M$ is called \emph{causally convex}  if and only if it contains the causal diamond $J(x,y)\subset X$ for all $x,y\in X$.

\begin{definition}[Notions of spacetimes] 
\label{D:spacetimes}
We shall say that:
\begin{itemize}
\item[i)]  $({\rm M},\ell)$ is a metric spacetime if $\ell$ is a time separation on  $\M$  so that $\leq$ is a partial order, i.e.\ $x\leq y$ and $y\leq x$ implies $x=y$.
\item[ii)] $(\M,\uptau,\ell)$  is a Polish metric spacetime if $\uptau$ is a Polish topology (i.e.\ induced by a complete and separable distance) on the metric spacetime $(\M,\ell)$ so that $\{\ell\geq 0\}$ is closed in $\M^2$.
\item[iii)] $(\M,\uptau,\ell)$  is a forward metric  spacetime if it is a Polish metric spacetime and every sequence $(x_n)$ such that $x_n\leq x_{n+1}\leq \bar x$ for every $n\in\N$ and some $\bar x\in \M$ admits a $\uptau$-limit.

Symmetrically, if we ask for existence of limits of decreasing sequences bounded from below we get the notion of backward metric spacetime.
\end{itemize}
Finally, a Polish spacetime is called locally causally convex if every point has a neighbourhood basis made of causally convex sets.
\end{definition}
Forward/backward spacetimes  give notions of completeness  which obviate our need for the local compactness guaranteed by global hyperbolicity.
A new compactness result in forward metric spacetimes, \Cref{prop-cad-comp} below, gives a limit curve theorem for causal curves.

\begin{remark}[The choice of the topology] 
\label{R:push-up2}
We are not going to require any further relation between $\uptau$ and $\ell$ on top of those already asked by the concepts of Polish and forward metric spacetimes. We opt for this choice both because for the moment we do not need anything more and because we are looking for notions that get inherited by the space of Borel probability measures equipped with the time separation $\ell_q$ coming from optimal transport: see \Cref{eq:probmellq} for details.

In practical applications however, and especially in relation to convergence of spacetimes, it is surely natural to look for a topology that is induced in some way by $\ell$. Also, our strongest version of the limit curve theorem \Cref{prop-cad-comp} holds for locally {causally} convex topologies and often topologies somehow induced by $\ell$ have this property. Examples of $\ell$-induced topologies are:
\begin{itemize}
\item[-] The Alexandrov topology, i.e.\ the one generated by chronological diamonds,
\item[-] The   chronological topology \cite{KS:18}, i.e.\ the one generated by sets of the form $I^-(x)$ and $I^+(x)$ as $x$ varies in $\M$,
\item[-] The smallest topologies making $\ell$ upper semicontinuous and its positive part $\ell_+$ continuous.
\end{itemize}
These are ordered from the coarsest to the finest and are all locally causally convex.
 \hfill$\blacksquare$
\end{remark}

\begin{remark}[Push-up property] 
\label{R:push-up1}
 In any metric spacetime $(\M,\ell)$, the \emph{push-up principle} holds, i.e.~if $x\ll y$ and $y\leq z$ (or if $x\leq y$ and $y\ll z$) then $x\ll z$. This follows directly  from the reverse triangle inequality. 
   \hfill$\blacksquare$
\end{remark}

We collect a couple of basic results about the relation of $\ell$ and the topology.
\begin{lemma}[Upper semicontinuous time separations attain maxima on emeralds]
\label{L:usc ell admits emerald bounds}
Let $(\M,\uptau,\ell)$ be a Polish metric  spacetime.  Then emeralds are closed. If moreover $\ell_+$ is real valued and  upper semicontinuous, then it   is bounded above on each emerald.
\end{lemma}

\begin{proof}
Let $E \subset \M$ be an emerald, say $E = J(C_0,C_1)$ for $C_0,C_1\subset M$ compact, and $(x_n)\subset E$ be a sequence converging to a limit $x$. Then there are $(y_n)\subset C_0$ and $(z_n)\subset C_1$ so that $y_n\leq x_n\leq z_n$ for every $n\in\N$. By compactness, up to passing to a non-relabelled subsequence we can assume that $y_n\to y\in C_0$ and  $z_n\to z\in C_1$. As $\{\ell\geq 0\}$ is closed, we conclude that $y\leq x\leq z$, so that $x\in E$ as well.

For the second claim, notice that  by upper semicontinuity $\ell$ attains its maximum value $L<\infty$ on $C_0 \times C_1$.
We claim $\ell \le L$ on $E^2$.  Each $x \in E$ lies in $J(x_0,x_1)$
for some $x_i \in C_i$.  Similarly $y \in E$ lies in $J(y_0,y_1)$
for some $y_i \in C_i$.  If $\ell(x,y) > -\infty$ then
$x_0 \le x \le y \le y_1$ and the reverse triangle inequality yields
$\ell(x,y) \le \ell(x_0,y_1) \le L$ as desired.
\end{proof}
Somewhat conversely, the following provides a condition ensuring  lower semicontinuity of $\smash{\ell_+(\cdot,o)}$ for a point $o\in \M$. The assumption  is tailored to the setting we will work with in \Cref{Section: Effects of curvature assumption}:  in  typical  applications, $z$ comes from the $t$-evaluation $\gamma_t$ of a rough $\ell$-geodesic $\gamma$ from $x$ to $o$ (see \Cref{Def:l-geodesics}). 
\begin{lemma}[Rough timelike geodesy yields lower semicontinuous time to a point]
\label{Le:LSC lplus} Let $(\M,\uptau,\ell)$ be a Polish metric  spacetime with $\uptau$ containing the chronological topology. Assume that a point $o\in \M$  has the following property: for every $x\ll o$ and every $t\in(0,1)$ there exists $z \in \M$ such that 
\begin{align*}
    \ell(x,z) = t\,\ell(x,o),\qquad\qquad\ell(z,o) = (1-t)\,\ell(x,o).
\end{align*}
Then the assignment $z\mapsto \ell_+(z,o)$ is lower semicontinuous.
\end{lemma}
\begin{proof} It suffices to prove that for every $c>0$ the superlevel set
$\{\ell(\cdot,o)>c\}$ is open. For every $x\in \{\ell(\cdot,o) > c\}$, by our assumption there exists $z\in \M$ such that $\ell(x,z) > 0$ and
$\ell(z,o)>c$. In particular, $x$ belongs to the chronological past of
$z$, which is open by definition. We thus proved   that $\{\ell(\cdot,o) > c\}$ is
the union of chronological pasts of all of its elements, hence it is open.
\end{proof}

Further interesting regularity properties can be proved under the (reasonable) assumption that in our spacetime any point can be approximated by points in its chronological past, i.e.\ 
\begin{equation}
\label{eq:apprpast}
x\in \overline{I^-(x)}\qquad\forall x\in \M.
\end{equation}
This holds e.g.~in metric spacetimes where every point is in the interior of a continuous timelike curve, such as in globally hyperbolic regular Lorentzian length spaces.

The notion of forward-completeness which defines forward spacetimes (Definition
\ref{D:spacetimes}) does not quite coincide with the usual notion of order completeness,  which would require existence of a supremum rather than a limit.  These two notions are linked by the following two results.

\begin{lemma}[Chronology determines causality]\label{le: chrondetcaus} Let $(\M,\uptau,\ell)$ be a Polish metric spacetime for which \eqref{eq:apprpast} holds.  Then the inclusion $I^-(x) \subset I^-(y)$ 
for some $x,y\in \M$ implies $x\leq y$.
\end{lemma}
\begin{proof}
This can partially be compared with one direction of \cite[Prop.~2.26]{APS20}.  Taking closures in the given inclusion and recalling that $\leq$ is closed we get $\overline{I^-(x)}\subset\overline{I^-(y)}\subset J^-(y)$. Then the assumption \eqref{eq:apprpast} yields $x\in J^-(y)$, which is the claim.
\end{proof}

The following proposition gives sufficient conditions for the 
limit of any $\le$-non-decreasing sequence to coincide with its $\le$-supremum.

\begin{proposition}[Limit from the past vs.\ order supremum]
Let $(\M,\uptau,\ell)$ be a Polish metric  spacetime  for which \eqref{eq:apprpast} holds and so that $\uptau$ contains the chronological topology. 
Let the limit $x = \lim_{n \to \infty} x_n$ of some non-decreasing sequence $x_n \le x_{n+1}$ in $M$ converge. Then $x_n \leq x$ for all $n \in \N$ and moreover: if $x_n \leq y \in M$ for all $n \in \N$ then $x \leq y$. 
\end{proposition}

\begin{proof} Letting $m\to\infty$ in  $x_n \leq x_m$ (which is valid for $n\leq m$) and using that $\leq$ is closed we deduce that   $x$ is an $\leq$-upper bound for the sequence. 
Let $y$ be another $\leq$-upper bound. Since $x_n \to x$, any neighbourhood $U$ of $x$ contains all $x_n$ for large enough $n$. 
In particular, this holds for $U=I^+(z)$ for some $z\ll x$ provided by \eqref{eq:apprpast}; $U$ is open by our assumption on $\uptau$. 
For such $U$ we must have $z \ll x_n$ for sufficiently large $n$ and since $y\geq x_n$ we must also have $z \ll y$ by the push-up property (Remark \ref{R:push-up1}). 
We thus proved that if $z\ll x$ then $z\ll y$, so that \Cref{le: chrondetcaus} implies $x\leq y$, as desired.
\end{proof}

We  comment on how all this relates to previously investigated settings.

\begin{remark}[Metric spacetimes versus other abstract generalizations of spacetimes]
\label{prop: metric spacetimes are LpLS} Our setup of Polish metric spacetimes is compatible with the approaches of Kunzinger--Sämann's Lorentzian pre-length spaces \cite{KS:18} and Minguzzi--Suhr's bounded Lorentzian metric spaces \cite{MinguzziSuhr:2022} whenever the respective standing hypotheses intersect. (We refer to Beran--Harvey--Napper--Rott \cite[§4.1]{BHNR23} for the relation between \cite{KS:18} and \cite{MinguzziSuhr:2022}.) Indeed, let $(\M,\uptau,\ell)$ be a Polish metric spacetime with $\uptau$ being  the chronological topology. Let $\sfd$ be a fixed  metric inducing $\uptau$. If $\ell_+$ is lower semicontinuous, then $(\M,\sfd,\ll,\leq,\ell_+)$ is a Lorentzian pre-length space. On the other hand, a Lorentzian pre-length space $(\M,\sfd,\ll,\leq,\tau)$ where the reference metric $\sfd$  is Polish and induces the chronological topology and such that the relation $\leq$ is closed becomes a Polish metric spacetime by defining $\ell$ as $-\infty$ outside $\leq$ and $\tau$ otherwise. 

Lastly, assume the pair $(\M,\tau)$ is a bounded Lorentzian metric space after Minguzzi--Suhr \cite{MinguzziSuhr:2022}. Let $\uptau$ denote the reference topology, which is Polish and in fact uniquely certifies the definition of $(\M,\tau)$ being a bounded Lorentzian metric space  \cite[Cor.~1.7, Thm.~1.10]{MinguzziSuhr:2022}. Define $\ell$ as in the previous paragraph, where $\leq$ is replaced by the (closed) extended causal relation from \cite[§5.1]{MinguzziSuhr:2022}. These observations make $(\M,\ell)$ a Polish metric spacetime.
\hfill$\blacksquare$
\end{remark}

\begin{definition}[Global hyperbolicity] A Polish metric spacetime is called \emph{globally hyperbolic} if   all of its  emeralds are compact. 
\end{definition}

Together with our ad hoc assumption of closedness of $\{l\geq 0\}$, this corresponds to the notion of global hyperbolicity for general topological ordered spaces suggested by Minguzzi \cite{minguzzitopapp}.

\begin{remark}[About global hyperbolicity] 
\label{R:global hyperbolicity} In a  globally hyperbolic metric spacetime, any bounded monotone sequence has a limit, as it lives in a (compact) emerald and the closedness of $\leq$ implies any two limits $x,x'\in \M$ satisfy $x\leq x'$ and $x'\leq x$, hence $x=x'$ as $\leq$ is a partial order. In particular, such a spacetime is both forward and backward. As we do not assume a strong relation between a metric (inducing the Polish order topology) and the causal structure (i.e.~$\sfd$-compatibility  \cite[Def.\ 3.13]{KS:18}), a globally hyperbolic metric spacetime is not a globally hyperbolic Lorentzian pre-length space in the sense of \cite[Def.\ 2.35]{KS:18}.  On the other hand if we assume $\sfd$-compatibility, causal path-connectedness (i.e.~every pair of causally related points can be joined by a continuous causal curve) and lower semicontinuity of $\ell_+$, then by \cite[Thm.\ 3.7]{Min:23} the quintuple $(\M,\sfd,\ll,\leq,\ell_+)$ is a globally hyperbolic Lorentzian pre-length space.\hfill$\blacksquare$
\end{remark}

\begin{remark}[Forward metric spacetimes on the causal ladder]
\label{R:causal ladder} A spacetime is called \emph{causally simple} if the causal relation $\le$ is both a partial ordering and closed; this definition  makes sense in our setting as well.  
By Remark \ref{R:global hyperbolicity},
any globally hyperbolic metric spacetime   is forward, while by definition  any Polish metric  spacetime is causally simple.

In our setting, these inclusions are strict. On the one hand, Anti-de Sitter space is causally simple but not forward-complete. Alternatively, consider any causal diamond $J(x,y)$ in the Minkowski plane but remove spacelike infinity $\partial I^+(x)\cap\partial I^-(y)$.  

Also, take the diamond $J((-1,0),(1,0))$ in the Minkowski plane and add the  horizontal line $\R\hat{e}_2$ to the space and extend the causal relation by $(-1,0) \leq (0,z) \leq (1,0)$ for all $z \in \R$. This space is causally simple and forward, but not globally hyperbolic (and not a manifold).\hfill$\blacksquare$
\end{remark}

\subsection{The spacetime of Borel probability measures}\label{eq:probmellq}

Let $(\M,\uptau,\ell)$ be  a Polish metric spacetime. We shall denote by $\Prob(\M)$ the collection of Borel probability measures on it and by $\pem\subset\Prob(\M)$ the subset of those with support in some emerald. Here we are interested in the study of the  \emph{Lorentz--Wasserstein time separation}  $\ell_\QQ$ (originating in Eckstein--Miller \cite{EM:17} in the range $\QQ\in (0,1]$) on $\Prob(\M)$. We start with the definition:

\begin{definition}[Lorentz--Wasserstein distance]\label{Def:Lorwass}  Let $(\M,\uptau,\ell)$ be a Polish metric spacetime in which $\ell$ is Borel. For $\mu,\nu\in\Prob(\M)$ let $\Pi_{\leq}(\mu,\nu)\subset\Prob(\M\times \M)$ be the set of \emph{causal} couplings, i.e.\ of measures $\pi \ge 0$ concentrated on $\{\ell\geq 0\}$ having marginals $\mu = (\Pr_1)_*\pi$ and $\nu=(\Pr_2)_*\pi$;
here $\Pr_1(x,y):=x$ and $\Pr_2(x,y):=y$ on $\M^2$.

For $0\neq \QQ \leq 1$, and $u_\QQ$ from \eqref{eq:up}, the \emph{$\QQ$-Lorentz--Wasserstein distance}  $\ell_\QQ(\mu,\nu)$ of $\mu,\nu\in\Prob(\M)$ is defined as $-\infty$ if $\Pi_{\leq}(\mu,\nu)$ is empty (consistently with the usual convention $\sup\emptyset := -\infty$) and otherwise as that element in $[0,+\infty]$ such that
\begin{equation}
\label{eq:defellq1}
     u_q\big( \ell_\QQ(\mu,\nu)\big) := \sup_{\pi\in \Pi_{\leq}(\mu,\nu)} \!\int_{\M\times \M}u_q\circ \ell \,\d \pi.
\end{equation}
\end{definition}
In other words, if no infinities appear, $\ell_\QQ(\mu,\nu)$ can be defined via
\begin{equation}
\label{eq:defellq2}
\ell_\QQ(\mu,\nu):=\sup_{\pi\in \Pi_{\leq}(\mu,\nu)} \Big(\int_{\M\times \M} \ell^\QQ\,\d \pi\Big)^{\frac1q}.
\end{equation}
A coupling $\pi$ of $\mu$ and $\nu$ with $\mu\preceq\nu$ is called \emph{$\ell_\QQ$-optimal} if it attains the  supremum in \eqref{eq:defellq1} (or, equivalently, in \eqref{eq:defellq2}). 

We shall prove in a moment that $\ell_q$ is a time separation on $\Prob(\M)$. For the moment, we notice that the causal relation $\preceq$ it induces is independent of $q$  and can be defined as: $\mu\preceq\nu$ if and only if $\Pi_{\leq}(\mu,\nu)$  is not empty. A standard gluing argument shows that $\preceq$ is transitive, while it is less obvious that it is antisymmetric (but see \Cref{prop:ellqst} below). For the interplay of $\preceq$ and the narrow topology we refer to \cite[Thm.~8]{EM:17} and  \cite[Thm.~B.5]{BraunMcCann:2023}.  We caution the reader that unlike the causal relation described above,  the timelike relation $\prec$ induced by $\ell_q$ can depend on $q$ when we allow $q<0$.

To study the properties of $\ell_q$ it is convenient to keep in mind some basic facts about $L^q$ `norms' for $0\neq q<1$, defined for $[0,+\infty]$-valued measurable functions as
\[
\|f\|_q:=\Big(\int f^{q}\,\d\mm\Big)^{\frac1q}\in[0,+\infty].
\]
The following is simple and well known, see also \cite{Gigli24+} for more about duality properties of $L^p$ and $L^q$ spaces for $p,q<1$:

\begin{lemma}[Basic inequalities in $L^q$, $0\neq q<1$]\label{le:lplq}
Let $(\X,\mm)$ be a $\sigma$-finite measure space, $0\neq p<1$ and $0 \neq q < 1$ satisfy $\tfrac1p+\tfrac1q=1$, and $f:\X\to[0,+\infty]$ be measurable. Then
\begin{equation}
\label{eq:duallplq}
\|f\|_q=\inf\Big\{\int fg\,\d\mm\ :\ \text{ $g:\X\to[0,+\infty]$ is measurable with $\|g\|_p=1$}\Big\}
\end{equation}
and
 \begin{equation}
\label{eq:revlq}
\begin{split}
\|f_1+f_2\|_q&\geq \|f_1\|_q+\|f_2\|_q.
\end{split}
\end{equation}
Moreover, if equality holds in \eqref{eq:revlq} and $\|f_1+f_2\|_q\in(0,+\infty)$,   then for some $\alpha\geq 0$ we have $f_1=\alpha f_2$.
\end{lemma}
\begin{proof} We observe that $u_p$ is the Legendre-Fenchel transform of the concave function $u_q$, i.e.:
\[
\begin{split}
u_q(z)+u_p(w)&\leq zw\qquad \forall z,w\geq 0,\\
u_q(z)+u_p(w)&= zw\qquad\Leftrightarrow\qquad z,w>0,\ u_q'(z)=w,\ u_p'(w)=z.
\end{split}
\]
The proof of this is trivial.  We now prove $\leq$  in  \eqref{eq:duallplq}. This is obvious   if  $\|f\|_q=0$. Assume thus ---  for the moment --- that $\|f\|_q\in(0,+\infty)$ and notice that the above yields
\begin{equation}
\label{eq:perduallp}
\tfrac1{\|f\|_q}\int  f  g\,\d\mm\geq \int u_q(\tfrac {f}{\|f\|_q})+u_p( g)\,\d\mm= \int u_q(\tfrac{ f}{\|f\|_q})\,\d\mm+\int u_p( g)\,\d\mm=\tfrac1q+\tfrac1p=1,
\end{equation}
and thus  the desired $\leq$. For the general case we find a non-decreasing sequence $(f_n)$ increasing to $f$  so that the $\|f_n\|_q$'s are finite and converging to $+\infty$ (here the assumption of $\sigma$-finiteness matters): letting $n\uparrow\infty$ in $\|f_n\|_q\leq \int f_ng\,\d\mm\leq \int fg\,\d\mm$, by the arbitrariness of $g$ we conclude that $\leq $ holds. We pass to $\geq$ and observe that it is obvious if $\|f\|_q=+\infty$. Then assume  --- up to normalization --- that $\|f\|_q=1$, notice that in this case $g:=f^{\frac qp}$ satisfies $\|g\|_p=1$ and that $\int fg\,\d\mm=\int f^{q}\,\d\mm=1$ yielding the claim. Finally, if $\|f\|_q=0$ find a decreasing sequence $(f_n)$ converging to $f$ so that the $\|f_n\|_q$'s are positive and converging to $0$ (again, here the assumption of $\sigma$-finiteness helps). By the above discussion, for each $n$ there is $g_n$ with $\|g_n\|_p=1$ so that $\|f_n\|_q=\int f_ng_n\,\d\mm$, thus in particular $f_ng_n\in L^1(\mm)$ and letting $m\to\infty$ in the bound $\int f_ng_n\,\d\mm\geq\int f_mg_n\,\d\mm$ valid for any $m\geq n$,   by dominated convergence we conclude that $\|f_n\|_q\geq \int fg_n\,\d\mm\geq \inf_g\int fg\,\d\mm$. Letting $n\uparrow\infty$ we conclude.

The second part of the statement is now an easy consequence of the first. Notice indeed that 
\begin{equation}
\label{eq:perrevtr}
\|f_1+f_2\|_q=\inf_g\int (f_1+f_2)g\,\d\mm\geq \inf_{g_1}\int f_1g_1\,\d\mm+\inf_{g_2}\int f_2g_2\,\d\mm=\|f_1\|_q+\|f_2\|_q,
\end{equation}
where $g,g_1,g_2$ are as in \eqref{eq:duallplq}. For the equality case we observe that if $\|f_1+f_2\|_q\in(0,+\infty)$ we proved above that there is $g$ as in  \eqref{eq:duallplq} so that $\|f_1+f_2\|_q=\int (f_1+f_2)g\,\d\mm$ and \eqref{eq:perduallp}  shows that the only such $g$ is given by  $g=(\frac{f_1+f_2}{\|f_1+f_2\|_q})^{\frac qp}$. If also $\|f_1\|_q,\|f_2\|_q\in(0,+\infty)$, then the same argument and the equality in \eqref{eq:duallplq} force $g=(\frac{f_1}{\|f_1\|_q})^{\frac qp}=(\frac{f_2}{\|f_2\|_q})^{\frac qp}$ and the conclusion easily follows. If instead one of $f_1,f_2$, say $f_1$, has $q$-norm equal to 0, then equality in \eqref{eq:revlq} implies that $\int f_2^q\,\d\mm=\int(f_1+f_2)^q\,\d\mm$ and since both sides are real (from $\|f_1+f_2\|_q\in(0,+\infty)$) the two integrands are in $L^1(\mm)$ and since $f_2\leq f_1+f_2$ holds $\mm$-a.e., the equality forces $f_1=0$ $\mm$-a.e., that is the conclusion. Finally, the case $\|f_1\|_q=\|f_2\|_q=0$ cannot occur in conjunction with the equality in \eqref{eq:revlq} and $\|f_1+f_2\|_q\in(0,+\infty)$.
\end{proof}

\begin{remark}
For $q\in(0,1)$ we might have $\|f_1\|_q=\|f_2\|_q=\|f_1+f_2\|_q=+\infty$ in \eqref{eq:revlq} without any rigidity. Similarly, if $q<0$ we might have  $\|f_1\|_q=\|f_2\|_q=\|f_1+f_2\|_q=0$ without  rigidity.\hfill$\blacksquare$
\end{remark}

In order to prove antisymmetry of $\preceq$, the following lemma will be useful:

\begin{lemma}[Recurrence of self-couplings]
\label{le:perunic}
Let $\X$ be a Polish space and $\pi\in\Prob(\X^2)$ have equal first and second marginals.

Then for every distance $\sfd$ inducing the Polish topology on $\X$ and $r>0$ we have: for $\pi$-a.e.\ $(x,y)$ there is a sequence $(x_n)\subset\X$ with $x_0=x$, $x_1=y$, $(x_n,x_{n+1})\in\supp\pi$ for every $n\in\N$ and
\begin{equation}
\label{eq:dpoinc}
\liminf_{n\to\infty}\sfd(x_0,x_n)\leq r.
\end{equation}
\end{lemma}
\begin{proof} The proof is based on a variant `for couplings' of the classical Poincar\'e recurrence theorem (for which we will follow the presentation from Tao \cite{TaoPoincare}). 
Let $\mu:=({\rm Pr}_1)_*\pi=({\rm Pr}_2)_*\pi$ and let $(\pi_x)$ be the disintegration of $\pi$ w.r.t.\ the projection ${\rm Pr}_1$ on the first marginal. Define $\ppi\in \Prob(\X^\N)$ as
\[
\d\ppi(x_0,x_1,\ldots)=\cdots\d\pi_{x_2}(x_3)\d\pi_{x_1}(x_2)\d\pi_{x_0}(x_1)\d\mu(x_0).
\]
In other words, $\ppi$ is the measure given  by Kolmogorov's extension theorem associated to the measures $\pi^N\in\Prob(\X^N)$, where $\d\pi^N(x_0,\ldots,x_{N-1})=\d\pi_{x_{N-2}}(x_{N-1})\cdots\d\pi_{x_0}(x_1)\d\mu(x_0).$ Notice that the construction grants 
\begin{equation}
\label{eq:ppin}
({\rm Pr}_n,{\rm Pr}_{n+1})_*\ppi=\pi\qquad\forall n\in\N
\end{equation}
and thus
\begin{equation}
\label{eq:ppiconc}
\text{ $\ppi$ is concentrated on sequences $(x_n)$ such that $(x_n,x_{n+1})\in\supp\pi$ for every $n\in\N$.}
\end{equation}
 We claim that for every $E\subset\X^2$ Borel we have 
\begin{equation}
\label{eq:recurrconcl}
\limsup_{n\to\infty}\ppi\Big(\Big\{(x_i)\in\X^\N\ :\ (x_0,x_1),(x_n,x_{n+1})\in E\Big\}\Big)\geq \pi(E)^2.
\end{equation}
Indeed, from \eqref{eq:ppin}  it follows that $\int\sum_{i< n} 1_{({\rm Pr}_i,{\rm Pr}_{i+1})^{-1}(E)}\,\d\ppi=n\pi(E)$ and the Cauchy-Schwarz inequality yields
\[
n^2\pi(E)^2\leq\int\Big(\sum_{i< n}1_{({\rm Pr}_i,{\rm Pr}_{i+1})^{-1}(E)}\Big)^2\,\d\ppi= \sum_{i,j< n}\ppi\Big(\underbrace{({\rm Pr}_i,{\rm Pr}_{i+1})^{-1}(E)\cap({\rm Pr}_j,{\rm Pr}_{j+1})^{-1}(E)}_{=({\rm Pr}_i,{\rm Pr}_{i+1},{\rm Pr}_j,{\rm Pr}_{j+1})^{-1}(E\times E)}\Big).
\]
From the definition it also follows that $({\rm Pr}_i,{\rm Pr}_{i+1},{\rm Pr}_j,{\rm Pr}_{j+1})_*\ppi= ({\rm Pr}_0,{\rm Pr}_1,{\rm Pr}_{j-i},{\rm Pr}_{j-i+1})_*\ppi$ whenever $i<j$, thus calling $L$ the $\limsup$ in \eqref{eq:recurrconcl}, it is not hard to see that
\[
\limsup_{n\to\infty}\tfrac1{n^2} \sum_{i,j< n}\ppi\Big(({\rm Pr}_i,{\rm Pr}_{i+1})^{-1}(E)\cap({\rm Pr}_j,{\rm Pr}_{j+1})^{-1}(E)\Big)\leq L,
\]
and the  claim \eqref{eq:recurrconcl} follows. We now claim that for every $E\subset \X^2$ Borel it holds that
\begin{equation}
\label{eq:altroclaim}
\text{for $\pi$-a.e.\ $(x,y)\in E$ we have }\ppi_{(x,y)}\big(\{(x_i)\ :\ (x_n,x_{n+1})\in E\text{ for infinite many $n$}\}\big)>0,
\end{equation}
where $(\ppi_{(x,y)})$ is the disintegration of $\ppi$ w.r.t.\ the projection on the first two coordinates. Indeed, if this fails for some $E$, for some Borel set $F\subset E$ with $\pi(F)>0$ we have
\[
0=\ppi_{(x,y)}\big(\{ (x_n,x_{n+1})\in E\text{ for infinite $n$'s}\}\big)\geq \ppi_{\{(x,y)}\big( (x_n,x_{n+1})\in F\text{ for infinite many $n$}\}\big)
\]
and therefore  $\ppi(\{(x_i) : (x_n,x_{n+1})\in F\text{ for infinite $n$'s}\})=0$, which contradicts \eqref{eq:recurrconcl}.

Hence \eqref{eq:altroclaim} holds and applying it  with $E:=B\times \X$, where $B\subset\X$ is a ball of diameter $<r$ and recalling \eqref{eq:ppiconc} and that $\ppi_{(x,y)}$ is --- for $\pi$-a.e.\ $(x,y)$ --- concentrated on sequences with $x_0=x$ and $x_1=y$, we see that for $\pi$-a.e.\ $(x,y)$ with $x\in B$ there is $(x_n)$ with $(x_n,x_{n+1})\in\supp\pi$ for every $n\in\N$ and $x_n\in B$ for infinite $n$'s. In particular $\liminf_n\sfd(x,x_n)\leq r$ and since $\X$ can be covered by a countable collection of such balls $B$, the conclusion follows.
\end{proof}
Notice that if the coupling $\pi$ in the statement is induced by  a map, then the claim \eqref{eq:recurrconcl} reduces to the classical recurrence theorem by picking $E:=F\times \X$ for arbitrary $F\subset\X$ Borel. Also, the measure $\ppi$ built in the statement reflects the idea that the coupling $\pi$ can be seen as the map  sending $x$ to the fiber $\pi_x$ of the disintegration of $\pi$ w.r.t.\ the projection onto the first coordinate, so that `to iterate $\pi$ means to iterate this assignment', in a sense.

\medskip

With this said, we have the   following general statement; 
c.f.~\cite{McC:23} \cite[Theorem B.2]{BraunMcCann:2023}.

\begin{proposition}[Probability measures inherit Polish structure from metric spacetime]
\label{prop:ellqst}
Let $(\M,\uptau,\ell)$ be a Polish metric spacetime so that $\ell$ is Borel and $0\neq q\leq 1$.  

Then $(\Prob(\M),\ell_q)$ equipped with the narrow topology is a Polish metric spacetime as well.
\end{proposition}

\begin{proof}
The reverse triangle inequality is known and based on a standard gluing argument together with the reverse triangle inequality \eqref{eq:revlq} in $L^q$ for $0\neq q<1$: see e.g.\  \cite[Thm.~13]{EM:17} or \cite[Prop.~2.5]{CM:20} and notice that  the  proofs  do not depend on global hyperbolicity and that, even though  are stated in the range $\QQ\in (0,1]$, remain unchanged for negative $\QQ$ (see also \Cref{le:lplq} above). We now claim that
\begin{equation}
\label{eq:diag}
\text{ for any $\mu\in\Prob(\M)$ the plan }\quad ({\rm id},{\rm id})_*\mu\quad\text{is the only element of }\quad \Pi_{\leq}(\mu,\mu).
\end{equation}
The fact that  $({\rm id},{\rm id})_*\mu\in  \Pi_{\leq}(\mu,\mu)$ is obvious. To show that it is the only element   we argue by contradiction, so we assume that $\pi\in \Pi_{\leq}(\mu,\mu)$ is not concentrated on the diagonal. {Let $r>0$ and apply  \Cref{le:perunic}  to find  a sequence $(x_n)\subset\M$ with $x_0\neq x_1$, $(x_n,x_{n+1})\in\supp\pi\subset\{\ell\geq 0\}$ (and thus $x_n\leq x_{n+1}$) for every $n\in\N$ satisfying \eqref{eq:dpoinc}. In particular there is $x^r=x_{n(r)}$ for a suitable $n(r)$ with $x_0\leq x_1\leq x^r$ and $x^r\in B_r(x_0)$. Letting $r\downarrow0$ and using the closure of $\{\ell\geq 0\}$ we deduce that $x_0\leq x_1\leq x_0$ and} conclude that $x_0\leq x_1\leq x_0$, i.e.\ that $x_0=x_1$, contradicting the assumption.

Thus \eqref{eq:diag} holds, and therefore $\ell_q(\mu,\mu)=0$ for any $\mu\in\Prob(\M)$. To conclude that $(\Prob(\M),\ell_q)$ is a metric spacetime it remains to show that $\ell_q(\mu,\nu)=\ell_q(\nu,\mu)=0$ implies $\mu=\nu$. To see this, let $\pi_1\in\Pi_\leq(\mu,\nu)$, $\pi_2\in\Pi_\leq(\nu,\mu)$ and $\pi\in\Prob(\M^3)$ a gluing of these along the common $\nu$-marginal. Since $\pi_1,\pi_2$ are causal, the construction grants that $x\leq y\leq z$ for $\pi$-a.e.\ $(x,y,z)$ and since $({\rm Pr}_1,{\rm Pr}_3)_*\pi\in\Pi_\leq(\mu,\mu)$, we deduce from \eqref{eq:diag} that for $\pi$-a.e.\ $(x,y,z)$ we have $x=z$, and thus that $x=y=z$ for $\pi$-a.e.\ $(x,y,z)$. In particular  $\mu=\nu$, as desired.  

We come to the topological aspects. The fact that the narrow topology on $\Prob(\M)$ is Polish is well known (see e.g.\ \cite[Remark 7.1.7]{AGS:08}), so to conclude we need only to show that $\{(\mu,\nu):\mu\preceq\nu\}$ is closed. This, however, is obvious: if $(\mu_n),(\nu_n)$ narrowly converge to $\mu,\nu$ respectively and $\pi_n\in\Pi_\leq(\mu_n,\nu_n)$ for every $n$, then $(\pi_n)$ is tight and any narrow limit is, by the closure of $\{\ell\geq 0\}$, an element of $\Pi_\leq(\mu,\nu)$. 
 \end{proof}
 We now want to investigate how forward completeness of $\M$ is inherited by  $\Prob(\M)$. To this aim, the following simple lemma will be useful:
 \begin{lemma}[`Limit' as Borel map]\label{le:limborel} Let $\X$ be a Polish space, $\hat\X:=\Pi_{n\in\N}\X$ with the (Polish) product topology and consider the subset $\hat\X_{\sf L}\subset\hat\X$ of sequences admitting a limit in $\X$.

Then $\hat\X_{\sf L}$ is a Borel subset of $\hat\X$ and the map  sending a sequence in $\hat\X_{\sf L}$ to its limit in $\X$ is Borel.
\end{lemma}
\begin{proof} Let $(y_k)\subset\X$ be countable and dense and, for $r>0$, consider the set $\hat\X_{r,k}\subset\hat\X$ defined as
\[
\hat\X_{r,k}:=\bigcup_{n\in\N}\big\{(x_i)\in\hat\X\ :\ x_i\in B_r(y_k)\ \forall i\geq n\big\}=\bigcup_{n\in\N}\bigcap_{i\geq n}{\rm Pr}_i^{-1}\big(B_r(y_k)\big)
\]
that is clearly Borel. Then so is $\hat \X_r:=\cup_k\hat\X_{r,k}$ and so is the map ${\sf L}_r:\hat \X_r\to\X$ sending $(x_i)$ to $y_k$, where $k\in\N$ is the least integer such that $(x_i)\in \hat\X_{r,k}$. To conclude, notice that for any  $r_j\downarrow 0$ we have $\hat\X_{\sf L}=\cap_j\hat\X_{r_j}$ and that for  $(x_i)\in \hat\X_{\sf L}$ the limit in $j$ of ${\sf L}_{r_j}((x_i))$ exists and is the limit of $(x_i)$.
\end{proof}

We then have:

\begin{proposition}[Probability measures inherit forward structure from metric spacetime]\label{prop:pmforward} Let $(\M,\uptau,\ell)$ be a forward metric spacetime so that $\ell$ is Borel and $0\neq q<1$. 

Then $(\Prob(\M),\ell_q)$ equipped with the narrow topology is a forward metric spacetime as well.
\end{proposition}
\begin{proof} By \Cref{prop:ellqst} we need only to check forward completeness, thus let  $(\mu_n)\subset\Prob(\M)$ be such that $\mu_n\preceq\mu_{n+1}\preceq\bar \mu$ for every $n\in\N$ and some $\bar\mu\in\Prob(\M)$.

Let $\hat \M:=\Pi_{n\in\N\cup\{\infty\}}\M$ be equipped with the product (hence Polish) topology and $\mathcal K\subset\Prob(\hat \M)$ be the collection of measures $\alpha$
 such that $({\rm Pr}_n)_*\alpha=\mu_n$ for any $n\in\N$ and $({\rm Pr}_\infty)_*\alpha=\bar\mu$. We claim that $\mathcal K$ is narrowly compact and since it clearly is narrowly closed (by the continuity of the projections), to see this it suffices to prove that it is tight. Thus fix $\eps>0$ and find compact sets $K_n\subset \M$, $n\in\N\cup\{\infty\}$, such that $\mu_n(K_n^c)<\frac\eps{2^n}$ for every $n\in\N$ and $\bar\mu(K_\infty^c)<\eps$. Then the set $K:=\Pi_{n\in\N\cup\{\infty\}}K_n$ is compact and it is easy to see that $\alpha(K^c)<2\eps$ for every $\alpha\in \mathcal K$, thus proving the claim.

Now for every $n\in\N$ let $\pi_n\in\Pi_\leq (\mu_n,\mu_{n+1})$ and $\tilde\pi_n\in\Pi_\leq(\mu_n,\bar\mu)$ (the existence of such plans is ensured by our assumptions). Using a gluing argument and then Kolmogorov's theorem we can find, for every $n\in\N$, a measure $\alpha_n\in \mathcal K$ such that 
\begin{equation}
\label{eq:defalphan}
\begin{split}
({\rm Pr}_i,{\rm Pr}_{i+1})_*\alpha_n&=\pi_i\qquad\forall i\in\N,\\
({\rm Pr}_n,{\rm Pr}_{\infty})_*\alpha_n&=\tilde \pi_n.
\end{split}
\end{equation}
By the compactness of $ \mathcal K$, possibly passing to  a non-relabelled subsequence we can assume that $(\alpha_n)$ narrowly converges to a limit $\alpha\in\mathcal K$. We claim that $\supp(\alpha)$ is contained in the set of sequences ${\bf x}=(\ldots,x_i,\ldots,x_\infty)\in\hat \M$ such that $x_{1}\leq\ldots\leq x_i\leq\ldots\leq x_\infty$. To see this, let ${\bf x}\in\supp(\alpha)$ and  find ${\bf x}^n\in\supp(\alpha_n)$ converging to ${\bf x}$ in $\hat \M$ (i.e.\ $x^n_i\to x_i$ as $n\to\infty$ for every $i\in \N\cup\{\infty\}$). Then for every $i\in\N$ and every $n\geq i$, by the defining properties \eqref{eq:defalphan} we have $x^n_i\leq x^n_{i+1}$ and $x_i^n\leq x^n_\infty$. Passing to the limit in these (recall that $\{\ell\geq 0\}$ is closed), we get our claim.

It follows by forward completeness that for $\alpha$-a.e.\ ${\bf x}$ there exists the limit  ${\sf lim}({\bf x})$ of $i\mapsto x_i$ and since the partially defined limit map ${\sf lim}:\hat \M\to \M$ is Borel (by  \Cref{le:limborel} above), we can define $\mu_\infty:={\sf lim}_*\alpha$. We check that $\mu_\infty$ is the desired narrow limit of $(\mu_n)$: let $\varphi\in C_b(\M)$ and notice that
\[
\lim_{n\to\infty}\int\varphi\,\d\mu_n=\lim_{n\to\infty}\int \varphi(x_n)\,\d\alpha({\bf x})=\int \varphi({\sf lim}({\bf x}))\,\d\alpha({\bf x})=\int \varphi\,\d\mu_\infty,
\]
having used the dominated convergence theorem in the second identity.
\end{proof}

For later use we also record the following simple fact:
\begin{lemma}[Upper semicontinuity of $\ell_q$ and attainment on emeralds]\label{le:USCRE}
Let $(\M,\uptau,\ell)$ be a Polish metric spacetime such that $\ell$ is upper semicontinuous, does not take the value $+\infty$ and let $0\neq q<1$.

Then for every $\mu,\nu\in\pem$ with $\mu\preceq\nu$ an $\ell_q$-optimal coupling exists. Moreover, if   $E\subset \M$ is an emerald and $(\mu_n),(\nu_n)$ sequences of measures concentrated in $E$ narrowly converging to $\mu,\nu$ respectively, then $\mu,\nu$ are also concentrated on $E$ and
\[
\limsup_{n\to\infty}\ell_q(\mu_n,\nu_n)\leq\ell_q(\mu,\nu).
\]
\end{lemma}
\begin{proof}  We start from the second claim. The closedness of $E$ established in \Cref{L:usc ell admits emerald bounds}  ensures that $\mu,\nu$ have support in $E$ and that $\sup_n\ell_q(\mu_n,\nu_n)<+\infty$. Now let  $\pi_n\in \Pi_{\leq}(\mu_n,\nu_n)$ be  such that $\int u_q\circ \ell \,\d\pi_n\geq u_q(\ell_q(\mu_n,\nu_n))-\tfrac1n$ and notice that the tightness of $(\mu_n),(\nu_n)$ yields that of $(\pi_n)$, so that passing to a non-relabelled subsequence we can assume that the sequence narrowly converges to some $\pi\in\Pi_\leq(\mu,\nu)$. Since   $\ell$ is upper semicontinuous and bounded from above on $E^2$, the narrow convergence yields
\[
\limsup_{n\to\infty}\int u_q\circ \ell \,\d\pi_n\leq \int u_q\circ \ell \,\d\pi\leq u_q(\ell_q(\mu,\nu)),
\]
and the conclusion follows. For the first claim just apply the above with $\mu_n=\mu$ and $\nu_n=\nu$ for every $n\in\N$.
\end{proof}

\subsection{Causal speed}
\label{ss:causal speed}
Let us fix a metric spacetime $(\M,\ell)$. A map $\gamma:[0,1]\to \M$ is called \emph{causal path} if it is monotone, i.e.\  $t,s\in[0,1]$ with $t\leq s$ imply $\gamma_t\leq \gamma_s$. Note that the time-orientation is inherently given by the relation $\leq$, so that all causal paths are future directed. We do not assume causal paths are continuous (at this stage $\M$ does not even have a topology, but even when topology will be added, continuity will not be imposed).

The goal of this section is to discuss  in which sense a causal path  admits a \emph{causal speed}.  Heuristically, in analogy to the  speed of absolutely continuous curves in metric geometry \cite{Ambrosio90}, its causal speed $\vert \dot\gamma_t\vert$ at an  appropriate $t\in[0,1]$ should be given as $\lim_{h\downarrow 0} \ell(\gamma_t,\gamma_{t+h})/h$. Making this idea precise requires a generalization of the well-known fact that the distributional derivative of a monotone function $\smash{F:\R \to \bar{\R}}$ is a non-negative Radon measure on the interior of its interval of finiteness.  This is also related to the  classical result of Lebesgue asserting that monotone functions are differentiable $\Leb^1$-a.e.
    
\Cref{Pr:speed} and its \Cref{Cor:speed} establish similar differentiability results 
for time separation functions
$T\colon [0,1]\times [0,1] \to \{-\infty\} \cup [0,+\infty]$ near the diagonal of the unit square.  Choosing $T(s,t) :=\ell(\gamma_s,\gamma_t)$ allows us to associate the desired causal speed and --- for any suitable choice of Lagrangian --- a global action with each causal path $\gamma$ in $\M$.   Choosing $T(s,t):=F(t)-F(s)$   recovers the distributional and a.e.\ derivatives of a monotone function $F$ as above.

Let us start by recalling a few standard facts.

\begin{definition}[Vitali cover]
    A Vitali cover of a set $E\subset\R$ is a collection $\mathscr V$  of closed non-trivial intervals such that for every $x\in E$ and every $\eps>0$ there is $I\in \mathscr V$  containing $x$ \textnormal{(}so in particular the collection is a cover\textnormal{)} with $\Leb^1(I) \leq \eps$.
\end{definition}

For a proof of the following well known theorem we refer to  Bogachev \cite[Thm.\ 5.5.1]{Bog:07a}.

\begin{theorem}[Vitali's covering lemma]
    Let $E\subset\R$ be Borel with $\Leb^1(E)<\infty$, $\eps>0$, and $\mathscr V$ a Vitali cover of $E$. Then there  are finitely many intervals $I_n\in\mathscr V$ that are disjoint and so that $\smash{\Leb^1(E\setminus \bigcup_nI_n)<\eps}$.
\end{theorem}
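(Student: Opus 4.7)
The plan is to follow the classical strategy: combine outer regularity with a greedy selection of intervals and a fivefold-enlargement trick to extract a finite disjoint subfamily that covers all of $E$ up to a set of measure less than $\eps$.

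First, I would use outer regularity of $\Leb^1$ to fix an open set $U\supseteq E$ with $\Leb^1(U)<+\infty$; this is possible because $E$ is Borel of finite measure. The subfamily $\mathscr V'\subseteq\mathscr V$ of intervals contained in $U$ remains a Vitali cover of $E$, because for each $x\in E$ the Vitali property produces intervals from $\mathscr V$ around $x$ of arbitrarily small length, so eventually contained in the open set $U$. All subsequent selections will be made inside $\mathscr V'$, so that disjoint choices satisfy $\sum\Leb^1(I_n)\leq \Leb^1(U)<+\infty$ automatically.

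Next I would run a greedy selection. Set $d_0:=\sup\{\Leb^1(I):I\in\mathscr V'\}\leq\Leb^1(U)$ and pick $I_1\in\mathscr V'$ with $\Leb^1(I_1)>d_0/2$. Inductively, given disjoint $I_1,\dots,I_n$, let
\[
d_n:=\sup\bigl\{\Leb^1(I):I\in\mathscr V',\ I\cap (I_1\cup\cdots\cup I_n)=\emptyset\bigr\},
\]
with the convention $\sup\emptyset=0$. If $d_n>0$, pick $I_{n+1}$ disjoint from the earlier intervals with $\Leb^1(I_{n+1})>d_n/2$; if $d_n=0$, stop. If the procedure stops at some finite $N$, then any $x\in E\setminus\bigcup_{k=1}^N I_k$ is at positive distance from the closed set $\bigcup_{k=1}^N I_k$, so the Vitali property yields $I\in\mathscr V'$ containing $x$ and disjoint from $\bigcup_{k=1}^N I_k$, which contradicts $d_N=0$; hence $E\subseteq\bigcup_{k=1}^N I_k$ and the conclusion is trivial. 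Otherwise we get a disjoint infinite sequence with $\sum_n\Leb^1(I_n)<+\infty$, so $\Leb^1(I_n)\to 0$ and in particular $d_n\to 0$ since $d_n<2\Leb^1(I_{n+1})$.

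Finally, I would pick $N$ with $5\sum_{n>N}\Leb^1(I_n)<\eps$ and claim that $E\setminus\bigcup_{k=1}^N I_k\subseteq\bigcup_{n>N}J_n$, where $J_n$ is the closed interval concentric with $I_n$ of length $5\Leb^1(I_n)$. Granted the claim, countable subadditivity yields $\Leb^1\bigl(E\setminus\bigcup_{k=1}^N I_k\bigr)\leq 5\sum_{n>N}\Leb^1(I_n)<\eps$. The main (and only real) obstacle is verifying the claim: for $x\in E\setminus\bigcup_{k=1}^N I_k$, produce $I\in\mathscr V'$ containing $x$ and disjoint from $I_1,\dots,I_N$ exactly as in the termination argument; since $\Leb^1(I)>0$ is fixed while $d_n\to 0$, there must be a smallest $m$ with $I\cap I_m\neq\emptyset$, and necessarily $m>N$. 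Because $I$ was an admissible competitor in the definition of $d_{m-1}$, one has $\Leb^1(I)\leq d_{m-1}<2\Leb^1(I_m)$, and the elementary one-dimensional estimate that any interval intersecting $I_m$ with length less than $2\Leb^1(I_m)$ lies inside the fivefold concentric enlargement $J_m$ then gives $x\in I\subseteq J_m$, completing the proof.
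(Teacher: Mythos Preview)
Your proof is correct and is the standard textbook argument for Vitali's covering lemma: outer regularity to work inside a set of finite measure, greedy selection by at least half the supremal length, and the $5r$-enlargement to absorb the remainder. The paper does not give its own proof of this statement; it simply cites \cite[Thm.~5.5.1]{Bog:07a}, so there is nothing to compare beyond noting that your argument is essentially the one found in such references.
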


The following simple lemma will be useful in defining the causal speed:
\begin{lemma}[Non-oscillation criterion]
\label{le:simple}
Let $f,\omega\colon (0,1)\to(0,+\infty)$ be  such that   $\lim_{h\downarrow0}\omega(h)=0$ and  so that for every $n\in\N$, every finite sequence $(\alpha_i) \subset [0,1]$ with $\sum \alpha_i=1$, and every $h\in (0,1)$ we have
\[
\sum^n_{i=0} \alpha_if(\alpha_ih)\leq f(h)+\omega(h).
\]
Then the limit $\lim_{h\downarrow 0}f(h)$ exists.
\end{lemma}

\begin{proof} Let $h,\alpha \in (0,1]$
and $n$ be the integer part of $1/\alpha$. Our assumption yields
\[
n\alpha f(\alpha h)\leq n\alpha f(\alpha h)+(1-n\alpha)f((1-n\alpha) h)\leq f(h)+\omega(h)
\]
for every $h \in (0,1)$. Keeping $h$ fixed and letting $\alpha \downarrow 0$ (noting that $n\alpha\to 1$ in this procedure) we obtain
\begin{align*}
\limsup_{h' \downarrow0} f(h')=\limsup_{\alpha \downarrow0} f(\alpha h)\leq f(h)+\omega(h).
\end{align*}
Letting $h\downarrow0$ we conclude  $
\limsup_{h' \downarrow 0}f(h')\leq \liminf_{h\downarrow 0}f(h)$,
which is the claim.
\end{proof}

In the sequel, consider the upper left halfsquare
\begin{align*}
H := \{(s,t)\in [0,1]^2 : s\leq t\}.
\end{align*}
\begin{theorem}[Differentiation of the reverse triangle inequality]
\label{Pr:speed} Let $T\colon H\rightarrow [0,+\infty)$ be a function satisfying
\begin{equation}
\label{eq:revtr}
T(r,s)\leq T(r,t) -T(s,t)
\end{equation} 
for all $0 \le r\leq s\leq t \le 1$. Then the following hold.
\begin{enumerate}[label=\textnormal{(\roman*)}]
\item There exists a unique maximal element $\mu$ among all Borel measures $\nu$ on $\R$ which satisfy
\begin{equation}
\label{eq:permaxmu}
\nu((a,b))\leq  {T}(a,b),\qquad\forall a,b\in\R,\ a<b,
\end{equation}
where here we are extending $T$ to the whole halfplane $\{s\leq t\}\subset \R^2$ by putting
\begin{equation}
\label{eq:extT}
T(s,t):=T\big(0\vee s\wedge 1,0\vee t\wedge 1\big ).
\end{equation}
(notice that this extension still satisfies \eqref{eq:revtr}).
\item The measure $\mu$ is  the weak limit as $h\downarrow 0$ of the measures $\mu_h$ given by
\begin{align*}
\rmd\mu_h(t) := \frac{{T}(t,t+h)}{h}\d\Leb^1(t).
\end{align*} 
Moreover, writing $\smash{\mu=\rho\,\Leb^1+\mu^\perp}$ for the Lebesgue decomposition of $\mu$ with respect to $\Leb^1$, we have that $\Leb^1$-a.e.~$t \in \R$ satisfies
\begin{equation}\label{Eq:rhot ptw cvg}
    \rho(t)=\lim_{n\to +\infty} \frac{{T}(r_n,s_n)}{s_n-r_n}
\end{equation}
for all sequences $(r_n),(s_n)\subset[0,1]$ so that $r_n\uparrow t$, $s_n\downarrow t$  and $r_n<s_n$ for every $n\in\N$.
\item For every $0 \ne \QQ<1$, write  $u_q(z):=\tfrac{z^q}{q}$ (recall also the conventions in Section \ref{Sub:InfConv} for $u_q(0)$) and for a partition $P=\{0=t_0 < t_1 < \dots < t_n=1\}$ of $[0,1]$ define the `discrete $q$-action' $A_q(P)$ as
\[
A_q(P):=\sum_{i=0}^{n-1}(t_{i+1}-t_i)\,u_q\Big(\frac{T(t_i,t_{i+1})}{t_{i+1}-t_i}\Big).
\]
Then
\begin{equation}
\label{eq:monpart}
\text{$P$ finer than $Q$}\qquad\Rightarrow\qquad A_q(P)\leq A_q(Q)
\end{equation}
and
\begin{equation}
\label{eq:uqinf}
\int_0^1u_q(\rho)\,\d\Leb^1= \inf A_q(P),
\end{equation}
where the infimum is taken among all partitions of $[0,1]$.
\item For a partition $P=\{0=t_0 < t_1 < \dots < t_n=1\}$ we set $|P|:=\max|t_{i+1}-t_i|$. Then for any sequence $(P_k)$ of partitions such that $|P_k|\to0$ we have
\begin{equation}
\label{eq:limpart}
\int_0^1u_q(\rho)\,\d\Leb^1=\lim_{k\to\infty} A_q(P_k).
\end{equation}
\end{enumerate}
\end{theorem}

\begin{proof} 
(ii.a) Notice that the measures $\mu_h$ are non-negative, because $T$ is,  and that for any $[a,b]\subset\R$ we have
\begin{equation}
\label{eq:massmuh}
\begin{split}
\mu_h([a,b])&= \tfrac1h\int_a^b {T}(t,t+h)\,\d t= \tfrac1h\int_a^{a+h} \sum_{i=0}^{n}T(t+ih,t+(i+1)h)\,\d t\stackrel{\eqref{eq:revtr} }\leq T(a,b+2h),
\end{split}
\end{equation}
where here $n$ is the integer part of $\tfrac{b-a}{h}$. 
Now  the existence of the weak limit   follows the idea from the ``subpartition lemma'' of Korevaar--Schoen \cite[Lem.~1.3.1]{KorevaarSchoen93}.  Fix a non-negative function $\varphi\in C_c(\R)$ and denote its modulus of continuity by $\omega\colon [0,+\infty)\to[0,+\infty)$. Pick $h>0$ and a finite sequence $(\alpha_i)\subset [0,1]$ with $\sum \alpha_i=1$. Set $\smash{A_i:=\sum_{j\leq i}\alpha_j}$ and notice that
 \[
\begin{split}
\sum_{i=1}^n\alpha_i\int\varphi\,\d\mu_{\alpha_ih}&=\frac1h\sum_{i=1}^n\int\varphi(t)\, {T}(t,t+\alpha_ih)\,\d t\\
&=\frac1h\int\,{{\sum_{i=1}^n \varphi(t)T\big(t+A_{i-1} h,t+ A_{i} h\big)}}-\big(\varphi(t -  A_{i-1} h)-\varphi(t)\big)\, T(t,t+\alpha_ih)\,\d t\\
\text{(by  \eqref{eq:revtr})}\qquad&\leq \int \varphi\,\d\mu_h+\frac1h\sum_{i=1}^n\omega(h)\int T(t,t+\alpha_ih)\,\d t\\
&=\int \varphi\,\d\mu_h+\omega(h)\sum_{i=1}^n\alpha_i \,\mu_{\alpha_ih}(\R)\leq \int \varphi\,\d\mu_h+\omega(h)\,T(0,1),
\end{split}
\] 
having used \eqref{eq:massmuh} and \eqref{eq:extT} in the last step.  Applying \Cref{le:simple} to $\smash{f(h):=\int\varphi\,\d\mu_h}$ we infer that $L(\varphi):=\smash{\lim_{h\downarrow 0}\int\varphi\d\mu_h}$ exists. It is then clear that this limit also exists for $\varphi\in C_c(\R)$ possibly negative and that the resulting functional $L:C_c(\R)\to\R$ is non-negative and finite. It follows by the Riesz-Markov-Kakutani theorem that there is a (unique) non-negative Radon measure $\mu$ representing $L$, i.e.\ such that $\int\varphi\,\d\mu=\lim_n\int \varphi\,\d\mu_n$.

We now come to the verification of all claims for this measure $\mu$.

(i)  Let $\varepsilon > 0$ satisfy $a<b-\varepsilon$. Then   lower semicontinuity on open sets under narrow convergence of finite measures and the estimate \eqref{eq:massmuh} imply
\begin{align*}
\mu((a,b-\varepsilon)) \leq \liminf_{h\downarrow0} \mu_{h}((a,b-\varepsilon)) \leq \liminf_{h\downarrow0}  \mu_{h}((a,b-2h)) \leq {T}(a,b).
\end{align*}
Letting $\eps\downarrow0$ we see that $\mu$ satisfies \eqref{eq:permaxmu}.

To show maximality of $\mu$, assume $\nu$ is any Borel measure on $\R$ satisfying the claimed inequality.  Since formulas \eqref{eq:extT} and \eqref{eq:revtr} grant that $T(a,b)=0$ when $a<b$ are both negative or both bigger than 1, we see that $\nu$ must be be concentrated on $[0,1]$, thus for $h>0$ we can define $\nu_h:= \nu * 1_{[0,h]}/h$. The assumption  \eqref{eq:permaxmu} implies $\smash{\nu_h \leq \mu_{h}}$ for every $n$, and sending $h\downarrow0$ yields $\nu \leq \mu$.

(ii.b) We now prove  the identity \eqref{Eq:rhot ptw cvg}. Given $t\in[0,1]$ we denote by $W(t)$ the collection of closed non-trivial intervals in $\R$   that contain $t$. Define $I,S:[0,1]\to[0,+\infty]$ by
\[
I(t):=\lim_{\eps\to 0}\inf_{\substack{[r,s]\in W(t),\\ |s-r|<\eps}}\frac{{T}(r,s)}{s-r},\qquad\qquad\qquad 
S(t):=\lim_{\eps\to 0}\sup_{\substack{[r,s]\in W(t),\\ |s-r|<\eps}}\frac{{T}(r,s)}{s-r}.
\]
The limits exist because the relevant quantities are monotone in $\eps$; for the same reason, from the Lebesgue measurability of $T$ (that follows from coordinatewise monotonicity --- see \cite[Thm.\ 4]{ChabCrou09})  we get Lebesgue measurability of $I$ and $S$. We claim that 
\begin{enumerate}
\item $\Leb^1$-a.e.~$t\in [0,1]$ satisfies
\begin{equation*}
    I(t)=S(t)<\infty,
\end{equation*}
\item the density $\rho$ is equal to $I$ (and hence $S$) $\Leb^1$-a.e.\ on $[0,1]$.
\end{enumerate}
These will readily imply the claimed identity.

(ii.b.1) To prove the equality of $I$ and $S$, it is sufficient to prove that the set $\{t\in[0,1]:I(t)<S(t)\}$ is $\Leb^1$-negligible. For $\alpha<\beta$ rational let $E:=\{t\in[0,1]:I(t)<\alpha<\beta<S(t)\}$. By the arbitrariness of $\alpha$ and $\beta$,  to prove the claim it suffices to show that $\mathscr{L}^1(E)=0$.

Fix $\eps>0$ and find an open superset $U\supset E$ with $\mathscr{L}^1(U\setminus E)<\eps$ (notice that $I=S=0$ outside $[0,1]$, so $E\subset[0,1]$ has finite measure). Let $\mathscr V$ be the collection of closed nontrivial intervals $[r,s]$ contained in $U$ such that $T(r,s)<\alpha(s-r)$. By definition of $E$, the family $\mathscr V$ is a Vitali covering of $E$. We thus apply Vitali's covering lemma to find a finite disjoint family $[r_k,s_k]$ such that $\mathscr{L}^1(E\setminus\bigcup_{k}[r_k,s_k])<\eps$. We have
\[
\sum_kT(r_k,s_k)\leq \alpha\sum_k (s_k-r_k)\leq \alpha\,\Leb^1(U)\leq \alpha(\mathscr{L}^1(E)+\eps).
\]
Let now $\smash{U':=\bigcup_k(r_k,s_k)}$ and $F:=E\cap U'$ and notice that  $\mathscr{L}^1(F)>\mathscr{L}^1(E)-\eps$. Also, let $\mathscr V'$ be the collection of closed non-trivial intervals $[a,b]$ contained in $U'$ such that  $T(a,b)>\beta(b-a)$. Again, by definition of $E$ we have that $\mathscr V'$ is a Vitali covering of $F$, hence we can find a finite disjoint subfamily $[a_j,b_j]$ such that $\mathscr{L}^1(F\setminus\bigcup_j[a_j,b_j])<\eps$. Therefore,
\begin{align*}
\sum_jT(a_j,b_j)>\beta \sum_j(b_j-a_j)>\beta(\mathscr{L}^1(F)-\eps)>\beta(\mathscr{L}^1(E)-2\eps).
\end{align*} 
Since each of the intervals $[a_j,b_j]$ is included in some of the $[r_k,s_k]$, from \eqref{eq:revtr} and the fact that $T$ is non-negative, non-increasing in the first entry and non-decreasing in the second one (from \eqref{eq:revtr}) we deduce 
\[
\begin{split}
\alpha(\mathscr{L}^1(E)+\eps)\geq \sum_kT(r_k,s_k)\geq\sum_jT(a_j,b_j)>\beta\sum_j(b_j-a_j)>\beta(\mathscr{L}^1(E)-2\eps)
\end{split}
\] 
and the arbitrariness of $\eps$ and the choice $\alpha<\beta$ imply $\Leb^1(E)=0$, as desired.

Next, we claim $\{t\in[0,1]:S(t)=+\infty\}$ is $\Leb^1$-negligible. 

For $\beta$ rational, let $E$ be the set of $t$'s with  $S(t)>\beta$, and let $\eps>0$.  Then, arguing as before, we can find finite disjoint intervals $[a_j,b_j]$ such that $\mathscr{L}^1(E\setminus\bigcup_j[a_j,b_j])<\eps$ and $T(a_j,b_j)>\beta(b_j-a_j)$ for every $j$. It is not restrictive to assume that these intervals are all contained in $[0,1]$. Then adding up, by \eqref{eq:revtr} and the fact that $T$ is non-negative we get
\[
\infty>T(0,1)\geq\sum_jT(a_j,b_j)>\beta\sum_j(b_j-a_j)>\beta(\mathscr{L}^1(E)-\eps).
\]
The arbitrariness of $\eps$ then yields $\mathscr{L}^1(E)<T(0,1)/\beta$, and the claim follows.

(ii.b.2) To conclude we need to prove that $\rho$ is equal to $I$, thus to $S$, $\Leb^1$-a.e.\ on $[0,1]$. To see this recall that by classical results about differentiation of measures (cf.\ e.g.\ \cite[Thm.\ 5.8.8]{Bog:07a} and notice that it is not a coincidence that these are proved along the same lines we just used) we have $h^{-1}\,\mu([t,t+h])\to \rho(t)$ as $h\downarrow0$ for $\Leb^1$-a.e.~$t\in[0,1]$, thus by the defining property (i) of $\mu$ we deduce $\Leb^1$-a.e.~$t\in[0,1]$ satisfies
\[
\rho(t)\leq I(t).
\]
On the other hand, let $C\subset[0,1]$ be compact with $\mu^\perp(C)=0$. Then the weak convergence of $(\mu_h)$ to $\mu$ as $h\downarrow 0$ provided by the proof of (ii.a) and the fact that the set $C$ is closed give
\[
\int_C\rho \,\d\Leb^1 =\mu(C)\geq \lims_{h\downarrow 0}\mu_h(C),
\]
while Fatou's lemma yields
\[
\limi_{h\downarrow 0}\mu_h(C) \geq\int_C\limi_{h\downarrow 0}\frac{T(t,t+h)}h\, \d t=\int_C I\,\d\Leb^1\geq \int_C\rho\,\d\Leb^1.
\]
Thus the inequalities must be equalities, and therefore $\rho=I$ almost everywhere on $C$. By the arbitrariness of $C$ (and inner regularity of the Lebesgue measure) we conclude that $\rho=I$  $\Leb^1$-a.e.~on $[0,1]$, as desired.

(iii)-(iv)   Since $u_q$ is concave and  non-decreasing, for any $t<r<s$ we have
\[
(r-t)u_q\big(\tfrac{T(t,r)}{r-t}\big)+(s-r)u_q\big(\tfrac{T(r,s)}{s-r}\big)\leq (s-t)u_q\big(\tfrac{T(t,r)}{r-t}+\tfrac{T(r,s)}{s-r}\big)\leq (s-t)u_q\big(\tfrac{T(t,s)}{s-t}\big)
\]
which is (equivalent to) \eqref{eq:monpart}. Since $u_q$ is also upper semicontinuous, for  $a<b$ we have
\[
\tfrac1{b-a} \int_a^b -u_q(\rho)\,\d\Leb^1\geq   -u_q\Big(\frac{1}{b-a}\int_a^b\rho\,\d\Leb^1\Big)\geq  -u_q\Big(\frac{\mu((a,b))}{b-a}\Big)\geq -u_q\Big(\frac{T(a,b)}{b-a}\Big),
\]
having used Jensen's inequality. From this,  ``$\leq$" in our claim \eqref{eq:uqinf} easily follows. 

Let now $(P_k)$ be a sequence of partitions as in (iv), i.e.\ such that $|P_k|\to0$, say $P_k=\{0=t_{k,0}<\ldots<t_{k,n_k}=1\}$. Define the `piecewise constant'  non-negative Radon measures $\nu_k$ on $[0,1]$ as $\nu_k=\eta_k\mathcal L^1$, where $\eta_k(x)=\tfrac{ T(t_{k,i},t_{k,i+1})}{t_{k,i+1}-t_{k,i}}$ if $x\in [t_{k,i},t_{k,i+1})$. Notice that $\nu_k([0,1])=\sum_iT(t_{k,i},t_{k,i+1})\leq T(0,1)$, so passing to a non-relabelled subsequence we can assume that $(\nu_k)$ has a weak limit $\nu$. We claim that $\nu=\mu$, and since this result does not depend on the particular subsequence chosen, this proves  weak convergence of the full original sequence of measures to $\mu$.

Let $[a',b']\subset(a,b)\subset\R$ and then, for every $k\in\N$, let $[a_k,b_k]$ be the smallest interval containing $[a',b']$ with $a_k,b_k\in P_k$. Then we have
\[
\nu_k([a',b'])\leq \nu_k({[a_k,b_k]})=\sum_{i=i_1}^{i_2} T(t_{k,i},t_{k,i+1})\stackrel{\eqref{eq:revtr}}\leq T(a_k,b_k),
\]
where $i_1,i_2$ are so that $t_{k,i_1}=a_k$ and $t_{k,i_2}=b_k$. Since $|P_k|\to 0$ we eventually have $[a_k,b_k]\subset(a,b)$ and therefore
\[
\nu((a',b'))\leq\liminf_k\nu_k((a',b'))\leq \limsup_k\nu_k([a',b'])\leq T(a,b).
\]
By interior approximation of the interval $(a,b)$ we see that $\nu$ satisfies \eqref{eq:permaxmu} and thus, by what already proved, that $\nu\leq \mu$.

Conversely, let $[a,b]\subset(a',b')\subset\R$ and then, for every $k\in\N$, let $[a_k,b_k]$ be the smallest interval containing $[a,b]$ with $a_k,b_k\in P_k$. Then we have
\[
\mu([a,b])\leq\mu([a_k,b_k])=\sum_{i=i_1}^{i_2}\mu([t_{k,i},t_{k,i+1}])\stackrel{\eqref{eq:permaxmu}}\leq \sum_{i=i_1}^{i_2}T(t_{k,i},t_{k,i+1})= \nu_k([a_k,b_k]).
\]
As before, from $|P_k|\to0$ we deduce that $[a_k,b_k]$ is eventually contained in $(a',b')$ and therefore
\[
\mu([a,b])\leq\limsup_k\nu_k([a_k,b_k])\leq\limsup_k\nu_k([a',b'])\leq \nu([a',b']).
\]
By exterior approximation of the interval $[a,b]$ we conclude that $\mu([a,b])\leq\nu([a,b])$, then by interior approximation of open intervals we deduce $\mu((a,b))\leq\nu((a,b))$ for any $(a,b)\subset\R$, then $\mu(U)\leq\nu(U)$ for any open set (as these are countable unions of disjoint intervals) and finally $\mu\leq\nu$ by outer regularity. Hence $\mu=\nu$, as claimed.

The conclusion now follows standard lower semicontinuity arguments.  Define the functional $V_q$ on the space of non-negative Radon measures on $[0,1]$ as:
$$V_q(\nu) := -\int_0^1 u_q(\eta)\d\mathscr L^1,\qquad\text{where}\quad \nu=\eta\Leb^1+\nu^\perp\quad\text{with}\quad\nu^\perp\perp\Leb^1.
$$
Since $-u_q$ is lower semicontinuous and sublinear,  $V_q$ is lower semicontinuous on the space of non-negative finite measures on $\R$ equipped with the weak convergence in duality with $C_c(\R)$ (see e.g.\ \cite[Proposition 7.7]{Santambrogio15} --- notice that in such reference the convex integrand is assumed to be real-valued, while in our case we have $-u_q(0)=+\infty$ if $q<0$, still this is easily cured by approximating $-u_q$ from below with real-valued convex and lower semicontinuous functions). As we just proved that $(\nu_k)$ weakly converges to $\mu$ we deduce that
\[
-V_q(\mu)\geq \limsup_{k\to\infty}(-V_q(\nu_k)),
\]
which, expanding the definitions, is inequality $\geq$ in \eqref{eq:limpart}. Since we already established the inequality $\leq$ in \eqref{eq:uqinf}, the proof is complete.
\end{proof}

The previous \Cref{Pr:speed}  assumed finiteness of $T$ (in which case the total mass of the measure $\mu$ therein has total mass bounded from above by $T(0,1)$; in particular, $\mu$ is a Radon measure). For functions $T$ taking infinity as a value (such as the $\ell_q$-time separation between probability measures), we shall need a generalization of \Cref{Pr:speed} given by the following corollary.  Recall $S \subset \R^2$ is called \emph{non-decreasing} if $(s'-s)(t'-t) \ge 0$ for all $(s,t),(s',t') \in S$;
for example, the diagonal of $\R^2$ is non-decreasing.

\begin{corollary}[Differentiation of the extended reverse triangle inequality]
\label{Cor:speed}
Let $T\colon H \to [0,+\infty]$ satisfy
\begin{equation*}
T(r,s)\leq T(r,t) -T(s,t)
\end{equation*} 
for every $r\leq s \leq t$. Then:
\begin{itemize}
\item[i)] There is a maximal \textnormal{(}with respect to inclusion\textnormal{)} non-decreasing closed subset $S\subset H$
such that $S_\infty\subset \{T=\infty\}$ and $S_0 \subset \{T<\infty\}$, where $S_\infty$
denotes the connected component of $H \setminus S$ containing the top left corner $(0,1)$ of the square 
\textnormal{(}but is empty if no such connected component exists\textnormal{)} and $S_0 := H \setminus (S \cup S_\infty)$ is relatively open. 
\item[ii)] The intersection $S_0 \cap \partial H$ denotes a relatively open subset of the diagonal and \Cref{Pr:speed} applies separately to each of its connected components in place of the unit interval.  
\item[iii)] The sum of the measures it yields can be extended to a maximal Borel measure $\mu$ on the diagonal satisfying  the inequality
\begin{equation*}
\mu((a,b)) \leq T(a,b)
\end{equation*}
whenever $a\leq b$ by setting $\mu(A):=+\infty$ unless $A \subset S_0 \cap \partial H$.
\end{itemize}
\end{corollary}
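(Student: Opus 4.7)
The plan is to build $S$ as a closed nondecreasing separator coming from a monotone envelope, then construct $\mu$ by localizing \Cref{Pr:speed} to small triangles covering each connected component of $S_0\cap\partial H$.

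First, the reverse triangle inequality applied twice yields the \emph{shrinking property}: if $T(r,s)<+\infty$ and $r\le r'\le s'\le s$, then $T(r',s)\le T(r,s)-T(r,r')<+\infty$ and hence $T(r',s')\le T(r',s)-T(s',s)<+\infty$. Combined with $T(r,r)=0$ (from \eqref{eq:revtr} with $r=s=t$ and nonnegativity), the function $\tau(r):=\sup\{t\in[r,1]:T(r,t)<+\infty\}$ is well-defined with $\tau(r)\ge r$, and the shrinking property makes $\tau$ nondecreasing. I then take $S\subseteq H$ to be the closure of the graph of $\tau$, completed by the vertical segment $\{r\}\times[\tau(r^-),\tau(r^+)]$ at each discontinuity; equivalently, $(r,t)\in S$ if and only if $\tau(r^-)\le t\le\tau(r^+)$. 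With $S_\infty$ the connected component of $H\setminus S$ containing $(0,1)$ (empty if none) and $S_0:=H\setminus(S\cup S_\infty)$, every $(r,t)\in S_\infty$ satisfies $t>\tau(r^+)$ and hence $T(r,t)=+\infty$ by definition of $\tau$, while every $(r,t)\in S_0$ satisfies $t<\tau(r^-)$ and hence $T(r,t)<+\infty$ by the shrinking property. Maximality follows because the completed graph is a maximal monotone chain in $H$: any additional point would sit strictly in $S_0$ or $S_\infty$, and appending it would produce two points with opposite coordinate orderings.

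For the measure, since $S_0$ is open in $H$, for each $c\in S_0\cap\partial H$ there exists $\delta>0$ such that the small triangle $H_{c-\delta,c+\delta}$ lies entirely in $S_0\cup\partial H\subseteq\{T<+\infty\}$. I apply \Cref{Pr:speed} to $T$ restricted to such local triangles (after affine rescaling) to obtain finite Borel measures that agree on overlaps by the uniqueness in part (i) of \Cref{Pr:speed}. These glue to a Radon measure $\mu_i$ on each connected component $(a_i,b_i)$ of $S_0\cap\partial H$, satisfying $\mu_i((a',b'))\le T(a',b')$ throughout. Setting $\mu:=\sum_i\mu_i$ on $S_0\cap\partial H$ and $\mu(A):=+\infty$ for every Borel $A$ not contained in $S_0\cap\partial H$, the inequality $\mu((a,b))\le T(a,b)$ splits by cases: if $(a,b)\subseteq(a_i,b_i)$ for some $i$ the bound descends from \Cref{Pr:speed}; otherwise $(a,b)$ contains some $c$ with $(c,c)\in S$, i.e.\ $\tau(c^-)\le c\le\tau(c^+)$, and a short reverse triangle argument (splitting into the sub-cases $\tau(c^+)=c$ and $\tau(c^+)>c$, the latter using that $\tau(r)\le\tau(c^-)\le c<t_0$ for any $t_0\in(c,\tau(c^+))$ with $T(c,t_0)<+\infty$ forces $T(r,c)=+\infty$) yields $T(a,b)\ge T(a,c)+T(c,b)=+\infty$. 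Maximality of $\mu$ is inherited from that of each $\mu_i$.

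The hard part will be the sub-case analysis at boundary points of components together with the careful treatment of left- and right-discontinuities of $\tau$ both in the definition of $S$ and in verifying its maximality, since the geometry of $\tau$ near such discontinuities is what dictates when the triangle $H_{a,b}$ associated to a full compact subinterval of a component $(a_i,b_i)$ can legitimately be used instead of the local cover by small triangles.
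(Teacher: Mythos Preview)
Your approach shares the paper's core idea—separating $\{T<\infty\}$ from $\{T=\infty\}$ by a nondecreasing curve—but you parametrize horizontally via $\tau(r):=\sup\{t:T(r,t)<\infty\}$, whereas the paper parametrizes anti-diagonally via $f(s):=\inf(\{t:T(s-t,s+t)=\infty\}\cup\{s,1-s\})$ and sets $S:=\{(s-t,s+t):t=f(s)\}$. The paper's choice is cleaner for two reasons: the shrinking property makes $f$ automatically $1$-Lipschitz, so $S$ is closed without any vertical-segment completion; and the built-in cap $f(s)\le\min(s,1-s)$ forces $S$ to absorb the non-diagonal sides of $\partial H$, which is exactly what delivers maximality and keeps $S_0\cap\partial H$ inside the diagonal.

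Your construction as written has a gap at $r=0$. Take $T\equiv 0$: then $\tau\equiv 1$ and your $S$ is just the top edge $[0,1]\times\{1\}$. One can append $(0,t)$ for any $t\in[0,1)$ while preserving nondecreasingness (check $(0-r)(t-1)\ge 0$ against each graph point $(r,1)$), so your $S$ is not maximal and your maximality sentence (``any additional point would produce two points with opposite coordinate orderings'') is false. In general $\tau(0^-)$ is undefined, your $S_0$ swallows the segment $\{0\}\times[0,\tau(0))$ on the left side of $\partial H$, and hence $S_0\cap\partial H$ is not contained in the diagonal. The fix is immediate—decree $\tau(0^-):=0$—after which your $S$ agrees with the paper's. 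Your measure argument (cover each component by small triangles, apply \Cref{Pr:speed}, glue via the uniqueness in its part~(i)) is more explicit than the paper's one-line sketch and is fine; your closing sub-case analysis is heavier than needed, since $(c,c)\in S$ with $a<c<b$ forces $\tau(c^-)=c$, hence $\tau(a)\le c<b$ and $T(a,b)=+\infty$ in one line.
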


\begin{proof} Define $f(s):=\inf(\{t\in [0,1]:T(s-t,s+t)=\infty\}\cup\{s,1-s\})$. Then set $S:=\{(s-t,s+t)\in H:t=f(s)\}$. Thus we have $S_\infty=\{(s-t,s+t)\in H:t>f(s)\}$ and $S_0=\{(s-t,s+t)\in H:t<f(s)\}$. It is clear that $S$ is maximal as a non-decreasing subset, and that $S_0\subset\{T<\infty\}$. Note that as $T$ is non-decreasing in the second coordinate and non-increasing in the first, we get that $T(s-t_0,s+t_0)\leq T(s-t,s+t)$ for $t_0<t$, and in particular $S_\infty\subset\{T=\infty\}$. 

By construction, the set $S$ contains $(0,0)$ and $(1,1)$, thus $S_0\cap\partial H$ is a subset of the diagonal.
\end{proof}

The preceding results allow us to define the \emph{causal speed} of a causal path, i.e.\ maps $\gamma:[0,1]\to\M$ such that $\gamma_t\leq\gamma_s$ whenever $t\leq s$. Indeed, for such a path, the map $T(s,t):=\ell(\gamma_s,\gamma_t)$ satisfies \eqref{eq:revtr}, thanks to the reverse triangle inequality on $\M$ and is non-negative by causality of $\gamma$. Hence \Cref{Pr:speed} and \Cref{Cor:speed} apply, thus we can give:
\begin{definition}[Causal speed of curves in $\M$]\label{Def:causal speed} Let $\gamma:[0,1]\to \M$ be a causal path with values in the metric spacetime $(\M,\ell)$.

The  maximal measure $|\dot{\bm\gamma}|:=\mu$ on $[0,1]$ 
associated to $\gamma$ via \Cref{Pr:speed} and \Cref{Cor:speed} will be called   \emph{causal speed} of $\gamma$. Its Lebesgue decomposition with respect to $\Leb^1$ is denoted by $|\dot{\bm \gamma}|=|\dot\gamma|\,\mathscr L^1+|\dot{\bm\gamma}|^\perp$ and, abusing terminology, we shall also refer to $|\dot \gamma|$ as the path's causal speed.
\end{definition}
To be explicit: maximality of $|\dot{\bm\gamma}|$ and its absolutely continuous density $|\dot \gamma|$ means that
\begin{equation}
\label{eq:boundspeed}
\int_t^s|\dot\gamma_r|\,\d r\leq |\dot{\bm\gamma}|([t,s])\leq\ell(\gamma_t,\gamma_s)\qquad\forall t,s\in[0,1],\ t<s,
\end{equation}
and that these are the maximal measure and $L^1$-function bounded above by $\ell(\gamma_t,\gamma_s)$ in this sense. Also, property \eqref{Eq:rhot ptw cvg} implies, in particular, that
\begin{equation}
\label{eq:speedlimit}
|\dot\gamma_t|=\lim_{h\downarrow0}\frac{\ell(\gamma_t,\gamma_{t+h})}{h}=\lim_{h\downarrow0}\frac{\ell(\gamma_{t-h},\gamma_{t})}{h}\qquad a.e.\ t.
\end{equation}

%

\subsection{Left-continuous causal paths and their topology}
In this section we deal with Polish metric spacetimes and study topology on and of causal paths.

\begin{definition}[Left-continuous causal paths in $\M$]\label{Def:LCC paths M} Let $(\M,\uptau,\ell)$ be a Polish metric spacetime. A map $\gamma\colon[0,1]\rightarrow \M$  is called a \emph{left-continuous causal path} if $\gamma$ is left-continuous (i.e.\  $\gamma_t= \lim_{s\uparrow t}\gamma_s$ for every $t\in (0,1]$) and causal. The space of all such paths is denoted $\CC([0,1];\M)$.
\end{definition}
We choose to work with this class of curves for its good stability properties, see in particular \Cref{Pr:CC Polish} and \Cref{prop-cad-comp}. Left continuity is a concept closely related to that of forward completeness, which ensures that left limits exist. For this reason, in this section we shall mostly work on forward spacetimes.

Intuitively, when a test particle's worldline is known to be in $\CC([0,1];\M)$,  then its present position provides more information about its past than its future. 

\begin{lemma}[Countability of causal curve discontinuities]
\label{L:countable discontinuities2}
Let $(\M,\uptau,\ell)$ be a forward metric spacetime, $A\subset[0,1]$ and $\gamma:A\to \M$ causal (meaning $s,t \in A$ with $s\le t$ imply $\gamma_s \le \gamma_t$; no continuity assumptions).

Then $\gamma$ has at most a countable number of  discontinuities.
\end{lemma}
\begin{proof} Fix $c>0$ and consider the set $B:=\{t\in A:\lims_{s\to t,\ s\in A}\sfd(\gamma_s,\gamma_t)> c\}$: if we prove that $B$ is countable for all $c>0$ we are done. To derive a contradiction suppose $B$ is uncountable. Then there are  $t_0,T\in B$ with $t_0<T$ such that $B\cap(t_0,T]$ is uncountable, hence there is $t_1\in B\cap (t_0,T]$ so that $B\cap(t_1,T]$ is uncountable and by recursion we can  build an increasing sequence $(t_n)\subset B$ with $B\cap(t_n,T]$ uncountable for every $n$. By  definition of $B$ 
there is also a sequence $(s_n)\subset A$ with $s_n\in(t_{n-1},t_{n+1})$ and 
\begin{equation}
\label{eq:tnsnc}
\sfd(\gamma_{s_n},\gamma_{t_n})>c\qquad\text{  for every }n>0.
\end{equation}
The two sequences of points $\gamma_{t_1},\gamma_{t_2},\gamma_{t_3},\ldots$ and $\gamma_{t_1},\gamma_{s_2},\gamma_{t_3},\gamma_{s_4},\ldots$ are both non-decreasing   and bounded from above by $\gamma_T$, thus by forward completeness they must both have a limit. Since the second contains a subsequence of the first, these limits must coincide, and thus we must have $\sfd(\gamma_{t_n},\gamma_{s_n})\to 0$. This, however, is in contradiction with \eqref{eq:tnsnc}.
\end{proof}

Forward-completeness 
will be used to deduce the following. Note the explicit exclusion of the starting point from the statement.

\begin{proposition}[Recovering a left-continuous causal path from a dense set of values]
\label{prop:distord}
 Let $(\M,\uptau,\ell)$ be a forward metric spacetime, $\mathscr{D}\subset(0,1]$ dense and  $\eta\colon\mathscr{D}\to \M$ a  $\leq$-monotone map. 
 
 Then there is a unique left-continuous causal curve $\gamma\colon (0,1)\to \M$  such that $\eta(r)\leq \gamma_t\leq\eta(s)$ for every $r,s\in \mathscr{D}$ with $r< t\leq  s$.  In particular, if $\eta$ is left-continuous, then we have $\gamma\rvert_\mathscr{D}= \eta\rvert_\mathscr{D}$.  
 
 If $\eta$ has an upper bound, then $\gamma$ has a unique extension to $(0,1]$ sharing the same properties.
 \end{proposition}
\begin{proof} 
For $t\in (0,1)$ we define 
\[
\gamma_t:=\lim_{\substack{s\uparrow t,\\ s\in \mathscr{D}}}\eta(s).
\]
Forward-completeness and the existence of $r\in(t,1)\cap \mathscr D$ (so that $\eta(r)$ is an upper bound for $\{\eta(s):s\leq t\}$), ensures that the limit exists, thus the definition is well-posed. The construction  ensures continuity from the left. Causality of $\gamma$ is clear from the monotonicity of $\eta$ and the closedness of $\leq$, thus we have the claimed existence. 

For uniqueness, let $\tilde\gamma\colon (0,1)\to M$ be another such path. Given any $t\in (0,1)$ let $s\in \mathscr{D}$ with $s<t$. As $\smash{\gamma_s\leq \eta(s) \leq \tilde{\gamma}_t}$ by assumption, letting $s\uparrow t$ reveals $\smash{\gamma_t \leq \tilde{\gamma}_t}$. Reversing the roles of $\tilde{\gamma}$ and $\gamma$ and using antisymmetry of $\leq$ gives $\tilde{\gamma}_t= \gamma_t$. The last claim is now obvious.
\end{proof}

We now topologize the space $\CC([0,1];\M)$ of left-continuous  causal paths. To this aim, let us fix a bounded distance $\sfd$ inducing the Polish topology on our metric spacetime (in asking boundedness we are not losing generality, as we can always replace a given $\sfd$ with $1\wedge\sfd$). Then we  define a distance $\sfD$ on $\CC([0,1];\M)$ via
\[
\sfD(\gamma,\eta):= \sfd(\gamma_0,\eta_0)+\int_0^1\sfd(\gamma_t,\eta_t)\d t.
\]
On $\CC([0,1];\M)$ we put the topology induced by $\sfD$. As the next result shows, this induced topology depends only on the topology $\uptau$ of $\M$ and not on the particular choice of $\sfd$ (see also \cite[Remarks 9 and 14]{McC:23} for similar considerations in more regular settings):

\begin{proposition}[Left-continuous causal paths inherit Polishness]
\label{Pr:CC Polish}
 Let $(\M,\uptau,\ell)$ be a forward metric  spacetime.  Then $(\CC([0,1];\M),\sfD)$ is a complete and separable metric space and the following are equivalent:
 \begin{itemize}
 \item[i)] $\sfD(\gamma_n,\gamma)\to 0$ as $n\to\infty$
 \item[ii)] Any sequence $n_k\uparrow\infty$ admits a subsequence $n_{k_j}\uparrow\infty$ such that $\gamma_{n_{k_j},t}\to\gamma_t$ as $j\uparrow\infty$ for $t=0$ and for a.e.\ $t\in[0,1]$.
 \end{itemize}
 If moreover the topology    is  locally causally convex, then these are also equivalent to:
 \begin{itemize}
 \item[iii)]  $\gamma_{n,t}\to \gamma_t$ as $n\to\infty$ for  $t=0$ and for any continuity point $t$ of $\gamma$,
 \item[iv)]  $\gamma_{n,t}\to \gamma_t$ as $n\to\infty$ for a  set  of $t$'s that is dense in $[0,1]$ and contains 0.
 \end{itemize}
\end{proposition}
\begin{proof} Since $\M$ is separable, so is   the space  $L^1([0,1];\M)$ of measurable maps $f\colon [0,1]\to \M$    with respect to the $L^1$-distance $\smash{\int_0^1 \met(\cdot,\cdot)\,\d t}$ (recall that $\sfd$ is bounded). Since $(\CC([0,1];\M),\sfD)$ is a  subspace of $\M\times L^1([0,1];\M)$, we see that $\CC([0,1];\M)$ is separable.

For completeness, let $(\gamma^n) \subset \CC([0,1];\M)$ be a Cauchy sequence. First, this implies the sequence $(\gamma_0^n)$ of initial points converges to some $\gamma_0\in \M$ by completeness of $\M$. Second, if necessary we pass  to a non-relabelled  subsequence such that $\smash{\sum_n \sfD(\gamma^n,\gamma^{n+1})<+\infty}$ and thus that converges for $\smash{\Leb^1}$-a.e.~$t\in[0,1]$. Call $\mathscr{D}\subset[0,1]$ the set of $t$'s for which we have pointwise convergence and $\eta_t$ the corresponding limit (with $\eta_0 = \gamma_0$). Let $\gamma\in\CC([0,1];\M)$ be associated to $\eta$ via \Cref{prop:distord} and notice that by \Cref{L:countable discontinuities2}  we have that $\gamma_t=\eta_t$ for every $t\in \mathscr D$ except at most a countable set.

We shall prove that $\sfD(\gamma^n,\gamma)\to0$ as $n\to+\infty$, the argument being standard. By construction, for a.e.\ $t$ and for $t=0$ we have $\sfd(\gamma_t,\gamma^n_t)\leq\sum_{i\geq n}\sfd(\gamma^{i+1}_t,\gamma^i_t)$, thus by monotone convergence 
\begin{align*}
\limsup_{n\to+\infty}\sfD(\gamma,\gamma^n)&=\limsup_{n\to+\infty}\Big(\,\sfd(\gamma_0,\gamma_0^n)+\int_0^1\sfd(\gamma_t,\gamma^n_t)\,\d t \Big)\leq \limsup_{n\to+\infty}\sum_{i\geq n} \sfD(\gamma^{i+1},\gamma^i)=0,
\end{align*}
having used $\sum_n \sfD(\gamma^n,\gamma^{n+1})<+\infty$ in the last step. This argument also shows that $(i)\Rightarrow(ii)$. The converse implication follows directly from the definition of $\sfD$.

We pass to the locally causally convex case.

\noindent $(i)\Rightarrow(iii)$. Since $\sfd(\gamma_{n,0},\gamma_0)\leq\sfD(\gamma_n,\gamma)$,  the convergence  $\gamma_{n,0}\to\gamma_0$ is clear. Now let $t$ be a continuity point of $\gamma$. We will show that any subsequence $(n_k)$ has a further extraction $(n_{k_j})$ such that $\gamma_{n_{k_j},t}\to\gamma_t$, as this suffices to conclude.  Thus let $n_k\uparrow\infty$ be arbitrary and notice that, as above,   there is  a subsequence $(n_{k_j})$ such that $\gamma_{n_{k_j},s}\to\gamma_s$ for a.e.\ $s\in[0,1]$. Now let $U$ be an arbitrary causally convex neighbourhood of $\gamma_t$. Then there are $t_1<t<t_2$ with $\gamma_{t_1},\gamma_{t_2}\in U$ such that $\gamma_{n_{k_j},t_i}\to\gamma_{t_i}$, $i=1,2$. Hence $\gamma_{n_{k_j},t_i}$ is eventually in $U$, $i=1,2$,  and by causal convexity and causality of the curves we get that $\gamma_{n_{k_j},t}$ is eventually in $U$. This proves that $\gamma_{n_{k_j},t}\to\gamma_t$, as desired.

\noindent $(iii)\Rightarrow(i)$. Direct consequence of Lemma \ref{L:countable discontinuities2} and the dominated convergence theorem.

\noindent $(iii)\Rightarrow(iv)$. Direct consequence of Lemma \ref{L:countable discontinuities2}.

\noindent $(iv)\Rightarrow(iii)$. Arguments similar to those in ``$(i)\Rightarrow(iii)$'' apply: let $U$ be a causally convex neighbourhood of $\gamma_t$, where $t$ is a continuity point of $\gamma$. Then for $t_1<t<t_2$ belonging to the given dense set and sufficiently close to $t$ we have $\gamma_{t_1},\gamma_{t_2}\in U$, hence eventually $\gamma_{n,t_1},\gamma_{n,t_2}\in U$ and by causal convexity eventually we get $\gamma_{n,t}\in U$.
\end{proof}

The following compactness result is reminiscent of Helly's selection theorem:

\begin{theorem}[Limit curve theorem]
\label{prop-cad-comp}
Let $(\M,\uptau,\ell)$ be a forward metric spacetime. Let $\Gamma\subset \CC([0,1];\M)$ be so that:
\begin{itemize}
\item[i)] For some   $\mathscr{D}\subset[0,1]$  Borel with $\mathcal L^1(\mathscr D)=1$ and  $0\in {\mathscr D}$   the set $\{\gamma_t:\gamma\in\Gamma\}$ is relatively compact for each  $t\in\mathscr D$,
\item[ii)] There is a compact set $K\subset\M$ such that for every $\gamma\in\Gamma$ there is $x\in K$ with $\gamma_1\leq K$.
\end{itemize}
Then $\Gamma$ is $\sfD$-relatively compact.

If moreover the topology is locally causally convex, then we can weaken $(i)$ above to
\begin{itemize}
\item[i')] For some   $\tilde{\mathscr{D}}\subset[0,1]$  dense with $0\in\tilde{\mathscr D}$   the set $\{\gamma_t:\gamma\in\Gamma\}$ is relatively compact for each  $t\in\tilde{\mathscr D}$.
\end{itemize}
\end{theorem}

\begin{proof} Since we are dealing with a distance, compactness is equivalent to sequential compactness. Thus let $(\gamma_n)\subset\Gamma$ and $(x_n)\subset K$ be corresponding points as in $(ii)$; our goal is to prove that $(\gamma_n)$ has a $\sfD$-convergent subsequence. Let ${\mathscr C}\subset\mathscr D$ be countable dense and containing 0. Then a diagonal argument and the assumption of relative compactness   grant that up to passing to a non-relabelled subsequence we can assume that $n\mapsto \gamma_{n,t}$ converges to some limit $\eta(t)$ for every $t\in { \mathscr{C}}$ and similarly that $x_n\to x$. Clearly we have $\eta(t)\leq x$ for any $t\in{\mathscr C}$, so we can apply \Cref{prop:distord} to obtain from $\eta$ a left-continuous causal path $\gamma$ on $(0,1]$  and extend it by putting $\gamma_0:=\eta(0)$ (this does not affect causality and left continuity --- here we used that $0\in {\mathscr C}$ to have a limit point $\eta(0)$).

We claim that $\sfD(\gamma_n, \gamma)\to0$. For this it suffices, by \Cref{Pr:CC Polish}  above, to show that for a.e.\ $t\in[0,1]$ and for $t=0$ we have $\gamma_{n,t}\to\gamma_t$. We are going to show that this holds for any continuity point $t\in\mathscr D$ of $\gamma$; (by \Cref{L:countable discontinuities2} this suffices). Fix such a $t$ and notice that by compactness, after passing to a non-relabelled subsequence we can assume that $\gamma_{n,t}\to p$ for some $p\in \M$. For any $r,s\in {\mathscr C}$ with $r<t<s$ passing to the limit in  $\gamma_{n,r}\leq\gamma_{n,t}\leq\gamma_{n,s}$ we deduce that $\gamma_r\leq p\leq \gamma_s$. Letting $r\uparrow t$ and $s\downarrow t$, by the continuity of $\gamma$ at $t$ we conclude that $\gamma_t\leq p\leq\gamma_t$, and so $p=\gamma_t$. Since this conclusion holds independently of the subsequence chosen, the proof in this case is complete.

In the locally causally convex case we instead argue as follows. Let, as before, $t$ be a continuity point of $\gamma$, then let $U$ be a causally convex neighbourhood of $\gamma_t$ and notice that the relations $ \gamma_{s_1}\leq \eta(s_2)\leq \gamma_{s_3} $ valid for $s_1<s_2<s_3$ together with the continuity of $\gamma$ at $g$ ensure that $\eta(s)\in U$ for $s$ sufficiently close to $t$. In particular this holds if  $s$ is also in $\tilde{\mathscr{D}}$, so for such $s$ we eventually have $\gamma^n_s\in U$ and thus by causal convexity we conclude that eventually we have $\gamma^n_t\in U$. We thus proved that $\gamma^n_t\to\gamma_t$ for any continuity point $t$, which again suffices to conclude.
\end{proof}

In what follows we shall often consider the evaluation maps $\eval_t:\CC([0,1];\M)\to \M$ defined for $t\in[0,1]$ as $\gamma\mapsto\gamma_t$ and the map $\eval:\CC([0,1];\M)\times[0,1]\to \M$ sending $(\gamma,t)$ to $\gamma_t$. We shall frequently use the following basic result without explicitly mentioning it:

\begin{proposition}[Evaluation maps are Borel]
\label{prop:evborel}
Let $(\M,\uptau,\ell)$ be a Polish metric spacetime. Then $\eval_t:\CC([0,1];\M)\to \M$ is Borel for any $t\in(0,1]$ and continuous for $t=0$. Also  $\eval:\CC([0,1];\M)\times[0,1]\to \M$ is Borel.
\end{proposition}

\begin{proof}
The continuity of $\eval_0$ is clear, thus let $t\in(0,1]$. Since $\M$ is Polish we can establish measurability of $\eval_t$ by proving that $\eval_t^{-1}(B_r(p))$ is Borel for any $p\in \M$ and $r>0$. Left continuity implies 
\[
\eval_t^{-1}(B_r(p))=\bigcup_{n\in\N}\bigcap_{m\geq n}\Big\{\gamma\in\CC([0,1];\M)\ :\ \int_{0\wedge(t-\tfrac1m)}^t\sfd(\gamma_s,p)\,\d s<\tfrac rm\Big\}.
\]
Thus  the claim follows from the fact that the set in parenthesis is $\sfD$-open for every $m$. The same argument shows that $\eval^{-1}(B_r(p))$ is the union of $\eval_0^{-1}(B_r(p))\times\{0\}$ and 
\[
\bigcup_{n\in\N}\bigcap_{m\geq n}\Big\{(\gamma,t)\in\CC([0,1];\M)\times(0,1]\ :\ \int_{0\wedge(t-\tfrac1m)}^t\sfd(\gamma_s,p)\,\d s<\tfrac rm\Big\},
\]
and the second claim follows as well.
\end{proof}


Another map we shall need to use is the `restriction map': given $s,t\in[0,1]$ with $s<t$, we denote by $\smash{\restr_s^t\colon \CC([0,1];\M)\to \CC([0,1];\M)}$  the  map
\begin{align*}
\restr_s^t(\gamma)_r := \gamma_{(1-r)s + rt}.
\end{align*}
Intuitively,  this map first restricts the input curve to $[s,t]$ and then ``stretches'' it to $[0,1]$.

From Proposition \ref{prop:evborel} it is not hard to see that $\restr_s^t$ is Borel for any $s,t\in[0,1]$ with $s<t$ (and continuous if $s=0$). Indeed, given $\eta\in\CC([0,1];\M)$ and $r>0$, we see that $\sfD(\restr_s^t(\gamma),\eta)<r$ if and only if there is $q\in[0,r]\cap\Q$ such that $\sfd(\gamma_s,\eta_0)<q$ and $\int_0^1\sfd(\gamma_{(1-v)s + vt},\eta_v)\,\d v<r-q$. Equivalently, in symbols
\[
(\restr_s^t)^{-1}(B_r(\eta))=\bigcup_{q\in[0,r]\cap\Q}\Big(\eval^{-1}_s(B_q(\eta_0))\ \bigcap\ \Big\{\gamma\ :\ \int_0^1\sfd(\gamma_{(1-v)s + vt},\eta_v)\,\d v<r-q\Big\}\Big),
\]
which shows that $(\restr_s^t)^{-1}(B_r(\eta))$ is Borel.

\subsection{Action and geodesics}

Let $(\M,\ell)$ be a metric spacetime, $\gamma:[0,1]\to \M$ a causal curve and $0 \neq \QQ \le 1$. We define the $\QQ$-action $  \KE_\QQ(\gamma)\in\bar\R$ as
\begin{equation}
\label{Eq:KE M}
    \KE_\QQ(\gamma) := 
    \begin{cases}
    {\displaystyle \int_0^1u_q(|\dot\gamma_t|)\,\d t} & {\rm if}\ 0\ne \QQ <1    \\ &
\\     |\dot{\bm \gamma}|([0,1]) & {\rm if}\ \QQ=1,
    \end{cases}
\end{equation}
where we recall $u_q(z)=\tfrac1qz^q$ for $z>0$ and from   \eqref{eq:up} that $u_q(0)$ is equal to $0$ (resp.\ $-\infty$) if $q\in(0,1)$ (resp.\ $q<0$).  In particular, for $q<0$ we have $\KE_\QQ(\gamma)\leq 0$ and $\KE_\QQ(\gamma)=-\infty$ as soon as $\vert \dot\gamma\vert = 0$ holds on an $\Leb^1$-non-negligible set.

Irrespective of the sign of $\QQ$, \Cref{Pr:speed} and Jensen's inequality imply
\begin{align}\label{eq:actioUpperBound}
    \KE_\QQ(\gamma) \leq u_q(\ell(\gamma_0,\gamma_1)) \qquad\quad (=
    \frac{1}{\QQ}\,\ell(\gamma_0,\gamma_1)^\QQ \ {\rm if}\ q \vee \ell(\gamma_0,\gamma_1) >0).
\end{align}
In analogy with the definition of (constant speed) geodesics in the classical setting of metric spaces, we propose the following definition:

\begin{definition}[Geodesics on $\M$]\label{Def:l-geodesics} Let $(\M,\ell)$ be a metric spacetime and $x,y\in \M$ be with $x\leq y$. We call  $\gamma\colon [0,1]\to \M$ a rough geodesic from $x$ to $y$ provided $x\leq \gamma_t\leq y$ for every $t\in[0,1]$ and
\begin{equation}
\label{eq:defgeo}
\ell(\gamma_t,\gamma_s)=(s-t)\ell(x,y),\qquad\forall t,s\in[0,1],\ t\leq s.
\end{equation}
If $\M$ is also Polish and the  rough geodesic $\gamma$ belongs to $\CC([0,1];\M)$ we call it a \emph{geodesic}, thus dropping the term `rough'. The collection of all geodesics on $\M$ will be denoted $\CGeo(\M)\subset\CC([0,1];\M)$, the `C' standing for `causal'.

A causal (possibly rough) geodesic is called \emph{timelike} if $\ell(\gamma_0,\gamma_1)\in(0,+\infty)$. The collection of all left continuous, timelike  geodesics is denoted  $\TGeo(\M)\subset \CGeo(\M)$. We use the term $\ell$-geodesic synonymously with geodesic, particular when more than one time separation 
is under discussion. 
\end{definition}

\begin{remark}The term {\em rough} is used analogously in \cite{BraunMcCann:2023}. Although not needed here, we reserve the label $\AGeo(M)$ for the set of {\em affinely parameterized} causal geodesics defined as the $\sfD$-closure of $\TGeo(M)$;
a related closure was instead denoted by $\CGeo(M)$ in the final version of \cite{McC:23}.
\end{remark}

Notice that in order for $\gamma$ to be a geodesic from $x$ to $y$ we are \emph{not} insisting on $\gamma_0=x$ or $\gamma_1=y$, still, for any $t\in[0,1]$ we have:
\begin{equation}
\label{eq:geo}
\ell(x,\gamma_t)=\ell(\gamma_0,\gamma_t)\quad\text{and}\quad
\ell(\gamma_t,y)=\ell(\gamma_t,\gamma_1)\quad\text{in addition to}\quad
\ell(x,y)=\ell(\gamma_0,\gamma_1).
\end{equation}
Indeed, if $\ell(x,y)=+\infty$ the claim follows easily from \eqref{eq:defgeo} and $x\leq \gamma_t\leq y$, while if $\ell(x,y)<+\infty$, from non-negativity of $\ell$ on $\le$ 
we deduce  $\ell(x,y)
\geq \ell(x,\gamma_t) + \ell(\gamma_t,y)
\geq \ell(\gamma_0,\gamma_t)+\ell(\gamma_t,\gamma_1)=\ell(x,y)$, having used \eqref{eq:defgeo} in the last step. Since $\ell(x,y)<+\infty$, the claim follows. Sufficient conditions to ensure that $\gamma_0=x$ and/or $\gamma_1=y$ will be explored in \Cref{Sub:qessreg}.

The link between geodesics and the $\QQ$-action functionals is given by the following result:

\begin{proposition}[Geodesics maximize the $\QQ$-action]\label{Pr:l-geodesics maximize} Let $(\M,\ell)$ be a metric spacetime,\\ $\gamma:~[0,1]\to~\M$ causal and $x,y\in \M$ be with $x\leq \gamma_0$, $\gamma_1\leq y$ and $\ell(\gamma_0,\gamma_1)<+\infty$. Then the following  are equivalent.
\begin{enumerate}[label=\textnormal{(\roman*)}]
    \item The curve $\gamma$ is a rough geodesic from $x$ to $y$.
    \item  We have: 
    \begin{equation}
\label{eq:geoalt}
({s-t})\,\ell(x,y) \leq \ell(\gamma_t,\gamma_s),\qquad\forall t,s\in[0,1], \ t\leq s.
\end{equation}
    \item There exists a real constant $c\geq 0$ such that $\vert\dot\gamma\vert = c$ $\mathscr{L}^1$-a.e.~and    $\KE_1(\gamma) \geq \ell(x,y).$
   \item For all $0 \ne \QQ<1$ we have
    \begin{align*}
        \KE_\QQ(\gamma) = u_\QQ(\ell(x,y)) \qquad\quad  (=\frac{1}{\QQ}\, \ell(x,y)^\QQ 
        \mbox{\rm\ \ if}\ \ q \vee \ell(\gamma_0,\gamma_1)>0).
    \end{align*}
    \item For some $0 \neq \QQ < 1$ we have $\KE_\QQ(\gamma) = u_\QQ(\ell(x,y))$.    
\end{enumerate}
\end{proposition}


\begin{proof} \ \\
\noindent{$(i) \Longrightarrow (iii)$}. By \eqref{eq:defgeo} we see that   the measure $\mu_h$ from \Cref{Pr:speed} is equal to $\ell(x,y)\, \mathscr{L}^1\mres [0,1-h]$ on $[0,1-h]$. In particular, the associated maximal measure $\mu$ is $\ell(x,y)\Leb^1$,   from which the claim follows.

\noindent{$(iii)\Longrightarrow (iv)$}. The assumption  forces $\vert\dot\gamma\vert = \ell(x,y)$ $\mathscr{L}^1$-a.e, so the claim follows from the definition of $\KE_\QQ(\gamma)$.

\noindent{$(iv) \Longrightarrow (v)$}. Obvious. 

\noindent{$ (v) \Longrightarrow (ii)$}.   Let $0 \neq \QQ < 1$ satisfy the assumption. Then
\begin{align*}
\frac{1}{\QQ}\,\ell(x,y)^\QQ = \frac{1}{\QQ}\int_0^1\vert\dot\gamma_t\vert^\QQ \d t \stackrel{{\rm (Jensen)}}\leq \frac{1}{\QQ}\,\Big[\!\int_0^1 \vert\dot\gamma_t\vert \d t\Big]^\QQ\ \stackrel{\eqref{eq:boundspeed}}\leq \ \frac{1}{\QQ}\,\ell(\gamma_0,\gamma_1)^\QQ\leq \frac{1}{\QQ}\,\ell(x,y)^\QQ.
\end{align*}
Since  by assumption we have $\ell(\gamma_0,\gamma_1)<+\infty$, we deduce that $\ell(x,y)<+\infty$ as well. If $\ell(x,y)=0$ the conclusion is trivial, thus we can assume $\ell(x,y)\in(0,+\infty)$.  In this case by strict concavity of  $z\mapsto \tfrac1qz^q=u_q(z)$, the equality in Jensen's inequality forces  $\vert \dot\gamma\vert = \ell(x,y)$ $\Leb^1$-a.e. The conclusion  \eqref{eq:geoalt} follows from \eqref{eq:boundspeed}.

\noindent{$(ii) \Longrightarrow (i)$}. By the reverse triangle inequality for $\ell$, for every $t,s\in[0,1]$ with $t\leq s$  we have
\begin{align*}
    \ell(x,y) \geq     \ell(\gamma_0,\gamma_1) &\geq \ell(\gamma_0,\gamma_t) + \ell(\gamma_t,\gamma_s) + \ell(\gamma_s,\gamma_1)\geq \big[t + (s-t) + (1-s)\big]\,\ell(x,y)= \ell(x,y).
\end{align*}
Since $\ell(\gamma_0,\gamma_1)<+\infty$ by assumption, this forces  $\ell(x,y)<+\infty$ and thus equality throughout.
\end{proof}

\begin{remark}[Relation to customary lengths]  \Cref{Def:l-geodesics} is inspired by the length functional $L_\ell$ induced by $\ell$ \cite[Def.~2.24]{KS:18} and its maximizers. We shortly comment on the connections between maximizers of $L_\ell$ and our action functionals, to relate our approach to others used in the literature. To adapt our setting to \cite{KS:18}, we shall assume $\ell_+$ is lower semicontinuous.

On the space $\CC([0,1];\M)$ we define
\begin{align*}
L_\ell(\gamma) := \inf\!\Big\lbrace\!\sum_{i=0}^{n-1} \ell(\gamma_{t_i}, \gamma_{t_{i+1}}) : n\in\N,\, 0 = t_0 < t_1 < \dots < t_n = 1\Big\rbrace.
\end{align*}
By \eqref{eq:boundspeed}  and the definition of $L_\ell$, we have
\begin{align}\label{Eq:Inequ ALl}
\KE_1(\gamma)\leq L_\ell(\gamma) \leq \ell(\gamma_0,\gamma_1).
\end{align}
A geodesic clearly attains the second inequality, and every $\gamma\in \CC([0,1];\M)$ with $\KE_1(\gamma) = \ell(\gamma_0,\gamma_1)$ maximizes $L_\ell$ by \eqref{Eq:Inequ ALl}. On the other hand, if $\gamma$ is a geodesic, \Cref{Pr:l-geodesics maximize} implies $\KE_1(\gamma) = \ell(\gamma_0,\gamma_1)$, hence equality holds throughout \eqref{Eq:Inequ ALl}. In order to relate general  maximizers of $L_\ell$ (which do not need to be affinely  parametrized) to those of $\KE_1$, further assumptions are needed. Suppose $\ell$ is upper semicontinuous and finite on $\le$, and moreover we suppose that $\ell_+$ is lower semicontinuous. Then   $\ell_+$ is continuous. Hence, if $\gamma\in\CC([0,1];\M)$ is a \emph{timelike}
maximizer of $L_\ell$, by \cite[Cor.~3.35]{KS:18} it has a strictly increasing  reparametrization $\sigma$ which is a geodesic. As noted above, this implies $\KE_1(\sigma) = \ell(\sigma_0,\sigma_1) = \ell(\gamma_0,\gamma_1)$. On the other hand, as $\KE_1$ is easily seen to be invariant under strictly increasing reparametrizations, equality holds in \eqref{Eq:Inequ ALl}.\hfill$\blacksquare$ 
\end{remark}

\begin{remark}[`Snowflakes']
A standard construction in positive signature in metric geometry is that of `snowflaking' a metric. Say that $(\M,\sfd)$ is a metric space. Then for any $\alpha\in(0,1)$ the function $\sfd^\alpha$ is still a distance, it is equivalent to the original one and has the property that any non-constant curve has infinite $\sfd^\alpha$-length.

A similar construction is in place for spacetimes (albeit in this case `snowflake' does not convey the appropriate geometric intuition): given a metric spacetime $(\M,\ell)$ and $\alpha>1$, the function $\ell^\alpha$ (where $\ell^\alpha(x,y):=-\infty$ is intended if $\ell(x,y)=-\infty$) is still a time separation inducing the same chronological and causal relations. If $\gamma$ is a causal curve and $t\in[0,1]$ such that $\lim_{h\downarrow 0}\frac{\ell(\gamma_t,\gamma_{t+h})}{h}$ exists and is finite, then clearly  $\lim_{h\downarrow 0}\frac{\ell(\gamma_t,\gamma_{t+h})^\alpha}{h}\to0$. It follows that the $q$-energy, $0\neq q<1$, of any causal curve w.r.t.\ $\ell^\alpha$ is zero.
\hfill$\blacksquare$ \end{remark}

We come to topological properties of geodesics:
\begin{proposition}[Causal and null geodesics form closed sets]
\label{C:CGeo is closed} Let $(\M,\uptau,\ell)$ be a forward spacetime such that    $\ell_+$ is continuous. 

Then the set of causal geodesics    from \Cref{Def:l-geodesics} is  $\sfD$-closed and that of   timelike geodesics is Borel.
\end{proposition}

\begin{proof} 
Let $(\gamma^n) \subset \CGeo(\M)$ be a sequence $\sfD$-converging to $\gamma\in \CC([0,1]; \M)$.  By \eqref{eq:defgeo} we see that
\[
    \ell(\gamma_s^n,\gamma_t^n) = (t-s)\,\ell(\gamma_0^n,\gamma_1^n),\qquad\forall s,t\in[0,1],\ s\leq t.
\]
By  \Cref{Pr:CC Polish} we see that for the Borel set  $\mathscr D\subset[0,1]$ containing 0 from the statement, passing to the limit in the above (notice that $1\in\mathscr D$ may not hold) we get
\[
\ell(\gamma_s,\gamma_t)=\tfrac{t-s}{t'}\ell(\gamma_0,\gamma_{t'})\qquad\forall s,t,t'\in\mathscr D,\ s\leq t\leq t'.
\]
Then by the left continuity of $\gamma$ as $t' \to 1$ and the continuity of $\ell_+$ again we easily conclude that $\gamma$ is a causal geodesic. The argument also shows that the collections of causal geodesics $\gamma$ satisfying $\ell(\gamma_0,\gamma_1)=+\infty$ (and similarly $\ell(\gamma_0,\gamma_1)=0$) are both closed as well, so the second claim follows.
\end{proof}

The $q$-action \eqref{Eq:KE M} inherits
upper semicontinuity from $\ell$ via identity \eqref{eq:uqinf}:

\begin{lemma}[Upper semicontinuity of the $\QQ$-action]\label{prop-p-act-usc}
Let $(\M,\uptau,\ell)$ be a metric spacetime with $\ell$ upper semicontinuous and let $0\neq \QQ<1$. Assume $(\gamma_n)\subset \CC([0,1];\M)$ $\sfD$-converges to $\gamma$ and that either $\sup_n \ell(\gamma_n(0),\gamma_n(1)) < +\infty$ or $\gamma_n(1)\to\gamma(1)$. Then
    \begin{equation*}
        \limsup_{n\to+\infty}\, \KE_\QQ(\gamma_n) \leq \KE_\QQ(\gamma).
    \end{equation*}
\end{lemma}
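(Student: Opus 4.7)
The natural strategy is to combine the infimum characterization from \Cref{Pr:speed}(iii) with the upper semicontinuity of $\ell$; the main obstacle will be that $\sfD$-convergence does not guarantee pointwise convergence at $t=1$, and there is a sign dichotomy $\QQ \in (0,1)$ versus $\QQ < 0$ to manage. Since the desired inequality holds if established along every subsequence, the proof of \Cref{Pr:CC Polish} allows me to pass to a further subsequence (still denoted $(\gamma_n)$) along which $\gamma_{n,t} \to \gamma_t$ for every $t \in S$, where $S \subseteq [0,1]$ has full $\Leb_1$-measure and contains $0$.

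For any $a \in S \cap (0,1)$ and any partition $0 = t_0 < t_1 < \ldots < t_k = a$ with $t_1, \ldots, t_{k-1} \in S$, the Jensen-type bound \eqref{eq:actioUpperBound} applied to each sub-interval $[t_i, t_{i+1}]$ (after affine reparametrization) yields
\begin{align*}
\KE_\QQ(\gamma_n) \leq \sum_{i=0}^{k-1} \frac{\ell(\gamma_{n,t_i}, \gamma_{n,t_{i+1}})^\QQ}{\QQ\,(t_{i+1}-t_i)^{\QQ-1}} + R_n(a),
\end{align*}
where the tail contribution $R_n(a) := \ell(\gamma_{n,a}, \gamma_{n,1})^\QQ/[\QQ(1-a)^{\QQ-1}]$ is nonpositive when $\QQ < 0$ and, when $\QQ \in (0,1)$, is bounded by $L^\QQ(1-a)^{1-\QQ}/\QQ$ via the reverse triangle inequality together with the hypothesis $L := \sup_n \ell(\gamma_{n,0}, \gamma_{n,1}) < \infty$. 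The map $r \mapsto r^\QQ/\QQ$ is nondecreasing on $[0,+\infty]$ regardless of the sign of $\QQ$, hence upper semicontinuous as a composition of USC $\ell$ with a USC nondecreasing scalar; passing $\limsup_n$ termwise through the finite, sign-definite sum therefore gives
\begin{align*}
\limsup_n \KE_\QQ(\gamma_n) \leq \sum_{i=0}^{k-1} \frac{\ell(\gamma_{t_i}, \gamma_{t_{i+1}})^\QQ}{\QQ\,(t_{i+1}-t_i)^{\QQ-1}} + R(a),
\end{align*}
with $R(a) \equiv 0$ for $\QQ < 0$, and $R(a) \to 0$ as $a \uparrow 1$ for $\QQ \in (0,1)$.

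To conclude, I take the infimum of the right-hand sum over partitions of $[0,a]$ whose interior points lie in $S$. By \Cref{Pr:speed}(iii) applied to $\gamma|_{[0,a]}$, the unconstrained infimum equals $\smash{\frac{1}{\QQ}\int_0^a |\dot\gamma|^\QQ \d\Leb_1}$; restricting to interior points in $S$ leaves the infimum unchanged, because for any partition $0 = \bar t_0 < \ldots < \bar t_k = a$ one can perturb each interior $\bar t_i$ downward to some $t_i \in S$ with $t_i \uparrow \bar t_i$, whereupon left-continuity of $\gamma$ gives $\gamma_{t_i} \to \gamma_{\bar t_i}$ and the same USC sign analysis makes the perturbed sum exceed the original by at most $\epsilon$. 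Finally, letting $a \uparrow 1$ through $S$ and using additivity of the integral together with $|\dot\gamma|^\QQ \in L^1([0,1];\Leb_1)$ (guaranteed by \eqref{eq:actioUpperBound} when $\QQ > 0$; the case $\KE_\QQ(\gamma) = -\infty$ being trivial when $\QQ < 0$) yields the claim.
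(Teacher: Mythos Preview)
Your proof is correct and takes a genuinely different route from the paper's. Both begin by passing to a subsequence with $\Leb_1$-a.e.\ pointwise convergence, but then diverge.

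The paper invokes the more specialized \Cref{Pr:speed}(iv) about \emph{uniform} partitions: for large $m$, most offsets $t\in[0,1/m]$ yield a uniform partition whose Riemann-type sum is within $\eps$ of $\KE_\QQ(\gamma)$; a measure-theoretic argument then locates such a $t$ at which additionally $\gamma_{n,t+i/m}\to\gamma_{t+i/m}$ holds for all $i$, and one passes to the limit termwise. Your approach instead uses only \Cref{Pr:speed}(iii), truncates at $a\in S\cap(0,1)$ to sidestep the endpoint $t=1$, bounds the tail uniformly via the hypothesis $\sup_n\ell(\gamma_{n,0},\gamma_{n,1})<\infty$, and then argues that restricting interior partition points to the full-measure set $S$ does not change the infimum, because any partition can be perturbed leftward into $S$ and left-continuity of $\gamma$ together with upper semicontinuity of $\ell$ (and monotonicity of $r\mapsto r^\QQ/\QQ$) forces the perturbed sum to have $\limsup$ at most the original.

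Your route is more elementary in that it avoids the somewhat technical item (iv), at the cost of the perturbation step; the paper's route is slicker once (iv) is in hand but relies on that additional structural result. Both handle the sign dichotomy $\QQ\gtrless 0$ in essentially the same way.
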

\begin{proof} We can pass to a non-relabelled subsequence realizing the $\limsup$ and then --- by \Cref{Pr:CC Polish} --- to a further extraction so that $\gamma_{n,t}\to\gamma_t$ for every $t\in \mathscr D$, where  $ \mathscr D\subset[0,1]$ is a Borel set of full measure containing 0. We can thus find   partitions $P_k=\{0=t_{k,0},\ldots,t_{k,n_k}=1\}$ of $[0,1]$ contained in $\mathscr D\cup\{1\}$ with $|P_k|\to 0$ (notation from item $(iv)$ of Theorem \ref{Pr:speed}).  Notice that since $u_q$ is non-decreasing, upper semicontinuity of $\ell$ implies that of $u_q\circ\ell$, thus if we are in the case  $\gamma_{n,1}\to\gamma_1$ we have
\[
\begin{split}
\sum_{i}(t_{k,i+1}-t_{k,i})\, u_q\big(\tfrac{\ell(\gamma_{t_{k,i}},\gamma_{t_{k,{i+1}}})}{t_{k,i+1}-t_{k,i}}\big)&\geq \limsup_{n\to\infty}\sum_{i}(t_{k,i+1}-t_{k,i})\, u_q\big(\tfrac{\ell(\gamma_{n,t_{k,i}},\gamma_{n,t_{k,{i+1}}})}{t_{k,i+1}-t_{k,i}}\big)\\
\text{(by \eqref{eq:uqinf})}\qquad&\geq         \limsup_{n\to+\infty}\, \KE_\QQ(\gamma_n),
\end{split}
\] 
so that letting $k\to\infty$ and recalling item $(iv)$ of Theorem \ref{Pr:speed} we are done. 

If we don't know whether $\gamma_{n,1}\to\gamma_1$, still the same argument shows that 
\[
\limsup_{n\to+\infty}\, \KE_\QQ(\gamma_n,[0,T]) \leq \KE_\QQ(\gamma,[0,T])\qquad\forall T\in\mathscr D,
\]
where $ \KE_\QQ(\gamma,[0,T]):=\int_0^Tu_q(|\dot\gamma_t|)\,\d t$ is the $q$-action on the interval $[0,T]$.  It is easy to see that $\lim_{T\uparrow 1}\KE_\QQ(\gamma,[0,T])=\KE_\QQ(\gamma,[0,1])$, so the conclusion will follow if we show that
\[
\lim_{T\uparrow1}\sup_n\KE_\QQ(\gamma_n,[T,1]) \, \le\, 0.
\]
To see this, let  $\sup_n\ell(\gamma_n(0),\gamma_n(1))<L <\infty$ and notice that Jensen's inequality applied to the concave upper semicontinuous function $u_q$ yields
\[
\KE_\QQ(\gamma_n,[T,1]) 
\leq (1-T)u_q\Big((1-T)^{-1}\int_T^1|\dot\gamma_{n,t}|\,\d t\Big)
\leq (1-T)u_q((1-T)^{-1}L)=\frac{L^\QQ}{\QQ}(1-T)^{1-q},
\]
where the second inequality follows from monotonicity of $u_q$ and the bound \eqref{eq:boundspeed}. The conclusion now follows from the fact that $q<1$.
\end{proof}


We conclude the section with a couple of simple results about the structure of (rough) $\ell_q$-geodesics. In particular these will show $\ell_q$-geodesics to/from Dirac masses do not depend on~$q$. Note that the concepts of geodesic defined in \Cref{Def:l-geodesics} apply equally well to the metric spacetime $(\Prob(\M),\ell_q)$ equipped with its narrow topology.

\begin{proposition}[On the structure of $\ell_q$-geodesics]\label{prop:ellqgeo} Let $(\M,\uptau,\ell)$ be a Polish metric spacetime in which $\ell$ is upper semicontinuous and does not take the value $+\infty$  and let $0\neq q<1$. Fix $\nu_0,\nu_1\in\pem$ with $\ell_q(\nu_0,\nu_1)\in(0,+\infty)$  and $(\mu_t)$ 
a rough $\ell_q$-geodesic from $\nu_0$ to $\nu_1$.

Fix  $t,s\in[0,1]$ with $t\leq s$ and  $\pi^0,\pi^t,\pi,\pi^s,\pi^1\in\Prob(\M^2)$ $\ell_q$-optimal for $(\nu_0,\mu_0)$, $(\mu_0,\mu_t)$, $(\mu_t,\mu_s)$, $(\mu_s,\mu_1)$ and $(\mu_1,\nu_1)$ respectively and a gluing  $\hat\pi\in\Prob(\M^6)$ of these along their common marginals. Then the coupling $({\rm Pr}_1,{\rm Pr}_6)_*\hat\pi$ is $\ell_q$-optimal for $(\nu_0,\nu_1)$ and
\begin{equation}
\label{eq:tuttiinfila}
\ell(x_0,y_t)=\ell(y_0,y_t),\quad
\ell(y_t,x_1)=\ell(y_t,y_1),\quad
\ell(x_0,x_1)=\ell(y_0,y_1),\quad
\ell(y_t,y_s)=(s-t)\ell(x_0,x_1)
\end{equation}
holds  for $\hat\pi$-a.e.\ $(x_0,y_0,y_t,y_s,y_1,x_1)$.
\end{proposition}

\begin{proof} Note $\ell_q(\nu_0,\nu_1)>0$ forces $\hat \pi[\{\ell>0\}]=1$
when $\QQ<0$.
For $\hat\pi$ as in the statement the inequality 
\begin{equation}
\label{eq:6variabili}
\ell(x_0,x_1)\geq\ell(x_0,y_0)+\ell(y_0,y_t)+\ell(y_t,y_s)+\ell(y_s,y_1)+\ell(y_1,x_1)
\end{equation}
holds for $\hat\pi$-a.e.\ $(x_0,y_0,y_t,y_s,y_1,x_1)$ and therefore
\[
\begin{split}
\ell_q(\nu_0,\nu_1)&\geq \Big({ \int\ell^q(x_0,x_1)\,\d\hat\pi}\Big)^{\frac 1q}
\\
\text{(by \eqref{eq:6variabili})}\qquad
&\geq\Big(\int\big(\ell(x_0,y_0)+\ell(y_0,y_t)+\ell(y_t,y_s)+\ell(y_s,y_1)+\ell(y_1,x_1)\big)^q\,\d\hat\pi\Big)^{\frac 1q}
\\&\stackrel*\geq
\ell_q(\nu_0,\mu_0)+\ell_q(\mu_0,\mu_t)+\ell_q(\mu_t,\mu_s)+\ell_q(\mu_s,\mu_1)+\ell_q(\mu_1,\nu_1)
\stackrel{\eqref{eq:geo}}=\ell_q(\nu_0,\nu_1),
\end{split}
\]
where
in the starred inequality we used  \eqref{eq:revlq} and optimality of the given plans. It follows that all the inequalities are equalities, and also inspecting the equality case in \eqref{eq:revlq} the conclusion quickly follows.
\end{proof}

From the above it follows that $\ell_q$-geodesics to/from Dirac masses do not depend on $q$:


\begin{proposition}[On $\ell_q$-geodesics with Dirac endpoints]\label{prop:geodindq}
 Let $(\M,\uptau,\ell)$ be a Polish metric spacetime  in which $\ell$ is upper semicontinuous and does not take the value $+\infty$. Also, let $\nu\in\pem$ and $\bar x\in \M$ be such that $\log(\ell(\cdot,\bar x))\in L^\infty(\nu)$ and let $(\mu_t)_{t \in [0,1]} \subset \Prob(\M)$.
 
 Then $(\mu_t)$ is a rough $\ell_q$-geodesic from $\nu$ to $\delta_{\bar x}$ for some $0\neq q<1$ if and only if it is so for every   $0\neq q<1$. In this case, for $t,s\in[0,1]$, $t\leq s$ and $\pi\in\Pi_\leq(\mu_t,\mu_s)$, the $\ell_q$-optimality of $\pi$ is also independent of $q$.

Similarly statements hold for rough $\ell_q$-geodesics from Dirac masses to other measures.
\end{proposition}

\begin{proof}
Suppose that $(\mu_t)$ is a rough $\ell_q$-geodesic from $\nu$ to $\delta_{\bar x}$ for a given  $0\neq q<1$, fix $t,s\in[0,1]$ with $t\leq s$ and let     $\pi^0,\pi,\pi^1\in\Prob(\M^2)$ be  $\ell_q$-optimal for $(\nu,\mu_t)$, $(\mu_t,\mu_s)$ and $(\mu_s,\delta_{\bar x})$ respectively. Let $\hat\pi\in\Prob(\M^4)$ be a gluing of these along the common marginals and notice that  \Cref{prop:ellqgeo} above (which applies because $\log(\ell(\cdot,\bar x))\in L^\infty(\nu)$  implies $\ell_q(\nu,\delta_{\bar x})\in(0,+\infty)$) says that for  $\hat\pi$-a.e.\  $(x_0,y_t,y_s,x_1)$ we have
\begin{equation}
\label{eq:allineate}
\ell(x_0,y_t)=t\ell(x_0,x_1)\quad\text{and}\quad\ell(y_t,y_s)=(s-t)\ell(x_0,x_1)\quad\text{and}\quad\ell(y_s,x_1)=(1-s)\ell(x_0,x_1)
\end{equation}
and logarithmically bounded. Now let $0\neq q'<1$ be another exponent and notice that 
\[
\begin{split}
\ell_{q'}(\nu,\delta_{\bar x})&\geq \ell_{q'}(\nu,\mu_t)+\ell_{q'}(\mu_t,\mu_s)+\ell_{q'}(\mu_s,\delta_{\bar x})\\
&\geq \Big(\int \ell^{q'}(x_0,y_t)\,\d\hat\pi\Big)^{\frac1{q'}}+ \Big(\int \ell^{q'}(y_t,y_s)\,\d\hat\pi\Big)^{\frac1{q'}}+ \Big(\int \ell^{q'}(y_s,x_1)\,\d\hat\pi\Big)^{\frac1{q'}}\\
\text{(by \eqref{eq:allineate})}\qquad&=  \Big(\int \ell^{q'}(x_0,x_1)\,\d\hat\pi\Big)^{\frac1{q'}}=\ell_{q'}(\nu,\delta_{\bar x}),
\end{split}
\]
thus again all the inequalities must be equalities, implying in particular that the plans $\pi^0,\pi,\pi^1$ are also $\ell_{q'}$-optimal. This computation and \eqref{eq:allineate} also show that $\ell_{q'}(\mu_t,\mu_s)=(s-t)\ell_{q'}(\nu,\delta_{\bar x})$, so the arbitrariness of $t,s$ imply that $(\mu_t)$ is also a rough $\ell_{q'}$-geodesic.
\end{proof}

\subsection[Lifting curves of probability measures to measures on curves]{Lifting curves of probability measures to measures on curves}\label{Sub:Lifting}

The main result of this section is  \Cref{Th:Lifting}, where we prove a ``lifting''  result for left-continuous causal paths on the space of Borel probability measures $\Prob(\M)$ on a given Polish metric spacetime $(\M,\uptau,\ell)$.

Thanks to \Cref{prop:ellqst} we know  that  $(\Prob(\M),\ell_q)$ is a Polish metric spacetime;  we shall always endow $\Prob(\M)$ with the narrow topology,
so that $\CC([0,1];\Prob(\M))$ refers to narrowly left continuous causal curves of measures. In particular, the definition of   $q$-action we gave in \eqref{Eq:KE M} also applies here, giving  the functional ${\mathscr{A}}_\QQ$ on causal curves in $\Prob(\M)$ defined as
\begin{align*}
{\mathscr{A}}_\QQ(\mu_\cdot) := \begin{cases} \displaystyle\frac{1}{\QQ}\int_0^1 \big\vert\dot\mu_t\big\vert_\QQ^\QQ\d t & \text{if } 0 \neq \QQ< 1,\\ &\\ |\dot{\bm \mu}|_1([0,1]) & \text{if } \QQ =1,
\end{cases}
\end{align*}
where  here and below $|\dot\mu_t|_q$ denotes the causal speed w.r.t.\ $\ell_q$ and for clarity we shall often write $\tfrac1q|\dot\mu_t|_q^q$ in place of $u_q(|\dot\mu_t|_q)$,
with the conventions analogous to \eqref{eq:up} implicitly understood. 
Calling a Borel probability measure $\bdpi$ on left continuous causal curves
a {\em plan} (or dynamical plan),
we have the following general relation between actions at the level of measures and at the level of curves:


\begin{lemma}[Relation between $\ell$- and $\ell_q$-causal speeds]
\label{Le:speed comparison} Let $(\M,\uptau,\ell)$ be a Polish metric spacetime, $0\neq \QQ < 1$ and    $\boldsymbol{\pi} \in \mathscr{P}(\CC([0,1]; \M))$. Put $\mu_t:=(\eval_t)_*\ppi$ for every $t\in[0,1]$. Then $(\mu_t)\in \CC([0,1];\Prob(\M))$ and 
\begin{equation}
\label{eq:intub}
{\mathscr{A}}_\QQ(\mu_\cdot) \geq \frac{1}{\QQ}\iint_0^1 \vert\dot\gamma_r\vert^\QQ\d r\d\bdpi(\gamma).
\end{equation}
Moreover, the pointwise bound
\begin{equation}
\label{eq:pointspbound}
\frac{1}{\QQ}\big\vert \dot\mu_t\big\vert^\QQ_\QQ \geq \frac{1}{\QQ}\int\vert\dot\gamma_t\vert^\QQ\d\boldsymbol{\pi}(\gamma)\qquad a.e.\ t
\end{equation}
holds if either $q\in(0,1)$ or  $\smash{\int \vert\dot\gamma_\cdot\vert^\QQ\d\bdpi(\gamma) \in L^1([0,1];\Leb^1)}$.
\end{lemma}

\begin{proof} For $t,s\in[0,1]$ with $t\leq s$ notice that since $\ppi$ is concentrated on causal curves the plan $(\eval_t,\eval_s)_*\ppi$, that is admissible for $(\mu_t,\mu_s)$, is concentrated on $\{\ell\geq 0\}$, thus $t\mapsto\mu_t\in\Prob(\M)$ is causal. For left continuity let $\varphi\in C_b(\M)$ and notice that since $\ppi$ is concentrated on left continuous curves, an application of the dominated convergence theorem yields $\lim_{s\uparrow t}\int \varphi(\gamma_s)\,\d\ppi(\gamma)=\int \varphi(\gamma_t)\,\d\ppi(\gamma)$, showing narrow left continuity of $(\mu_t)$.
 
 We turn to \eqref{eq:intub}. For any $t\in (0,1)$ and any $h\in (0,1-t)$, $(\eval_t,\eval_{t+h})_\push\bdpi$ is a causal coupling of $\mu_t$ and $\mu_{t+h}$ by construction. Thus 
\begin{align*}
    \frac{\ell_\QQ(\mu_t, \mu_{t+h})^\QQ}{q} &\geq   \int \frac{\ell(\gamma_t,\gamma_{t+h})^\QQ }q\d\bdpi(\gamma) \geq \tfrac{1}{\QQ}\int \Big[\!\int_t^{t+h} \vert\dot\gamma_r\vert\d r\Big]^\QQ\d\bdpi(\gamma) \geq \tfrac{h^{\QQ-1}}{\QQ}\iint_t^{t+h} \vert\dot\gamma_r\vert^\QQ\d r\d\bdpi(\gamma).
    \end{align*}
    where the second and third inequality follow from the bound \eqref{eq:boundspeed} and Jensen's inequality respectively.
    Now \Cref{Pr:speed}(iii) yields \eqref{eq:intub}. 
    For \eqref{eq:pointspbound} we divide the above by $h^q$ and let $h\downarrow0$: the left hand side converges to $\frac{1}{\QQ}\big\vert \dot\mu_t\big\vert^\QQ_\QQ $ for a.e.\ $t$ by \Cref{Pr:speed}(ii), so we discuss  the right hand side. If $q\in(0,1)$ we notice that for $g:[0,1]\to [0,+\infty]$ Borel we have $\varliminf_{h}\tfrac1h\int_t^{t+h}g\,\d\Leb^1\geq g(t)$ for a.e.\ $t$ (because $\varliminf_{h}\tfrac1h\int_t^{t+h}g\,\d\Leb^1\geq\varliminf_{h}\tfrac1h\int_t^{t+h}n \wedge  g\,\d\Leb^1\geq n\wedge g(t)$ for every $n$ and a.e.\ $t$), thus the claim follows by the non-negativity of $\tfrac1q|\dot\gamma_t|^q$. If instead $q<0$ the conclusion follows from our integrability assumption and Lebesgue's differentiation theorem.
\end{proof}

We are interested in understanding whether, given $(\mu_t)$, we can find $\ppi$ for which equality holds in \eqref{eq:intub}. To this aim, the following compactness criterion will be useful (compare with  the limit curve \Cref{prop-cad-comp}).

\begin{lemma}[A tightness criterion]\label{le:tightcrit} Let $(\M,\uptau,\ell)$ be a forward spacetime and $\mathcal G\subset \Prob(\CC([0,1];\M))$ a collection of measures such that:
\begin{itemize}
\item[i)]  For some Borel $\mathscr D\subset[0,1]$ of full measure with $0\in{\mathscr D}$ we have: for any $t\in\mathscr D$ there is a compact set $K_t$ such that  $(\e_t)_*\ppi$ is concentrated on $K_t$ for every $\ppi\in\mathcal G$ and $t\in\mathscr D$,
\item[ii)] There is $\mathcal K\subset \Prob(\M)$ tight so that for all $\ppi\in\mathcal G$ there is $\mu\in\mathcal K$ with $(\e_1)_*\ppi\preceq\mu$. 
\end{itemize}
Then $\mathcal G$ is tight.

If moreover the topology $\uptau$ on $\M$ is locally causally convex, then we can weaken $(i)$ above to
\begin{itemize}
\item[i')]  For some   $\tilde{\mathscr D}\subset[0,1]$ dense with $0\in\tilde{\mathscr D}$  the set $\{(\e_t)_*\ppi:\ppi\in\mathcal G\}\subset\Prob(\M)$ is tight for all $t\in\tilde{ \mathscr D}$.
\end{itemize}
\end{lemma}

\begin{proof} Let $\eps>0$ and find $K\subset\M$ compact such that   $\nu(K^c)<\eps$ for every $\nu\in\mathcal K$. Consider the collection $\hat K$ of curves $\gamma$ such that $\gamma_t\in K_t$ for every $t\in\mathscr D$ and $\gamma_1\leq x$ for some $x\in K$: by \Cref{prop-cad-comp}, $\hat K$ is  compact in $\CC([0,1];\M)$ and by our assumptions  we have $\ppi(\hat K)\geq1-\eps$, proving the claim in this case.

Under the assumption $(i')$ we instead argue as follows. It is not restrictive to assume that $\tilde{\mathscr D}$ is countable. Let $\eps>0$, find $\{\eps_t>0:t\in\tilde{\mathscr D}\}$ such that $\sum_{t\in\tilde{\mathscr D}}\eps_t<\eps$ and then --- by the tightness assumption --- $H_t\subset \M$ compact so that $(\e_t)_*\ppi(H_t)>1-\eps_t$ for every $\ppi\in\mathcal G$ and $t\in\tilde{\mathscr D}$. Also, let   $H_1\subset \M$ be compact so that $\nu(H_1)>1-\eps_1$ for every $\nu\in\mathcal K$.

For  $\ppi\in\mathcal G$ let $\nu\in\mathcal K$ be so that $(\e_1)_*\ppi\preceq\nu$ and $\pi\in\Pi_\leq((\e_1)_*\ppi,\nu)$: since $\pi$ is concentrated on $\{(x,y):x\leq y\}$   we have $(\e_1)_*\ppi(J^-(H_1)^c) \, \le\, \pi(\M\times  (H_1^c))=\nu(H_1^c)<\eps$.

Let $\hat H:=\{\gamma\in G:\gamma_t\in H_t\text{ for any $t\in\tilde{\mathscr D}$ and $\gamma_1\in J^-(H_1)$}\}$ and notice that by \Cref{prop-cad-comp}  we know that $\hat H$ is relatively compact. To conclude observe that $\hat  H^c\subset\cup_{t\in\tilde{\mathscr D}}\e_t^{-1}(H_t^c)\cup \e_1^{-1}(J^-(H_1)^c)$ and thus $\ppi(\hat H^c)\leq (\e_1)_*\ppi(J^-(H_1)^c)+\sum_{t\in \tilde{\mathscr D}}(\e_t)_*\ppi(H_t^c)\leq\eps+\sum_{t\in \tilde{\mathscr D}}\eps_t<2\eps$   for every $\ppi\in\mathcal G$.
\end{proof}

We are now ready to state and prove the main result of the section.  Notice that in order to gain tightness we shall need to assume something either at the level of the topology of  $\M$ (conditions $(A),(B)$) or at the level of the specific curve of measures considered (condition $(C)$).  In relation to $(B)$, it is worth recalling that several natural topologies induced by $\ell$ are locally causally convex (see \Cref{R:push-up2}).

\begin{theorem}[Lifting paths of measures to measures on paths]\label{Th:Lifting} Let $(\M,\uptau,\ell)$ be a forward metric spacetime  such that $\ell$ is upper semicontinuous and does not take the value $+\infty$. Let $(\mu_t)\in \CC([0,1];\Prob(\M))$ satisfy $\mu_t\in\pem$ for every $t\in[0,1]$ and assume at least one of the following holds:
\begin{itemize}
\item[(A)] $\M$ is globally hyperbolic, i.e. emeralds are compact;
\item[(B)] the topology on $\M$ is locally causally convex;
\item[(C)] for some   $D\subset[0,1]$ dense  with $0\in  D$   the set   $\cup_{t\in  D}\supp\mu_t$ is relatively compact.
\end{itemize}
Let   $0\neq q<1$. Then $(\mu_t)$ is induced  by a plan $\bdpi\in \mathscr{P}(\CC([0,1];\M))$ with the property 
\begin{equation}
\label{action identity}
{\KE}_\QQ(\mu_\cdot) = \frac{1}{\QQ}\iint_0^1 \vert\dot\gamma_t\vert^\QQ\d t\d\bdpi(\gamma).
\end{equation}
Moreover, if ${\KE}_\QQ(\mu_\cdot)>-\infty$, then any such plan also satisfies  the a.e.\ pointwise identity
    \[
        \big\vert\dot\mu_t\big\vert_\QQ^\QQ = \int\vert\dot\gamma_t\vert^\QQ\d\bdpi(\gamma)\qquad a.e.\ t.
\]
\end{theorem}

\begin{proof}\ \\
\noindent{\sc Step 0: preliminary considerations.} From the bound \eqref{eq:actioUpperBound} and \Cref{L:usc ell admits emerald bounds} we have ${\KE}_\QQ(\mu_\cdot) \leq \tfrac1q\ell^q_q(\mu_0,\mu_1)<\infty$, thus if ${\KE}_\QQ(\mu_\cdot)>-\infty$ the function $t\mapsto         \big\vert\dot\mu_t\big\vert_\QQ^\QQ$ is in $L^1([0,1])$. It follows  that the second statement is a consequence of the   first one together with  \Cref{Le:speed comparison}: integrating \eqref{eq:pointspbound} yields \eqref{eq:intub} whence comparison with \eqref{action identity}  and the integrability just noted force equality in \eqref{eq:pointspbound} for a.e.\ $t$.
Here for $q<0$ we have used the identity \eqref{action identity} and the assumption ${\KE}_\QQ(\mu)>-\infty$ to ensure that $\smash{\int \vert\dot\gamma_\cdot\vert^\QQ\d\bdpi(\gamma) \in L^1([0,1];\Leb^1)}$ so that \eqref{eq:pointspbound} holds. 

\noindent{\sc Step 1: construction of the plan.} The idea, originating in an analogous standard construction in  different settings \cite{AGS:08, Lisini:2007, Villani:2009}, is to build $\bdpi$ as the narrow limit of piecewise constant plans which ``interpolate'' $(\mu_t)$ at intermediate points. 

Let $n\mapsto  P_n:=\{0=t^n_0<\ldots<t^n_n=1\}$ be a sequence  of partitions of $[0,1]$ containing $0,1$, so that $P_n\subset P_{n+1}$ for every $n\in\N$ and with $|P_n|\to 0$ (notation from \Cref{Pr:speed}): under the assumption $(C)$ we pick these so that $P_n\subset  D\cup\{1\}$ for any  $n$, while under assumptions $(A),(B)$ we pick these arbitrarily.

Use the upper semicontinuity of $\ell$ and \Cref{le:USCRE} to find an $\ell_q$-optimal plan $\pi^n_i\in\Pi_\leq(\mu_{ t^n_i},\mu_{ t^n_{i+1}} )$. Recursively glue these plans along their common marginals to produce $\pi^n\in\Prob(\M^{n+1})$, then consider the map $F^n:\M^{n+1}\to\CC([0,1];\M)$ sending $(x_0,\ldots,x_{n+1})$ to the curve that is equal to  $x_i$ on $(\tfrac{t^n_{i-1}+t^n_{i}}2,\tfrac{t^n_i+t^n_{i+1}}2]$, where $t^n_{-1}:=0$ and $t^n_{n+1}:=1$.  Notice that $F^n$ is continuous and  set $\ppi^n:=F^n_*\pi^n$.

We claim that the sequence $(\ppi^n)$ is tight and to this aim we shall make use of one of assumptions $(A),(B),(C)$.

\underline{{\sc (A)}} Let $\eps>0$ and find a compact set $K\subset\M$ with $\mu_0(K^c)+\mu_1(K^c)<\eps$. Then $E:=J(K,K)$ is compact by assumption and thus by \Cref{prop-cad-comp} (with $\mathscr D=[0,1]$ and $K_t=E$ for every $t\in[0,1]$) the set $\hat E:=\{\gamma\in\CC([0,1];\M):\gamma_t\in E,\ \forall t\in[0,1]\}=\e_0^{-1}(J^+(K))\cap \e_1^{-1}(J^-(K))$ is compact in $\CC([0,1];\M)$.  Since for every $n\in\N$ we have $(\e_0)_*\ppi^n=\mu_0$ and $(\e_1)_*\ppi^n=\mu_1$, we deduce
\[
\ppi^n(\hat E^c)\leq \ppi^n(\e_0^{-1}(K^c))+\ppi^n(\e_1^{-1}(K^c))=\mu_0(K^c)+\mu_1(K^c)<\eps\qquad\forall n\in\N
\] 
proving   the desired tightness.

\underline{{\sc (B)}}  We shall apply  \Cref{le:tightcrit}. We  have  $(\e_1)_*\ppi^n=\mu_1$ for every $n\in\N$, so assumption $(ii)$ of  \Cref{le:tightcrit} holds.  Moreover,  $\tilde{\mathscr D}:=\cup_nP_n$ is dense in $[0,1]$ and  for every $t\in\tilde{\mathscr D}$ the measure $(\e_t)_*\ppi^n$ is eventually equal to $\mu_t$ (here we used that $P_n\subset P_{n+1}$), thus assumption $(i')$ of \Cref{le:tightcrit} holds. Hence  \Cref{le:tightcrit} gives the desired tightness. 

\underline{{\sc (C)}} We shall apply again \Cref{le:tightcrit} and as before we notice that $(\e_1)_*\ppi^n=\mu_1$ for every $n\in\N$, so assumption $(ii)$ holds. Also, for every $t\in[0,1)$ and $n\in\N$ we have $(\e_t)_*\ppi^n\in\{\mu_t:t\in \cup_nP_n\}$, thus if we  call $K\subset\M$ a compact set containing $\supp\mu_t$ for any $t\in  \mathscr D\supset\cup_nP_n$, then assumption $(i)$ of \Cref{le:tightcrit} is satisfied with $\mathscr D=[0,1)$ and   $K_t:=K$ for any $t\in\mathscr D$. Thus, as before, \Cref{le:tightcrit} gives the desired tightness. 

Having proved tightness, after passing   to a non-relabelled subsequence we can therefore assume that  $(\ppi^n)$ narrowly converges to some $\ppi\in\Prob(\CC([0,1];\M))$.
 
We claim that $(\e_t)_*\ppi=\mu_t$ for every $t\in[0,1]$:  since $t\mapsto(\e_t)_*\ppi$ is narrowly left continuous (\Cref{Le:speed comparison}), by \Cref{prop:distord} applied with $\Prob(\M)$ in place of $\M$  it suffices to prove   $\mu_s\preceq(\e_t)_*\ppi\preceq\mu_t$ for all dyadics $s<t$. Thus fix such $s,t$ and notice that for any $\varphi\in C_b(\M)$ the map $\gamma\mapsto\tfrac1{t-s}\int_s^t\varphi(\gamma_r)\,\d r$ is continuous from $\CC([0,1];\M)$ to $\R$ (by the choice of $\sfD$-topology on  $\CC([0,1];\M)$ and an application of the dominated convergence theorem), thus 
\[
\tfrac1{t-s}\iint_s^t\varphi(\gamma_r)\,\d r\,\d\ppi^n(\gamma)\quad\to\quad\tfrac1{t-s}\iint_s^t\varphi(\gamma_r)\,\d r\,\d\ppi(\gamma)\qquad\text{as $n\to\infty$}.
\]
In other words, the measures $\mu^n_{s,t}:=\tfrac1{t-s}\int_s^t(\e_r)_*\ppi^n\,\d r$ (the integral  being interpreted in the weak sense) narrowly converge to $\mu_{s,t}:=\tfrac1{t-s}\int_s^t(\e_r)_*\ppi\,\d r$ and since $\mu_s\preceq \mu^n_{s,t}\preceq \mu_t$ for every $n$, by the closure of $\{\ell\geq 0\}$ we deduce that  $\mu_s\preceq \mu_{s,t}\preceq \mu_t$. Finally, to establish the only remaining claim $\mu_s\preceq(\e_t)_*\ppi\preceq\mu_t$ of this paragraph, we use left narrow continuity and $\preceq$-monotonicity of $ r\mapsto(\e_r)_*\ppi $ 
to deduce that $\mu_{r,t}$ converges narrowly and $\preceq$-monotonically
to $(\eval_t)_*\bdpi$ as $r\uparrow t$, and recall from \Cref{prop:ellqst} that the causal relation $\preceq$ on $\Prob(M)$ is narrowly closed.

\noindent{\sc Step 2: proof of the action identity.} By  \Cref{Le:speed comparison} it suffices to prove the reverse  inequality $\le$ to \eqref{eq:intub} also holds. To this aim we start imitating the notation of \Cref{Pr:speed} and define, for $\gamma\in \CC([0,1];\M)$ and a partition $P=\{0=t_0<\ldots<t_n=1\}$ of $[0,1]$, the quantity
\[
A_q(\gamma,P):=\sum_{i=0}^{n-1} (t_{i+1}-t_i)\,u_q\Big(\frac{\ell(\gamma_{t_i},\gamma_{t_{i+1}})}{t_{i+1}-t_i}\Big).
\]
Then notice  that the construction and  the properties \eqref{eq:monpart} and \eqref{eq:limpart} give
\[
{\KE}_\QQ(\mu_\cdot) = \lim_{n\to +\infty} \int A_q(\gamma,P_n)\,\d\ppi^n(\gamma)\leq\liminf_{n\to\infty}  \int u_q(\ell(\gamma_0,\gamma_1))\,\d\ppi^n(\gamma).
\]
Now fix ${\sf t}_0,{\sf t}_1\in(0,1)$ belonging to some of the $P_n$'s --- and thus eventually to all of them --- with ${\sf t}_0<{\sf t}_1$ and notice that applying the above to the curve $t\mapsto\mu_{(1-t){\sf t}_0+t{\sf t}_1}$  gives
\[
\begin{split}
\int_{{\sf t}_0}^{{\sf t}_1}u_q(|\dot\mu_t|)\,\d t&\leq \liminf_{n\to\infty }\int u_q(\ell(\gamma_{{\sf t}_0},\gamma_{{\sf t}_1}))\,\d\ppi^n(\gamma)\leq\limsup_{n\to\infty}\int\dashint_0^{{\sf t}_0}\!\!\!\dashint_{{\sf t}_1}^1u_q(\ell(\gamma_t,\gamma_s))\,\d s\,\d t\,\d\ppi^n(\gamma)
\end{split}
\]
having used also the monotonicity of $u_q$ and the causality of the $\gamma$'s in the second inequality. Arguing as above, starting from the observation that for $\varphi\in C_b(\M)$ the map $\CC([0,1];\M)\ni \gamma\mapsto \dashint_0^{{\sf t}_0}\!\!\dashint_{{\sf t}_1}^1\varphi(\gamma_t,\gamma_s)\,\d s\,\d t\in\R$ is continuous we see that the sequence $\dashint_0^{{\sf t}_0}\!\!\dashint_{{\sf t}_1}^1(\e_t,\e_s)_*\ppi^n \,\d s\,\d t$ narrowly converges to $\dashint_0^{{\sf t}_0}\!\!\dashint_{{\sf t}_1}^1(\e_t,\e_s)_*\ppi\,\d s\,\d t$. Since $\ell$ is upper semicontinuous  and  bounded from above on $E$ (\Cref{L:usc ell admits emerald bounds}), we can pass to the limit in the above and get
\[
\int_{{\sf t}_0}^{{\sf t}_1}u_q(|\dot\mu_t|)\,\d t\leq \int\dashint_0^{{\sf t}_0}\!\!\!\dashint_{{\sf t}_1}^1u_q(\ell(\gamma_t,\gamma_s))\,\d s\,\d t\,\d\ppi(\gamma)\leq \int u_q(\ell(\gamma_0,\gamma_1))\,\d\ppi(\gamma),
\]
having used again the monotonicity of $u_q$ and causality of the $\gamma$'s. Letting ${\sf t}_0\downarrow0$ and  ${\sf t}_1\uparrow1$ we obtain $\int_0^1 u_q(|\dot\mu_t|)\,\d t\leq\int u_q(\ell(\gamma_0,\gamma_1))\,\d\ppi(\gamma)$. A simple scaling argument now readily implies
\[
\int_0^1u_q(|\dot\mu_t|)\,\d t\leq \int A_q(\gamma,P)\,\d\ppi(\gamma)\qquad\text{ for every partition $P$ of $[0,1]$.}
\]
To conclude it therefore suffices to show that
\[
\iint_0^1u_q(|\dot\gamma_t|)\,\d t\,\d\ppi(\gamma)=\lim_{k\to\infty }\int A_q(\gamma,P_k),\d\ppi(\gamma),\qquad\text{for given partitions $(P_k)$  with $|P_k|\to 0$}.
\]
To see this, recall that $A_q(\gamma,P)\leq u_q(\ell(\gamma_0,\gamma_1))\leq u_q(\sup_{E^2}\ell)<\infty$ for any $\gamma\in\supp\ppi$, then use \eqref{eq:limpart} in conjunction with the monotone convergence theorem. 
\end{proof}

It is worth encoding the property established by the previous result in a definition:
\begin{definition}[Lifting causal curves of measures]
    Let $(\M,\uptau,\ell)$ be a Polish metric spacetime,  $\bdpi \in \mathscr{P}(\CC([0,1];\M))$ and $(\mu_t)\in \CC([0,1];\mathscr{P}(\M))$. We say that $\ppi$ is a lifting of   $(\mu_t)$ provided $\mu_t = (\eval_t)_*\bdpi$ for each $t\in[0,1]$ and \eqref{action identity} holds.
\end{definition}

To clarify the upcoming discussion, we recall that according to  \Cref{Def:l-geodesics},  a rough timelike $\ell_q$-geodesic  is a curve $(\mu_t)\subset\Prob(\M)$ such that 
\[
 0<\ell_q(\mu_t,\mu_s) = (s-t)\,\ell(\mu_0,\mu_1) <\infty\qquad\forall t,s\in[0,1],\ t<s.
\]
This, however, does not imply that  `the mass moves all along timelike geodesics'; more precisely, if $\ppi$ is a lifting of $(\mu_t)$ we do not necessarily have that $\ppi$ is concentrated on timelike geodesics. We thus give the following:

\begin{definition}[Strongly timelike $\ell_q$-geodesics]
Let $(\M,\uptau,\ell)$ be a Polish metric spacetime, $0\neq q<1$ and $(\mu_t)\subset\Prob(\M)$ a rough timelike $\ell_q$-geodesic. We say that $(\mu_t)$ is a rough strongly timelike $\ell_q$-geodesic provided furthermore for any $t,s\in[0,1]$ with $t<s$ that any $\ell_q$-optimal coupling of $(\mu_t,\mu_s)$ is concentrated on $\{\ell\in(0,+\infty)\}$.
\end{definition}

We then have the following:
\begin{corollary}[Lifting $\smash{\ell_\QQ}$-geodesics]\label{Cor:Lifting geos} Let $(\M,\uptau,\ell)$ be a forward metric spacetime in which $\ell$ is upper semicontinuous and does not take the value $+\infty$. Let  $0\neq q<1$ and $(\mu_t)\in \CC([0,1];\Prob(\M))$ be a causal
$\ell_\QQ$-geodesic from $\nu_0$ to $\nu_1$, with $\nu_0,\nu_1\in\pem$. Assume also that $\ell_q(\nu_0,\nu_1)\in(0,+\infty)$ and that either of $(A),(B),(C)$ of \Cref{Th:Lifting} hold.

Then $(\mu_t)$ is induced by a plan $\bdpi\in \Prob(\CC([0,1];\M))$ which is concentrated on causal geodesics and so that  $(\eval_0,\eval_1)_*\ppi$ is $\ell_\QQ$-optimal. Moreover we have
\begin{equation}
\label{eq:conclgeo}
u_q\big(\ell_q(\nu_0,\nu_1)) = \int u_q(\ell(\gamma_0,\gamma_1))\, \d\bdpi(\gamma).
\end{equation}
If $(\mu_t)$ is a strongly timelike $\ell_q$-geodesic, then every $\ppi$ as above is concentrated on timelike geodesics.

Conversely, let $\ppi\in\Prob(\CC([0,1];\M))$ and $\nu_0,\nu_1\in\pem$ be such that    $\nu_0\preceq(\e_0)_*\ppi$ and $(\e_1)_*\ppi\preceq\nu_1$. Let $\mu_t:=(\e_t)_*\ppi$ for every $t\in[0,1]$. Then $(\mu_t)$ is  an $\ell_q$-geodesic from $\nu_0$ to $\nu_1$ if and only if
\begin{equation}
\label{eq:convgeo}
\frac1q\iint_0^1|\dot\gamma_t|^q\,\d t\,\d\ppi(\gamma)\geq u_q\big(\ell_q(\nu_0,\nu_1)\big).
\end{equation}
\end{corollary}

\begin{proof} Let $\ppi$ be built from $(\mu_t)$ via \Cref{Th:Lifting}. Then
\[
\int u_q(\ell(\gamma_0,\gamma_1))\,\d\ppi(\gamma)\leq u_q\big(\ell_q(\mu_0,\mu_1)\big)\leq u_q\big(\ell_q(\nu_0,\nu_1)\big)
\]
and since $(\mu_t)$ is a $\ell_q$-geodesic from $\nu_0$ to $\nu_1$ we also have
\[
u_q\big(\ell_q(\nu_0,\nu_1)\big)=\int_0^1u_q\big(|\dot\mu_t|_q\big)\stackrel{\eqref{action identity}}=\iint_0^1 u_q(|\dot\gamma_t|)\,\d t\,\d\ppi(\gamma)\stackrel{\eqref{eq:uqinf}}\leq\int u_q(\ell(\gamma_0,\gamma_1))\,\d\ppi(\gamma).
\]
The assumption $\ell_q(\nu_0,\nu_1)\in(0,+\infty)$ implies that $u_q\big(\ell_q(\nu_0,\nu_1)\big)\in\R$ and thus the equalities in the above force the optimality of $(\e_0,\e_1)_*\ppi$ and that  $\int_0^1u_q(|\dot\gamma_t|)\,\d t=\ell(\gamma_0,\gamma_1)$ for $\ppi$-a.e.\ $\gamma$, that by \Cref{Pr:l-geodesics maximize} means that $\gamma$ is a geodesic. The claim about strong timelike $\ell_q$-geodesics is now obvious by definition and the optimality of $(\e_0,\e_1)_*\ppi$.

All this also establishes the `only if' in the  last claim. For the `if' we put $\mu_t:=(\e_t)_*\ppi$ for every $t\in[0,1]$ and notice that 
\[
\begin{split}
u_q\big(\ell_q(\nu_0,\nu_1)\big)\stackrel{\eqref{eq:convgeo}}\leq\int \mathscr A_q(\gamma)\,\d\ppi(\gamma)\stackrel{\eqref{eq:intub}}\leq\mathscr A_q(\mu_\cdot)\stackrel{\eqref{eq:actioUpperBound}}\leq u_q\big(\ell_q(\mu_0,\mu_1)\big)\leq u_q\big(\ell_q(\nu_0,\nu_1)\big),
\end{split}
\]
having used the causality relation in the last step. The claim follows.
\end{proof}
As a consequence of this discussion we can see the relation between   $\M$ being geodesic and $\Prob(\M)$ being so. Here and below we say that a metric spacetime is geodesic if for any two points $x,y$ with $x\leq y$ a  causal geodesic from $x$ to $y$ exists.


\begin{proposition}[Heredity of geodesy on metric spacetimes]
\label{P:heredity of geodesy}
Let $(\M,\uptau,\ell)$ be a Polish metric spacetime  in which $\ell$ is upper semicontinuous and $\ell_+$ is continuous and real valued. 

If $(\M,\uptau,\ell)$ is geodesic then $(\pem,\ell_q)$ is geodesic for every $0\neq q<1$. 

Conversely, assume that  $(\pem,\ell_q)$ is geodesic for some  $0\neq q<1$ and that the topology of $\M$ satisfies $(A)$ or $(B)$ of \Cref{Th:Lifting}. Then 
every $x,y\in \M$ with $\ell(x,y) \in [0,+\infty)$ are joined by a 
geodesic.
\end{proposition}

\begin{proof} Assume that $\M$ is geodesic. Notice that from \Cref{C:CGeo is closed} we easily get that  the multivalued map taking couples $(x,y)\in \{\ell\geq 0\} $  returning the set of causal geodesics from $x$ to $y$  has closed graph. Hence by standard measurable selection arguments (see e.g.\  \cite[Thm.~6.9.2]{Bog:07b})  there is a universally  measurable map ${\sf GeodSel}:\{\ell\geq 0\}\to \CGeo(\M)$  such that  ${\sf GeodSel}(x,y)$ is one such geodesic for every $(x,y)\in \{\ell\geq 0\}$. Now let $\mu,\nu\in\pem $  be with $\mu\preceq\nu$ and let $\pi$ be a $\ell_q$-optimal plan (this exists by \Cref{le:USCRE}). A direct computation shows that putting $\ppi:={\sf GeodSel}_*\pi$, the curve $t\mapsto(\e_t)_*\ppi$ is a $\ell_q$-geodesic from $\mu$ to $\nu$.

Conversely, assume that  $(\pem,\ell_q)$ is geodesic and let $x,y\in \M$ with 
$x \le y$.  If $\ell(x,y)=0$ then the lightlike curve 
$$\gamma_t := \begin{cases}x & {\rm if}\ t=1/2\\ y & {\rm if}\ t \in (1/2,1]\end{cases}$$
can be verified to be geodesic (using conventions
\eqref{eq:up} and \eqref{eq:defellq1} if $\QQ<0$).
Therefore, assume
$\ell(x,y)\in(0,\infty)$ and let $(\mu_t)$ be an $\ell_q$-geodesic from $\delta_x$ to $\delta_y$.  Since $\ell_q(\delta_x,\delta_y)=\ell(x,y)\in(0,+\infty)$, by \Cref{Cor:Lifting geos} above  $(\mu_t)$ admits a lifting $\ppi$ that is concentrated on $\CGeo(\M)$. We claim that $\ppi$-a.e.\ $\gamma$ is a geodesic from $x$ to $y$ (and thus in particular at least a geodesic exists).  To see this, notice that since $(\mu_t)$ is a  $\ell_q$-geodesic from $\delta_x$ to $\delta_y$,  we have $\delta_x\preceq \mu_0=(\e_0)_*\ppi$ and $(\e_1)_*\ppi=\mu_1\preceq\delta_y$, so that $\ppi$-a.e.\ $\gamma$ satisfies $x\leq \gamma_0$ and $\gamma_1\leq y$ and therefore $u_q(\ell(\gamma_0,\gamma_1))\leq u_q(\ell(x,y))$.  Then \eqref{eq:conclgeo} and the fact that $u_q(\ell(x,y))\in\R$ forces $u_q(\ell(\gamma_0,\gamma_1))= u_q(\ell(x,y))$ for $\ppi$-a.e.\ $\gamma$, which is the conclusion.
\end{proof}

\subsection{Non-branching notions}\label{Sub:qessreg}
In this section we shall start working with Polish metric spacetimes equipped with a reference measure. Let us isolate the relevant definition:
\begin{definition}[Metric measure spacetime]
A metric measure spacetime, or shortly a mm spacetime, is a quadruple $(\M,\uptau,\ell,\mm)$ such that $(\M,\uptau,\ell)$ is a Polish metric spacetime and $\mm$ is a non-negative and non-zero Radon measure on $\M$.

A forward (resp.\ backward) metric measure spacetime, or shortly a forward mm spacetime, is a metric measure spacetime so that  $(\M,\uptau,\ell)$ is also forward (resp.\ backward).
\end{definition}

In the classical theory of optimal transport it is well known that a non-branching condition implies strong regularity properties of geodesics of measures (see \cite{Villani:2009} for an overview on the topic). In the context of metric geometry, it has been understood in \cite{Gigli12a} that a lower Ricci bound and a non-branching assumption yield existence of optimal maps (see also \cite{giglirajalasturm} and \cite{cavalletti-mondino2017} for further results).

In non-smooth Lorentzian signature a similar picture is --- starting from \cite{CM:20} --- emerging, see also \cite{Braun:2023Renyi} for subsequent developments.  In this section we review some of the existing notions in our framework and propose the concept of `non-branching at 0/1'.

Notice that in our setting  one should exclude geodesics with either 0 or infinite length, because these are invariant by reparametrization and thus always branch. In other words, a non-branching condition only makes sense if required for timelike geodesics. Also, as in \cite{CM:20}, one might distinguish between `forward' and `backward' non-branching, as discussed in the next definition,  coming from   \cite[Def.~1.10]{CM:20}:
 \begin{definition}[Timelike non-branching conditions] Let $(\M,\uptau,\ell)$ be a Polish metric spacetime and $G$ a set of  timelike geodesics.  We say that $G$ is forward (resp.\ backward) timelike  non-branching if  any two $\gamma,\eta\in G$  that agree on $[0,t]$ (resp.\ $[t,1]$) for some $t\in(0,1)$ agree on the whole $[0,1]$.
 
 We then say that $(\M,\uptau,\ell)$ is forward (resp.\ backward) timelike  non-branching if   $\TGeo(\M)$ is so.
  \end{definition}
This concept applies equally well to the spacetime $(\Prob(\M),\ell_q)$, but in this case it is better to restrict the attention to strongly timelike $\ell_q$-geodesics. The problem is that a lifting of general timelike $\ell_q$-geodesics may give positive mass to non-timelike geodesics and reparametrizing these curves we easily see that in general circumstances $(\Prob(\M),\ell_q)$ is never non-branching.
  
 With this said, the timelike non-branching  of $\M$ is related to that of $\Prob(\M)$:
 
 \begin{proposition}[Heredity of timelike non-branchingness]
 \label{prop:MnonbrPMnonbr} Let $(\M,\uptau,\ell)$ be a forward spacetime. Let $0\neq q<1$ and assume that the collection of strongly timelike $\ell_q$-geodesics in $\pem$  is forward (resp.\ backward) timelike  non-branching. Then $(\M,\uptau,\ell)$ is forward (resp.\ backward) timelike  non-branching.
 
Conversely, assume that  $(\M,\uptau,\ell)$ is forward (resp.\ backward) timelike  non-branching,  that $\ell$ is upper semicontinuous and does not take the value $+\infty$ and that either $(A)$ or $(B)$ of \Cref{Th:Lifting} hold. Then for every $0\neq q<1$  the collection of strongly timelike $\ell_q$-geodesics in $\pem$  is forward (resp.\ backward) timelike  non-branching.
 \end{proposition}
 
 \begin{proof} We deal with the `forward' case, the `backward' one being analogous.  Let $0\neq q<1$ and suppose that   the collection of strongly timelike $\ell_q$-geodesics in $\pem$  is forward   timelike  non-branching. Notice that  for any $x,y\in \M$ we have $\ell(x,y)=\ell_q(\delta_x,\delta_y)$, hence   $t\mapsto\gamma_t\in (\M,\ell)$  is a  timelike geodesic  if and only if $t\mapsto \delta_{\gamma_t}\in(\Prob(\M),\ell_q)$ is strongly timelike. The conclusion follows.

Conversely, assume that $(\M,\uptau,\ell)$ is forward timelike non-branching  and let $(\mu^1_t),(\mu^2_t)\subset\pem$  be two  strongly timelike  $\ell_q$-geodesics that agree on $[0,T]$ for some given $T\in(0,1)$.  Let $\ppi^1,\ppi^2\in\Prob(\TGeo(\M))$ be liftings of $(\mu^1_t),(\mu^2_t)$ respectively as in \Cref{Cor:Lifting geos} (that we can apply thanks to the assumptions on $\M$). Produce a new plan $\ppi\in\Prob(\TGeo(\M))$ by gluing $(\restr_0^T)_*\ppi^1$ with $(\restr_T^1)_*\ppi^2$ along their common marginal $\mu^1_T$ --- obtaining a measure on the 
space of paths on $[0,2]$ --- and then reparametrize the curves in the obvious (piecewise affine) way to obtain a measure on curves on $[0,1]$. It is clear that $\ppi$ is still a lifting of $(\mu^2_t)$, so by  \Cref{Cor:Lifting geos} above is concentrated on $\TGeo(\M)$. Also, the construction ensures that $(\restr_0^T)_*\ppi=(\restr_0^T)_*\ppi^1$ and since our non-branching assumption can equivalently be stated by saying that $\restr_0^T:\TGeo(\M)\to\TGeo(\M)$ is injective, this proves that $\ppi^1=\ppi$, and thus that $\mu^1_t=(\e_t)_*\ppi^1=(\e_t)_*\ppi=\mu^2_t$ for every $t\in[0,1]$, as desired.
 \end{proof}

 When dealing with \emph{measured} spacetimes, it is natural to tailor the above concept in order to allow a `negligible set of exceptions', so to say.  

We shall use the following measure-theoretic variant of the above non-branching condition, akin to that  introduced by Braun \cite{Braun:2023Renyi} for $\QQ\in (0,1)$ (inspired by Rajala--Sturm \cite{RajalaSturm:2014} in positive signature). Here and below a family of measures in $\pem$ is said to be of bounded compression if for some $C>0$ each of the measures is $\leq C\mm$.
\begin{definition}[Timelike $q$-essential non-branching]\label{Def:TL ess nb} Let  $(\M,\uptau,\ell,\meas)$  be a forward mm  spacetime and $0\neq q<1$. We say that $\M$ is \emph{forward (resp.\ backward) timelike $\QQ$-essentially non-branching}, provided for any strongly timelike $\ell_q$-geodesic $(\mu_t)\subset\pem$ with bounded compression and for any lifting $\ppi\in\Prob(\CC([0,1];\M))$ we have that   $\ppi$  is concentrated on a forward (resp.\ backward)  timelike non-branching subset of $\TGeo(\M)$.
\end{definition}

 The analogue of \Cref{prop:MnonbrPMnonbr} for this last definition is the following result:

\begin{proposition}[When bounded compression strongly timelike $\ell_q$-geodesics are non-branching] 
Let  $(\M,\ell,\meas)$  be a forward mm spacetime and $0\neq q<1$. 

Assume that the collection of bounded compression strongly timelike $\ell_q$-geodesics in $\pem$ is forward (resp.\ backward) non-branching.  Then $(\M,\uptau,\ell,\mm)$ is forward (resp.\ backward) timelike $q$-essentially non-branching.

Conversely, assume that $(\M,\uptau,\ell,\mm)$ is forward (resp.\ backward)  timelike $q$-essentially non-branching,  that $\ell$ is upper semicontinuous and does not take the value $+\infty$ and that and that either $(A)$ or $(B)$ of \Cref{Th:Lifting} hold. Then  the collection of bounded compression strongly timelike $\ell_q$-geodesics in $\pem$ is forward (resp.\ backward) non-branching.
\end{proposition}

\begin{proof} Suppose that $\M$ is not forward  $q$-essentially non-branching. Then there is a bounded compression strongly timelike $\ell_q$-geodesic $(\mu_t)$   and a lifting $\ppi\in\Prob(\TGeo(\M))$ of it that is not concentrated on a timelike non-branching subset of $\TGeo(\M)$. With a bit of work  (see e.g.\ \cite{Gigli12a} for analogous arguments in positive signature) it is not hard to see that there are $t\in(0,1)$ and two Borel sets $\Gamma_1,\Gamma_2\subset\TGeo(\M)$  so that $(\restr_0^t)_*(\ppi\mres\Gamma_1)=(\restr_0^t)_*(\ppi\mres\Gamma_2)$ and $(\e_s)_*(\ppi\mres\Gamma_1)\neq (\ppi\mres\Gamma_2)$ for some $s>t$ (in particular $\ppi(\Gamma_1)=\ppi(\Gamma_2)>0$). Then define the plans $\ppi^i:=\ppi(\Gamma_i)^{-1}\ppi\mres\Gamma_i$ and the measures $\mu^i_t:=(\e_t)_*\ppi^i$ for $i=1,2$, $t\in[0,1]$. The construction ensures that $(\mu^1_t),(\mu^2_t)$ are strongly timelike $\ell_q$-geodesics with bounded compression, that they agree on $[0,t]$ and with $\mu^1_s\neq\mu^2_s$, as desired.

The converse implication follows along the very same lines used to prove \Cref{prop:MnonbrPMnonbr}.
\end{proof}

We turn to a sort of `infinitesimal variant' of the timelike non-branching condition. Notice that this concept is trivial in positive signature, due to the continuity of geodesics:
\begin{definition}[Non-branching at 0 or 1]\label{def:regularity}
Let $(\M,\ell)$ be a metric spacetime and $G$ a collection of rough timelike geodesics. We say that $G$ is timelike non-branching at 0 (resp.\ at 1) if  any two geodesics in $G$  that agree on $(0,1)$ also agree at 0 (resp.\ at 1).

If this holds with $G$ being the collection of all rough timelike geodesics, then we say that $(\M,\ell)$ is   is timelike non-branching at 0 (resp.\ at 1).
\end{definition}

It is worth noticing that under suitable compactness assumptions, the uniqueness encoded in this condition precisely detects continuity of geodesics:

\begin{proposition}[Geodesics are not rough in a forward spacetime non-branching at 1]
\label{R:regularity} Let $(\M,\uptau,\ell)$ be  a  Polish metric spacetime   that is timelike non-branching at 0 (resp.\ 1). Also, let $\gamma:[0,1]\to \M$ be a rough timelike geodesic such that for any $t\in[0,1]$ and $s_n\downarrow t$ (resp.\ $s_n\uparrow t$) there is $n_k\uparrow\infty$ such that $k\mapsto \gamma_{s_{n_k}}$ admits a limit.

 Then $\gamma$ is right- (resp.\ left-) continuous.
\end{proposition}

\begin{proof}We  imitate  the proof of \cite[Lem.~5]{McC:23}. We prove right-continuity of the rough  timelike geodesic $\gamma$ at $t=0$, then right-continuity at other times follows by scaling, and the arguments for left continuity  are analogous. Call $y$ a limit of a sequence $k\mapsto\gamma_{s_{n_k}}$ for $s_{n_k}\downarrow 0$ as in the statement. Let $(\eta_t)$ be equal to $(\gamma_t)$ on $(0,1]$ and defined as $\eta_0:=y$ for $t=0$. We clearly have $|\dot\eta_t|=|\dot\gamma_t|$ for a.e.\ $t$ and
\[
A_1(\eta)\geq\lims_{s\downarrow0}\int_s^1|\dot\eta_t|\,\d t=\lims_{s\downarrow 0}\int_s^1|\dot\gamma_t|\,\d t=A_1(\gamma)\geq\ell(\gamma_0,\gamma_1), 
\]
having used that $\gamma$ is an $\ell$-geodesic. Since $y\geq \gamma_0$ (here we use that $\{\ell\geq0\}$ is closed), by item $(iii)$ in \Cref{Pr:l-geodesics maximize} this suffices to establish that $\eta$ is also a rough timelike $\ell$-geodesic, so that non-branchingness at $0$ forces $y=\eta_0=\gamma_0$. This suffices to conclude right-continuity of $\gamma$ at $0$.
\end{proof}

Clearly, a sufficient condition for left-limits to exist is the forward completeness that we typically assume. More relevant for us however, will be the notion of timelike non-branching at $0$, 
which is linked to continuity at $t=0$ of geodesics in $\Prob(\M)$.

The following is a variant of \Cref{prop:MnonbrPMnonbr}.
Recall that timelike (and strongly timelike) $\ell_q$-geodesics $(\mu_t)$ satisfy $\ell_q(\mu_0,\mu_1) \in (0,\infty)$ as part of their definition.

\begin{proposition}[Heredity of non-branchingness at $0$ and/or $1$]
Let $(\M,\uptau,\ell)$ be a forward metric  spacetime. Let $0\neq q<1$ and assume that the collection $\cone_q$ of strongly timelike $\ell_q$-geodesics in $\pem$   has the following property: if $(\mu_t) \in \cone_q$  and $\nu\in\pem$   satisfies $\nu\preceq\mu_t$ for {all} $t\in(0,1)$ and $\ell_q(\nu,\mu_1)=\ell_q(\mu_0,\mu_1)$, then $\nu=\mu_0$ (resp.\  $\mu_t\preceq\nu$ for every $t\in(0,1)$ and $\ell_q(\mu_0,\nu)=\ell_q(\mu_0,\mu_1)$, then $\nu=\mu_1$). Then $(\M,\uptau,\ell)$ is timelike non-branching at 0 (resp.\ 1).

 Conversely, assume that  $(\M,\uptau,\ell)$ is timelike non-branching at 0 (resp.\ 1),  that $\ell$ is upper semicontinuous and does not take the value $+\infty$ and that either $(A)$ or $(B)$ of \Cref{Th:Lifting} hold. Then for every $0\neq q<1$,  
 the collection 
 $\cone_q$ has the property described above (respectively).
\end{proposition}



\begin{proof} The proof is similar to that of \Cref{prop:MnonbrPMnonbr}. 
We deal with the case of  non-branching at 0, that of non-branching at 1 being analogous. 

Assume that the collection $\cone_q$ of strongly timelike $\ell_q$-geodesics in $\pem$   has the indicated property. Since $\ell(x,y)=\ell_q(\delta_x,\delta_y)$ for any $x,y\in \M$ and the inclusion $\M\ni x\mapsto\delta_x\in\pem$ preserves the time separations, we easily deduce that $\TGeo(\M)$ has the following property: for $\gamma\in\TGeo(\M)$ and $x\in\M$ with $x\leq\gamma_t$ for {all} $t\in(0,1)$ and $\ell(x,\gamma_1)=\ell(\gamma_0,\gamma_1)$ we must have $x=\gamma_0$. It is easy to see that this is equivalent to the claim of  $(\M,\uptau,\ell)$ being timelike non-branching at 0, so in this case the conclusion follows.

Conversely, assume that $(\M,\ell)$ is timelike non-branching at 0, let $(\mu_t)\subset\pem$ be  a strongly timelike $\ell_q$-geodesic and  $\nu\in\pem$   with $\nu\preceq\mu_t$ for any $t\in(0,1)$ and $\ell_q(\nu,\mu_1)=\ell_q(\mu_0,\mu_1)$. Let $\ppi\in\Prob(\TGeo(\M))$ be a lifting of $(\mu_t)$ as in \Cref{Cor:Lifting geos} and, for every $t\in(0,1)$, let $\pi_t\in\Pi_\leq(\nu,\mu_t)$ be $\ell_q$-optimal. Use a gluing argument to find $\alpha_t\in\Prob(\M\times\TGeo(\M))$ such that 
\[
\begin{split}
({\rm Pr}_2)_*\alpha_t=\ppi,\qquad\text{and}\qquad({\rm Pr}_1,\e_t)_*\alpha_t&=\pi_t.
\end{split}
\]
Notice that the same arguments used in the proof of \Cref{Th:Lifting} easily implies that for any  $t_n\downarrow0$ the sequence $(\alpha_{t_n})$ is tight, thus up to passing to a non-relabelled subsequence it admits a narrow limit $\alpha$. Since $\alpha_t$ is concentrated on couples $(x,\gamma)$ with $\gamma\in\TGeo(\M)$ and $x\leq\gamma_t$, the closure of $\{\ell\geq0\}$ easily implies that $\alpha$ is concentrated on couples  $(x,\gamma)$ with $\gamma\in\TGeo(\M)$ and $x\leq\gamma_t$ for any $t\in(0,1]$. The  construction also yields
\[
\begin{split}
({\rm Pr}_2)_*\alpha=\ppi,\qquad\text{and}\qquad({\rm Pr}_1)_*\alpha&=\nu,
\end{split}
\]
thus from the assumption that $(\M,\ell)$ is  timelike non-branching at 0, to conclude it suffices to prove that for $\alpha$-a.e.\ $(x,\gamma)$ we have $\ell(x,\gamma_1)=\ell(\gamma_0,\gamma_1)$ (as this forces $x=\gamma_0$ and thus $\nu=\mu_0$, as required). To this aim, notice that 
\[
\begin{split}
u_q\circ\ell_q({\nu},\mu_1)&\geq\int u_q\circ\ell(x,\gamma_1)\,\d\alpha(x,\gamma)\\
\text{(as $x\leq\gamma_t$ for $\alpha$-a.e.\ $(x,\gamma)$)}\qquad& \geq\int \sup_{t\in(0,1)} u_q\circ\ell(\gamma_t,\gamma_1)\,\d\alpha(x,\gamma)\\
\text{(by \eqref{eq:uqinf} with $P=\{0,1\}$)}\qquad&\geq \iint_0^1u_q(|\dot\gamma_t|)\,\d t\,\d\ppi(\gamma)=u_q\circ\ell_q(\mu_0,\mu_1),
\end{split}
\]
having used \Cref{Cor:Lifting geos} in the last step. Since by assumption we have $\ell_q(\nu,\mu_1)=\ell_q(\mu_0,\mu_1)\in(0,+\infty)$,   all the inequalities are in fact equalities and  all the values are real: this can only be possible if $\ell(x,\gamma_1)=\sup_{t\in(0,1)}\ell(\gamma_t,\gamma_1)=\ell(\gamma_0,\gamma_1)$, forcing $x=\gamma_0$, for $\alpha$-a.e.\ $(x,\gamma)$. This implies $\nu=\mu_0$, as  desired.
\end{proof}

We are interested in forcing curves of measures with bounded compression to be non-branching at $0$.
For this, it is sufficient to introduce the following weakening of \Cref{def:regularity} above (analogous to \Cref{Def:TL ess nb}):
\begin{definition}[$q$-essential timelike non-branching at 0 or 1]
\label{D:$p$-regularity}
Fix $0\neq q<1$. 

We say that a forward mm  spacetime $(\M,\uptau,\ell,\mm)$ is   $\QQ$-essentially timelike non-branching at 0 (resp.\ at 1) if  the following holds. For each bounded compression strongly timelike $\ell_q$-geodesic $(\mu_t)\subset\pem$ and $\nu\in\pem$   with bounded compression such that   $\nu\preceq\mu_t$ for all $t\in(0,1)$ and $\ell_q(\nu,\mu_1)=\ell_q(\mu_0,\mu_1)$ 
we must have   $\nu=\mu_0$ (resp.\  $\mu_t\preceq\nu$ for every $t\in(0,1)$ and $\ell_q(\mu_0,\nu)=\ell_q(\mu_0,\mu_1)$ 
we must have $\nu=\mu_1$). 
\end{definition}
\Cref{R:regularity} gives the following  result, which is our reason for introducing this last concept:

\begin{corollary}[Continuity for bounded compression $\ell_q$-geodesics on a fixed emerald]
\label{cor:contfromreg}
Let $(\M,\uptau,\ell,\mm)$ be a forward mm  spacetime with $\mm$ giving finite mass to each emerald, $0\neq q<1$ and $(\mu_t)$ a rough strongly timelike $\ell_q$-geodesic of bounded compression 
such that 
$\supp\mu_t\subset E$ for some fixed emerald $E$ and {all} $t\in[0,1]$.

If $\M$ is $\QQ$-essentially  timelike non-branching at 0 (resp.\ at 1) then $(\mu_t)$ is right (resp.\ left) continuous.
\end{corollary}
\begin{proof} We know that $\mu_t\leq C\mm\mres E$ for some $C>0$ and every $t\in[0,1]$, which readily implies that the collection $\{\mu_t:t\in[0,1]\}$ is tight, hence relatively narrowly compact. The conclusion follows from \Cref{R:regularity}.
\end{proof}

\section{Lorentzian differential and Sobolev calculus}
\label{Ch:differential}

\subsection{Some basic properties of causal functions}\label{ss:causal functions}

\begin{definition}[Causal function]\label{Def:TF} Let $(\M,\ell)$ be a metric spacetime. A function $f\colon \M\to \bar{\R}$ is called a \emph{causal function} if it is causally monotone, i.e.\ $x \leq y$ implies $f(x)\leq f(y)$.

The \emph{domain} $\dom(f)$ of a causal  function $\smash{f\colon \M\to \bar{\R}}$ is the set of all $x\in \M$ such that $f(x)$ is a real number.
\end{definition}
Notice that the domain of a causal function is always causally convex.

If  $f$ and $g$ are causal functions and  $\lambda_1,\lambda_2 \geq 0$, then $\lambda_1\, f + \lambda_2\,g$ is again a causal function. If $f$ and $g$ are non-negative, then $f\,g$ is a causal function. If $\varphi\colon \bar{\R} \to \bar{\R}$ is non-decreasing on the image of $f$, then $\varphi\circ f$ is again a causal function. Lastly, arbitrary (pointwise) infima and suprema of causal functions are again causal functions.

\medskip

The following lemma is an analogue for causal functions of the fact that a non-decreasing map of the line to itself can have only countably many discontinuities. This notably includes the case where $f$ attains the values $\pm\infty$.

Recall a subset of $\M$ is \emph{achronal} if none of its elements are related by $\ll$.

\begin{lemma}[Chronological almost continuity]\label{L:chronological continuity}
Let $(\M,\ell)$ be a metric spacetime and $f\colon \M \to \bar{\R}$  causal. Define $\smash{f^\pm\colon \M\to\bar{\R}}$ by
\begin{align}\label{f+}
\begin{split}
f^+(y) = \inf_{z \in I^+(y) } f(z),\qquad\text{and}\qquad f^-(y) = \sup_{x \in I^-(y)} f(x),
\end{split}
\end{align}
where we employ the usual conventions $\inf\emptyset := +\infty$ and $\sup\emptyset := -\infty$. Then:
\begin{enumerate}[label=\textnormal{(\roman*)}]
    \item The functions $f^+$ and $f^-$ are causal functions. 
    \item We have $f^- \le f \le f^+$ everywhere on $\M$.
    \item Outside a countable union of achronal sets, $f^+=f^-$. 
\end{enumerate}
Let  $\M$ be equipped with a topology containing the chronological one. Then we also have:
\begin{enumerate}[label=\textnormal{(\roman*)}]
\setcounter{enumi}{3}    
\item If a sequence $(y_n)\subset I^\pm(y)$ converges to $y$,  then $\smash{\lim_n f^\pm(y_n) = f^\pm(y)}$.
    \item\label{(v)} Both $f^+$ and $-f^-$ are upper semicontinuous.  Hence $f$ is continuous at every point of the complement of the countable union provided by  \textnormal{(iii)}.
\end{enumerate}
\end{lemma}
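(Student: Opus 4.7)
The plan is to treat items (i), (ii), (iv), and (v) by direct applications of the push-up principle together with the openness of chronological cones, and to prove (iii) by a rational-parameter decomposition into achronal sets --- this last step being the main point.

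For (i), push-up gives $I^+(y) \subseteq I^+(x)$ whenever $x \le y$, so taking infima yields $f^+(x) \le f^+(y)$; the case of $f^-$ is symmetric. For (ii), any $z \in I^+(y)$ satisfies $f(y) \le f(z)$ by causality of $f$, so $f(y) \le f^+(y)$ (the convention $\inf \emptyset := +\infty$ covering the empty case), and analogously $f^-(y) \le f(y)$. For (iv) with $y_n \in I^+(y)$ converging to $y$, causality of $f^+$ already forces $f^+(y) \le \liminf_n f^+(y_n)$; conversely, for each $z \in I^+(y)$, openness of $I^-(z)$ forces $y_n \ll z$ eventually, hence $f^+(y_n) \le f(z)$, and taking the infimum over $z$ bounds $\limsup_n f^+(y_n)$ by $f^+(y)$. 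The approach from the past is symmetric.

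The main step is (iii). Rather than attempting to assert density of chronology --- i.e., $x \ll y$ implying the existence of $z$ with $x \ll z \ll y$, which is not automatic in a rough spacetime --- I would decompose the discontinuity set $D := \{y : f^-(y) < f^+(y)\}$ via a \emph{one-sided} rational separation. For each $q \in \Q$, set
\[
W_q^+ := \{y \in M : f(y) \le q < f^+(y)\}, \qquad W_q^- := \{y \in M : f^-(y) < q \le f(y)\}.
\]
Each $W_q^+$ is achronal: if $x \ll y$ with $x, y \in W_q^+$, then $y \in I^+(x)$ forces $f(y) \ge f^+(x) > q$, contradicting $f(y) \le q$; the argument for $W_q^-$ is symmetric. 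Every $y \in D$ lies in some $W_q^\pm$ because the chain $f^-(y) \le f(y) \le f^+(y)$ with $f^-(y) < f^+(y)$ forces at least one of the two outer inequalities to be strict, and density of rationals in $\bar{\R}$ then locates the required $q$; the $\pm\infty$ cases follow directly from the conventions $\sup \emptyset := -\infty$ and $\inf \emptyset := +\infty$.

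For (v), upper semicontinuity of $f^+$ runs as in (iv) without assuming $y_n \gg y$: for arbitrary $x_n \to y$ and arbitrary $z \in I^+(y)$, openness of $I^-(z)$ forces $x_n \ll z$ eventually, so $\limsup_n f^+(x_n) \le f(z)$ and hence $\le f^+(y)$; lower semicontinuity of $f^-$ is symmetric. Continuity of $f$ outside $\bigcup_{q \in \Q} (W_q^+ \cup W_q^-)$ then follows by sandwich: there $f^-(y) = f(y) = f^+(y)$, and $f^- \le f \le f^+$ combined with the semicontinuity of $f^\pm$ squeezes $\lim_n f(x_n) = f(y)$. The only genuine obstacle throughout is the decomposition in (iii), where the crucial observation is that one-sided rational separation avoids any appeal to density of chronology.
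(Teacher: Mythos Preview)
Your proof is correct and essentially identical to the paper's: the achronal sets there, $S_q^\pm := \{f(y) < q < f^+(y)\}$ and $\{f^-(y) < q < f(y)\}$, differ from your $W_q^\pm$ only by having strict inequalities on both sides, and all five items are argued the same way. The one place to tighten is (v): since a rough spacetime carries only the chronological topology with no first-countability assumed, your sequential argument should be phrased via neighborhoods --- or, as the paper does, by writing $f^+(y) = \inf_{z \in M}\bigl[f(z) + \infty \cdot 1_{M \setminus I^-(z)}(y)\bigr]$ as an infimum of upper semicontinuous functions.
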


\begin{proof}\ \\
(i) For $x\leq y\ll z$ we have $x\ll z $ by the push-up property (\Cref{R:push-up1}), whence $f^+(x) \le f^+(y)$ as desired.  An analogous argument shows $f^-$ is a causal function. \\
(ii) This is immediate from the causal property of $f$ and the definitions \eqref{f+}. \\
(iii)  Given a number $q \in \Q$ we define
\begin{align*}
S_{q}^- &:= \{ y \in \M : f^-(y)< q< f(y) \}\qquad\text{and}\qquad S_{q}^+ := \{ y \in \M : f(y)< q< f^+(y) \}.
\end{align*}
We claim that the sets $\smash{S_{q}^\pm}$ are achronal.  Indeed,  if   $y_1,y_2\in S_{q}^+$ were points satisfying $y_1\ll y_2$, we would have $f^+(y_1)\leq f(y_2)$ by \eqref{f+}. This would produce the contradiction $q<f^+(y_1)\leq f(y_2)<q$. An analogous argument applies to $\smash{S_{q}^-}$.

Since $f^+(y) > f^-(y)$   implies $y \in S_{q}^+\cup S_{q}^-$ for some $q \in \Q$, (iii) is proven.\\
(iv) We only show the statement for the function $\smash{f^+}$, the argument for $f^-$ follows analogous lines. By the transitivity of $\ll$, $I^+(y_n)$ is contained in $I^+(y)$ for every $n\in\N$. Therefore $f^+(y) \leq f^+(y_n)$ by \eqref{f+}, and consequently
\begin{align*}
f^+(y) \leq \liminf_{n\to+\infty} f^+(y_n).
\end{align*}
On the other hand, let $z\in I^+(y)$ be arbitrary. (Our assumption on the given sequence implies such a point does exist.) Since $(y_n)$ converges to $y$ and the set $I^-(z)$ is open, we eventually have $y_n \in I^-(z)$. Choosing $z$ as a competitor in the definition \eqref{f+} of $\smash{f^+(y_n)}$ implies $\limsup_{n\to+\infty} f^+(y_n) \leq f(z)$
and therefore
\begin{equation*}
\limsup_{n\to+\infty} f^+(y_n) \leq f^+(y)
\end{equation*}
by the arbitrariness of $z$.\\
(v)  
Since the chronological topology makes $I^-(z)$ open for each $z\in \M$, the formula
$$
f^+(y) = \inf_{z \in \M} f(z) + \infty 1_{\M\setminus I^-(z)}(y)
$$
shows $f^+$ is an $\inf$ of upper semicontinuous functions,
hence itself  upper semicontinuous.  Similarly $f^-$ is lower semicontinuous and both $f^\pm$ are continuous at points where they agree, as is $f$. 
\end{proof}
As a direct consequence of the above we have:
\begin{corollary}
[Measurability of causal functions]\label{Cor:Meas tf} Let $(\M,\ell)$ be a metric spacetime equipped with a ($\sigma$-algebra and a) measure $\mm$ that   assigns zero outer measure to every achronal subset. 

Then for every   $f\colon \M \to \bar{\R}$   causal we have $f^+=f^-$ $\meas$-a.e., where   $\smash{f^\pm}$ are from \eqref{f+}. 

If, in addition, open sets in the chronological topology are $\mm$-measurable, then so is $f$.
\end{corollary}

\begin{proof}  The first claim follows from our hypotheses on $\meas$ and  \Cref{L:chronological continuity}.  For the second it suffices to notice that for any $c\in\R$ we have
\[
\{f<c\}= \Big(\bigcup_{x\in\{f<c\}} I^-(x)\Big)\ \cup\ \Big(\{f<c\}\setminus \bigcup_{x\in\{f<c\}} I^-(x)\Big),
\]
that the first set on the right is chronologically open and the second achronal.
\end{proof}

A reinforcement of \Cref{Def:TF}   is the subsequent analog of Lipschitz continuity from metric geometry. Recall our infinity conventions from the beginning of \Cref{Ch:Curves}. 

\begin{definition}[Steepness] Let $(\M,\ell)$ be a metric spacetime, $L\in[0,\infty]$ and   $f\colon \M\to \bar{\R}$. We say that $f$ is \emph{$L$-steep} on $W\subset \M$ if 
\begin{align*}
f(y) - f(x) \geq L\,\ell(x,y)\qquad\forall x,y\in W.
\end{align*}
If, in addition, $W=\M$, we simply say $f$ is $L$-steep.
\end{definition}
Notice that a function is $0$-steep if and only if it is a causal function. Typical examples of 1-steep functions are $\ell(x,\cdot)$ and $-\ell(\cdot,x)$ for given $x\in \M$: their 1-steepness is a direct consequence of the reverse triangle inequality \eqref{eq:revtr} (and the infinity conventions from the beginning of \Cref{Ch:Curves}).

A steep function can always be extended to all of $\M$ by the following analog of the well-known McShane extension lemma from metric geometry, e.g. \cite[Thm.~1.33]{Weaver:2018}.  Note that the conventions \eqref{eq:conventions1} about differences do not have a role in the definitions of $f_\vee$ and $f_\wedge$ below.
\begin{lemma}[McShane-type extension]\label{Le:McShane} Let $(\M,\ell)$ be a metric spacetime. Let $f\colon W \to \bar{\R}$ be a given $L$-steep function, where $W\subset \M$ is any subset and $L\geq 0$. Define the functions $f_\wedge, f^\vee\colon \M\to\bar{\R}$ by
\begin{align*}
f_\wedge(y) &:= \sup\!\big\lbrace f(x)+L\,\ell(x,y): x\in W \textnormal{ \textit{with} } x\leq y \textnormal{ \textit{and} }  f(x)>-\infty \big\rbrace,\\
f^\vee(x) &:= \inf\!\big\lbrace f(y) - L\,\ell(x,y): y\in W\textnormal{ \textit{with} }y\geq x\textnormal{ \textit{and} }f(y)<+\infty\big\rbrace.
\end{align*}
Then $f_\wedge$ and $f^\vee$ are $L$-steep functions which coincide with $f$ on $W$. Furthermore, these extensions are extremal, in the sense that every $L$-steep extension $g\colon \M\to \bar{\R}$ of $f$ satisfies $f_\wedge \leq g\leq f^\vee$ everywhere on $\M$.
\end{lemma}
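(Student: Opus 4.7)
The plan is a straightforward verification, split by a symmetry between $f_\wedge$ and $f^\vee$: the arguments for $f^\vee$ are dual to those for $f_\wedge$ (obtained by reversing time and negating $f$), so I would only write up the $f_\wedge$ case in detail. I would treat the three claims (agreement on $W$, $L$-steepness, extremality) in order, with the main subtlety being the bookkeeping for the infinity conventions of \Cref{Sub:InfConv}.

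First, to show $f_\wedge = f$ on $W$, fix $x_0 \in W$ and split into cases. If $f(x_0) > -\infty$, then $x = x_0$ is an admissible competitor in the defining supremum (it is here that I use $\ell(x_0,x_0)=0$, so $x_0 \leq x_0$), yielding $f_\wedge(x_0) \geq f(x_0) + L\ell(x_0,x_0) = f(x_0)$; conversely, the assumed $L$-steepness of $f$ on $W$ forces $f(x) + L\ell(x,x_0) \leq f(x_0)$ for every admissible $x$, so $f_\wedge(x_0) \leq f(x_0)$. If instead $f(x_0) = -\infty$, then any $x \in W$ with $x \leq x_0$ and $f(x) > -\infty$ would produce the contradiction $-\infty = f(x_0) \geq f(x) + L\ell(x,x_0) > -\infty$, so the defining set is empty and $f_\wedge(x_0) = \sup\emptyset = -\infty = f(x_0)$.

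Second, for the $L$-steepness of $f_\wedge$: let $y_1, y_2 \in M$. If $y_1 \not\leq y_2$, then $L\ell(y_1,y_2) \in \{-\infty,0\}$ and the inequality is automatic, so assume $y_1 \leq y_2$. For any competitor $x \in W$ of $f_\wedge(y_1)$, transitivity of $\leq$ gives $x \leq y_2$ (making $x$ admissible for $f_\wedge(y_2)$), and the reverse triangle inequality gives $\ell(x,y_2) \geq \ell(x,y_1) + \ell(y_1,y_2)$. Hence
\begin{equation*}
f_\wedge(y_2) \geq f(x) + L\ell(x,y_2) \geq f(x) + L\ell(x,y_1) + L\ell(y_1,y_2),
\end{equation*}
and taking the supremum over admissible $x$ yields $f_\wedge(y_2) \geq f_\wedge(y_1) + L\ell(y_1,y_2)$, which is the desired steepness. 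The cases where $f_\wedge(y_1) = \pm\infty$ or $\ell(y_1,y_2) = +\infty$ are handled directly from the conventions.

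Third, for extremality: if $g\colon M\to\bar{\R}$ is any $L$-steep extension of $f$, then for every $y \in M$ and every $x \in W$ with $x \leq y$ and $f(x) > -\infty$, the $L$-steepness of $g$ gives $g(y) \geq g(x) + L\ell(x,y) = f(x) + L\ell(x,y)$; supremizing in $x$ produces $g(y) \geq f_\wedge(y)$. The bound $g \leq f^\vee$ is the dual argument applied to each $x \in M$ and $y \in W$ with $y \geq x$ and $f(y) < +\infty$. The only obstacle anywhere in the argument is careful accounting for the infinity conventions when $f$ attains $\pm\infty$ on $W$ or $\ell(\cdot,\cdot)$ takes the value $+\infty$, which can be dispatched case-by-case without altering the overall structure.
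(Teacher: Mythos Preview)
Your proposal is correct and follows essentially the same approach as the paper's proof: both argue only for $f_\wedge$ by symmetry, use $\ell(x_0,x_0)=0$ plus $L$-steepness on $W$ for agreement, use transitivity and the reverse triangle inequality for the steepness of $f_\wedge$, and use $L$-steepness of $g$ for extremality. Your treatment is slightly more explicit than the paper's about the case $f(x_0)=-\infty$ in the first step, while the paper is slightly more explicit than you about the passage from $f_\wedge(y_2)\geq f_\wedge(y_1)+L\ell(y_1,y_2)$ to the subtractive form $f_\wedge(y_2)-f_\wedge(y_1)\geq L\ell(y_1,y_2)$ under the infinity conventions; both are minor presentational differences.
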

\begin{proof} We show all the claims for $f_\wedge$, those for $f^\vee$ being similar.

Let us first show that $\smash{f_\wedge}$ coincides with $f$ on $W$.  Let $x\in W$. Choosing $x$ as a competitor in the supremum defining $\smash{f_\wedge}$ and using $\ell(x,x)=0$ yields $\smash{f_\wedge(x) \geq f(x)}$. On the other hand, by $L$-steepness of $f$ on $W$, $\smash{L \ell(x,y) + f(x) \leq f(y)}$ for any $x,y \in W$ with $f(x)>-\infty$, taking the supremum over $x$ implies $\smash{f_\wedge(y) \leq f(y)}$, which is the claim.

Next, we show $L$-steepness of $\smash{f_\wedge}$ on $M$.  Let $y_1,y_2 \in \M$. We claim
\begin{equation}
\label{eq:permcshane}
f_\wedge(y_2)-f_\wedge(y_1)\geq L\,\ell(y_1,y_2).
\end{equation}
We may assume that $y_1 \leq y_2$, otherwise the claim is trivial. Let  $x\in W$ be a competitor in the definition of $f_\wedge(y_1)$ (we can assume such $x$ exists, otherwise $f_\wedge(y_1)=-\infty$ and according to the convention \eqref{eq:conventions1} the inequality \eqref{eq:permcshane} holds). Notice that $x$ is also a competitor in the definition of  $f_\wedge(y_2)$ and  that  in this case the reverse triangle inequality can be written as $\ell(y_1,y_2) + \ell(x,y_1) \leq \ell(x,y_2)$ and thus trivially
\[
f(x)+L\,\ell(x,y_2)  \geq f(x)+ L\,\ell(x,y_1)+ L\,\ell(y_1,y_2).
\]
Taking the sup in $x$ we obtain that 
\begin{equation}
\label{eq:almostmcshane}
f_\wedge(y_2)\geq f_\wedge(y_1)+L\,\ell(y_1,y_2)
\end{equation}
and thus in particular that  $f_\wedge$ is causal. Then, since by the conventions \eqref{eq:conventions1} the claim \eqref{eq:permcshane} holds provided either $f_\wedge(y_2)=+\infty$ or $f_\wedge(y_1)=-\infty$, we can assume that $f_\wedge(y_1),f_\wedge(y_2)\in\R$. In this case, though, the claim \eqref{eq:permcshane} is a direct consequence of \eqref{eq:almostmcshane}.

Finally, let $g$ be any $L$-steep extension of $f$ to $\M$ and $y\in \M$. We need to prove that 
\begin{equation}
    \label{eq:gfwedge}
    g (y)\geq f_\wedge (y).
\end{equation}
To this end, let $x \in W$ be a competitor in the definition of $f_\wedge(y)$ (as before: if such $x$ does not exist we have $f_\wedge(y)=-\infty$ and \eqref{eq:gfwedge} follows). By $L$-steepness of $g$ we have 
\begin{equation}
\label{eq:pergf}
g(y)-f(x)=g(y)-g(x) \geq L\,\ell(x,y).    
\end{equation}
By assumption we know that $f(x)>-\infty$ while we see from the above that if $f(x)=+\infty$ for some competitor $x$, then $g(y)=+\infty$ and \eqref{eq:gfwedge} follows. Thus we can assume that $f(x)\in\R$ for every $x$ competitor in the definition of $f_\wedge(y)$. In turn, the relation \eqref{eq:pergf} tells us that $g(y)\geq f(x)+L\,\ell(x,y)$ for any such $x$, thus taking the supremum in $x$ we conclude the statement.
\end{proof}
A key object of study of this manuscript is differential calculus with causal functions. To develop this a first step is to leverage the concept of causal speed introduced above to define a notion of `modulus of differential' by duality. In Polish metric spacetimes, relevant proxies of what will turn out to be `the correct' definition are  the \emph{forward} and \emph{backward} slopes $|\partial^{+}f|$ and $|\partial^{-}f|$, respectively, defined as: 
\begin{equation}\label{eq:defsubslopes}
\begin{split}
|\partial^{+}f|(x)=:\varliminf_{y\downarrow x}\frac{f^+(y)-f^-(x)}{\ell(x,y)}:=\sup \inf_{y\in U\cap J^+(x)}\frac{f^+(y)-f^-(x)}{\ell(x,y)},
\\
|\partial^{-}f|(x):=\varliminf_{z\uparrow x}\frac{f^+(x)-f^-(z)}{\ell(z,x)}:=\sup \inf_{z\in U\cap J^-(x)}\frac{f^{ +}(x)-f^-(z)}{\ell(z,x)},
\end{split}
\end{equation}
where both suprema are taken over all neighborhoods $U$ of $x$. In these formulas, $0/0$ is intended to be equal to $+\infty$,   thus, for instance, if $x$ has a neighborhood $U$ for which there is no $y\in U$ different from $x$ with $y\geq x$, then $|\partial^{+}f|(x)=+\infty$. Similarly for $|\partial^{-}f|$.  If $f$ is $L$-steep, then clearly we have
\begin{equation}
\label{eq:steepslopeloc}
|\partial^{+}f|(x),|\partial^{-}f|(x)\geq L\qquad\forall x\in \M.
\end{equation}
The functions $|\partial^{\pm}f|$  could be considered as Lorentzian counterparts of the ascending/descending slopes appearing in positive signature and they will have a similar role in the proof of the metric Brenier--McCann theorem (compare \cite[Eq.\ (2.6), Thm.\ 10.3]{AGS:14a} with \eqref{eq:defsubslopes} above and \Cref{Th:Brenier} respectively). In this direction, the analog of the local Lipschitz constant as e.g.~defined in \cite{AGS:14a}  would be the \emph{local steepness constant} ${\sf st}(f)\colon \M\to [0,+\infty]$ defined through the assignment
\[
{\sf st}(f)(x):=\min\{|\partial^{+}f|(x),|\partial^{-}f|(x)\}.
\]
In the smooth setting, the reverse Cauchy-Schwarz inequality yields:
\[
\partial_t(f\circ \gamma)_t=\d f_{\gamma_t}(\gamma'_t)\geq\|\d f\|_*\|\gamma'_t\|.
\]
The following result offers a first non-smooth counterpart to this.

\begin{proposition}[Pathwise growth exceeds integrated forward and backward slopes]
\label{prop:subslopes1} Let $(\M,\uptau,\ell)$ be a Polish metric spacetime, $f:\M\to\bar\R$ a causal function and $\gamma:[0,1]\to\M$ a causal curve. Then there is a Borel function $g:[0,1]\to[0,+\infty]$ with  $g_t\geq \max\{|\partial^{+}f|,|\partial^{-}f|\}(\gamma_t)$  for every $t\in[0,1]$ and such that
\begin{equation}
\label{eq:subslope}
f(\gamma_1)-f(\gamma_0)\geq\int_0^1 g_t|\dot\gamma_t|\,\d t.
\end{equation}
If the   topology $\uptau$ contains the chronological one  and  $\ell_+$ is lower semicontinuous, then $|\partial^{+}f|$ and $|\partial^{-}f|$ are Borel measurable.
\end{proposition} 
\begin{proof} 
If either $f(\gamma_1)=+\infty$ or $f(\gamma_0)=-\infty$ the  conventions \eqref{eq:conventions1} ensure that \eqref{eq:subslope} hold with $g\equiv+\infty$. Taking causality of $f,\gamma$ into account we can thus assume that $f$ is real valued.   

\Cref{L:chronological continuity} asserts $f^- \le f \le f^+$ with equality holding outside a countable union $\Sigma$ of achronal sets. 
Let $S:= \supp |\dot{\bm\gamma}| \subset[0,1]$ denote the smallest closed set outside of which the causal speed of $\gamma$ vanishes.  Its complement consists of countably many intervals $(a,b)$. Unless $a,b \in S$ are the endpoints of one of these intervals, if $a<b$ then \Cref{Pr:speed}(i) and the \Cref{Def:causal speed} of causal speed show $0< |\dot{\bm\gamma}|(a,b) \le \ell(\gamma_a,\gamma_b)$.
This chronological relation shows $S\cap\gamma^{-1}(\Sigma)$ is at most countable.  Thus the non-decreasing functions  $f^\pm \circ \gamma$ satisfy $f^- \circ \gamma \le f \circ \gamma \le f^+\circ \gamma$ on $[0,1]$,  with equality holding on $S$ outside of the countable set $S \cap \gamma^{-1}(\Sigma)$.

By Lebesgue's theorem, see e.g. \cite[Theorem 3.23]{Folland2013},
(or by \Cref{Pr:speed} applied to $T(s,t):=f(\gamma_t)-f(\gamma_s)$) 
a monotone real-valued function is differentiable Lebesgue almost everywhere and letting  $(f\circ \gamma)'$ be this pointwise derivative yields
\[
f(\gamma_1)-f(\gamma_0)\geq\int_0^1(f\circ \gamma)'(t)\,\d t. 
\]
Now let $s_t:=\liminf_{h\downarrow0}\min\{\frac{\ell(\gamma_t,\gamma_{t+h})}{h},\frac{\ell(\gamma_{t-h},\gamma_{t})}{h}\}$ and notice that $t\mapsto s_t$ is Lebesgue measurable  (as a consequence of the monotonicity of $\ell$ in its entries  --- see the already cited \cite[Thm.\ 4]{ChabCrou09}) and equal to $|\dot\gamma_t|$ for a.e.\ $t$. Let $D\subset S$ be the set of $t$'s where  $s_t>0$ for which the differentials of  $f\circ\gamma$ and $f^\pm\circ\gamma$ exist and agree. Set  $g_t:=\tfrac1{s_t}\lim_{h\to 0}\frac{f(\gamma_{t+h})-f(\gamma_t)}{h}$ on $D$ and  $g_t:=+\infty$ on $[0,1]\setminus D$. It is clear that with this choice \eqref{eq:subslope} holds, so we just have to prove that $g_t\geq \max\{|\partial^{+}f|,|\partial^{-}f|\}(\gamma_t)$ holds  for every $t\in D$.  To see this notice that
\[
\begin{split}
(f\circ\gamma)'(t)&=\lim_{h\downarrow0}\frac{f^+(\gamma_{t+h})-f^-(\gamma_t)}h
\geq s_t\,\liminf_{h\downarrow0}\frac{f^+(\gamma_{t+h})-f^-(\gamma_t)}{\ell(\gamma_t,\gamma_{t+h})}  \geq |\partial^{+}f|(\gamma_t)\,s_t.
\end{split}
\]
and similarly
\begin{align*}
(f\circ\gamma)'(t)&=\lim_{h\downarrow0}\frac{f^+(\gamma_{t})-f^-(\gamma_{t-h})}h\geq s_t \liminf_{h\downarrow0}\frac{f^+(\gamma_{t})-f^-(\gamma_{t-h})}{\ell(\gamma_{t-h},\gamma_{t})}
\geq |\partial^{-}f|(\gamma_t)\, s_t,
\end{align*}
proving the claim. The $g$ as defined is Lebesgue measurable, but modifying it in a suitable Lebesgue measurable set of measure 0 by setting  it to $+\infty$ therein, we retain all of the required properties and obtain Borel {measurability}
of $g$.

We turn to Borel {measurability}
of $|\p^{+} f|$ and $|\p^{-} f|$. Let $F:\M^2\to[0,+\infty]$ be defined as
\[
F(x,y):=\begin{cases}
    \displaystyle\frac{f^+(y) - f^-(x)}{\ell(x,y)} & \textnormal{if }\ell(x,y) >0,\\
    +\infty & \textnormal{otherwise}.
\end{cases}
\]
Recall that \Cref{L:chronological continuity}(v) yields upper semicontinuity of $\pm f^\pm$, and also $f^+(y)\ge f(y) \ge f(x) \ge f^-(x)$ whenever $\ell(x,y) \ge 0$.  As 
the set $\{x:\ell(x,y)>0\}$ is open for each $y\in \M$, 
the assumed lower semicontinuity of $\ell_+$ implies the assignment 
$(x,y) \mapsto F(x,y)$ is upper semicontinuous.

Now let $(U_n)$ be a countable base for the Polish topology of $\M$ and notice that from the monotonicity of the quantity $\inf_{y\in U\cap J^+(x)}(f^+(y)-f^-(x))/\ell(x,y)$ in $U$ with respect to inclusion it follows that
\[
|\partial^{+}f|(x)=\sup_{n\in\N}\inf_{y\in U_n}F(x,y)
\]
for every $x\in \M$, hence $\smash{\vert \partial^{+} f\vert}$ is Borel measurable.  Similarly, one shows that $|\partial^{-}f|$ is Borel measurable.
\end{proof}

\subsection{Test plans on metric measure spacetimes}\label{Sub:TPMMS}
The concept of `weak subslope' will be given by duality with test plans. The latter are defined by:
\begin{definition}[Test plans]
\label{Def:Test plans} Let $(\M,\uptau,\ell,\mm)$  be a mm spacetime. A measure $\bdpi\in \Prob(\CC([0,1];\M))$ is called a \emph{test plan} if it has bounded compression, i.e.\ there exists a constant $C>0$ such that $(\eval_t)_\push\bdpi \leq C\,\meas$ for every $t\in[0,1]$.

Also, we will call $\bdpi\in\Prob(\CC([0,1];\M))$ 
\begin{enumerate}
    \item[a)] an \emph{initial test plan} if for some $T\in(0,1)$ and $C>0$ we have  $(\eval_t)_\push\bdpi \leq C\,\meas$ for all $t\in[0,T]$ and moreover   $(\eval_t)_\push \bdpi \to (\eval_0)_\push\bdpi$  in the narrow topology as $t\downarrow  0$,  
    \item[b)] a \emph{final test plan} if for some $T\in(0,1)$ and $C>0$ we have  $(\eval_t)_\push\bdpi \leq C\,\meas$ for all $t\in[T,1]$ and moreover $(\eval_s)_\push\bdpi \to (\eval_1)_\push\bdpi$ in the narrow topology as $s\uparrow 1$.
\end{enumerate}
\end{definition}

\begin{remark}[Initial and final limits and time reversibility] 
\label{R:forward-backward}
In forward metric measure spacetimes, the narrow continuity at time one (stipulated in point $(b)$ above) holds automatically. 
We impose it anyways,  to ensure we
could work equally well with backward metric measure spacetimes,  at the cost of exchanging left-notions for right-notions throughout.
This ensures the theory is as symmetric as possible under time-reversal.\hfill$\blacksquare$
\end{remark}
We shall refer to the condition   ``$(\eval_t)_\push\bdpi \leq C\,\meas$ for every $t\in[0,1]$'' by saying that $\ppi$ has bounded compression.  Among other things, it ensures $\meas$-a.e.~defined functions to be well-defined along test plans as follows: for $t\in[0,1]$, it yields $f\circ\e_t = g\circ\e_t$ $\bdpi$-a.e.~if $f=g$ $\meas$-a.e.

\begin{example}[Statics]
\label{Ex:Const} Let $\mathrm{const}\colon \M \to \CC([0,1];\M)$ denote the continuous map sending a point $x\in \M$ to the curve constantly equal to $x$. 
If $\mu\in\Prob(\M)$ 
then $\bdpi := \mathrm{const}_\push\mu$
satisfies $(\eval_t)_\push \bdpi = \mu$
for all $t\in [0,1]$.
Thus $\bdpi$
is a test plan if and only if $\mu\leq C\mm$ for some $C>0$, i.e.\ if and only if $\mu$ itself has bounded compression. \hfill$\blacksquare$
\end{example}
We conclude this short section describing procedures to build new test plans from old. Let  $(\M,\uptau,\ell)$ be a Polish metric spacetime:
\begin{lemma}[Restrictions of test plans]
\label{Le:restr} Let $(\M,\uptau,\ell,\mm)$  be a mm spacetime and $\bdpi$  a test plan on it. Then for every $s,t\in [0,1]$ with $s<t$ and every Borel set $\Gamma\subset \CC([0,1];\M)$ with $\bdpi[\Gamma]>0$, the measures $(\restr_s^t)_\push\bdpi$ and $\bdpi[\Gamma]^{-1}\,\bdpi\mres \Gamma$ are again test plans.
\end{lemma}

\begin{proof} Both measures clearly belong to $\Prob(\CC([0,1];\M))$. Bounded compression in either case follows easily as well. Indeed, if $(\e_r)_*\ppi\leq C\mm$ for any $r$, then we have
\begin{align*}
    (\eval_r)_\push\big[(\restr_s^t)_\push \bdpi\big] = (\eval_r \circ \restr_s^t)_\push\bdpi = (\eval_{(1-r)s+rt})_\push\bdpi \leq C\,\meas
\end{align*}
and similarly
\begin{align*}
(\eval_t)_\push\big[\bdpi[\Gamma]^{-1}\,\bdpi\mres \Gamma\big]  \leq \bdpi[\Gamma]^{-1}\,(\eval_t)_\push\bdpi \leq \bdpi[\Gamma]^{-1}\,C\,\meas.\tag*{\qedhere}
\end{align*}
\end{proof}

\subsection{Weak subslopes}

From now on all causal  functions are assumed to be $\meas$-measurable unless
explicitly stated otherwise; (this hypothesis becomes redundant in those cases covered by \Cref{Cor:Meas tf} anyways).
{We reserve the term {\em rough causal} to refer to a causal function which is not necessarily $\mm$-measurable.}

We have already noticed that in the smooth category, given a smooth causal function $f$ and a smooth causal curve $\gamma$, the reverse Cauchy-Schwarz inequality yields:
\[
\partial_t(f\circ \gamma)_t=\d f_{\gamma_t}(\gamma'_t)\geq
\|\d f\|_*\|\gamma'_t\|\qquad\forall t.
\]
Also,   it is easy to see that $\|\d f\|_*$ is the maximal continuous function for which the above inequality holds for any curve and $t$. Starting from this consideration and inspired by analogous definitions in the metric setting \cite{AGS:14a}, we propose the following `weak'  notion:
\begin{definition}[Weak subslope]
\label{Def:Weak lower diff} Let $(\M,\uptau,\ell,\mm)$ be a mm  spacetime and $f\colon \M\to\bar{\R}$  a causal  function. A Borel function $G\colon \M \to [0,+\infty]$ is called \emph{weak subslope} of $f$ if
\begin{align}\label{eq:defwd}
\int \big[f(\gamma_1) - f(\gamma_0)\big]\,\d\bdpi(\gamma) \geq \iint_0^1 G(\gamma_t)\,\vert\dot\gamma_t\vert\,\d t\,\d\bdpi(\gamma)\qquad\text{ for every test plan }\ppi.
\end{align}
\end{definition}

Some elementary comments on this notion are in order. By non-negativity of the respective integrands, both integrals in \eqref{eq:defwd} are well-defined, possibly with value $+\infty$. Because test plans have bounded compression, \Cref{Def:Weak lower diff} is not influenced by changes of $f$ or $G$ on $\meas$-negligible sets.   The set of weak subslopes of a given causal function is never empty, as it always contains the function identically $0$ on $\M$. 

A direct and relevant consequence of working with test plans rather than with single curves is the following stability result:
\begin{proposition}[Stability of weak subslopes]
\label{Pr:closed} Let $(\M,\uptau,\ell,\mm)$ be a mm spacetime and  $(f_n)$  a sequence of causal  functions converging pointwise $\mm$-a.e.\ to a causal function $f$. Let $(G_n)$ be a sequence of weak subslopes $G_n$ of $f_n$ which converges pointwise $\meas$-a.e.~to a function $G$. 

Then   $G$ is a weak subslope of $f$.
\end{proposition}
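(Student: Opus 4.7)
The proof breaks into two separate tasks: first, establishing that the pointwise $\meas$-a.e.\ limit $f$ of causal functions admits a causal representative, and second, passing to the limit in the defining inequality \eqref{eq:defwd} using twice-applied Fatou-type estimates.

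For the first task, I would redefine $f$ on the $\meas$-null exceptional set by setting $\tilde f(x) := \limsup_{n\to+\infty} f_n(x)$ for every $x\in M$; this agrees with $f$ $\meas$-a.e.\ and is causal, since a pointwise limsup of causal functions preserves the defining monotonicity $x\le y\Rightarrow f_n(x)\le f_n(y)$. Henceforth I work with this version.

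For the second task, fix a test plan $\bdpi$. The bound $(\e_t)_\push\bdpi\le\Comp(\bdpi)\meas$ at every $t\in[0,1]$ lifts $\meas$-null sets to $\bdpi$-null sets via $\e_t$, and Fubini then shows that the set $\{(\gamma,t):G_n(\gamma_t)\not\to G(\gamma_t)\}$ is $(\bdpi\otimes\Leb_1)$-negligible, and analogously for $\{(\gamma,0),(\gamma,1):f_n(\gamma_i)\not\to f(\gamma_i)\}$. Since $G_n\ge 0$ and $|\dot\gamma_t|\ge 0$, Fatou's lemma yields
\begin{equation*}
\iint_0^1 G(\gamma_t)\,|\dot\gamma_t|\d t\d\bdpi(\gamma)\le\liminf_{n\to+\infty}\iint_0^1 G_n(\gamma_t)\,|\dot\gamma_t|\d t\d\bdpi(\gamma).
\end{equation*}
For the left-hand side, I would use that $\gamma$ is $\bdpi$-a.e.\ causal and each $f_n$ is causal, so $f_n(\gamma_1)-f_n(\gamma_0)\ge 0$ pointwise $\bdpi$-a.e.\ (here $f_n$ is finite-valued on an extended-real difference that never requires the conventions \eqref{eq:conventions1}, being nonnegative). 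Another application of Fatou gives
\begin{equation*}
\liminf_{n\to+\infty}\int[f_n(\gamma_1)-f_n(\gamma_0)]\d\bdpi(\gamma)\ge\int\liminf_{n\to+\infty}[f_n(\gamma_1)-f_n(\gamma_0)]\d\bdpi(\gamma),
\end{equation*}
and chaining these three estimates together with the assumption that $G_n$ is a weak subslope of $f_n$ produces the desired inequality.

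The one delicate point is the $\bdpi$-a.e.\ identification of $\liminf_n[f_n(\gamma_1)-f_n(\gamma_0)]$ with $f(\gamma_1)-f(\gamma_0)$ in the presence of infinite values. I would split $\CC([0,1];M)$ into $A:=\{\gamma:f(\gamma_0),f(\gamma_1)\in\R\}$ and its complement. On $A$, both $f_n(\gamma_i)\to f(\gamma_i)$ in $\R$ so the liminf of the differences is exactly $f(\gamma_1)-f(\gamma_0)\ge 0$. On $A^c$, the conventions \eqref{eq:conventions1} force $f(\gamma_1)-f(\gamma_0)=+\infty$, and because the integrand on $A$ is already nonnegative the overall LHS $\int[f(\gamma_1)-f(\gamma_0)]\d\bdpi$ is $+\infty$ as soon as $\bdpi(A^c)>0$, so the inequality holds trivially there. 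The main technical care is therefore bookkeeping with $\bar\R$-valued integrands and the non-commutative subtraction rules, rather than any deep analytic obstacle.
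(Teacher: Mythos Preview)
Your Fatou argument on the left-hand side goes in the wrong direction. You correctly derive
\[
\iint_0^1 G(\gamma_t)\,|\dot\gamma_t|\,\rmd t\,\rmd\bdpi(\gamma)\le \liminf_{n\to+\infty}\iint_0^1 G_n(\gamma_t)\,|\dot\gamma_t|\,\rmd t\,\rmd\bdpi(\gamma)\le \liminf_{n\to+\infty}\int\big[f_n(\gamma_1)-f_n(\gamma_0)\big]\,\rmd\bdpi(\gamma),
\]
but to close the argument you need this last quantity to be \emph{bounded above} by $\int[f(\gamma_1)-f(\gamma_0)]\,\rmd\bdpi$. Your Fatou estimate instead gives $\liminf_n\int[\,\cdots]\ge\int\liminf_n[\,\cdots]$, which is the opposite inequality; identifying the pointwise $\liminf$ with $f(\gamma_1)-f(\gamma_0)$ on the set $A$ then only tells you that $\liminf_n\int[\,\cdots]$ sits \emph{above} both $\iint G|\dot\gamma|$ and $\int[f(\gamma_1)-f(\gamma_0)]$, which says nothing about their relative order. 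Concretely, nothing prevents mass in $f_n(\gamma_1)-f_n(\gamma_0)$ from concentrating and escaping, making $\liminf_n\int[\,\cdots]$ strictly larger than $\int[f(\gamma_1)-f(\gamma_0)]$.

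This is exactly why the paper does not use Fatou on the left-hand side. Instead it first imposes the auxiliary uniform bound $\sup_n\|f_n\circ\eval_1-f_n\circ\eval_0\|_{L^\infty(\bdpi)}<+\infty$, under which dominated convergence gives the needed \emph{equality} $\int[f(\gamma_1)-f(\gamma_0)]\,\rmd\bdpi=\lim_n\int[f_n(\gamma_1)-f_n(\gamma_0)]\,\rmd\bdpi$, and the chain closes. The uniform bound is then removed by exhausting $\CC([0,1];M)$ with the sets $\Gamma_i:=\{\gamma:\sup_n[f_n(\gamma_1)-f_n(\gamma_0)]\le i\}$, applying the bounded case to the restricted test plans $\bdpi(\Gamma_i)^{-1}\bdpi\mres\Gamma_i$, and passing to the limit $i\to+\infty$ via monotone convergence. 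Your treatment of the causality of $f$ and of the infinite-value bookkeeping is fine; the missing ingredient is precisely this domination-plus-truncation step.
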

\begin{proof} 
Let $\bdpi$ be a fixed test plan.  For the moment, we assume
\begin{align}\label{Eq:Supremum assumption}
\sup_{n\in\N} \big\Vert f_n\circ \eval_1 - f_n \circ \eval_0\big\Vert_{L^\infty(\CC([0,1];\M),\bdpi)} < +\infty.
\end{align}
Our hypothesis and bounded compression give $f_n \circ \eval_1 - f_n\circ \eval_0 \to f\circ\eval_1 - f\circ \eval_0$ $\bdpi$-a.e.~as $n\to+\infty$. Lebesgue's dominated convergence theorem and \eqref{Eq:Supremum assumption} imply
\begin{align*}
\int \big[f(\gamma_1) - f(\gamma_0)\big]\d\bdpi(\gamma) = \lim_{n\to+\infty} \int\big[f_n(\gamma_1) - f_n(\gamma_0)\big]\d\bdpi(\gamma).
\end{align*}
On the other hand, again by our hypothesis and bounded compression, the set of all pairs $(\gamma,t)\in \CC([0,1];\M) \times [0,1]$ with $G_n(\gamma_t)\to G(\gamma_t)$ as $n\to+\infty$ has full $\bdpi\otimes \mathscr{L}^1$-measure. Fubini's theorem and  Fatou's lemma then give
\begin{align*}
\liminf_{n\to+\infty} \iint_0^1 G_n(\gamma_t)\,\vert\dot\gamma_t\vert \d t\d\bdpi(\gamma) \geq \iint_0^1 G(\gamma_t)\,\vert\dot\gamma_t\vert\d t\d\bdpi(\gamma).
\end{align*}
This shows the desired property \eqref{eq:defwd} under the hypothesis \eqref{Eq:Supremum assumption}.

To get rid of the assumption \eqref{Eq:Supremum assumption}, we argue by approximation. Let $\bdpi$ be an arbitrary test plan. If $f\circ\eval_1 - f\circ \eval_0 = +\infty$ on a set of positive $\bdpi$-measure, the conclusion is trivial, so we can assume that $f\circ\eval_1 - f\circ\eval_0 < +\infty$ $\bdpi$-a.e. Then  since $f_n\circ \eval_1 - f_n \circ \eval_0 \to f\circ \eval_1 -f\circ\eval_0$ $\bdpi$-a.e.~as $n\to+\infty$, we obtain $\smash{\limsup_{n\in\N}\big[f_n(\gamma_1) - f_n(\gamma_0)\big] < +\infty}$ for $\bdpi$-a.e.~$\gamma\in\CC([0,1];\M)$. Thus, the  collection   $(\Gamma_{i,j})_{i,j\in\N}$ of $\ppi$-measurable sets defined as
\begin{equation*}
\Gamma_{i,j} := \big\lbrace \gamma \in \CC([0,1];\M) : \sup_{n\geq j} \big[f_n(\gamma_1)-f_n(\gamma_0)\big] \leq i\big\rbrace
\end{equation*}
satisfies $\ppi(\cup_{i,j}\Gamma_{i,j})=1$. As in   \Cref{Le:restr}, setting $\smash{\bdpi_{i,j} := \bdpi(\Gamma_{i,j})^{-1}\,\bdpi\mres\Gamma_{i,j}}$ gives a collection $(\bdpi_{i,j})_{i,j\in\N}$ of test plans such that, thanks to the previous part, \eqref{eq:defwd} holds for $\bdpi$ replaced by $\bdpi_{i,j}$ for every $i,j\in\N$. Since $\Gamma_{i,j}\subset\Gamma_{i',j'}$ for $i\leq i'$ and $j\leq j'$, letting $i,j\to +\infty$ in the resulting inequality and using Levi's monotone convergence theorem, the claim follows.
\end{proof}
Our next goal is to prove that the collection of weak subslopes of a given function has a maximal element, intended in the $\mm$-a.e.\ sense. To show this, it is convenient to have the following  pathwise characterization of weak subslopes:
\begin{lemma}[An essentially pathwise criterion for weak subslopes]
\label{Le:Distr der} Let $(\M,\uptau,\ell,\mm)$ be a mm spacetime,  $\smash{f\colon \M\to \bar{\R}}$ a causal function, and  $\smash{G\colon [0,+\infty]\to\bar{\R}}$  Borel. 

Then $G$ is a weak subslope of $f$ if and only if for any test plan $\bdpi$ we have:
\begin{itemize}
\item[i)]For $\bdpi$-a.e.~$\gamma\in\CC([0,1];\M)$ the function $t\mapsto  G(\gamma_t)\,\vert\dot\gamma_t\vert$ belongs to $\smash{L^1_{\mathrm{loc}}(\dom(f\circ\gamma);\mathscr{L}^1)}$; 
\item[ii)] the distributional derivative of the non-decreasing and real valued restriction of $f\circ\gamma$ to $\dom(f\circ\gamma)$ is bounded from below by $G\circ\gamma\,\vert\dot\gamma\vert\,\mathscr{L}^1$ on $\dom(f\circ\gamma)$. 
\end{itemize} 
Item $(ii)$ could also be replaced by
\begin{itemize}
\item[ii')]  the density $(f\circ\gamma)'$ of the absolutely continuous part of the   distributional derivative of  $f\circ\gamma$ in $\dom(f\circ\gamma)$ satisfies $(f\circ\gamma)'\geq G\circ\gamma\,\vert\dot\gamma\vert$  $\Leb^1$-a.e.\ in $  \dom(f\circ\gamma)$.
\end{itemize}
In particular, if these hold for a test plan $\ppi$   the function $t\mapsto G(\gamma_t)\,\vert\dot\gamma_t\vert$ belongs to $\smash{L^1([0,1];\mathscr{L}^1)}$ for $\bdpi$-a.e.~$\gamma\in \CC([0,1];\M)$ with $f(\gamma_1) - f(\gamma_0)<+\infty$.
\end{lemma}
\begin{proof} The equivalence of $(ii)$ and $(ii')$ is an obvious consequence of the fact that $f\circ\gamma$ is non-decreasing and the last claim is a trivial consequence of $(ii')$ and the bound $\int_0^1(f\circ\gamma)'_t\,\d t\leq f(\gamma_1)-f(\gamma_0)$, that in turn follows from the monotonicity of $f\circ\gamma$. 

  Now assume first that $G$ is a weak subslope of $f$. We claim that, given any $s,t\in [0,1]$ with $s<t$, $\bdpi$-a.e.~$\gamma\in\CC([0,1];\M)$ satisfies
\begin{equation}
\label{eq:diff1}
f(\gamma_t) - f(\gamma_s) \geq \int_s^t G(\gamma_r)\, |\dot \gamma_r|\, \d r.
\end{equation}
Indeed, if not there are $s,t\in[0,1]$ with $s<t$ as well as $\varepsilon > 0$ such that the set $\Gamma_\varepsilon$ of all  $\gamma\in\CC([0,1];\M)$ such that $[s,t] \subset \dom(f\circ\gamma)$ and
\begin{align}\label{Eq:TP}
f(\gamma_t) - f(\gamma_s) + \varepsilon \leq \int_s^t G(\gamma_r)\, |\dot \gamma_r| \,\d r
\end{align}
satisfies $\bdpi[\Gamma_\varepsilon]>0$. Up to further reducing $\varepsilon$ and slightly reducing $\Gamma_\varepsilon$ as well, as in the proof of  \Cref{Pr:closed}  we may and will assume $f\circ\eval_t - f\circ \eval_s$ to be uniformly bounded on $\Gamma_\varepsilon$. Then by \Cref{Le:restr}, $\smash{\bdpi' := (\restr_s^t)_\push\big[\bdpi[\Gamma_\varepsilon]^{-1}\,\bdpi\mres\Gamma_\varepsilon\big]}$ constitutes a test plan and is  admissible in \eqref{eq:defwd}, but the latter in conjunction with \eqref{Eq:TP} as well as the finiteness of $\smash{\iint_0^1 G(\gamma_\alpha)\,\vert\dot\gamma_\alpha\vert\,\d\alpha\,\d\bdpi'(\gamma)}$,  as implied by our hypothesis on $f\circ\eval_t - f\circ \eval_s$, directly leads to a contradiction.

We then deduce that for any $s,t\in[0,1]\cap \Q$ with $s<t$ the bound \eqref{eq:diff1} holds for $\ppi$-a.e.\ $\gamma$. Then approximating arbitrary reals number $t$ from below and $s$ from above we conclude that for $\ppi$-a.e.\ $\gamma$ the bound \eqref{eq:diff1} holds for any  $s,t\in[0,1]$ with $s<t$. From this, the validity of $(ii)$  easily follows.

Conversely, we assume $(ii)$. Let $\bdpi$ be a test plan. The hypothesis implies that for $\bdpi$-a.e.~$\gamma\in\CC([0,1];\M)$, the distributional derivative of the function $h\colon \dom(f\circ\gamma) \to \R$ given by
\begin{align*}
h(t) := f(\gamma_t) - \int_a^t G(\gamma_r)\,|\dot \gamma_r| \,\d r
\end{align*} 
is non-negative (here we assume $\dom(f\circ\gamma) = [a,b]$ for notational convenience). In turn, this gives $h(s) \leq h(t)$ for $\smash{\mathscr{L}^1}$-a.e.~$s,t\in[a,b]$ with $s< t$. Since $f$ is a causal function, for such $s$ and $t$ this implies
\begin{align}\label{Eq:Prec}
f(\gamma_1) - f(\gamma_0)\geq f(\gamma_t) - f(\gamma_s) \geq \int_s^t G(\gamma_r)\, |\dot \gamma_r|\,\d r.
\end{align}
The dependence of the right-hand side on $s$ and $t$ is continuous if the difference $f(\gamma_1)-f(\gamma_0)$ is finite; in this case, \eqref{Eq:Prec} holds for $s=0$ and $t=1$. If $f(\gamma_1)-f(\gamma_0) = +\infty$, the estimate  \eqref{Eq:Prec} holds vacuously for these choices of $s$ and $t$, and so does \eqref{Eq:Prec} for $\bdpi$-a.e.~$\gamma\in\CC([0,1];\M)$. Integration yields the claim.
\end{proof}

 This pathwise characterization immediately implies that when
\begin{equation}
\label{eq:maxsubslopes}
G_1,G_2\quad \text{are subslopes of $f$}\qquad\Rightarrow\qquad \max\{G_1,G_2\}\text{ is a subslope of $f$}.
\end{equation}
Indeed, if $\ppi$ is a test plan, by \Cref{Le:Distr der} and with the notation of item $(ii')$, for $\bdpi$-a.e.~$\gamma\in\CC([0,1];\M)$ we have $(f\circ\gamma)'\geq (G_1\circ\gamma)\,\vert\dot\gamma\vert$ and $(f\circ\gamma)'\geq  (G_2\circ\gamma)\,\vert\dot\gamma\vert$  $\Leb^1$-a.e.\ on  $\dom(f\circ\gamma)$.
It follows that  for $\bdpi$-a.e.~$\gamma$ we have  $(f\circ\gamma)'\geq \max\{G_1,G_2\}\circ\gamma \,\vert\dot\gamma\vert$ $\Leb^1$-a.e.\ on  $\dom(f\circ\gamma)$, which by  \Cref{Le:Distr der} suffices to deduce \eqref{eq:maxsubslopes}.

The main result of the section is:
\begin{theorem}[Unique existence of maximal weak subslopes]
\label{Th:MaxLoDiff} Let $(\M,\uptau,\ell,\mm)$ be a mm spacetime and  $f\colon \M\to\bar{\R}$  causal. Then there exists a Borel function $G\colon \M\to[0,+\infty]$ which is a weak subslope of $f$ and $\meas$-a.e.~no smaller than any other weak subslope of $f$.

In particular, $G$ is unique up to modifications on $\meas$-negligible sets.
\end{theorem}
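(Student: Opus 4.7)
The plan is to use the classical essential supremum argument adapted to the setting at hand, exploiting the two structural properties of weak subslopes already established: the lattice property (\Cref{Le:Max}) and stability under pointwise $\meas$-a.e.\ limits (\Cref{Pr:closed}).

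First, since $\meas$ is Radon on the Polish space $M$, it is $\sigma$-finite, so I can fix a strictly positive Borel function $\phi\colon M\to(0,+\infty)$ with $\int_M\phi\d\meas=1$ and set $\nu:=\phi\,\meas$, a probability measure with the same negligible sets as $\meas$. Let $\mathcal{G}$ denote the family of all weak subslopes of $f$, which is nonempty (it contains $0$). Define
\[
\alpha:=\sup\!\Big\{\!\int_M\arctan(G)\d\nu : G\in\mathcal{G}\Big\}\in [0,\pi/2].
\]
The bounded, monotone function $\arctan$ makes $\alpha$ finite and allows pointwise suprema to be compared conveniently through integration, even on sets where candidate subslopes might take the value $+\infty$.

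Next I would construct a maximizing sequence. Pick $\tilde G_n\in\mathcal{G}$ with $\int\arctan(\tilde G_n)\d\nu\to\alpha$ and replace it by $G_n:=\max\{\tilde G_1,\dots,\tilde G_n\}$, which still belongs to $\mathcal{G}$ by the lattice property of \Cref{Le:Max}. The sequence $(G_n)$ is pointwise nondecreasing and still satisfies $\int\arctan(G_n)\d\nu\to\alpha$. Define the pointwise limit $G^\star:=\sup_n G_n\colon M\to[0,+\infty]$, which is $\meas$-measurable as the supremum of countably many $\meas$-measurable functions. By \Cref{Pr:closed} applied to the constant sequence $f_n\equiv f$ together with $(G_n)$, we conclude that $G^\star$ is itself a weak subslope of $f$. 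Moreover, by monotone convergence $\int\arctan(G^\star)\d\nu=\alpha$.

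For maximality, let $G\in\mathcal{G}$ be any weak subslope. By \Cref{Le:Max}, $\max\{G,G^\star\}\in\mathcal{G}$, so by definition of $\alpha$ we get
\[
\alpha\ge\int_M\arctan(\max\{G,G^\star\})\d\nu\ge\int_M\arctan(G^\star)\d\nu=\alpha.
\]
Equality forces $\arctan(\max\{G,G^\star\})=\arctan(G^\star)$ $\nu$-a.e., and since $\nu$ and $\meas$ share the same negligible sets and $\arctan$ is strictly increasing, this yields $G\le G^\star$ $\meas$-a.e. Uniqueness is immediate: any two such maximal subslopes coincide $\meas$-a.e.\ by applying this $\meas$-a.e.\ dominance in both directions.

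I do not anticipate any serious obstacle here; the only subtle point is choosing an auxiliary probability measure $\nu$ equivalent to $\meas$ (needed because $\meas$ need not be finite) and working through a bounded monotone transformation such as $\arctan$ so that the maximization is well-posed even when the competitors may attain the value $+\infty$. The rest is a standard diagonal/lattice argument made possible precisely by the two structural ingredients already in place.
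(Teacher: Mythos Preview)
Your proposal is correct and follows essentially the same essential-supremum route as the paper: pick a maximizing sequence, make it monotone via \Cref{Le:Max}, and pass to the limit. The only differences are cosmetic—you spell out the essential supremum construction (equivalent probability measure, $\arctan$ transformation) and invoke \Cref{Pr:closed} for the limit step, whereas the paper is terser and passes to the limit directly in \eqref{eq:defwd} via Levi's monotone convergence theorem.
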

\begin{proof} The conditions on  $G$ force it to be the $\mm$-essential supremum of the collection of all weak subslopes of $f$: what we need to prove is that this $G$ is still a weak subslope. From general properties of essential suprema we know that there is a sequence $(G_n)$ of weak subslopes such that $\mm$-a.e.\ we have $G=\sup_nG_n$. By \eqref{eq:maxsubslopes} the functions $H_n:=\max_{i\leq n}G_i$ are still weak subslopes and by \Cref{Pr:closed} (with $f_n=f$) we conclude that $G$ is also a weak subslope, as desired.
\end{proof}
The concept isolated by this last result deserves a definition:
\begin{definition}[Maximal weak subslope]
\label{Def:Maximal wld} Let $(\M,\uptau,\ell,\mm)$ be a mm  spacetime and   $f\colon \M\to\bar{\R}$ causal. The $\meas$-a.e.~unique element $G$ from  \Cref{Th:MaxLoDiff} is  called \emph{maximal weak subslope} of $f$ and denoted by $\vert\rmd f\vert$.
\end{definition}
Notice that if $f$ is $L$-steep, then we have
\begin{equation}
\label{eq:dflsteep}
|\d f|\geq L\qquad\mm-a.e.,
\end{equation}
as follows directly from the maximality of $|\d f|$, the definition of $L$-steepness and the bound \eqref{eq:boundspeed}.

A direct consequence of the definition and   \Cref{Pr:closed} is the following:
\begin{proposition}[`Upper semicontinuous' dependence]\label{Pr:USCdep} Let $(\M,\uptau,\ell,\mm)$ be a mm spacetime and  $(f_n)$  a sequence of causal  functions on it converging pointwise $\mm$-a.e.\ to a causal function $f$. Then
\begin{equation}
\label{eq:uscdf}
    \liminf_{n\to+\infty} \vert\rmd f_n\vert \leq \vert \rmd f\vert\quad\meas\textnormal{-a.e.}
\end{equation}
\end{proposition}
\begin{proof} The function $G_n:=\inf_{i\geq n}|\d f_i|\leq|\d f_n|$ is clearly a weak subslope of $f_n$. Since $G_n\to     \liminf_{n} \vert\rmd f_n\vert$ $\mm$-a.e.\ the conclusion follows from \Cref{Pr:closed}.
\end{proof}
\begin{remark}
The result above would be false if we replace $\liminf$ with $\limsup$, which is why we put the quotation marks in `upper semicontinuous'. Still, given that in many cases of interest, e.g.\ in studying calculus rules in the upcoming sections, the sequence $(|\d f_n|)$ admits a limit $\mm$-a.e., speaking of upper semicontinuity is not entirely inappropriate.

For a counterexample to \eqref{eq:uscdf} with the $\limsup$ consider $[0,1]$ with the Euclidean topology and $\ell(x,y):=y-x$ if $y\geq x$ and $-\infty$ otherwise. Find a sequence $(g_n)\subset L^1([0,1])$ of non-negative functions with $\limsup_ng_n\equiv 1$ and $\|g_n\|_{L^1}\downarrow0$, then define $f_n(x):=\int_0^xg_n$. Quite clearly we have $|\d f_n|=g_n$ (see also \Cref{ss:compatibility}), $f_n\to 0$ but $|\d 0|=0\ngeq 1$. 
\hfill$\blacksquare$
\end{remark}
The maximality and \Cref{prop:subslopes1} directly imply:
\begin{proposition}[Forward and backward slopes are weak subslopes]
\label{prop:subslopes2} Let $(\M,\uptau,\ell,\mm)$ be a mm spacetime whose topology contains the chronological one and so that $\ell_+$ is lower semicontinuous. Let  $f:\M\to\bar\R$ be a causal function.

Then $|\partial^{+}f|$ and $|\partial^{-}f|$ are weak subslopes. In particular, letting $L$ being the steepness constant of $f$ we have
\begin{equation}
\label{eq:slopesteepness}
|\d f|\geq |\partial^{+}f|,|\partial^{-}f|\geq L\qquad\mm-a.e.
\end{equation}
\end{proposition}
\begin{proof}
By \Cref{prop:subslopes1}  we see that $|\partial^{+}f|$ and $|\partial^{-}f|$ are Borel and thus also that the bound  $f(\gamma_1)-f(\gamma_0)\geq\int_0^1\max\{ |\partial^{+}f|,|\partial^{-}f|\}|\dot\gamma_t|\,\d t$ holds for any causal curve $\gamma$. Integrating w.r.t.\ an arbitrary test plan we deduce that $|\partial^{+}f|$ and $|\partial^{-}f|$ are weak subslopes. Then \eqref{eq:slopesteepness} follows from the definition of $|\d f|$ and \eqref{eq:steepslopeloc}.
\end{proof}

\subsection{Calculus rules}
This part establishes several calculus rules for the maximal weak subslope of causal functions, notably locality, the chain rule, and the Leibniz rule. We start with some elementary observations about regions where $|\d f|$ is equal to $+\infty$. A first simple comment comes from our conventions from \Cref{Sub:InfConv} and the maximality proclaimed in \Cref{Def:Maximal wld}: they readily imply that for any causal  function $f$ we have
\[
\text{$\vert\rmd f\vert = +\infty$ $\meas$-a.e.~on the complement of $\dom(f)$.}
\]
A related result involves the parts of our spacetime that are `visible' via test plans. We introduce the concept. Let $\bdpi$ be a given test plan and call  $\rho_{\bdpi}$  the $\meas$-density of the push-forward of the measure $\vert \dot\gamma_t\vert\d\bdpi(\gamma)\d t$ on $\CC([0,1];\M)\times[0,1]$ under the ``full'' evaluation map $\mathsf{e}(\gamma,t) := \gamma_t$. Let  ${\rm Vis}_{\bdpi}:=\{\rho_{\bdpi}>0\}\subset M$. We think of the Borel set ${\rm Vis}_{\bdpi}$ --- defined up to $\mm$-negligible sets --- as the set that is `visible' from $\ppi$, whence the notation. Then we define ${\rm Vis}(\M)$ as the $\mm$-essential union of all the ${\rm Vis}_{\bdpi}$ as $\ppi$ varies among test plans. Recall (see for instance  \cite[Prop.~3.1.9]{GP:20} for this basic result in measure theory) that ${\rm Vis}(\M)$  is characterized up to $\mm$-negligible sets as the smallest Borel set that contains all the ${\rm Vis}_{\bdpi}$'s up to $\mm$-negligible sets (this notion is closely related to that of $\mm$-essential supremum of a family of Borel functions that we have already seen in Theorem \ref{Th:MaxLoDiff}). We shall frequently use the fact that a property holds $\mm$-a.e.\ on ${\rm Vis}(\M)$ if and only if it holds $\mm$-a.e.\ on ${\rm Vis}_{\bdpi}$ for every test plan $\ppi$.

We then define the `invisible' set ${\rm Invis}(\M):=\M\setminus {\rm Vis}(\M)$ and notice that
\begin{equation}
\label{eq:invisible}
\text{$|\d f|=+\infty\quad\mm$-a.e.\  on ${\rm Invis}(\M)$ for any causal function $f$.}
\end{equation}
To see this, by pointwise maximality of $|\d f|$ it suffices to show that $G:=\infty 1_{{\rm Invis}(\M)}$ is a weak subslope of   $f$ and to this aim it is enough to prove that $\iint_0^1  G(\gamma_t)|\dot\gamma_t|\,\d t\,\d\ppi(\gamma)$ is equal to 0 for any test plan $\ppi$. This, however, is obvious from
\[
\iint_0^1G(\gamma_t)|\dot\gamma_t|\,\d t\,\d\ppi(\gamma)=\int G\,\d\eval_*\big(\vert \dot\gamma_t\vert\bdpi\times\Leb^1\mres{[0,1]}\big)=\int  G\rho_{\ppi}\,\d\mm=0.
\]
Because of \eqref{eq:invisible}, our calculus rules are sometimes phrased on the complement ${\rm Vis}(\M)$ of ${\rm Invis}(\M)$ as this will simplify our
formulas. Also, we will often restrict our attention to regions where $|\d f|<+\infty$, so that automatically from \eqref{eq:invisible} we will not care about what happens in the invisible set ${\rm Invis}(\M)$.

A simple property very much in this  vein 
is:
\begin{lemma}[Achronal sets are invisible]
Let  $(\M,\uptau,\ell,\mm)$ be a mm spacetime and $A\subset\M$ achronal and Borel. Then $\mm(A\setminus {\rm Invis}(\M))=0$.

In particular, for any causal $f:\M\to\bar\R$ we have $|\d f|=+\infty$ $\mm$-a.e.\ on $A$.
\end{lemma}
\begin{proof}
It suffices to prove that for any test plan $\ppi$ we have $\iint_0^11_A(\gamma_t)|\dot\gamma_t|\,\d t\,\d\ppi(\gamma)=0$. For $\gamma\in\CC([0,1];\M)$ and  $t,s\in\gamma^{-1}(A)$  with $t\leq s$ the achronality of $A$ forces $\ell(\gamma_t,\gamma_s)=0$. Then the causality of $\gamma$ implies that the same holds for $t\leq s$  in the convex hull $I\subset[0,1]$ of $\gamma^{-1}(A)$. It follows that $|\dot\gamma_t|=0$ for a.e.\ $t\in I$ and in particular for a.e.\ $t\in \gamma^{-1}(A)$. Hence   $1_A(\gamma_t)|\dot\gamma_t|=0$ for a.e.\ $t\in[0,1]$ and the conclusion follows.
\end{proof}

\begin{example} In the one point space the whole space is invisible, because any curve must be constant and thus have 0 causal speed.\hfill$\blacksquare$
\end{example}
With this said, we can turn to the study of calculus rules. We start with the simple
\begin{equation}
\label{eq:dconst}
|\d f |=0\qquad\mm-a.e.\ on\ {\rm Vis}(\M)\qquad\text{ if $f$ is constant,}
\end{equation}
which can be shown noticing that for any weak subslope $G$ and any test plan $\ppi$ we must have $0\geq \iint_0^1G(\gamma_t)|\dot\gamma_t|\,\d t\,\d\ppi(\gamma)=\int  G\rho_{\ppi}\,\d\mm$. We also have the following concavity properties,
of which (i) is essential and (ii), though not required subsequently,  offers a concrete perspective which some readers may find helpful:

\begin{proposition}[Concavity and positive 1-homogeneity]
\label{Pr:Linear comb} Let  $(\M,\uptau,\ell,\mm)$ be a mm spacetime, $f,g\colon :\M\to\bar{\R}$  causal. {\rm (i)} Then all $\alpha,\beta \geq 0$ satisfy

\begin{equation}
\label{eq:dsuper}
\vert\rmd (\alpha\,f + \beta\,g)\vert \geq \alpha\,\vert\rmd f\vert + \beta\,\vert \rmd g\vert\quad\meas\textnormal{-a.e. on $\M$}
\end{equation}
and 
\begin{equation}
\label{eq:1-homo}
\vert\rmd(\alpha f) \vert = \alpha\,\vert\rmd f\vert\quad
\meas\textnormal{-a.e.\ on Vis($\M$) if $\alpha=0$ and 
$\meas$-a.e. on $\M$ if $\alpha>0$}.
\end{equation}
{\rm (ii)} A concave positively 1-homogeneous function $h_x:[0,\infty)^2 \to [0,\infty]$ can be associated to each $x \in \M$ so that: all $(0,0)\ne(\alpha,\beta) \in [0,\infty)^2$ satisfy $|\rmd(\alpha f +\beta g)|(y) = h_y(\alpha,\beta)$ for $\mm$-a.e. $y \in\M$.

\end{proposition}

\begin{proof} (i) Evidently, $\alpha\,\vert\rmd f\vert + \beta\,\vert\rmd g\vert$ is a weak subslope of the causal function $\alpha\,f + \beta\,g$. 

The first claim \eqref{eq:dsuper} follows from 
 Definition \ref{Def:Maximal wld}. The second claim is obvious if $\alpha=0$ (here having restricted to attention to ${\rm Vis}(\M)$ matters), while for $\alpha>0$ we use twice the first claim with $g=0$ to get $|\d f|\geq \alpha |\d(\alpha^{-1}f)|\geq|\d f|$, which is \eqref{eq:1-homo}.

(ii) For $0\le \alpha \le \alpha '$ and $0\le \beta \le \beta'$,
since $|\rmd(\alpha f + \beta g)|$ is a weak subslope for $\alpha' f + \beta ' g$ we find (*)
$|\rmd (\alpha f + \beta g)| \le |\rmd (\alpha'f + \beta' g)|$
holds $\mm$-a.e. 
 For rational coefficients and all $\Q_+:=\Q\cap [0,\infty)$ combinations of $f$ and $g$, the $\mm$-negligible set $\M\setminus X$ of \eqref{eq:dsuper}--\eqref{eq:1-homo} can be taken independent of $\alpha,\beta \in \Q_+$ and of the combinations of $f$ and $g$. 
Taking $X$ smaller yet $\mm(X)=1$, (*) 
yields
$|\rmd(\alpha f + \beta g)| \le |\rmd(\alpha' f + \beta' g)|$
throughout $X$ for $\alpha',\beta'$ also rational
with $\alpha \le \alpha'$ and $\beta \le \beta'$.
Fix $x \in X$ and assume $|\rmd(f+g)(x)|<\infty$; otherwise a similar but simpler argument will yield $|\rmd(\alpha f + \beta g)|(x) =\infty$ for all real positive $\alpha,\beta>0$,
(and positive $1$-homogeneity for $\alpha,\beta \ge 0$ real such that 
$\alpha\beta=0$ but $\alpha+\beta >0$).
For $\alpha,\beta \in [0, \gamma] \cap \Q_+$ and $\gamma$
rational, 
$$|\rmd(\alpha f + \beta g)|(x) \le |\rmd(\gamma f + \gamma g)|(x)
= \gamma |\rmd(f+g)|(x) < \infty.
$$ 
Fixing $\alpha \in \Q_+$,  it follows that 
$\beta \in \Q_+ \mapsto |\rmd (\alpha f + \beta g)|(x)$
is concave and locally bounded --- hence
locally Lipschitz except possibly at $\beta=0$.
Fixing $\beta \in \Q_+$ instead,  the same conclusions apply to $\alpha \in \Q_+ \mapsto |\rmd (\alpha f + \beta g)|(x)$.
Thus $h_x(\alpha,\beta) := |\rmd(\alpha f +\beta g)|(x)$
extends from $\Q_+^2$ to a concave positively 1-homogeneous function on $[0,\infty)^2$ (which is locally Lipschitz on the interior of this quadrant and on its boundary, but may be merely lower semicontinuous on their union).
  Approximating real $\alpha \in (\alpha'',\alpha')$ and $\beta \in (\beta'',\beta')$ from above and below
  by rational $\alpha'',\alpha',\beta'',\beta'$,
the aforementioned continuity of $h_x(\cdot,\cdot)$ combines with (*) to yield $|\rmd(\alpha f + \beta g)|(y) = h_y(\alpha,\beta)$ for $\mm$-a.e. $y \in \M$ unless $\alpha=0=\beta$. 
\end{proof}


We turn to locality properties of the maximal weak subslope. They will closely interact with the chain rule and the Leibniz rule (for metric measure spaces these three properties of the minimal weak upper gradient \cite{AGS:08} are equivalent to each other \cite{AK:00,Gig:18}).

We  recall how calculus with BV functions on $\R$ works. Thus letting $I\subset\R$ be an interval, recall that  $u\in BV_{loc}(I)$ iff it is in $L^1_{loc}(I)$ and has distributional derivative represented by a (Radon) measure $Du$. We write 
$Du=u'\mathcal L^1\mres I+Du^\perp$ with $Du^\perp\perp\Leb^1$ for the Lebesgue decomposition of $Du$. Then:
\begin{enumerate}[label={BV\arabic*)}]
\item $u$ is $\Leb^1$-a.e.\ equal to the difference of two monotone functions (see e.g.\ \cite[Sect.~3.12]{AFP:00} and references therein).
\item\label{it:bvae}   If $u$ coincides with the representative given above, then it is $\Leb^1$-a.e.\ differentiable and the derivative  a.e.\ coincides with $u'$ (see    \cite[Thm.~3.107]{AFP:00}).
\item\label{it:bvloc} For $N\subset\R$ Borel and negligible we have $u'=0$ $\Leb^1$-a.e.\ on $u^{-1}(N)$ (see   \cite[Prop.~3.92]{AFP:00}).
\item\label{it:bvchain}  For $\varphi:\R\to\R$ Lipschitz we have $\varphi\circ u\in BV_{loc}(I)$ and $(\varphi\circ u)'=\varphi'\circ u\,u'$ $\Leb^1$-a.e.\ (this is the absolutely continuous part of the Vol'pert chain rule, see e.g.\   \cite[Thm.~3.99]{AFP:00}).  For $\varphi$ non-decreasing but not necessarily Lipschitz the same conclusions can be verified based on the a.e.\ differentiability of monotone functions and \ref{it:bvae}--\ref{it:bvchain}.
\end{enumerate} 


Lemma \ref{Le:Distr der} and the locality in item \ref{it:bvloc} easily yields the following:
\begin{proposition}[Locality]\label{Th:Locality} Let $(\M,\uptau,\ell,\mm)$ be a mm spacetime,   $f_1,f_2\colon \M\to\bar{\R}$ be causal functions and $E\subset\R$ Borel and $\Leb^1$-negligible. Then
\begin{equation}
\label{eq:strongloc}
\vert \rmd f_1\vert = \vert \rmd f_2\vert \quad \meas\textnormal{-a.e.~on } (f_1-f_2)^{-1}(E).
\end{equation}
In particular, we have
\begin{equation}
\label{eq:locpunt}
\vert \rmd f_1\vert = \vert \rmd f_2\vert \quad \meas\textnormal{-a.e.~on }\{f_1=f_2\}
\end{equation}
and 
\begin{equation}
\label{eq:loczero}
    \vert\rmd f_1\vert = 0\quad\meas\textnormal{{-a.e.~on} }f_1^{-1}(E) \cap {\rm Vis}(\M).
\end{equation}
\end{proposition}
\begin{proof} Put  $F:=(f_1-f_2)^{-1}(E)$. We shall prove that $|\d f_1|1_F+|\d f_2|1_{\M\setminus F} \le |\d f_2|$. 
Reversing the roles of $f_1,f_2$ this suffices to obtain \eqref{eq:strongloc}. Notice that in order for $f_1(x)-f_2(x)$ to belong to $E\subset \R$, we must have $x\in\dom(f_1)\cap \dom(f_2)$, thus $F\subset\dom(f_1)\cap\dom(f_2)$.

Now let  $\gamma\in \CC([0,1];\M)$ and notice that on the interval $I_\gamma:=\gamma^{-1}(\dom(f_1)\cap\dom(f_2))$ the function $t\mapsto f_1(\gamma_t)-f_2(\gamma_t)$ is in $BV_{loc}$ and thus, by item \ref{it:bvloc} and with the same notation we see that
\begin{equation}
\label{eq:perlocalita}
 (f_1\circ\gamma)'_t=(f_2\circ\gamma)'_t\qquad\text{ for $\Leb^1$-a.e.\ $t\in \gamma^{-1}(F)$}.
\end{equation}
Now let $\bdpi$ be a test plan. \Cref{Le:Distr der} implies that for $\ppi$-a.e.\ $\gamma$ we have
\[
(f_2\circ\gamma)'_t\geq |\d f_2|(\gamma_t)|\dot\gamma_t|\qquad \text{ for $\Leb^1$-a.e.\ }t\in \gamma^{-1}(\dom(f_2))\, \supset\, \big( \gamma^{-1}(\dom(f_2))\setminus \gamma^{-1}(F)\big)
\]
and taking into account also \eqref{eq:perlocalita} we see that it also holds that
\[
(f_2\circ\gamma)'_t=(f_1\circ\gamma)'_t\geq |\d f_1|(\gamma_t)|\dot\gamma_t|\qquad \text{ for $\Leb^1$-a.e.\ }t\in \gamma^{-1}(F).
\]
These last two bounds imply  that
\[
(f_2\circ\gamma)'_t\geq \big(|\d f_1|1_F+|\d f_2|1_{\M\setminus F}\big)(\gamma_t)|\dot\gamma_t|\qquad  \text{ for $\Leb^1$-a.e.\ }t\in \gamma^{-1}(\dom(f_2))
\]
holds for $\ppi$-a.e.\ $\gamma$, so \Cref{Le:Distr der} yields 
that $|\d f_1|1_F+|\d f_2|1_{\M\setminus F}$ 
is a weak subslope of $f_2$, as desired.
Now \eqref{eq:locpunt} follows by picking $E:=\{0\}$ in  \eqref{eq:strongloc} while  \eqref{eq:loczero} by taking $f_2\equiv 0$ in  \eqref{eq:strongloc} and recalling \eqref{eq:dconst}.
\end{proof}

Similarly,  Lemma \ref{Le:Distr der} and the chain rule in item \ref{it:bvchain} easily imply:



\begin{proposition}[Chain rule]\label{Th:Chain rule II} Let $(\M,\uptau,\ell,\mm)$ be a mm  spacetime, $f :\M\to\bar{\R}$ causal  and  $\varphi\colon\R\to \R$  non-decreasing. Then $\varphi\circ f$ is causal and 
\begin{equation}
\label{eq:chaingeneral}
    \vert\rmd(\varphi\circ f)\vert = (\varphi'\circ f)\,\vert\rmd f\vert\quad\meas\textnormal{\textit{-a.e. on ${\rm Vis}(\M)\cap\big(\{|\d f|>0\})\cup\{\varphi'\circ f<+\infty\}\big)$} },
\end{equation}
where $\varphi'$ is the density of the distributional derivative $D\varphi $ of $\varphi$ on its domain and it is intended that $\varphi(\pm\infty)=\pm\infty$ and $\varphi'(\pm\infty)=+\infty$.
\end{proposition}
\begin{proof}  The fact that  $\varphi\circ f$ is causal is trivial. Then notice that even though $\varphi'$ is defined up to equality $\Leb^1$-a.e., an application of \eqref{eq:loczero} shows that identity \eqref{eq:chaingeneral} is well-posed, i.e.\ its validity is unaffected by the particular representative of $\varphi'$ chosen. Also, both sides of \eqref{eq:chaingeneral} are $\mm$-a.e.\ equal to $+\infty$ on $ (\varphi\circ f)^{-1}(\{\pm\infty\})\cap\{|\d f|>0\}\supset f^{-1}(\{\pm\infty\})$, thus by locality \eqref{eq:strongloc} and truncation we can assume that $\varphi:\R\to\R$ is bounded. 

Now let $\gamma\in\CC([0,1];\M)$ and use item \ref{it:bvchain} and the notation therein to deduce that 
\begin{equation}
\label{eq:keychain}
(\varphi\circ f\circ\gamma)'_t=\varphi'(f(\gamma_t))(f\circ\gamma)'_t\qquad \textrm{for a.e.\ }t\in\dom(f\circ\gamma).
\end{equation}
By Lemma  \ref{Le:Distr der}  this suffices both to establish that $\geq$ holds in \eqref{eq:chaingeneral} (because $(f\circ\gamma)'_t\geq|\d f|(\gamma_t)|\dot\gamma_t|$ for a.e.\ $t $ and $\ppi$-a.e.\ $\gamma$ for $\ppi$ test) and that equality holds  on $f^{-1}(\{\varphi'=0\})$. It thus remains to prove that $\leq$ holds on $ f^{-1}(\{\varphi'>0\})$. For this we use   again  Lemma  \ref{Le:Distr der} to get that  $(\varphi\circ f\circ\gamma)'_t\geq |\d(\varphi\circ f)|(\gamma_t)|\dot\gamma_t|$ holds for a.e.\ $t$ and $\ppi$-a.e.\ $\gamma$ for any test $\ppi$, then the identity \eqref{eq:keychain} yields $(f\circ\gamma)'_t\geq \tfrac{|\d(\varphi\circ f)|(\gamma_t)}{\varphi'(f(\gamma_t))}|\dot\gamma_t|$ for a.e.\ $t$ and $\ppi$-a.e.\ $\gamma$, where we interpret the ratio as $0$ if the denominator is 0. Then  Lemma  \ref{Le:Distr der} once more allows to conclude.
\end{proof}

\begin{proposition}[Leibniz rule]\label{Cor:Leibniz} Let $(\M,\uptau,\ell,\mm)$ be a mm  spacetime and  $f,g: \M\to\bar{\R}$   non-negative and causal. Then $fg$ is causal and 
\begin{equation}
\label{eq:leibineq}
\vert\rmd(fg)\vert \geq f\,\vert \rmd g\vert + g\,\vert\rmd f\vert\quad\meas\textnormal{\textit{-a.e. on ${\rm Vis}(\M)$} }.
\end{equation}
\end{proposition}
\begin{proof} The fact that $fg$ is causal is obvious. Also, $\mm$-a.e.\ on the set ${\rm Vis}(\M)\cap\{f=+\infty\}\cap\{g>0\}$ both sides of \eqref{eq:leibineq} are equal to $+\infty$, whereas $\mm$-a.e.\ on ${\rm Vis}(\M)\cap\{g=0\}$ both sides are 0. Thus, also switching the roles of $f,g$, we are left to prove the conclusion on  ${\rm Vis}(\M)\cap\{f\in(0,+\infty)\} \cap\{g\in(0,+\infty)\}$. Replacing $f$ with $c\vee f \wedge c^{-1}$ for $c>0$ arbitrarily small and similarly for $g$ and using \Cref{Th:Locality}, we can --- and will --- assume that $f,g$ are $(0,+\infty)$-valued.

In this case \Cref{Th:Chain rule II} yields
\[
\begin{split}
\tfrac1{fg}|\d(fg)|&=|\d(\log(fg))|=|\d(\log(f)+\log(g))|\stackrel{\eqref{eq:dsuper}}\geq |\d(\log(f))|+|\d(\log(g))|=\tfrac1f|\d f|+\tfrac1g|\d g|,
\end{split}
\]
which is the conclusion.
\end{proof}

\section[Horizontal (inner) and vertical (outer) derivatives]{Horizontal (inner) and vertical (outer) derivatives}
\label{ch:horizontal and vertical derivatives}

\subsection{Horizontal differentiation}

Here  we want to propose a notion  of ``derivative'' of an arbitrary causal function in the ``direction'' of LCC curves.   Intuitively, we shall refer to the horizontal derivative of a causal function $f$ along an initial test plan $\ppi$ as at the limit
    \begin{align*}
\lim_{t\to 0} \int \frac{f(\gamma_t) - f(\gamma_0)}{t}\d\bdpi(\gamma)
\end{align*}
and we are interested in knowing when such a limit exists and/or how we can estimate it. A bound from below can be quite easily given, as we are going to show in a moment. To motivate the result, notice that in the smooth category, the reverse Cauchy-Schwarz and the Fenchel--Young inequalities (for $p,q<1$) give that
\[
\d f(v)\geq \|\d f\|_*\|v\|\geq\tfrac1p\|\d f\|_*^p+\tfrac1q\|v\|^q
\]
for any causal function $f$ and future directed vector $v$ (here and below for clarity we shall often write $\tfrac1pz^p$ rather than the more precise $u_p(z)$ as in \Cref{Sub:InfConv}). We are going to prove a nonsmooth analogue of this.  In the sequel, we say that a measure $\bdpi\in \Prob(\CC([0,1];\M))$ \emph{stays initially in a given Borel  set $E\subset M$} if there is $T\in(0,1]$ such that $(\eval_t)_\push\bdpi$ is concentrated on $E$ for every $t\in [0,T]$.

\begin{proposition}[Nonsmooth Fenchel--Young inequality]
\label{Pr:Lower bound}  Let $(\M,\uptau,\ell,\meas)$ be a forward mm spacetime,  $E\subset \M$ be a Borel  set. Let $\bdpi$ be an initial test plan which stays initially in $E$. Let $\PP^{-1}+\QQ^{-1}=1$ with $0 \neq \QQ < 1$. Let $f\colon \M\to\bar{\R}$ be a causal function and assume that 
\begin{enumerate}[label=\textnormal{(\roman*)}]
    \item if $p<0$ we have $\vert \rmd f\vert^p\in L^1(E,\meas\mres E)$  (note: this  implies that  $\vert\rmd f\vert^p\in L^1(E,(\eval_0)_\push\bdpi)$ since $\bdpi$ is an initial test plan, but the converse implication does not hold),
    \item if $q<0$ we have $ \liminf_{t\to 0} \frac{1}{t\QQ}\iint_0^t \vert\dot\gamma_r\vert^\QQ\d r\d\bdpi(\gamma)>-\infty$.
\end{enumerate}
Then
\begin{equation}
\label{eq:FYnonsmooth}
\liminf_{t\downarrow 0} \int \frac{f(\gamma_t) - f(\gamma_0)}{t}\,\d\bdpi(\gamma)  \geq \frac{1}{\PP}\int \vert\rmd f\vert^\PP(\gamma_0)\,\d\bdpi(\gamma) + \liminf_{t\downarrow 0} \frac{1}{t\QQ}\iint_0^t \vert\dot\gamma_r\vert^\QQ\,\d r\,\d\bdpi(\gamma).
\end{equation}
\end{proposition}
\begin{proof} Let $T>0$ be such that $\smash{(\restr_0^t)_\push\bdpi}$ is a test plan $t\in (0,T)$. Applying the Definitions  \ref{Def:Weak lower diff} and \ref{Def:Maximal wld} 
of (maximal) weak subslope as provided by \Cref{Th:MaxLoDiff} to the  restricted test plan $\smash{(\restr_0^t)_\push\bdpi}$ we obtain
\begin{align*}
\int f(\gamma_t) - f(\gamma_0)\,\d\bdpi(\gamma)  &\geq \iint_0^t \vert\rmd f\vert(\gamma_r)\,\vert\dot\gamma_r\vert\,\d r\,\d\bdpi(\gamma)\\
&\geq \iint_0^t \frac{1}{\PP}\vert \rmd f\vert^\PP(\gamma_r)   + \frac{1}{\QQ}\vert\dot\gamma_r\vert^\QQ\,\d r\,\d\bdpi(\gamma)\\
&= \frac{1}{\PP}\iint_0^t \vert \rmd f\vert^\PP(\gamma_r)\d r\d\bdpi(\gamma)  + \frac{1}{\QQ}\iint_0^t\vert\dot\gamma_r\vert^\QQ\,\d r\,\d\bdpi(\gamma)\qquad\text{for $t$ small enough,}
\end{align*}
where the scalar Fenchel--Young inequality has been used in the second inequality. We comment on the last equality. Since the two integrands both have a sign, and thus their integrals are well defined --- possibly with values $\pm\infty$ --- equality holds provided at least one of the two integrands is actually integrable. This follows from our assumptions:
\begin{itemize}
\item[-] if $p<0$, then $(i)$ and the fact that $\ppi$ is an initial test plan that stays initially in $E$ ensures that for $t>0$ sufficiently small the first integral is finite (see also formula \eqref{Eq:Fubb} below), 
\item[-]  if $q<0$, then $(ii)$ ensures that for $t>0$ sufficiently small the second integral is $>-\infty$. 
\end{itemize}
To conclude it then  suffices to prove
\begin{align}\label{Eq:IINT}
{\liminf_{t\to 0} \frac{1}{pt}\iint_0^t \vert \rmd f\vert^\PP(\gamma_r)\,\d r\, \d\bdpi(\gamma) \geq \frac1p\int \vert\rmd f\vert^\PP(\gamma_0)\,\d\bdpi(\gamma).}
\end{align}
We start with the case $p<0$, so  that $\vert\rmd f\vert^p\in L^1(E,\meas\mres E)$ by assumption $(i)$. In this case, we claim in fact that equality holds in \eqref{Eq:IINT}. Let $\rho_r$ be the density of $(\eval_r)_\push\bdpi$ with respect to $\mathfrak{m}$, where $r\in [0,T]$. By Fubini's theorem  we have
\begin{align}\label{Eq:Fubb}
\frac{1}{t}\iint_0^t \vert \rmd f\vert^\PP(\gamma_r)\,\d r\,\d\bdpi(\gamma) =  \int_E \vert \rmd f\vert^\PP\, \Big[\frac{1}{t}\int_0^t \rho_r\,\d r\Big]\,\d\mathfrak{m}.
\end{align}
Since $\smash{(\restr_0^T)_\push\bdpi}$ is a test plan, we have $\smash{\sup_{r\in [0,T]} \Vert \rho_r\Vert_{L^\infty(\mathfrak{m})}  < +\infty}$. Since $\bdpi$ is an initial test plan, we have the narrow convergence $(\eval_r)_\push\bdpi\to(\eval_0)_\push\bdpi$ as $r\to 0$, which together with the uniform $L^\infty$ bound yields  $\rho_r \to \rho_0$ as $r\to 0$ in duality with $L^1(\M,\mathfrak{m})$. Since we assumed  $\vert \rmd f\vert^\PP\in L^1(E,\mathfrak{m}\mres E)$, this suffices to prove that equality holds --- with a full $\lim$ rather than a $\liminf$  --- in \eqref{Eq:IINT}.

In the case $p>0$ we instead argue as follows. Since $\mm$ is a Radon measure and the topology on $\M$ is Polish (and in particular has the Lindel\"of property), $\mm$ is $\sigma$-finite. Considering an increasing sequence of sets with finite measure covering $\M$ we can thus find an increasing sequence $(g_n)$ of $L^1(\M,\mm)$ functions such that $g_n(x)\uparrow+\infty$ for every $x\in \M$. Thus $(\tfrac1p|\d f|^p)\wedge g_n$ is in $L^1(\M,\mm)$ for every $n\in\N$ and the same arguments used above give
\begin{align*}
    \liminf_{t\to 0}\frac{1}{t}\iint_0^t \tfrac{\vert\rmd f\vert^p}p(\gamma_r)\d r\,\d\bdpi(\gamma)&\geq    \liminf_{t\to 0}\frac{1}{t}\iint_0^t g_n\wedge\big(\tfrac{\vert\rmd f\vert^p}p\big)(\gamma_r)\,\d r\,\d\bdpi(\gamma)= \int  g_n\wedge\big(\tfrac{\vert\rmd f\vert^p}p\big) \,\rho_0\,\d\meas.
\end{align*}
The conclusion follows letting $n\uparrow+\infty$ in the above and using Levi's monotone convergence theorem noticing that $ g_n\wedge\big(\tfrac{\vert\rmd f\vert^p}p\big)\uparrow  \tfrac{\vert\rmd f\vert^p}p$.
\end{proof}

Inspired by a similar definition for metric measure spaces \cite{Gigli:2015}, which is itself motivated by De Giorgi's approach to gradient flows in metric spaces (cf.\ e.g.\ \cite{AGS:08}), the following definition is naturally based on requiring the saturation of the inequality from \Cref{Pr:Lower bound}. In the smooth setting, the $\PP$-gradient of $f$ refers to the Hamiltonian gradient of $f$ given by the nonlinear identification of the cotangent and tangent spaces induced by the Hamiltonian 
\[
H(\omega):=\left\{\begin{array}{ll}
\tfrac1p\|\omega\|_*^p,&\qquad\text{ if $\omega$ is future directed,}\\
+\infty,&\qquad\text{ otherwise}.
\end{array}
\right.
\]

\begin{definition}[Representing the \fff $p$-gradient of a causal function]\label{Def:represent gradient} Let  $(\M,\uptau,\ell,\meas)$ be a  mm  spacetime,  $f\colon \M\to \bar{\R}$ be a causal function and $\PP^{-1}+\QQ^{-1}=1$ with $0 \neq p, \QQ < 1$.  We say that an initial test plan $\bdpi$ \emph{represents the \fff $\PP$-gradient of $f$} if:
\begin{itemize}
\item[i)] There is a Borel set $E\subset\M$ so that  $\smash{\bdpi}$  stays initially in $E$;
\item[ii)] for $E$ as above we also have $\vert \rmd f\vert^\PP\in L^1(E,\mathfrak{m} \mres E)$;
\item[iii)] $\smash{\limsup_{t\downarrow 0} \tfrac1t\iint_0^t\vert \dot\gamma_r\vert^\QQ\d r\d\bdpi(\gamma)  < +\infty}$; and
\item[iv)] 
\begin{align}\label{Eq:Subtract}
 \limsup_{t\downarrow 0}\int \frac{f(\gamma_t) - f(\gamma_0)}{t}\d\bdpi(\gamma)  \leq \frac{1}{\PP}\int \vert \rmd f\vert^\PP(\gamma_0)\d\bdpi(\gamma)
+ \liminf_{t\downarrow 0} \frac{1}{t\QQ}\iint_0^t\vert \dot\gamma_r\vert^\QQ\,\d r\,\d\bdpi(\gamma).
 \end{align}
\end{itemize}
In this case we also say that $\ppi$ represents the \fff $\PP$-gradient of $f$ on $E$.
\end{definition}
We collect some comments about the above:
\begin{enumerate}
\item We are asking $(ii),(iii)$ irrespective of the signs of $p$ and $q$ 
\item A consequence of this definition and of \Cref{Pr:Lower bound} is that $f\circ\eval_t-f\circ\eval_0\in L^1(\ppi)$ for $t>0$ sufficiently small. 
\item If $p<0$, the assumptions above --- in particular $|\d f|^p\in L^1(E)$ ---  do not exclude the possibility that  $|\d f|=+\infty$ on a subset of $E$ of positive measure. For instance, in the one point space any function is causal with $|\d f|=+\infty$ and the (only) test plan  is concentrated in the (only) constant curve and represents the \fff $\PP$-gradient of $f$.
\item If the \fff $\PP$-gradient of a given causal function $f$ is represented by an initial test plan $\bdpi$ on $E$, then
\[
 \lim_{t\to 0}\int \frac{f(\gamma_t) - f(\gamma_0)}{t}\d\bdpi(\gamma)  = \frac{1}{\PP}\int \vert \rmd f\vert^\PP(\gamma_0)\d\bdpi(\gamma)
+ \liminf_{t\to 0} \frac{1}{t\QQ}\iint_0^t\vert \dot\gamma_r\vert^\QQ\,\d r\,\d\bdpi(\gamma),
\]
as follows by combining \eqref{eq:FYnonsmooth} with  \eqref{Eq:Subtract}. In this case, imitating the proof of \cite[Prop.\ 3.11]{Gigli:2015} it is not hard to see that the limit $\lim_{t\to 0} \frac{1}{t}\iint_0^t\vert \dot\gamma_r\vert^\QQ\,\d r\,\d\bdpi(\gamma)$ exists
in \eqref{Eq:Subtract}.

\item From \Cref{Pr:Lower bound} we  see   that if $\ppi$ represents the \fff $\PP$-gradient of $f$ in $E$ and $\Gamma\subset \CC([0,1];\M)$ is Borel with $\ppi(\Gamma)>0$, then the ``restricted'' plan $\ppi_\Gamma:=\ppi(\Gamma)^{-1}\ppi\mres\Gamma$ also represents the \fff $\PP$-gradient of $f$ in $E$.
\end{enumerate}

\subsection{Perturbation of causal functions} 
This section serves as preparation for the subsequent one, where we will start from a causal function $f$, perturb it as $f+\eps g$ for some $g$ and be interested in those cases for which the new function is still $\mm$-measurable and causal. This motivates the next definition, which is the main object of study here:
\begin{definition}[Perturbation of causal  functions]\label{Def:Pert} Let  $(\M,\uptau,\ell,\meas)$ be a  mm spacetime and   $f:\M\to \bar \R$  a causal  function. We define the class $\Pert(f)$ of \emph{perturbations} of $f$ as the collection of all $\meas$-measurable 
functions $g\colon \M\to \bar{\R}$ such that $f+\varepsilon g$ is a 
causal function for some $\varepsilon > 0$.
\end{definition}
We think of $\pert(f)$ as the tangent space at $f$  to the space of causal functions.  Notice that according to our infinity conventions in \Cref{Sub:InfConv}, we have $\pm\infty+\eps g(x)=\pm\infty$ for any $g:\M\to\bar\R$ and $x\in \M$. Thus the role of $g$ is only seen in the causally convex set $\dom(f)$.

The  properties of $\Pert(f)$  are better understood if we introduce the following order relation on (not necessarily causal) functions on $\M$. \red{By slight abuse of notation, we denote it by $\preceq$ just like the causal relation between probability measures, cf.\ \Cref{Sub:Lifting}, and hope no confusion ensues.}

\begin{definition}[A pre-order on the space of functions via finite differences]\label{Def:A partial}
Let $(\M,\ell)$ be a metric spacetime. For $g\colon \M\to\bar\R$  define $\delta g\colon \M\times \M\to\bar \R$ as
\[
\delta g(x,y):=g(y)-g(x).
\]
We  say that  $g_2$ is at least as steep as $g_1$ and  write  $g_1\preceq g_2$ provided 
\[
\delta g_1(x,y)\leq\delta g_2(x,y)\quad\text{whenever } x\leq y.
\]
\end{definition}
An analogous definition will also be used in the case $\M=\bar\R$. 
The following are direct consequences of the definition.
\begin{itemize}
\item A function $f\colon \M\to \bar{\R}$ is causal  if and only if $\delta f(x,y)\geq 0$ whenever $x\leq y$ and if and only if $c\preceq f$, where $c$ is any constant, real-valued function.
\item Given an interval $I\subset\bar\R$ consider the function $f_I\colon \bar\R\to\bar\R$ defined as 0 on $I$, as $-\infty$ on the left of $I$ and $+\infty$ on the right of $I$. Then $f_I\preceq f_J$ if and only if $I\supset J$.
\item $\preceq$ is a relation about ``first order derivatives'' and not about ``pointwise values'' of the functions. In particular, $g_1\preceq g_2$ does not say anything about the 
sign of $g_1-g_2$ outside $\pm\infty$ 
unless $g_1(x) = g_2(x) \in \R$ at some  $x\in \M$ (in which case $(g_2(y)- g_1(y))(y- x) \ge0$ for all $y\in\R$). It can happen that $g_1\preceq g_2$ yet $g_1(x) > g_2(x)$ for every $x\in \M$. This is for instance the case for the interval $[0,1]$ endowed with the usual order relation $\leq$ and the causal functions $g_1(x) := x$ and $g_2(x) := 2x-2$. Rather, $g_1 \preceq g_2$ should be thought of as a monotonicity condition on the \emph{derivatives} of $g_1$ and $g_2$, as illustrated in the next  remark. 
\end{itemize}

\begin{remark}[Smooth differential characterization of $\preceq$]\label{Re:Firsto} Assume $(M,g)$ is a smooth spacetime. Given $x\in \M$ we define an order $\leq_x$ on the cotangent space $T^*_xM$ by $\omega_1\leq_x\omega_2$ if $\omega_1(v)\leq \omega_2(v)$ for every future-directed causal vector $v\in T_xM$ (see also \Cref{ss:compatibility}). Then for $f_1,f_2\in C^1(M)$, we have $f_1\preceq f_2$ if and only if their differentials satisfy $\rmd f_1(x) \leq_x \rmd f_2(x)$ for every $x\in M$.

For the ``only if'' direction, fix $x\in M$ and a future-directed causal vector $v\in T_xM$. Since $x\leq \exp_x(\eps v)$  for every $\eps>0$, we have
\begin{align*}
\rmd f_1(x)(v) &=\lim_{\eps\downarrow0}\frac{f_1(\exp_x(\eps v))-f_1(x)}{\eps}\leq \lim_{\eps\downarrow0}\frac{f_2(\exp_x(\eps v))-f_2(x)}{\eps}=\d f_2(x)(v).
\end{align*}
For the ``if'' direction,  let $x\leq y$ and $\gamma$ be a smooth causal curve from $x$ to $y$. Then
\[
f_1(y)-f_1(x)=\int_0^1\rmd f_1(\gamma_t)(\gamma'_t)\d t\leq \int_0^1\rmd f_2(\gamma_t)(\gamma'_t)\d t= f_2(y)-f_2(x).
\] 
Notice that we shall be able to somehow formulate the implication ``$f_1\preceq f_2$ implies  $\rmd f_1 \leq \rmd f_2$ a.e.'' also in the non-smooth setting: see  the monotonicity stipulated in \Cref{Pr:Calc df}(v) below and the second part of \Cref{le:boundbasso}. 

To motivate the non-smooth statement \Cref{le:boundbasso} below, we point out that the smooth duality formula $\|\omega\|_*=\inf_{|v|=1}\omega(v)$ valid for all $\omega\geq_x0$ easily yields  the implication
\begin{equation}
\label{eq:compsmoothdiff}
\min\{\d  f_1(v),\d f_2(v)\}\leq \d f_3(v)\quad\forall v\text{ future directed}\quad\Rightarrow\quad \min\{\|\d f_1\|_*,\|\d f_2\|_*\}\leq \|\d f_3\|_*
\end{equation}
 for every $f_1,f_2,f_3$ causal functions. On the other hand, it is easy to construct causal functions $f_1,f_2,f_3$ such that $\max\{\d  f_1(v),\d f_2(v)\}\geq \d f_3(v)$ for every future directed $v$, but for which $ \max\{\|\d f_1\|_*,\|\d f_2\|_*\}\ngeq \|\d f_3\|_*$ (we can take suitable linear functions in flat Minkowski space).
 \hfill$\blacksquare$
\end{remark}

The rest of this part is devoted to basic properties of $\preceq$ that will become relevant in the subsequent discussion. We start with the nonsmooth analog of implication \eqref{eq:compsmoothdiff} above:
\begin{lemma}[
Finite difference and 
maximal weak subslope 
comparison among causal functions]
\label{le:boundbasso} Let  $(\M,\uptau,\ell,\meas)$ be a mm  spacetime,   $f_1,f_2,f_3:\M\to\bar\R$  causal and such  that $\min\{\delta f_1(x,y),\delta f_2(x,y)\}\leq\delta f_3(x,y)$ holds for any $x\leq y$. 

Then $\min\{|\d  f_1|,|\d f_2|\}\leq |\d f_3|$ $\mm$-a.e. In particular, if $f,\tilde f$ are causal functions with $f\preceq \tilde f$, then $|\d f|\leq|\d \tilde f|$ $\mm$-a.e.
\end{lemma}
\begin{proof} 

Notice that for $\gamma$ causal, the set $\dom(f_3\circ\gamma)$ is a subinterval of $[0,1]$ and if it is not a point (if it is, the conclusion is trivial)  our assumption easily implies $\dom(f_3\circ\gamma)\subset\dom(f_1\circ\gamma)\cup \dom(f_2\circ\gamma)$.

Now recall that Lebesgue's differentiation theorem of monotone functions on $\R$ implies that if both the curve $\gamma$ and the function $f$ are causal, then the density $(f\circ\gamma)'$ of the distributional derivative of $f\circ\gamma$ coincides $\Leb^1$-a.e.\ on $\dom(f)$ with the classically derivative $\lim_{h\downarrow0}\tfrac{f(\gamma_{t+h})-f(\gamma_t)}h$ of $f\circ \gamma$. 

 Therefore our assumption gives $(f_3\circ\gamma)'_t\geq\min\{(f_1\circ\gamma)'_t,(f_2\circ\gamma)'_t\}$ for a.e.\ $t\in[0,1]$, where  we interpret $(f_i\circ\gamma)'_t$ to be $+\infty$ if $t\notin\dom(f_i\circ\gamma)$. In particular, this holds for $\ppi$-a.e.\ $\gamma$ for any given test plan $\ppi$, so that the conclusion follows from \Cref{Le:Distr der}.

The second claim now follows by picking $f_1\equiv 0$ in the first part.
 \end{proof}

\begin{lemma}[
Ordering respects addition of causal  functions]
\label{eq:useful} Let  $(\M,\uptau,\ell,\meas)$ be a mm spacetime,  $f\colon \M\to\bar{\R}$ causal  function and $g_1,g_2\colon \M\to\bar{\R}$ are arbitrary functions with $g_1\preceq g_2$, then $f+  g_1 \preceq f+  g_2$.
\end{lemma}

\begin{proof} 
The proof is a simple case analysis. Let $x$ and $y$ be fixed points with $x\leq y$. We need to prove that
\begin{equation}
\label{eq:usef2}
\delta(f+  g_1)(x,y)\leq \delta(f+  g_2)(x,y).
\end{equation}

If $f(y)=+\infty$ then $(f+ g_i)(y)=+\infty$ by our infinity conventions and thus $\delta (f+ g_i)(x,y)=+\infty$ by \eqref{eq:conventions1}, thus in this case \eqref{eq:usef2} holds. Similarly, if $f(x)=-\infty$ we have $(f+ g_i)(x)=-\infty$ and then $\delta (f+ g_i)(x,y)=+\infty$, thus also in this case \eqref{eq:usef2} holds.

Since $f$ is causal it remains to check the case $f(x),f(y)\in\R$, but in this case it is easily verified that --- according to \eqref{eq:conventions1} --- we have  $\delta(f+ g_i)(x,y)=\delta f(x,y)+\delta g_i(x,y)$, so the conclusion follows from the assumption $g_1\preceq g_2$.
\end{proof}

We also have:

\begin{proposition}[Properties of $\Pert(f)$]\label{Pr:prec} Let  $(\M,\uptau,\ell,\meas)$ be a mm spacetime, and   $f\colon \M\to\bar{\R}$ causal. Then:
\begin{enumerate}[label=\textnormal{(\roman*)}]
\item\emph{Cone.} We have $\lambda g\in\pert(f)$ for every $g\in\pert(f)$ and every $\lambda\geq 0$.
\item\label{it:cone2}\emph{Convex cone.} We have $g_1+g_2\in\pert(f)$ for every $g_1,g_2\in\pert(f)$.
\item\emph{Monotonicity.} If $g_1\in\pert(f)$ and $g_1\preceq g_2$ then $g_2\in\pert(f)$.
\item \emph{Lattice.} If $g_1,g_2\in\pert(f)$ then $\min\{g_1,g_2\},\max\{g_1,g_2\}\in\pert(f)$.
\item\label{it:comp} \emph{Comparison.} Let $g\colon \M\to \bar{\R}$ and $\varphi_1,\varphi_2,\psi:\bar\R\to\bar \R$ be functions satisfying $\varphi_1\preceq\psi\preceq\varphi_2$ and $\varphi_1\circ g,\varphi_2\circ g\in\pert(f)$. \textnormal{(}We do not necessarily assume $g\in\pert(f)$ here.\textnormal{)}  Then $\psi\circ g\in\pert(f)$ and
\[
\begin{split}
\min\{\delta (f+\eps\,\varphi_1\circ g),\delta (f+\eps\,\varphi_2\circ g)\} (x,y)&\leq \delta (f+\eps\,\psi\circ g)(x,y)\\
&\leq\max\{\delta (f+\eps\,\varphi_1\circ g), \delta (f+\eps\,\varphi_2\circ g)\}(x,y).
\end{split}
\]
whenever $x\leq y$ and $\eps>0$. In particular,
\begin{equation}\label{eq:compd}
|\rmd (f+\eps\,\psi\circ g)|\geq \min\{|\rmd  (f+\eps\,\varphi_1\circ g)|,|\rmd (f+\eps\,\varphi_2\circ g)|\}\quad\meas\textnormal{-a.e.}
\end{equation}
\end{enumerate}
\end{proposition}

\begin{proof}\ 

(i) This is trivial.

(ii) This statement follows from the identity $
f+\eps(g_1+g_2)=(f+2\eps g_1)/2+(f+2\eps g_2)/2$ which is consistent with \eqref{eq:conventions1}, observing that $f+2\eps g_1$ and $f+2\eps g_2$ are both causal functions for sufficiently small $\varepsilon > 0$ thanks to $(i)$.

(iii) This is a consequence of \Cref{eq:useful} and the characterization of causal functions in terms of the finite difference operator $\delta$ mentioned after \Cref{Def:A partial}.

(iv) This follows from the identity $f+\eps \min\{g_1,g_2\}=\min\{f+\eps g_1,f+\eps g_2\}$ which is consistent with \eqref{eq:conventions1} and the fact that the minimum of causal functions is still a causal  function. Analogously, we show $\max\{g_1,g_2\}\in\pert(f)$.

(v) The inequality \eqref{eq:compd} follows from the first statement and \Cref{le:boundbasso}.

To show the first claim, we distinguish two cases.
\begin{itemize}
\item Assume $g(x)\leq g(y)$. Since $\varphi_1\preceq\psi\preceq\varphi_2$ we obtain 
\begin{align*}
\delta(\varphi_1\circ g)(x,y)\leq \delta(\psi\circ g)(x,y)\leq \delta(\varphi_2\circ g)(x,y).
\end{align*}
By applying \Cref{eq:useful} to the two-point space $\{x,y\}$ we deduce for $\eps>0$
\[
 \delta (f+\,\eps\varphi_1\circ g)(x,y)\leq \delta (f+\eps\psi\circ g)(x,y)\leq\delta (f+\eps\varphi_2\circ g)(x,y).
\]
\item Assume $g(x)\geq g(y)$. As above, this yields 
\begin{align*}
\delta(\varphi_2\circ g)(x,y)\leq \delta(\psi\circ g)(x,y)\leq \delta(\varphi_1\circ g)(x,y),
\end{align*}
and applying \Cref{eq:useful} to $\{x,y\}$ once more we deduce
\[
 \delta (f+\eps\varphi_2\circ g)(x,y)\leq \delta (f+\eps\psi\circ g)(x,y)\leq\delta (f+\eps\varphi_1\circ g)(x,y).\qedhere
\]
\end{itemize}
\end{proof}
In view of the chain and Leibniz rules stated in \Cref{Pr:Calc df} --- used especially in the proof of \Cref{Cor:Time dist II} as well as in \cite{braun+}, we  study the behavior of perturbations under composition and multiplication. The hypotheses are likely not optimal, but will suffice for our purposes.  

In the statement below we shall always extend the given function $\varphi:\R\to\R$ by putting  $\varphi(\pm\infty)=\pm\infty$.
\begin{lemma}[Composition and multiplication]\label{Pr:RelationsII} Let $(\M,\uptau,\ell,\mm)$ be a mm  spacetime and $f\colon \M\to\bar\R$ causal. Then:
\begin{enumerate}[label=\textnormal{\textcolor{black}{(}\roman*\textcolor{black}{)}}] 
\item \textnormal{Composition I.} Let  $\varphi\colon\R\to\R$ be Lipschitz continuous. Then $\smash{\varphi\circ f\in \Pert(f)}$. 
\item \textnormal{Composition II.} Let $\varphi\colon\R\to\R$ be $c$-steep (i.e.\ $\varphi(w)-\varphi(z)\geq (w-z)c$ for all $z < w$) for some $c>0$.  
Then  $\smash{\Pert(f)}\subset \smash{\Pert(\varphi\circ f)}$.  
\item \textnormal{Composition III.} Let $\smash{g\in \Pert(f)}$ and $\varphi\colon\R\to \R$ be non-decreasing and Lipschitz continuous. Then  $\smash{\varphi\circ g\in\Pert(f)}$.
\item \textnormal{Multiplication I.} Let $g,h\in \Pert(f)$ be non-negative and bounded. Then $g\,h\in\Pert(f)$.
\item \textnormal{Multiplication II.} Let $\smash{g\in\Pert(f)}$ be bounded and $\varphi\colon\R\to \R$ be non-negative, bounded and Lipschitz continuous. Then  $\smash{g\,\varphi\circ f \in \Pert(f)}$.
\end{enumerate}
\end{lemma}

\begin{proof} \

(i) If $\varphi$ is affine the statement is straightforward. For general Lipschitz continuous $\varphi$, we use the relation $-\textnormal{Lip}\, \varphi\,\textnormal{id} \preceq  \varphi \preceq \textnormal{Lip}\, \varphi\,\textnormal{id}$ directly implied from Lipschitz continuity and \Cref{Pr:prec}(v).

(ii) Note that $\varphi\circ f$ is a causal function, let $g\in\Pert(f)$ be such that  $f+g$ is causal (any $g \in \Pert(f)$ can be rescaled to satisfy this). We claim that $\varphi\circ f+cg$ is also causal, which clearly suffices to conclude. Pick  $x\leq y$, with $x,y\in \M$. We need to prove that $\varphi(f(y))+cg(y)\geq \varphi(f(x))+cg(x)$. If either $\varphi(f(y))=+\infty$ or $\varphi(f(x))=-\infty$ the claim holds by our infinity conventions. Since $\varphi(f(y))\geq\varphi(f(x))$, we can then assume $\varphi(f(y)),\varphi(f(x))\in\R$ and thus in particular $f(y),f(x)\in\R$.  If this is the case we have $\varphi(f(y))\geq \varphi(f(x))+cf(y)-cf(x)$ hence 
\[
\varphi(f(y))+cg(y)\geq \varphi(f(x))+cf(y)-cf(x)+cg(y)\geq \varphi(f(x))+cg(x),
\]
by our normalization of $g\in\Pert(f)$.

(iii) As in (i), the claim is clear if $\varphi$ is linear. The general case follows from the relation $0\preceq \varphi \preceq \textnormal{Lip}\,\varphi\,\textnormal{id}$, the causal property of $f$, and \Cref{Pr:prec}(v).

(iv)  Assume, by scaling, that $f+g,f+h$ are causal, let $x\leq y$ and $\varepsilon > 0$. As already noticed, to check that $(f+\eps gh)(y)\geq(f+\eps gh)(x)$ we can assume that $x,y\in\dom(f)$, otherwise the claim follows from our infinity conventions. Let $C$ be an upper bound on $g$ and $h$ on all of $\M$. Then
\begin{align*}
(f+\eps gh)(y)-(f+\eps gh)(x)&= f(y) - f(x) + \varepsilon\,g(y)\,\big[h(y) - h(x)\big] + \varepsilon\,h(x)\,\big[g(y) - g(x)\big]\\
&  \geq f(y) - f(x)+ C\varepsilon \max\{0, \textnormal{sgn}(h(x) - h(y))\}\,\big[h(y) - h(x)\big]\\
&\qquad + C\varepsilon \max\{0,\textnormal{sgn}(g(x)-g(y))\}\,\big[g(y)-g(x)\big],
\end{align*}
where the last inequality follows from a straightforward case distinction, using $g$ and $h$ are non-negative. It is now easy to check that for $\eps:=\tfrac1{2C}$  the right-hand side is non-negative, thus concluding the proof.

(v) It follows from (i) and (iv) that $(g - \inf g(\M))\,\varphi\circ f \in\Pert(f)$. Moreover, again (i) implies $\inf g(\M)\,\varphi\circ f\in\Pert(f)$. Since $\Pert(f)$ is closed under addition, the claim follows.
\end{proof}

\subsection{Vertical right-differentiation}\label{se:calcrules}
Recall that we put $u_p(z):=\tfrac1pz^p$ and that $u_p$ is suitably extended to all of $\bar\R$ in Section \ref{Sub:InfConv}.
\begin{definition}[Vertical right derivative]\label{Def:Vert} Let $(\M,\uptau,\ell,\mm)$ be a mm spacetime,  $f:\M\to\bar\R$ causal   and   $g\in\Pert(f)$. We  define the \emph{vertical right derivative} $\rmd^+g(\nabla f)\,|\rmd f|^{\PP-2}\colon \M\to\bar\R$ as
\begin{equation}\label{Eq:d+gdfdf}
\rmd^+g(\nabla f)\,\vert\rmd f\vert^{\PP-2} := \begin{cases}
     \displaystyle\lim_{\varepsilon \downarrow 0}\frac{u_\PP(\vert\rmd(f+\varepsilon g)\vert) - u_\PP(\vert\rmd f\vert)}{\varepsilon} & \textnormal{if }\vert\rmd f\vert <+\infty,\\
    0 & \textnormal{otherwise}.
\end{cases}
\end{equation}
\end{definition}
For the moment, the term $\smash{\rmd^+g(\nabla f)\,\vert\rmd f\vert^{\PP-2}}$ from \Cref{Def:Vert} should and will henceforth be considered as a single expression, not as the ``product'' of $\vert\rmd f\vert^{\PP-2}$ and (the not yet defined quantity) $\smash{\rmd^+ g(\nabla f)}$. For comments about the definition on $\{|\d f|=+\infty\}$ see \Cref{re:limitconventions}.

Let us  comment on the well-definedness of \eqref{Eq:d+gdfdf}. 
For $\eps>0$ sufficiently small   $f+\eps g$ is causal, thus the difference quotient is well defined. The concavity of the maximal weak subslope $\vert \rmd f\vert$ in $f$ (on the space of causal functions) from \Cref{Pr:Linear comb} and the fact that $u_\PP$ is concave and non-decreasing easily yield
\begin{align*}
\frac{u_p(\vert\rmd(f+\varepsilon_1 g)\vert) - u_p(\vert\rmd f\vert) }{\varepsilon_1}\geq \frac{u_p(\vert\rmd(f+\varepsilon_2 g)\vert) - u_p(\vert\rmd f\vert)}{\varepsilon_2}\quad\meas\textnormal{-a.e.}
\end{align*}
whenever $\varepsilon_2 > \varepsilon_1>0$ are sufficiently small. It follows that for every $\eps_n\downarrow0$ the functions $\frac{u_\PP(\vert\rmd(f+\varepsilon_n g)\vert) - u_\PP(\vert\rmd f\vert)}{\varepsilon_n}$ have a pointwise $\mm$-a.e.\ limit which is independent of the chosen sequence, and this limit $\d^+g(\nabla f)\,|\rmd f|^{\PP-2}$ satisfies
\begin{equation}
\label{eq:monotdgnf}
\d^+g(\nabla f)\,|\rmd f|^{\PP-2}=\meas\textnormal{-}\!\esssup_{\eps>0} \frac{u_\PP(\vert\rmd(f+\varepsilon g)\vert) - u_\PP(\vert\rmd f\vert)}{\varepsilon}\qquad \text{on }\{|\d f|<+\infty\}.
\end{equation}
From this it is easy to see  that  if $f,g$ are both causal, thus in particular $g\in\Pert(f)$, then the   following sort of reverse Cauchy--Schwarz inequality is in place:
\[
\rmd^+g(\nabla f)\,|\rmd f|^{\PP-2}\geq |\d g|\,|\d f|^{p-1}\quad\mm\textnormal{-a.e.}
\]
Indeed, the concave homogeneity in \eqref{eq:dsuper} gives $|\d(f+\eps g)|\geq|\d f|+\eps|\d g|$  and since $u_p$ is non-decreasing we have $u_p(|\d (f+\eps g)|)\geq u_p(|\d f|+\eps |\d g|)$. The conclusion follows from the above discussion  and the fact that $u_p'(z)=z^{p-1}$. Similar considerations give
\[
\d^+f(\nabla f)|\d f|^{p-2}=|\d f|^p\quad\meas\textnormal{-a.e.\ on }\{|\d f|<+\infty\},
\]
where here we need to restrict to the set $\{|\d f|<+\infty\}$ to avoid conflicts, in the case $p\in(0,1)$, with our choice $\rmd^+g(\nabla f)\,|\rmd f|^{\PP-2}=0$ on $\{|\d f|=+\infty\}$, see also \Cref{re:limitconventions}.

We also point out that according to our infinity conventions in \Cref{Sub:InfConv} we have
\begin{equation}
\label{eq:dfzero1}
\rmd^+g(\nabla f)\,|\rmd f|^{\PP-2}=+\infty,\quad\mm\textnormal{-a.e.\ on }\{|\d f|=0\}\quad\text{ if }p<0
\end{equation}
and, taking also into account the concavity in \Cref{Pr:Linear comb},
\begin{equation}
\label{eq:dfzero2}
\rmd^+g(\nabla f)\,|\rmd f|^{\PP-2}=
\begin{cases}
+\infty,&\quad\mm\textnormal{-a.e.\ on }\{|\d f|=0\}\cap E(f,g),\\ 
0,&\quad\mm\textnormal{-a.e.\ on }\{|\d f|=0\}\setminus E(f,g),
\end{cases}\qquad \text{ if $p\in(0,1)$,}
\end{equation}
where $E(f,g):=\cup_{\eps>0}\{|\d(f+\eps g)|>0\}=\cup_{n\in\N}\{|\d(f+\tfrac1n g)|>0\}$.

For later use it is worth demonstrating also the following. If $f:\M\to\bar\R$ is causal and $g\in\Pert(f)$, say $f+\bar\eps g$ is causal for $\bar\eps>0$, then  we claim the uniform bound from below
\begin{equation}
\label{eq:unifdiffquot}
\frac{u_\PP(\vert\rmd(f+\varepsilon g)\vert) - u_\PP(\vert\rmd f\vert)}{\varepsilon}\geq \begin{cases}
-\tfrac1{\bar\eps}\,u_p(|\d f|)&\qquad\text{ if }p\in(0,1),\\
\tfrac{2^{1-p}}{\bar\eps}\,u_p(|\d f|)&\qquad\text{ if }p<0,
\end{cases}\qquad\forall \eps\in(0,\tfrac12\bar\eps).
\end{equation}
Indeed, the bound for $p\in(0,1)$ follows from the monotonicity in $\eps$ of the left hand side together with the positivity of $u_p$. For $p<0$, the same monotonicity together with $u_p<0$ give $\frac{u_\PP(\vert\rmd(f+\varepsilon g)\vert) - u_\PP(\vert\rmd f\vert)}{\varepsilon}\geq \frac{u_\PP(\vert\rmd(f+{\bar\varepsilon}g/2)\vert)}{{\bar\varepsilon}/2}$. Then, since $f+\bar\eps g$ is causal, from \eqref{eq:dsuper} we get $\vert\rmd(f+{\bar\varepsilon}g/2)\vert\geq \tfrac12|\d f|$ and the claim follows from the monotonicity of $u_p$.

The following two propositions collect some basic calculus rules. Compare with those valid in positive signature obtained in \cite[Section 3.3]{Gigli:2015}.

\begin{proposition}[Calculus rules `for $g$']\label{Pr:Calc df} Let $(\M,\uptau,\ell,\mm)$ be a mm spacetime and   $f\colon \M\to \bar{\R}$ causal. Then
\begin{enumerate}[label=\textnormal{(\roman*)}]
\item\label{it:homg} \emph{Positive $1$-homogeneity}. For $g\in \pert(f)$ and every $\lambda\in(0,+\infty)$ we have
\begin{equation}
\label{eq:homg}
\d^+(\lambda g)(\nabla f)\,|\rmd f|^{\PP-2}=\lambda\, \d^+g(\nabla f)\,|\rmd f|^{\PP-2}\quad\meas\textnormal{-a.e.}
\end{equation}
For $\lambda=0$ the same holds $\mm$-a.e.\ on $\{|\d f|>0\}$.
\item\label{it:conc} \emph{Super-additivity.} For every $g_1,g_2\in\pert(f)$ we have
\begin{equation}
\label{eq:superaddg}
\rmd^+(g_1+g_2)(\nabla f)\,|\rmd f|^{\PP-2} \geq \rmd^+g_1(\nabla f)\,|\rmd f|^{\PP-2}+\rmd^+g_2(\nabla f)\,|\rmd f|^{\PP-2}\quad\mm\textnormal{-a.e.}
\end{equation}
\item\label{it:compleft}\emph{Comparison from the left.} Let $g:\M\to\bar\R$ be such that $\pm g\in\Pert(f)$. Then
\begin{equation}
\label{eq:dpdm}
\rmd^+g(\nabla f)\,|\rmd f|^{\PP-2}\leq -\,\rmd^+(-g)(\nabla f)\,|\rmd f|^{\PP-2}\quad\mm\textnormal{-a.e.\ on }\{|\d f|>0\}.
\end{equation}
\item\label{it:locg} \emph{Locality.} Let $g_1,g_2\in\Pert(f)$ and $E\subset\R$ Borel and negligible. Then  
\begin{equation}
\label{eq:locg}
\rmd^+g_1(\nabla f)\,|\rmd f|^{\PP-2}= \rmd^+g_2(\nabla f)\,|\rmd f|^{\PP-2}\quad\mm\textnormal{-a.e.\ on }(g_1-g_2)^{-1}(E).
\end{equation}
\item\label{it:mong} \emph{Monotonicity.} If $g_1,g_2\in\pert(f)$ satisfy   $g_1\preceq g_2$, we have
\[
\rmd^+g_1(\nabla f)\,|\rmd f|^{\PP-2}\leq \rmd^+g_2(\nabla f)\,|\rmd f|^{\PP-2}\quad\mm\textnormal{-a.e.}
\]
\item\label{it:chaing1} \emph{Chain rule I.} Let $g\in\pert(f)$ and let $\varphi\colon\R \to \R$ be a non-decreasing and Lipschitz function. Then 
\begin{equation}
\label{eq:chaing1}
\rmd^+(\varphi\circ g)(\nabla f)\,|\rmd f|^{\PP-2}=\varphi'\circ g\, \rmd^+g(\nabla f)\,|\rmd f|^{\PP-2}
\end{equation}
holds $\mm$-a.e.\ on $\{\varphi'\circ g>0\}\cup\{|\d f|>0\}$. Note that $\varphi\circ g\in\Pert(f)$ by \Cref{Pr:RelationsII}.
\item\label{it:chaing2} \emph{Chain rule II.} Let $\varphi\colon\R\to\R$ be Lipschitz continuous. Then
\begin{equation}
\label{eq:chaing2}
\rmd^+(\varphi\circ f)(\nabla f)\,\vert\rmd f\vert^{\PP-2} = \varphi'\circ f\,\vert\rmd f\vert^\PP
\end{equation}
holds $\mm$-a.e.\ on $\{|\d f|<+\infty\}\cap(\{\varphi'\circ f>0\}\cup\{|\d f|>0\})$. Note that $\varphi\circ f\in\Pert(f)$ by \Cref{Pr:RelationsII}.
\item\label{it:leib1} \emph{Leibniz rule I.} Let $g,h\in\Pert(f)$ be non-negative and bounded. Then 
\begin{equation}
\label{eq:leib1}
\rmd^+(g\,h)(\nabla f)\,\vert\rmd f\vert^{\PP-2} \geq g\,\rmd^+ h(\nabla f)\,\vert\rmd f\vert^{\PP-2} + h\,\rmd^+g(\nabla f)\,\vert\rmd f\vert^{\PP-2}\quad\meas\textnormal{-a.e.\ on }\{|\d f|>0 \}.
\end{equation}
Note that $gh\in\Pert(f)$ by \Cref{Pr:RelationsII}.
\item\label{it:leib2} 
\emph{Leibniz rule II.} Let $g\in\Pert(f)$ be bounded 
and $\varphi\colon\R\to\R$ be non-negative, bounded and Lipschitz continuous. Then
\begin{equation}
\label{eq:leib2}
\rmd^+(g\,\varphi\circ f)(\nabla f)\,\vert\rmd f\vert^{\PP-2} \geq g\,\varphi'\circ f\,\vert\rmd f\vert^\PP + \varphi\circ f\,\rmd^+g(\nabla f)\,\vert\rmd f\vert^{\PP-2}\quad\meas\textnormal{-a.e.\ on }\{|\d f|\in(0,+\infty)\}.
\end{equation}
Note that $g\varphi\circ f\in\Pert(f)$ by \Cref{Pr:RelationsII}.
\end{enumerate}
\end{proposition}
Note: the latter two Leibniz rules hold as equalities in the smooth case.
\begin{proof}\ 

\noindent\ref{it:homg} 
Follows directly from definition \eqref{Eq:d+gdfdf}.

\noindent\ref{it:conc} 
On $\{|\d f|=+\infty\}$ the conclusion is trivial, on $\{|\d f|=0\}$ it follows from \eqref{eq:dfzero1} and \eqref{eq:dfzero2}. On $\{|\d f|\in(0,+\infty)\}$ we argue as follows: for $\lambda\in(0,1)$ the concavity in \eqref{eq:dsuper} gives
\begin{align*}
\vert \rmd(f+ \varepsilon((1-\lambda) g_1 + \lambda g_2))\vert \geq (1-\lambda)\,\vert\rmd(f+\varepsilon g_1)\vert + \lambda\,\vert\rmd(f+\varepsilon g_2)\vert,
\end{align*}
thus using the concavity and non-decreasingness of $u_\PP$ we get
\[
u_p\big(\vert \rmd(f+ \varepsilon((1-\lambda) g_1 + \lambda g_2))\vert\big) \geq (1-\lambda)\,u_p(\vert\rmd(f+\varepsilon g_1)\vert) + \lambda\,u_p(\vert\rmd(f+\varepsilon g_2)\vert).
\]
The claim follows taking also $(i)$ into account.

\noindent\ref{it:compleft} On $\{|\d f|=+\infty\}$ both sides are 0 by definition, so we focus on $\{|\d f|\in(0,+\infty)\}$. For $\eps>0$ sufficiently small, the concavity in \eqref{eq:dsuper} gives $|\d f|\geq\tfrac12|\d(f+\eps g)|+\tfrac12|\d(f-\eps g)|$
thus using the concavity and non-decreasingness of $u_\PP$ we get
\[
u_p\big(|\d f|)\geq \tfrac12u_p(|\d(f+\eps g)|)+\tfrac12u_p(|\d(f-\eps g)|)
\]
and therefore (notice that  $u_p(|\d f|)\in\R$ on $\{|\d f|\in(0,+\infty)\}$  to justify cancellations) we have
\[
\begin{split}
\d^+g(\nabla f)|\d f|^{p-2}&=\meas\textnormal{-}\!\esssup_{\eps>0} \frac{u_\PP(\vert\rmd(f+\varepsilon g)\vert) - u_\PP(\vert\rmd f\vert)}{\varepsilon}\\
&\leq -\meas\textnormal{-}\!\esssup_{\eps>0} \frac{u_\PP(\vert\rmd(f-\varepsilon g)\vert) - u_\PP(\vert\rmd f\vert)}{\varepsilon}=-\d^+(-g)(\nabla f)|\d f|^{p-2}.
\end{split}
\]

\noindent\ref{it:locg} Direct consequence of  the locality in \Cref{Th:Locality} and definition \eqref{Eq:d+gdfdf}.

\noindent\ref{it:mong} On $\{|\d f|=+\infty\}$ the claim is obvious, so we focus on $\{|\d f|<+\infty\}$. The relation $g_1 \preceq g_2$ implies $f+\eps g_1\preceq f+\eps g_2$ for every $\eps>0$ by \Cref{eq:useful}. For $\eps>0$ sufficiently small both these functions are causal, thus the second claim in Lemma \ref{le:boundbasso} yields $|\d(f+\eps g_1)|\leq |\d(f+\eps g_2)|$. Then the conclusion follows from the fact that $u_p$ is non-decreasing.

\noindent\ref{it:chaing1} Let us assume for the moment that, with the stated assumptions, it holds
\begin{equation}
    \label{eq:comparisonchaing}
    \rmd^+(\varphi\circ g)(\nabla f)\,|\rmd f|^{\PP-2}\geq \varphi'\circ g\, \rmd^+g(\nabla f)\,|\rmd f|^{\PP-2}
\end{equation}
and let us see how to conclude from here.

Define $\psi:=({\rm Lip}(\varphi)+1){\rm id}-\varphi$, so that $\psi$ is also non-decreasing and Lipschitz with $\psi'>0$ a.e. Thus  from \eqref{eq:homg}, $\mm$-a.e.\ on $\{\varphi'\circ g>0\}\cup\{|\d f|>0\}$ we can write
\[
\begin{split}
({\rm Lip}(\varphi)+1)\,\rmd^+ g(\nabla f)\,|\rmd f|^{\PP-2}&=\rmd^+(\psi\circ g+\varphi\circ g)(\nabla f)\,|\rmd f|^{\PP-2}\\
\text{(by \eqref{eq:superaddg})}\qquad&\geq\rmd^+(\psi\circ g)(\nabla f)\,|\rmd f|^{\PP-2}+\rmd^+(\varphi\circ g)(\nabla f)\,|\rmd f|^{\PP-2}\\
\text{(by $\geq$ in \eqref{eq:chaing1})}\qquad&\geq \psi'\circ g\,\rmd^+ g(\nabla f)\,|\rmd f|^{\PP-2}+\varphi'\circ g\,\rmd^+ g(\nabla f)\,|\rmd f|^{\PP-2}\\
&=({\rm Lip}(\varphi)+1)\,\rmd^+ g(\nabla f)\,|\rmd f|^{\PP-2}.
\end{split}
\]
Thus the inequalities are in fact equalities, and on the set $\{\rmd^+ g(\nabla f)\,|\rmd f|^{\PP-2}<+\infty\}$ this can only occur if each of the inequalities we used was in fact an equality, yielding the desired conclusion. On $\{\rmd^+ g(\nabla f)\,|\rmd f|^{\PP-2}=+\infty\}$ the conclusion is obvious on $\{\varphi'\circ g>0\}$ (because we already know that $\geq$ holds in \eqref{eq:chaing1}), so --- inspecting the claim --- to conclude it suffices to deal with the set $\{\varphi'\circ g=0\}\cap\{|\d f|\in(0,+\infty)\}$ and prove that  $\mm$-a.e.\ in there we have $|\d(f+\eps \varphi\circ g)|=|\d f|$. This, however, is clear from the characterization in \Cref{Le:Distr der}, as for $\gamma\in\CC([0,1];\M)$ and $t\in[0,1]$ with $s\mapsto (\varphi(g(\gamma_s)),f(\gamma_s))$ differentiable at $t$ (this holds for $\ppi$-a.e.\ curve and $\Leb^1$-a.e.\ $t$ for any test plan $\ppi$) and $\varphi$ differentiable at $g(\gamma_t)$ with null derivative, we have $((f+\eps \varphi\circ g)(\gamma))_t'=(f\circ\gamma)_t'+\eps \varphi'(g(\gamma_t))(g\circ\gamma)_t'=(f\circ\gamma)_t'$.

We are thus left to proving \eqref{eq:comparisonchaing}. Fix  an interval $I=[a,b]\subset(0,+\infty)$  and then a compact set $K\subset(\varphi')^{-1}(I)$. We are going to prove that
\begin{equation}
    \label{eq:confrontoKeps}
     \rmd^+(\varphi\circ g)(\nabla f)\,|\rmd f|^{\PP-2}\geq \min\{a\, \rmd^+g(\nabla f)\,|\rmd f|^{\PP-2},b\, \rmd^+g(\nabla f)\,|\rmd f|^{\PP-2}\}\qquad\mm-a.e.\ on\ f^{-1}(K),
\end{equation}
which by the arbitrariness of $[a,b]$ and $K$ suffices to conclude. We can assume that $K$ is not empty, or else the claim is obvious. Thus fix $\bar z\in K$ and then consider the auxiliary function $\tilde\varphi:\R\to\R$ defined by $\tilde\varphi(\bar z)=\varphi(\bar z)$ and $\tilde\varphi'=1_K\varphi'+a1_{\R\setminus K}$. Since $\R\setminus K$ is open, and thus a countable union of disjoint intervals $(I_n)$, we see that the set $E:=\{\varphi(z)-\tilde\varphi(z)\ :\ z\in K\}$ is countable, as for any $z\in K$  the difference $\varphi(z)-\tilde\varphi(z)$ is equal to the sum of $\int_{I_n}\varphi'-a\,\d\mathcal L^1$ over those intervals $I_n$ that are entirely contained in either $[z,\bar z]$ or $[\bar z,z]$ (depending on whether $z>\bar z$ or $z<\bar z$). It follows from \eqref{eq:strongloc} that 
\begin{equation}
    \label{eq:varphitildevarphi}
|\d(f+\eps\varphi\circ g)|=|\d(f+\eps\tilde\varphi\circ g)|\qquad\mm-a.e.\ on \ K.
\end{equation}
To conclude, let $\varphi_1(z):=az$ and $\varphi_2(z):=bz$ for any $z\in\R$, so that $\varphi_1\preceq\varphi\preceq\varphi_2$. Then \eqref{eq:compd} ensures that
\[
|\d(f+\eps\tilde\varphi\circ g)|\geq\min\{|\d(f+\eps a g)|,|\d(f+\eps b g)|\} 
\]
and the monotonicity of $u_p$ yields
\[
     \rmd^+(\tilde\varphi\circ g)(\nabla f)\,|\rmd f|^{\PP-2}\geq \min\{a\, \rmd^+g(\nabla f)\,|\rmd f|^{\PP-2},b\, \rmd^+g(\nabla f)\,|\rmd f|^{\PP-2}\}\qquad\mm-a.e.,
\]
that together with \eqref{eq:varphitildevarphi} gives \eqref{eq:confrontoKeps} and the claim.
\noindent\ref{it:chaing2} For $\eps<\tfrac1{{\rm Lip}(\varphi)}$, by \Cref{Th:Chain rule II} we have $|\d(f+\eps\varphi\circ f)|=|\d(({\rm id}+\eps\varphi)\circ f)|=(1+\eps \varphi')\circ f|\d f|$. The conclusion follows by direct computation also noticing that $\{|\d f|<+\infty\}\subset  {\rm Vis}(\M)$.

\noindent\ref{it:leib1} On $\{g=0\}\cup\{h=0\}$ the conclusion follows from \eqref{eq:homg}, hence by a locality argument based on \eqref{eq:locg} to conclude it suffices to prove \eqref{eq:leib1} in the set $\{g,h\in[c,c^{-1}]\}$ for any given $c>0$. Fix such $c$ and notice that by \Cref{Pr:RelationsII} the functions  $\tilde g:=c\vee g\wedge (c^{-1})$ and $\tilde h:=c\vee h\wedge (c^{-1})$ still belong to $\Pert(f)$. Since $z\mapsto\log(z)$ is Lipschitz on $[c^2,+\infty)$  we can apply  \eqref{eq:chaing1} and get
\[
\begin{split}
\tfrac1{\tilde g\tilde h}\d^+(\tilde g\tilde h)(\cdots)=\d^+(\log(\tilde g\tilde h))(\cdots)&=\d^+(\log(\tilde g)+\log (\tilde h))(\cdots)\\
\text{(by \eqref{eq:superaddg})}\qquad&\geq \d^+(\log(\tilde g))(\cdots)+\d^+(\log (\tilde h))(\cdots)=\tfrac1{\tilde g}\d^+\tilde g(\cdots)+\tfrac1{\tilde h}\d^+\tilde h(\cdots),
\end{split}
\]
where $(\cdots)$ is shorthand for $(\nabla f)|\d f|^{p-2}$. Using again the locality property \eqref{eq:locg} the claim follows.

\noindent\ref{it:leib2} Let $c\geq 0$ be such that $g+c\geq 0$. Then we have
\[
\begin{split}
\d^+(g\varphi\circ f)(\cdots)&=\d^+((g+c)\varphi\circ f+(-c\varphi\circ f))(\cdots)\\
\text{(by \eqref{eq:superaddg})}\qquad&\geq \d^+((g+c)\varphi\circ f)(\cdots)+\d^+(-c\varphi\circ f)(\cdots)\\
\text{(by \eqref{eq:leib1} and \eqref{eq:chaing2})}\qquad&\geq (g+c)\d^+(\varphi\circ f)(\cdots)+ \varphi\circ f\d^+(g+c)(\cdots)-c\varphi'\circ f |\d f|^p\\
\text{(by \eqref{eq:locg} and \eqref{eq:chaing2})}\qquad&= (g+c)\varphi'\circ f|\d f|^p+ \varphi\circ f\d^+g(\cdots)-c\varphi'\circ f |\d f|^p
\end{split}
\]
and the conclusion follows because the term $c\varphi'\circ f |\d f|^p$ is finite and can be canceled out.
\end{proof}

\begin{proposition}[Calculus rules `for $f$'] 
\label{P:f calculus}
Let $(\M,\uptau,\ell,\mm)$ be a mm spacetime and   $f\colon \M\to \bar{\R}$ causal. Then:
 \begin{enumerate}[label=\textnormal{(\roman*)}]
 \item\label{it:homf}\emph{Positive $p-1$ homogeneity.} Let   $g\in\Pert(f)$ and $\lambda\in(0,+\infty)$. Then
 \begin{equation}
\label{eq:homf}
 \d^+g(\nabla (\lambda f))|\d (\lambda f)|^{p-2}=\lambda^{p-1}\d^+g(\nabla f)|\d f|^{p-2}\quad\mm\textnormal{-a.e.}
 \end{equation}
 Notice that $g\in\Pert(\lambda f)$ (trivially).
 \item\label{it:locf}\emph{Locality.} Let  $\tilde f:\M\to\bar\R$ be a causal function, $g\in\Pert(f)\cap\Pert(\tilde f)$  and $E\subset\R$ Borel and negligible. Then
 \begin{equation}
\label{eq:locf}
\d^+g(\nabla f)|\d f|^{p-2}=\d^+g(\nabla \tilde f)|\d \tilde f|^{p-2} \quad\mm\textnormal{-a.e.\ on }(f-\tilde f)^{-1}(E).
\end{equation}
\item\label{it:chainf} \emph{Chain rule.} Let $\varphi\colon\R\to\R$ be $c$-steep (i.e.\ $\varphi(w)-\varphi(z)\geq (w-z)c$ for every $z\leq w$) for some $c>0$ and let  $g\in\Pert(f)$. Then
\begin{equation}
\label{eq:chainf}
\rmd^+g(\nabla(\varphi\circ f))\,\vert\rmd(\varphi\circ f)\vert^{\PP-2} = (\varphi')^{\PP-1}\circ f\,\rmd^+g(\nabla f)\,\vert\rmd f\vert^{\PP-2}\quad\mm\textnormal{-a.e.}.
\end{equation}
Note that $g$ belongs to $\Pert(\varphi\circ f)$ by \Cref{Pr:RelationsII}.
 \end{enumerate}
\end{proposition}

\begin{proof}\ 

\ref{it:homf} For $\lambda>0$ the claim follows $\mm$-a.e. from 
the positive $1$-homogeneity in \Cref{Pr:Linear comb}:
\begin{align*}
 \d^+g(\nabla (\lambda f))|\d (\lambda f)|^{p-2}
 &=
 \lambda^{p-1} \lim_{\eps\downarrow 0} \frac{|\d(f+g\eps/\lambda|^p - |\d f|^p}{\eps/\lambda}
 =\lambda^{p-1}\d^+g(\nabla f)|\d f|^{p-2}.
\end{align*}
 

\ref{it:locf} Direct consequence of the definition and \Cref{Th:Locality}.

\ref{it:chainf} It is readily proved that for $\psi:\R\to\R$ with $\varphi\preceq\psi$ we have $\delta(\varphi\circ f+\eps g)\leq \delta(\psi\circ f+\eps g)$ and therefore, by \Cref{le:boundbasso}, that $|\d(\varphi\circ f+\eps g)|\leq |\d (\psi\circ f+\eps g)|$. Taking into account the  monotonicity of $u_p$ we then deduce that
\begin{equation}
    \label{eq:perchainf}
\d^+g(\nabla(\varphi\circ f))|\d(\varphi\circ f)|^{p-2}\leq\d^+g(\nabla(\psi\circ f))|\d(\psi\circ f)|^{p-2}\qquad\mm-a.e.
\end{equation}
The conclusion now follows as for the proof of \eqref{eq:chaing1}: we first use a locality argument to replace the original $\varphi$ with a $\tilde\varphi$ so that $\varphi'\in[a,b]\subset(0,+\infty)$, then we use 
\eqref{eq:perchainf} to compare the desired quantity with that appearing with the choices $\varphi_a(z):=az$ and $\varphi_b(z):=bz$. To conclude we then observe that for linear $\varphi$'s the conclusion is given by \eqref{eq:homf}.
\end{proof}

\begin{remark}[Limits and usefulness of conventions]\label{re:limitconventions} The --- arbitrary --- choice of defining $\d^+g(\nabla f)|\d f|^{p-2}$ to be 0 on $\{|\d f|=+\infty\}$ is only motivated by simplicity, as in any case we won't ever really care about what happens to such an expression where $|\d f|=+\infty$; setting to zero won't create integrability problems later on.  A side effect of this choice is that some formulas, such as \eqref{eq:chaing2} only hold on $\{|\d f|<+\infty\}$: we are aware of this fact, that causes no troubles in what comes next.

In a conceptually similar direction, 
we have defined the homogeneity formulas \eqref{eq:homg}  and \eqref{eq:homf} --- and thus with the chain rules, that ultimately depend on these --- with care to avoid conflicts
with \eqref{eq:dfzero1} or \eqref{eq:dfzero2}
on the set $\{|\d f|=0\}$.

In contrast with the situation up to the previous chapter, we haven't found a consistent set of conventions capable of handling all possible scenarios, and in any case having these would be irrelevant for the point we want to make, which is that in all interesting cases the relevant formulas hold `as is', without the need to resort to artificial choices.
\hfill$\blacksquare$
\end{remark}

\begin{remark}[Sign choices]\label{re:sign}  In line with the terminology in \cite{Gigli:2015}, the quantity $-\d^+(-\varphi)(\nabla f)|\d f|^{p-2}$ appearing in \eqref{eq:dpdm} might also be called $\d^-\varphi(\nabla f)|\d f|^{p-2}$. We haven't done so both to avoid introducing a new notation and because in our signature this would lead to the bizarre looking inequality $\d^+\varphi(\cdots)\leq \d^-\varphi(\cdots)$ where the `minus' term is bigger than the `plus' term. 

Speaking of this, we emphasize that  working with $\d^+\varphi(\cdots)$ rather than with the above-mentioned $\d^-\varphi(\cdots)$ is related to the fact that we shall work with the $\TMCP^\h_+$ condition rather than with the $\TMCP^\h_-$ one. In this direction it might be worth to keep in mind the following table
\[
\begin{array}{lcr}
\TMCP^\h_+(K,N)\qquad&vs&\qquad \TMCP^\h_-(K,N)\\
\d^+\varphi(\cdots)\qquad&vs &\qquad \d^-\varphi(\cdots)\\
\varphi\in\Pert(f)\qquad&vs&\qquad-\varphi\in\Pert(f),\\
\text{initial test plans}\qquad&vs& \qquad\text{final test plans},\\
\text{functions of the form }\psi_c\qquad&vs& \qquad\text{functions of the form }\psi^c
\end{array}
\]
(see \eqref{eq:defcconc} for the definition of $\psi_c$ and $\psi^c$) and notice that the first choice dictates all the others below it. This means that if we work on $\TMCP^\h_+(K,N)$ spaces, as we shall do most of the time, then the d'Alembertian comparison will be obtained for functions of the form $\psi_c$ by differentiating  functions $\varphi\in\Pert(f)$ along initial test plans and studying the quantity $\d^+\varphi(\nabla f)|\d f|^{p-2}$. Symmetrically if we work on $\TMCP^\h_-(K,N)$ spaces, see Section \ref{Sub:Modiback} for comments.

\medskip

Of totally different kind and  \emph{not} related to the above, is the choice  of dealing with forward spacetimes rather than backward ones, i.e.\ spacetimes that are forward complete rather than backward complete (recall  \Cref{D:spacetimes}). We shall always stick to this choice, so that in particular the results concerning $\TMCP^\h_-$ spaces  --- even though will be proved along similar lines of thought --- cannot be derived by a simple time-reversal.
\hfill$\blacksquare$
\end{remark}

\subsection{Relation of horizontal and vertical derivatives}

Here we discuss the connection between these two different notions of differentiation for arbitrary causal functions.

Our motivation is drawn once more from the case of a smooth spacetime $(M,g)$. Given a smooth causal function $f\colon M \to \R$ and any smooth and compactly supported function $g\colon M\to\R$, the quantity $\smash{\rmd g(\nabla f)\,\| \rmd f\|_*^{\PP-2}}$ can be computed in two ways. The first, related to our horizontal approach, is to consider a smooth curve $\gamma$ with initial speed $\smash{ {\gamma}'_0 = \|\rmd f\|_*^{\PP-2}\,\nabla f}$ and differentiate at zero:
\begin{equation*}
\frac{\rmd}{\rmd t}g(\gamma_t) \Big\vert_0 = \rmd g(\nabla f)\,\big\|\rmd f\big\|_*^{\PP-2}.
\end{equation*}
The second, related to our vertical approach, is to differentiate the $\varepsilon$-dependent quantity $\smash{| \rmd(f+\varepsilon g)|_*^\PP/\PP}$ (in other words, the underlying Hamiltonian applied  to the covector field $\d (f+\varepsilon g)$) at zero:
\begin{align*}
\,\frac{\rmd}{\rmd \varepsilon} \frac{\big\|\rmd(f+\varepsilon g)\big\|_*^\PP}{\PP}\Big\vert_0  = \rmd g(\nabla f)\,\big\|\rmd f\big\|_*^{\PP-2}.
\end{align*}
This last identity can be seen recalling that $\nabla f$ is --- inspecting the definition of the musical isomorphisms --- equal to the differential of $\tfrac12\|\cdot\|_*^2$ applied at $\d f$, see also \Cref{ss:compatibility}.  Since the right-hand sides of the last two formulas agree, so do the left ones. 

Remarkably, this consideration has a counterpart   in our context.  Recall the definition of `plan $\ppi$ representing the initial $p$-gradient of a causal function $f$' from \Cref{Def:represent gradient}.

\begin{theorem}[First-order differentiation formula]\label{thm:horver} Let $(\M,\uptau,\ell,\mm)$ be a mm spacetime,  $f\colon \M\to\bar\R$ causal, $0\neq \PP<1$  and let $\bdpi$ represent the \fff $\PP$-gradient of $f$.

 Then for every   $g\in\Pert(f)$ we have:
 \begin{enumerate}[label=\textnormal{(\roman*)}]
 \item\label{horver-i} 
 For any $t>0$ sufficiently small   the negative part of the function $g\circ\eval_t-g\circ\eval_0$ is in $L^1(\ppi)$,
 \item\label{horver-ii} 
 and
\begin {equation}
\label{eq:horverder}
\limi_{t\downarrow0}\int\frac{g(\gamma_t)-g(\gamma_0)}{t}\,\d\bdpi(\gamma)
 \geq \int \rmd^+g(\nabla f)\,|\rmd f|^{\PP-2}(\gamma_0)\,\d\bdpi(\gamma).
\end{equation}
 \item\label{horver-iii} 
 If $E\subset \M$  Borel so that $\ppi$ represents the initial $p$-gradient of $f$ on $E$, then the negative part of the function $\rmd^+g(\nabla f)\,|\rmd f|^{\PP-2}$ is in $L^1(\mm\mres E)$.
 \end{enumerate} 
 \end{theorem}
\begin{proof} Definition \ref{Def:represent gradient} implies, as already noticed right after it, that for $t>0$ sufficiently small we have $f\circ\eval_t-f\circ\eval_0\in L^1(\ppi)$ (and in particular the function is a.e.\ finite). On the other hand, for $\bar\eps>0$ we have that $f+\eps g$ is causal for any $\eps\in[0,\bar\eps]$, so that in particular $(f+\bar\eps g)\circ\eval_t-(f+\bar\eps g)\circ\eval_0$ is non-negative on $\CC([0,1];\M)$. Writing $g=\tfrac1{\bar\eps}((f+\bar\eps g)-f)$ the claim \ref{horver-i} easily follows. The claim \ref{horver-iii} follows from the uniform bound \eqref{eq:unifdiffquot} and the assumption $u_p(|\d f|)\in L^1(E,\mm\mres E)$ that comes with \Cref{Def:represent gradient}.

\ref{horver-ii} We now want to apply \Cref{Pr:Lower bound} to the function $f+\eps g$ for   $\eps\in(0,\tfrac12\bar\eps)$ and the plan $\ppi$; to this aim let us check the assumptions. If $q<0$ the requirement for $\ppi$ is satisfied, as the plan satisfies \Cref{Def:represent gradient}. If instead $p<0$ we need to check that $u_p(|\d(f+\eps g)|)\in L^1(E,\mm\mres E)$. As in the proof of the bound \eqref{eq:unifdiffquot}, this follows from the bound $0\geq u_p(|\d (f+\eps g)|)\geq u_p(\tfrac12|\d f|)$ and again the assumption $u_p(|\d f|)\in L^1(E,\mm\mres E)$.

We can therefore apply  \Cref{Pr:Lower bound} to the function $f+\eps g$, thus writing the corresponding inequality \eqref{eq:FYnonsmooth} for $f+\eps g$ and subtracting \eqref{Eq:Subtract} for $f$, after the --- justified --- cancellations we get
\[
\liminf_{t\to 0} \int \frac{g(\gamma_t) - g(\gamma_0)}{t}\,\d\bdpi(\gamma) \geq \int \frac{\vert\rmd(f+\varepsilon\,g)\vert^\PP(\gamma_0) - \vert\rmd f\vert^\PP(\gamma_0)}{\PP\eps}\,\d\bdpi(\gamma)\qquad\forall \eps\in(0,\tfrac12\bar\eps).
\]
The uniform bound \eqref{eq:unifdiffquot} and the assumption $u_p(|\d f|)\in L^1(E,\mm\mres E)$ ensure that we can use  Levi's monotone convergence theorem  to pass to the limit as $\eps\downarrow0$. The conclusion follows.
\end{proof}

We now analyze how the calculus developed above improves on infinitesimally Minkowskian spaces (recall \Cref{Def:Inf Minkow} from the introduction).  We  claim in this setting, the bound \eqref{eq:horverder} upgrades to 
a genuine first order differentiation formula equating the
horizontal and vertical derivatives,
\eqref{eq:firstorderdiff} below.

\begin{theorem}[Elements of calculus 
on infinitesimally Minkowskian spaces]
\label{T:VD=HD} Let $(\M,\uptau,\ell,\meas)$ be a forward metric measure spacetime that is infinitesimally Minkowskian and $0\ne p<1$.  Then:
\begin{enumerate}[label=\textnormal{(\roman*)}]
\item\label{it:dpdm} Let $f:\M\to\bar\R$ be causal and $\pm g\in\Pert (f)$. Then 
\begin{equation}
\label{eq:dpdm2}
-\rmd^+(-g)(\nabla f)\,|\rmd f|^{\PP-2} = \rmd^+g(\nabla f)\,|\rmd f|^{\PP-2}\qquad\mm-a.e.\ on\ \{|\d f|>0\};
\end{equation}
(compare with  inequality \eqref{eq:dpdm}).

\item\label{it:horder} 
If, in addition, $\ppi$ represents  the \fff $\PP$-gradient of $f$ on the Borel set $E$ then
\begin{equation}
\label{eq:firstorderdiff}
\lim_{t\downarrow0}\int\frac{g(\gamma_t)-g(\gamma_0)}{t}\,\d\bdpi(\gamma) = \int \rmd^+g(\nabla f)\,|\rmd f|^{\PP-2}(\gamma_0)\,\d\bdpi(\gamma).
\end{equation}
\item\label{it:scalprod} For $f,g,h:\M\to\bar\R$ causal define $\d^+ g(\nabla f)$ on the set $\{|\d f|\in(0,+\infty)\}$  as the product of $|\d f|^{2-p}$ and $\d^+g(\nabla f)|\d f|^{p-2}$. Then this quantity does not depend on $p$;
moreover, for $\alpha,\beta \in[0,\infty)$, both
\begin{equation}
\label{eq:dgnfscal}
\begin{split}
\d^+g(\nabla f)&=\d^+f(\nabla g) 
\\ {\rm and} \quad \d^+(\alpha g+\beta h)(\nabla f)&=\alpha\d^+g(\nabla f)+\beta \d^+h(\nabla f)
\end{split}
\end{equation}
hold $\mm$-a.e.\ on the set $\{x \in \M : 0 < |\d f| \wedge |\d g| \wedge |\d h| {\rm\ and}\ |\d(f+g+h)| <\infty\}$.

\end{enumerate}
\end{theorem}
\begin{proof}\

\noindent\ref{it:dpdm}  Let $\bar\eps>0$ be such that $f+\eps g$ is causal for every $\eps\in[-\bar\eps,\bar\eps]$. On $\{|\d f|=+\infty\}$ there is nothing to prove, as both sides are 0 by convention. On the set $\{|\d f|\in(0,+\infty)\}$, by the smoothness of $u_p$ on $(0,+\infty)$ it suffices to prove that
\begin{equation}
\label{eq:claimsimm}
\lim_{\eps\downarrow0}\frac{|\d(f+\eps g)|^2-|\d f|^2}{2\eps}=\lim_{\eps\downarrow0}\frac{|\d f|^2-|\d(f-\eps g)|^2}{2\eps},
\end{equation}
where the limits are intended as in \eqref{Eq:d+gdfdf} and their existence follows from the existence of the limit in \eqref{Eq:d+gdfdf}. To see the above write the defining identity \eqref{eq:definfmink} with $f-\eps g,f+\eps g$ in place of $f,g$ respectively, for $\eps\in(-\bar\eps,\bar\eps)$, to get
\[
2|\d(f-\eps g)|^2+2|\d (2f)|^2=|\d(f+\eps g)|^2+|\d(3f-\eps g)|^2\qquad\mm-a.e.
\]
Rearranging and recalling the positive homogeneity in \Cref{Pr:Linear comb} we get
\[
|\d(f+\eps g)|^2-|\d f|^2=2\big(|\d (f-\eps g)|^2-|\d f|^2\big)-9\big(|\d (f-\tfrac\eps3 g)|^2-|\d f|^2\big)
\] 
and --- since both  limits in  \eqref{eq:claimsimm} exist --- the identity \eqref{eq:claimsimm} easily follows.

\noindent\ref{it:horder} Applying   \Cref{thm:horver} to  $-g$ in place of $g$ we obtain
 \begin{align*}
\limsup_{t\to 0} \int \frac{g(\gamma_t) - g(\gamma_0)}{t}\d\bdpi(\gamma) \leq -\int \rmd^+(-g)(\nabla f)\,|\rmd f|^{\PP-2}(\gamma_0)\d\bdpi(\gamma).
\end{align*}
The conclusion follows combining  \Cref{thm:horver} as stated with item \ref{it:dpdm} above.





\noindent\ref{it:scalprod} For $f,g$ causal define the auxiliary function $B(f,g)$ on $\{0<|\d f|\wedge|\d g| \}\cap\{
|\d(f+ g)|<\infty \}$ as

\[
2B(f,g):=|\d (f+g)|^2-|\d f|^2-|\d g|^2.
\]
Then $B(f,g)=B(g,f)$ and is finite $\mm$-a.e.\ by \eqref{eq:dsuper}. 
We claim that for any $f,g,h$ causal
\begin{equation}
\label{eq:Blin}
B(f+h,g)=B(f,g)+B(h,g)
\end{equation}
holds where they are all defined. From the parallelogram law \eqref{eq:definfmink} defining infinitesimal Minkowskianity we see that
\[
\begin{split}
4|\d(f+h+g)|^2+4|\d h|^2&=2|\d(f+2h+g)|^2+2|\d(f+g)|^2,\\
2|\d(f+2h+g)|^2+2|\d f|^2&=|\d(2f+2h+g)|^2+|\d(2h+g)|^2,\\
2|\d(h+g)|^2+2|\d h|^2&=|\d(2h+g)|^2+|\d g|^2,\\
2|\d(f+h+g)|^2+2|\d(f+ h)|^2&=|\d(2f+2h+g)|^2+|\d g|^2.
\end{split}
\]
The quantities on the left of each line are finite,  hence the same is true of the quantities on the right.
Adding the first two identities and subtracting the last two we get \eqref{eq:Blin}. It is then clear that 
\begin{equation}
\label{eq:Bhom}
B(\alpha f,g)=\alpha B(f,g)
\end{equation}
first for $\alpha\in\N$, then for $\alpha\in \Q_{+}$ and finally, using that  $|\d (\alpha f+g)|\leq|\d (\beta f+g)|\leq|\d (\gamma f+g)|$ holds $\mm$-a.e.\ whenever $0\leq \alpha\leq\beta\leq\gamma$ as a consequence of \eqref{eq:dsuper}, we conclude that \eqref{eq:Bhom} holds for $\alpha\in[0,+\infty)$.

To conclude, let $f,g$ be causal, notice that   on the set $\{|\d f|\in(0,+\infty)\}$ the limits in \eqref{eq:claimsimm} are equal to $\d^+g(\nabla f)$ (from the discussions in the previous item) and therefore
\[
\d^+g(\nabla f)=\lim_{\eps\downarrow0}\frac{|\d(f+\eps g)|^2-|\d f|^2}{2\eps}=\lim_{\eps\downarrow0}\frac{B(f,\eps g)}{\eps}\stackrel{\eqref{eq:Bhom}}=B(f,g)
\]
and the conclusion follows.
\end{proof}

\begin{remark}[Lorentzian a.e.~inner product from infinitesimal Minkowskianity] 
In an infinitesimally Minkowskian forward spacetime,  we can regard \eqref{eq:dgnfscal}
as defining a symmetric and positively bilinear a.e.\ inner product on the convex cone of causal cotangent fields $\d f$ and $\d g$, at least where $0< |\rmd f|\wedge|\rmd g|$ and $|\rmd(f+g)|<\infty$.\hfill$\blacksquare$
\end{remark}

\section{Effects of timelike Ricci curvature assumptions}\label{Section: Effects of curvature assumption}\label{se:ricci}

\subsection{Synthetic timelike curvature-dimension bounds}
We introduce here the  curvature conditions we are going to work with in the rest of the paper. They involve (slight modifications of) the Timelike Measure Contraction Property  proposed  by Braun~\cite{Braun:2023Renyi}, 
and the entropic variant of Cavalletti--Mondino \cite{CM:20} upon which it is based. Although these conditions are expected to become equivalent
under suitable non-branching hypotheses, this is not yet established in Lorentzian signature.  Braun's formulation provides sharp constants, while Cavalletti and Mondino's yields narrow compactness in merely forward spacetimes via Lemma \ref{L:narrow coercivity} below.  In the absence of global hyperbolicity we are going to need both.  (When $(\M,\uptau,\ell,\mm)$ is globally hyperbolic on the other hand, all our results remain valid even if we replace  Boltzmann's entropy \eqref{BS2} by the trivial entropy $\scrS_\infty(\mu):=0$;  Lemma \ref{L:narrow coercivity} would remain true since the compactness of emeralds $E\subset \M$ which holds in this case implies tightness --- hence narrow compactness --- of 
the entire set 
$\Prob(E)$.%
)

For measures $\mu \in\Prob(E)$ supported on an emerald $E \subset \M$, the 
Boltzmann-Shannon entropy $\scrS_\infty \colon \Prob_\emr(\M)\to [-\log \mm(E),\infty]$ is 
defined by
\begin{align}\label{BS2}
    \scrS_\infty(\mu) :=
    \begin{cases}
\int_M \rho \log \rho \d\meas &\text{if}\ \mu=\rho\,\meas 
    \\ +\infty & \text{otherwise}. 
    \end{cases}
\end{align}
It is well-defined if $\mm(E)<\infty$ and its values lie in the indicated range by Jensen's inequality.  Cavalletti and Mondino's $\TMCP^e(K,N)$ condition asks for suitable growth bounds on $\scrS_\infty(\mu_t)$ along strongly timelike $\ell_q$-geodesics starting or ending at a point mass.  Braun's $\TMCP(K,N)$ 
asks for analogous decay estimates on the $N$-Rényi entropy $\scrS_N \colon \Prob(\M)\to [-\infty,0]$ defined by
\begin{align*}
    \scrS_N(\mu) := -\int_M \rho^{(N-1)/N}\d\meas\qquad\text{ for }\quad \mu=\rho\,\meas + \mu^\perp,\quad \mu^\perp\perp\mm.
\end{align*}
Both are detailed in Definition \ref{Def:TMCP} and \Cref{re:vardef} below.

For $\kappa\in\R$ define the generalized sine function $\sin_\kappa:\R\to\R$ as the only smooth $f$ solving 
\[
\begin{split}
f''+\kappa f=0,\qquad f(0)=0,\ f'(0)=1,
\end{split}
\]
so that
\begin{align*}
    \sin_\kappa(\theta) := \begin{cases}
        \kappa^{-1/2} \sin(\kappa^{1/2}\,\theta)
        & \textnormal{if } \kappa > 0,\\
        \theta & \textnormal{if }\kappa=0,\\
        |\kappa|^{-1/2} \sinh(|\kappa|^{1/2}\,\theta)
        & \textnormal{otherwise}.
    \end{cases}
\end{align*}
Then for $K\in\R$ and $N\in (1,+\infty)$  define two distortion coefficients as follows: for $\theta\geq 0$ and $t\in[0,1]$, set
\begin{align}\label{sigma}
    \sigma_{K,N}^{(t)}(\theta) := \begin{cases}
        \displaystyle\frac{\sin_{K/N}(t\theta)}{\sin_{K/N}(\theta)} & \textnormal{if }K\theta^2 < N\pi^2,\\
        +\infty & \textnormal{otherwise}
    \end{cases}
\end{align}
and, by geometrically averaging,
\begin{align}\label{tau}
    \tau_{K,N}^{(t)}(\theta) := t^{1/N}\,\sigma_{K,N-1}^{(t)}(\theta)^{(N-1)/N}.
\end{align}
In particular, when $K=0$ the distortion coefficients simply become $\sigma_{0,N}^{(t)}(\theta) = \tau_{0,N}^{(t)}(\theta) = t$.

The interpretation of the average \eqref{tau} is that in the smooth setting, the term $t^{1/N}$ measures the volume distortion in all ``tangential'' directions (which does not see curvature), while ``orthogonal'' directions are influenced by curvature and quantified by $\smash{\sigma_{K,N-1}^{(t)}(\theta)^{(N-1)/N}}$.

We now come to main definition of this section. Recall from \Cref{prop:geodindq}  that under the stated assumption the notion of $\ell_q$-geodesic to a Dirac mass does not depend on the chosen $0\neq q<1$.

\begin{definition}[Hybrid timelike measure contraction property]\label{Def:TMCP} 


Let  $(\M,\uptau,\ell,\meas)$ be a mm  spacetime  in which $\ell$ is upper semicontinuous and does not take the value $+\infty$. We say that it satisfies the   \emph{hybrid future timelike measure contraction property} $\smash{\TMCP^\h_+(K,N)}$ 
if for every   $\mu = \rho\, \meas \in \pem$  and   $x_1\in\supp\meas$  with $\log(\ell(\cdot,x_1))\in L^\infty(\mu)$
and $\scrS_\infty(\mu)<\infty$, there is an $\ell_q$-geodesic $(\mu_t)$ from $\mu$ to $\delta_{x_1}$ such that
\begin{align}\label{TMCP}
\scrS_{N}(\mu_t ) &\leq -\int \tau_{K,N}^{(1-t)}\circ\ell(\cdot,x_1)\,\rho^{1-1/N}\d\meas ,\qquad\forall t\in[0,1] \qquad\text{and}
\\ \label{TMCPe}
\scrS_{\infty}(\mu_t ) &\le \scrS_\infty(\mu) -N \log \sigma_{K,N}^{(1-t)}\big( \|\ell(\cdot, x_1)\|_{L^2(\d\mu)}\big),\qquad\forall t\in[0,1].
\end{align}
Similarly, $(\M,\ell,\meas)$ satisfies the \emph{hybrid past timelike measure contraction property} $\TMCP_-^\h(K,N)$ 
if for every   $\mu = \rho\, \meas \in \dom(\scrS_\infty)\subset \pem$  and   $x_0\in\supp\meas$  with $\log(\ell(x_0,\cdot))\in L^\infty(\mu)$, there is an $\ell_q$-geodesic $(\mu_t)$ from $\delta_{x_0}$ to $\mu$ such that
\begin{align}
\label{TMCP-}
\scrS_{N}(\mu_t ) &\leq -\int \tau_{K,N}^{(t)}\circ\ell(x_0,\cdot)\,\rho^{1-1/N}\d\meas ,\qquad\forall t\in[0,1] \qquad\text{and}
\\ \scrS_{\infty}(\mu_t )&\le \scrS_\infty(\mu) 
-N \log \sigma_{K,N}^{(t)}\big( \|\ell(x_0,\cdot)\|_{L^2(\d\mu)}\big),\qquad\forall t\in[0,1].
\label{TMCPe-}
\end{align}
\end{definition}

\begin{remark}[Related timelike measure contraction properties]\label{re:vardef}
Requiring \eqref{TMCP} but not \eqref{TMCPe}
--- or more precisely, replacing \eqref{BS2} with the trivial entropy $\scrS_\infty:=0$ ---
would yield
a future 
version $\TMCP_+(K,N)$ 
of Braun's timelike measure contraction property.  
In this case, replacing the $\tau$-distortion coefficients by the $\sigma$-distortion coefficients in \eqref{TMCP} yields the \emph{reduced future timelike measure contraction property} $\TMCP_+^*(K,N)$.
Similarly a future version $\TMCP_+^e(K,N)$ 
of Cavalletti and Mondino's timelike measure contraction property 
is obtained by requiring \eqref{TMCPe} but not \eqref{TMCP}
or equivalently setting $\scrS_N:=-\infty$.
Past versions of these various conditions are defined analogously.
In all that follows,  the hybrid assumption $\TMCP^\h_\pm(K,N)$ can be relaxed to $\TMCP_\pm(K,N)$ whenever $(\M,\uptau,\ell,\mm)$ is globally hyperbolic.

$\TMCP^\h_+(K,N)$ is stronger than $\smash{\TMCP_+^*(K,N)}$ \cite[Prop.~3.6]{Braun:2023Renyi} and allows to prove  all quantitative properties we care about (especially our comparison theorems) in their sharp form. However, with appropriate modifications that we occasionally specify, our results have evident adaptations to the more general setting $\smash{\TMCP_+^{\h,*}(K,N)}$ where the distortion coefficients $\tau$ are replaced by $\sigma$ in \eqref{TMCP}.

Notice also that --- in line with the definition in positive signature \cite{Ohta:2007}, \cite{Sturm:2006b} ---  the $\TMCP^\h_+(K,N)$ as formulated is not `dimensionally consistent', meaning that a priori a space could be $\TMCP^\h_+(K,N)$ but not $\TMCP^\h_+(K,N')$ for some $N'> N$. To enforce this property one should --- tautologically --- work with spaces that are $\TMCP^\h_+(K,N')$ for all $N'\geq N$ (and similarly for 
$\TMCP^{\h,*}_+(K,N)$ and the past versions of both).  
In timelike $\QQ$-essentially non-branching spaces, however, dimensional consistency does hold, as can be shown by localizing the inequalities defining $\TMCP_+^h(K,N)$ and $\TMCP_+^{h,*}(K,N)$ to a pathwise inequality following the lines of Braun \cite[Thms.~4.20, 4.21]{Braun:2023Renyi} and taking appropriate powers. In particular,  $\smash{\TMCP_+^{h,*}(K,N)}$ then recovers the entropic or reduced TMCP conditions of Cavalletti--Mondino \cite{CM:20} and Braun \cite{Braun:2023Renyi}. Moreover, in this situation we expect the  two hybrid conditions $\smash{\TMCP^h(K,N)}$ and $\smash{\TMCP^{h,*}(K,N)}$  to be equivalent, as suggested by corresponding results of Cavalletti--Sturm \cite{cavalletti-sturm} in positive signature.
\hfill$\blacksquare$
\end{remark}

\begin{remark}[Failure to reach endpoints]
Typically one assumes that the geodesic $(\mu_t)$ in the above definition satisfies $\mu_0=\mu$ and $\mu_1=\delta_{x_1}$, however in the current setting it seems us more natural  to work with this slightly weaker variant \eqref{eq:defgeo}. One of the reasons is that it seems more likely that this notion is stable by convergence of mm spacetimes (compare with the closure of $\CGeo(\M)$ established in \Cref{C:CGeo is closed}). 
\hfill$\blacksquare$\end{remark}

Notice that by nature of the definition, the $\TMCP^\h_+(K,N)$ (resp.\ $\TMCP^\h_-(K,N)$) condition  is only relevant for those points having non-empty chronological future (resp.\ past). In other words, introduce the `timelike final/initial sets' $\M_{\sf fin}$ and ${\sf \M_{in}}$   of $\M$  as
 \[
\begin{split}
 \M_{\sf {fin}}:=\{x\in \M\ :\ I^+(x)=\emptyset\},\qquad\text{and}\qquad \M_{\sf {in}}:=\{x\in \M\ :\ I^-(x)=\emptyset\}
\end{split}
 \]
 and notice that these are evidently achronal but possibly empty. If the topology of $\M$ contains the chronological one, then $\M_{\sf fin}$ and $\M_{\sf in}$ are automatically closed; moreover, the closure of any achronal set is still achronal. It is also clear that 
\begin{equation}
\label{eq:changem}
\left.\begin{array}{ll}
\text{$(\M,\uptau,\ell,\mm)$ is $\TMCP^\h_+(K,N)$} \\
\text{$\M_{\sf {fin}}$ is $\mm$-measurable}
\end{array}\right\}\qquad
\Rightarrow\qquad \text{$(\M,\uptau,\ell,\mm\mres{(\M_{\sf {fin}})^c})$ is $\TMCP^\h_+(K,N)$}
\end{equation}
and similarly for $\TMCP^\h_-(K,N)$ spaces. As we are going to see in the next section (\Cref{cor:fmeastmcp}), on $\TMCP^\h_+(K,N)$ spaces achronal sets are $\mm$-negligible (with the possible exception of $\M_{\sf {fin}}$), thus \eqref{eq:changem}  is relevant in connection to $\mm$-measurability of causal functions (recall \Cref{L:chronological continuity}).

\subsection{Good geodesics}

Our Sobolev calculus is built on the concept of test plans, that in turn asks for relevant measures to have bounded compression. On the other hand, the curvature condition we are asking only imposes entropy bounds. There is therefore a regularity gap that needs to be filled 
to link the geometry encoded in the curvature assumption with the analysis on the space as we are developing it. In positive signature, such a link was provided by  Rajala in \cite{Rajala2012,Rajala2013} in the setting of $\CD$ spaces: one of its first effects was to simplify the axiomatization of the $\RCD$ condition (making it depending only on the $\CD$ assumption plus infinitesimal Hilbertianity) along the lines suggested in \cite[Remark 4.20]{Gigli:2015} and put forward in \cite{AmbrosioGigliMondinoRajala12}. Rajala's construction was extended to $\MCP(K,N)$ spaces by Cavalletti-Mondino in \cite{cavalletti-mondino2017}. This latter result was later `Lorentzified' in \cite{Braun:2023Good}. In this section we shall show how it can be adapted to our current setting. The aim is to show existence of $\ell_\QQ$-geodesics satisfying not only the entropy bounds encoded in the $\TMCP^\h$ assumption, but also suitable $L^\infty$-density bounds. As is well known to experts, the construction involves 3 steps:
\begin{itemize}
\item[1)] `{\sc One step estimates}' Prove  existence of an intermediate point obeying the desired bounds.
\item[2)] `{\sc Discrete iteration}' Construct a  discretization of the desired geodesic.
\item[3)] `{\sc Passage to the limit}' Pass to the limit in the discretization to get the desired geodesic.
\end{itemize}
To implement this in our setting, we notice two differences w.r.t.\ \cite{Braun:2023Good}: one concerns the version of $\TMCP$ condition adopted, which has only a minor impact on the proof, and a conceptually deeper one related to our choice to give up global hyperbolicity in favor of forward completeness.

 Because of this passage from `compactness' to `completeness', the execution of the above plan, that would be more or less standard in a globally hyperbolic setting, becomes more involved and for this reason we give some detail below on how to proceed. This will involve step (3) in particular, where we will make crucial use of  the forward-narrow  completeness of the space of measures established in \Cref{prop:pmforward}. 
An additional ingredient is instead needed to adapt Rajala's construction to the forward context and get step (1) done.
For this we rely on the following lemma, already known to experts:

\begin{lemma}[Narrow coercivity of Boltzmann entropy]\label{L:narrow coercivity}
Fix an emerald $E=J(C_0',C_1')$ with $\mm(E)<\infty$ in a metric measure spacetime $(\M,\uptau,\ell,\meas)$. 
For each $c \in \R$, the following set is narrowly compact:
\begin{equation}\label{entropy sublevel}
S_c := \{ \mu \in \Prob(E) : \scrS_\infty(\mu)\leq c\}.
\end{equation}
\end{lemma}

\begin{proof}
Since $\meas_E := \meas \mres E$ assigns finite mass to the Polish space $\M$, there is a sequence of compact sets $C_i \subset C_{i+i} \subset \M$ which exhausts the mass of $\meas_E$.
As in the discussion below \cite[Eq. (3.4)]{GigliDGG:23}, since the function $x\mapsto x\log x$ is convex and greater than $-1$, Jensen's inequality can be used to estimate Boltzmann's entropy $\mathscr{S}_\infty(\mu)$ from below by $-\mm_E(E\setminus B)+\mu(B)\log\tfrac{\mu(B)}{\meas_E(B)}$ for any Borel set $B$. Hence, using \eqref{entropy sublevel}, the mass of $\mu \in S_c$ outside $C_i$ (i.e. setting $B=M\setminus C_i)$ can be estimated by
$$
\mu(\M \setminus C_i)\le \frac
{1+ c+\mm_E(C_i)}{-\log{\meas_E(M\setminus C_i)}}\to 0
$$
as $i \to \infty$ at a rate independent of $\mu$.   This shows $S_c$ is tight,  hence Prokhorov's theorem yields the desired narrow compactness.
\end{proof}


\medskip

In the discussion below we shall need the notion of $\lambda$-intermediate measure between a measure $\mu\in\Prob(\M)$ and a Dirac mass $\delta_{\bar x}$ at a point   $\bar x\in \M$ such that $\log(\ell(\cdot,\bar x))\in L^\infty(\mu)$. Given such $\mu$ and $\bar x$ and $\lambda\in(0,1)$ the set $\intm_\lambda(\mu,\delta_{\bar x})$ is defined to consist of those $\nu\in\Prob(\M)$ with $\mu\preceq\nu\preceq\delta_{\bar x}$ for which there is $\pi\in\Pi_\leq(\mu,\nu)$ such that
\begin{equation}
\label{eq:defintm}
\ell(x,y)=\lambda\ell(x,\bar x)\qquad\text{and}\qquad\ell(y,\bar x)=(1-\lambda)\ell(x,\bar x)\qquad\pi-a.e.\ (x,y).
\end{equation}
Equivalently $\nu \in \intm_\lambda(\mu,\delta_{\bar x})$
if and only if 
$$
\ell_q(\mu,\nu)=\lambda\ell_q(\mu,\delta_{\bar x})\qquad\text{and}\qquad\ell_q(\nu,\delta_{\bar x})=(1-\lambda)\ell_q(\mu,\delta_{\bar x}).
$$
The definition of $\ell_q$ and reverse triangle inequality make it clear that $\intm_\lambda(\mu,\delta_{\bar x})$ is convex.
(Although not needed here, in a geodesic spacetime, Propositions \ref{prop:geodindq} and \ref{P:heredity of geodesy} and their proofs can also be used to show a measure $\nu \in \intm_\lambda(\mu,\delta_{\bar x})$ is --- for any $0\neq q<1$ --- the value at $\lambda$ of a  
$\ell_q$-geodesic from $\mu$ to $\delta_{\bar x}$; c.f. \cite[Lemma 2.8]{McCann:2020} or \cite[Theorem 2.11]{CM:20}.)

The definition clearly implies that  $\log(\ell(\cdot,\bar x))\in L^\infty(\nu)$ as well, so  the concept can be iterated. Then   notice that a simple gluing argument together with the reverse triangle inequality yield
\begin{equation}
\label{eq:iterintm}
\nu\in\intm_{\lambda_1}(\mu,\delta_{\bar x}),\qquad \sigma\in\intm_{\lambda_2}(\nu,\delta_{\bar x})\qquad\Rightarrow\qquad \sigma\in\intm_{\lambda_3}(\mu,\delta_{\bar x}),
\end{equation}
for $(1-\lambda_3)=(1-\lambda_1)(1-\lambda_2)$.

\begin{proposition}[Step 1 --- one step estimates]\label{Pr:Int pts} 
 Let $(\M,\uptau,\ell,\meas)$ be  a   $\smash{\TMCP^\h_+(K,N)}$ mm  spacetime,   $\mu_0 = \rho_0\,\meas\in \dom (\scrS_\infty) \subset \pem$ and $x_1\in \supp\meas$ satisfy $\log(\ell(\cdot,x_1))\in L^\infty(\mu_0)$. Assume $\lambda\in(0,1)$ and $\mm(E)<\infty$ for some emerald $E$ containing $\{x_1\} \cup \spt \mu_0$.

 Then there is   $\nu_\lambda \in \intm_\lambda(\mu_0,\delta_{x_1})$ so that putting $D:=\sup\ell(\supp\mu_0\times\{x_1\})$ we have
 \begin{subequations}
\begin{align}
\label{eq:1stepbound}
\nu_\lambda&\leq  \tfrac{1}{(1-\lambda)^N}\,\mathrm{e}^{Dt\sqrt{K_-(N-1)}}\,\Vert \rho_0\Vert_{L^\infty(\meas)}\mm;
\\ \label{eq:1stepentbound}
\scrS_{N}(\nu_\lambda)&\leq  -\int \tau_{K,N}^{(1-\lambda)}\circ\ell(\cdot,x_1)\,\rho_0^{1-1/N}\d\meas;
\\ \label{eq:1stepBS}
\scrS_\infty(\nu_\lambda) &\le \scrS_\infty(\mu_0) 
- N \log \sigma^{(1-t)}_{K,N}\big(\|\ell(\cdot,x_1)\|_{L^2(\mu_0)}\big).
\end{align}
\end{subequations}
\end{proposition}
\begin{proof} 
The proof is a variant  of the clever manipulations in \cite{Rajala2012,Rajala2013}: we shall sketch it in the simplified case $K=0$, where computations are more transparent. We start noticing  that for $\mu_0,x_1$ as in the assumption, the set 
$$
\intm^{K,N}_\lambda(\mu_0,\delta_{x_1}):= 
\{ \nu_\lambda \in\intm_\lambda(\mu_0,\delta_{x_1}) \ \text{satisfying}\  
\eqref{eq:1stepentbound}-\eqref{eq:1stepBS}\}
$$
is not empty, as it contains the measure $\mu_\lambda$ from \Cref{Def:TMCP}; 
it is narrowly compact by 
Lemma~\ref{L:narrow coercivity},
since each $\nu \in \intm_\lambda(\mu_0,\delta_{x_1})$ vanishes outside of $E$.
We shall show that \eqref{eq:1stepbound} can be satisfied by studying the minimum on $\IKN_\lambda(\mu,\delta_{{x_1}})$ of the modified excess functional $\mathscr F_c$ defined below.  If the minimizer fails to satisfy \eqref{eq:1stepbound}, we derive a contradiction by constructing an admissible perturbation which lowers its excess.  

We claim that there is $\nu\in \IKN_\lambda(\mu,\delta_{{x_1}})$ for which \eqref{eq:1stepbound} holds. To this aim, let
$c:=\tfrac{\Vert \rho_0\Vert_\infty}{(1-\lambda)^N}$ and 
$\mathscr F_c:\mathscr \dom (\scrS_\infty) \to[0,1]$ be defined as $\mathscr F_c(\mu):=\int [(1+(\rho-c)^2_+)^{1/2}-1]\,\d\mm$
for   $\mu=\rho\mm$. 

Let $\mu=\rho\mm$ 
be an arbitrary measure in $ \IKN_\lambda(\mu_0,\delta_{{x_1}})$. If $\mathscr F_c(\mu)=0$ we are done, otherwise $A := \{ x \in \M : \rho(x)>c\}$  
is Borel and satisfies 
$\mu(A)>0$. 
For $\hat A:=\M\times A$,  let $\pi\in \mathscr P(\M^2)$ be an optimal coupling between $\mu_0$ and $\mu$, and denote the marginals of $\pi':=\mu(A)^{-1}\pi\mres{\hat A}$
by $\mu_0':=({\rm Pr}_1)_*\pi'\leq \mu(A)^{-1}\mu_0$ 
and $\mu' :=({\rm Pr}_2)_*\pi'\leq \mu(A)^{-1}\mu$.

Since 
$\scrS_\infty(\mu_0') \le \frac1{\mu(A)} \scrS_\infty(\mu_0) - \log {\mu(A)}$ is finite and $\log(\ell(\cdot,x_1))\in L^\infty(\mu_0')$,
the $\TMCP^\h_+(K,N)$ assumption 
yields the existence of $\mu''\in\IKN_\lambda( \mu_0',\delta_{{x_1}})$ with 
$$\scrS_{N}(\mu'')\leq (1-\lambda)\scrS_{N}(\mu_0')\leq -(1-\lambda)(\tfrac{\|\rho_0\|_{\infty}}{\mu(A)})^{-\frac1N}=-(\tfrac c{\mu(A)})^{-\frac1N}.$$
If $B\subset E$ is a Borel set where $\mu''$ is concentrated, Jensen's inequality and the bound just proved yield $\mm(B)\geq \tfrac{\mu(A)}{c}>\mm(A)$. It follows that $\mu''$ cannot concentrate entirely on $A$.  Set $A'=B\setminus A$
and $\hat A'=\M \times A'$. 
Let $\pi''\in \mathscr P(\M^2)$ be an optimal coupling between $\mu_0'$ and $\mu''$, and set $\pi''':=\mu(A')^{-1}\pi''\mres{\hat A'}$, so that 
$\mu''' :=({\rm Pr}_2)_*\pi''' \le \mu(A')^{-1} \mu''$
lies in 
$\intm_\lambda(\mu_0''',\delta_{x_1})$ 
where $\mu_0''':=({\rm Pr}_1)_*\pi''' \le \mu(A')^{-1}\mu_0'$.  We have now identified a portion $\mu_0'''\ne 0$
of $\mu_0$ whose $\lambda$-midpoint under $\pi$ (and $\pi'$) lies in $A$, but whose $\lambda$-midpoint $\mu'''$ under $\pi''$ (and $\pi'''$) lies outside $A$.
For $\eps>0$ small enough, setting $\mu''''= ({\rm Pr}_2)_* (\frac{\d \mu_0'''}{\d \mu_0}\pi)$
makes
$\mu^\eps = \mu + \eps(\mu'''-\mu'''') \in I_\lambda(\mu_0,\delta_{x_1})$ a perturbation
of $\mu$ which effectively `moves a bit of the mass of $\mu$ from above the threshold $c$ to below it'.  Note $\frac{\d \mu_0'''}{\d \mu_0} \le \mu(A)\mu''(A')$ is bounded.
For $\eps>0$
smaller still we claim $\mu^\eps \in \IKN_\lambda(\mu_0,\delta_{x_1})$
but $\mathscr F_c(\mu^\eps)< \mathscr F_c(\mu)$.  Once this
claim is established, choosing $\mu$ to minimize the (narrowly)
lower semicontinuous excess $\mathscr F_c$ on the compact set 
$\IKN_\lambda(\mu_0,\delta_{x_1})$ yields the desired contradiction to conclude the proposition.

The outstanding claim can be argued as follows.  All three functionals of interest --- $\scrS_N$, $\scrS_\infty$ 
and $\mathscr F_c$ --- are given by nonlinearities
$s(z) \in \{-z^{1-1/N}, z\log z, (1+(z-c)^2_+)^{1/2}-1\}$ which are convex 
and differentiable on the positive reals.
Formally
\begin{equation}
\label{eq:percontraddizione}
\lim_{\eps\downarrow0}
\int_E \frac{s(\frac{\d\mu^\eps}{\d\mm}) - s(\rho)}\eps d\mm 
= \int s'(\rho) \d(\mu'''-\mu'''') < 0,
\end{equation}
by the monotonicity of $s'$,
since $\mu'''$ is concentrated on $\{\rho \le c\}$ while
$\mu''''$ is concentrated on $\{\rho > c\}$.
Since all three functionals of $\mu^\eps$ are real-valued
and their integrands are convex functions of $\epsilon$, 
the dominated and monotone convergence theorems can be used to justify \eqref{eq:percontraddizione}, %
arguing separately on the intersections of
$\{\rho \le c\}$ and $\{\rho > c\}$ with $\{\rho \le 1/e\}$ and $\{\rho > 1/e\}$.
For $\eps>0$ sufficiently small, 
we now have $\scrS_N(\mu^\eps)<\scrS_N(\mu)$, $\scrS_\infty(\mu^\eps)<\scrS_\infty(\mu)$ and $\mathscr F_c(\mu^\eps) <\mathscr F_c(\mu)$ which establishes the desired claim and concludes the proof.
\end{proof}

\begin{proposition}[Step 2 --- discrete iteration]\label{Le:Densent} With the same assumptions and notation of \Cref{Pr:Int pts}  the following holds. Let $\lambda\in(0,1)$, put $\nu_{\lambda,0}:=\mu_0$  and then let $\nu_{\lambda,k+1}\in \intm_\lambda(\nu_{\lambda,k},\delta_{x_1})$  be given by \Cref{Pr:Int pts} above with $\nu_{\lambda,k}$ in place of $\mu_0$.

Then putting $t_{\lambda,k}:=1-(1-\lambda)^k$, for every $ k\in\N$ we have
\begin{subequations}
\begin{align}
\label{eq:discbound}
\nu_{\lambda,k}&\leq\tfrac1{(1-t_{\lambda,k})^N}\mathrm{e}^{Dt_{\lambda,k}\sqrt{K_-(N-1)}}\,\Vert\rho_0\Vert_{L^\infty(\M,\meas)}\mm,\\
\label{eq:discentbound}    
\scrS_{N}(\nu_{\lambda,k}) &\leq -\int \tau_{K,N}^{(1-t_{\lambda,k})}\circ\ell(\cdot,x_1)\,\rho_0^{1-1/N}\d\meas;
\\ \label{eq:discBS}
\scrS_\infty(\nu_{\lambda,k}) &\le \scrS_\infty(\mu_0) 
- N \log \sigma^{(1-t_{\lambda,k})}_{K,N}\big(\|\ell(\cdot,x_1)\|_{L^2(\mu_0)}\big);
\\
\label{eq:discgeo}
\nu_{\lambda,k'}&\in \intm_{s}(\nu_{\lambda,k},\delta_{x_1})\quad \text{for }s=\tfrac{t_{\lambda,k'}-t_{\lambda,k}}{1-t_{\lambda,k}}=1-(1-\lambda)^{k'-k}\quad\forall k'\geq k,
\end{align}
\end{subequations}
where  $D := \sup\ell(\supp\mu_0 \times\{x_1\})$.
\end{proposition}


\begin{proof}  The short discussion after the definition \eqref{eq:defintm} ensures that the measures $\nu_{\lambda,k}$ are well defined. Let us then introduce 
$\bar {\sf D}_{k} :=\sup \ell(\supp\nu_{\lambda,k} \times \{x_1\})$ and  $\underbar{\sf D}_{k}:=\| \ell(\cdot, x_1)^{-1}\|_{L^\infty(\nu_{\lambda,k})}^{-1} > 0$  respectively. Then   the definition \eqref{eq:defintm}   and an induction argument show that 
\begin{align}\label{Eq:supl}
\underbar{\sf D}_{0}(1-\lambda)^{k}\leq \underbar{\sf D}_{k} \leq \bar{\sf D}_{k}\leq \bar{\sf D}_{0}(1-\lambda)^{k},\qquad\text{and}\qquad \bar{\sf D}_{0}=D.
\end{align}
Now, letting $\nu_{\lambda,k}=\eta_{\lambda,k}\mm$, from the bound \eqref{eq:1stepbound}  we see that
\[
\begin{split}
    \Vert\eta_{\lambda,k}\Vert_{L^\infty(\M,\meas)}&\leq \frac{1}{(1-\lambda)^N}\,\mathrm{e}^{\bar{\sf D}_{k-1}\,\lambda\sqrt{K_-(N-1)}}\,\Vert\eta_{\lambda,k-1} \Vert_{L^\infty(\M,\meas)}\\
    &\leq\ldots\leq \frac{1}{(1-\lambda)^{kN}}\,\mathrm{e}^{\lambda\sqrt{K_-(N-1)}\sum_{i=0}^{k-1}\bar{\sf D}_{i}}\,\Vert\rho_0\Vert_{L^\infty(\M,\meas)}.
\end{split}
\]
Together with \eqref{Eq:supl}, this gives  \eqref{eq:discbound}. For  \eqref{eq:discentbound}--\eqref{eq:discBS} we notice that   from the monotonicity of 
$\theta\mapsto\tau^{(t)}_{K,N}(\theta)$ and
$\theta\mapsto\sigma^{(t)}_{K,N}(\theta)$
(they are increasing if $K>0$ and decreasing if $K<0$), the definition of $\nu_{\lambda,k}$ and 
\eqref{eq:1stepentbound}--\eqref{eq:1stepBS}, we get
\begin{align*}
    \scrS_{N}(\nu_{\lambda,k}) &\leq-\int_M \tau_{K,N}^{(1-\lambda)}\circ\ell(\cdot,x_1)\,(\eta_{\lambda,k-1})^{1-1/N}\d\meas\leq \tau_{K,N}^{(1-\lambda)}( {\sf D}^*_{k-1} )\,\scrS_{N}(\nu_{\lambda,k-1}),
\\ \scrS_{\infty}(\nu_{\lambda,k}) - 
\scrS_\infty(\nu_{\lambda,k-1}) 
&\le -\log \sigma_{K,N}^{(1-\lambda)} \big(\|\ell(\cdot,x_1)\|_{L^2(\nu_{\lambda,k-1})}\big)
\leq -\log \sigma_{K,N}^{(1-\lambda)}( {\sf D}^*_{k-1} ),
    \end{align*}
where $ {\sf D}^*_{k-1}$ is equal to $ \underbar{\sf D}_{k-1} $ if $K>0$ and to $ \bar{\sf D}_{k-1} $ if $K\leq 0$. With this convention, from \eqref{Eq:supl} we see that $-\tau_{K,N}^{(1-\lambda)}( {\sf D}^*_{k} )\leq -\tau_{K,N}^{(1-\lambda)}( {\sf D}^*_{0}(1-\lambda)^k )$ and similarly for $-\sigma^{(1-\lambda)}_{K,N}$. Thus taking also into account the product formulas $\prod_{i=0}^{k-1} \tau_{K,N}^{(1-\lambda)}(\theta(1-\lambda)^{i})= \tau_{K,N}^{((1-\lambda)^k)}(\theta)$ and similarly for $\sigma^{(1-\lambda)}_{K,N}$
(these follow from the definitions   \eqref{sigma} and \eqref{tau}  noticing that the products are telescopic), by induction from the above we conclude that
\begin{subequations}
\label{Crude}
\begin{align}\label{Eq:Crude}
    \scrS_{N}(\nu_{\lambda,k}) &\leq \tau_{K,N}^{({1-}t_{\lambda,k})}({\sf D}^*_{0})\,\scrS_{N}(\mu_0)    \qquad\forall k\in\N,
\\     \scrS_{\infty}(\nu_{\lambda,k}) &\leq \scrS_{\infty}(\mu_0)-\log \sigma_{K,N}^{(1-t_{\lambda,k})}({\sf D}^*_{0})\,    \qquad\forall k\in\N.
\label{eq:CrudeBS}
\end{align}\end{subequations}

Now we adapt the estimates \eqref{Crude} to establish the desired entropy inequalities. The idea is to decompose the transport into separate parts where the quantity $\ell(\cdot,x_1)$ is ``approximately constant'' and combine the continuity of the distortion coefficients with a similar argument as for \eqref{Crude}. Let $\varepsilon> 0$ and set $A_l := \{l\varepsilon < \ell(\cdot, x_1)\leq (l+1)\varepsilon\}$, where $l\in\N_0$. Clearly, these sets are mutually disjoint (and here we benefit from transporting to a Dirac mass); in particular, $\smash{\scrS_{N}}$ behaves additively under the decomposition of $\nu_{\lambda,k}$ into multiples of its restrictions to the $ A_l$'s. Since any transport toward $\delta_{x_1}$ does not mix the masses in different $A_l$'s, we see that the measure $\nu_{\lambda,k+1}$ obtained applying \Cref{Pr:Int pts} to $\nu_{\lambda,k}$ is equal to the suitable  linear combinations of the mutually singular measures obtained applying the same statement to the restrictions of $\nu_{\lambda,k}$ to the $A_l$'s.  The claimed estimate thus follows from repeating the argument from the above paragraph separately for every $l$ and finally sending $\varepsilon\to 0$.  
In the case of \eqref{eq:discBS}, to obtain the desired bound we use the convexity of $r \in (0,\frac{N}{K_-}\pi^2) \mapsto g(r):=\log \sigma_{K,N}^{(t)} (r^{1/2})$ for each $t \in (0,1)$ that we now verify.
When $K<0$,
setting $K=-N$ without loss of generality, $g(r)
=\log \frac{\sinh(tr^{1/2})}{\sinh{(r^{1/2})}}$ yields
$$
4r^{1/2} g''(r)=[(e^{4r^{1/2}}-1)-t(e^{4tr^{1/2}}-1)] +4r^{1/2}[e^{2r^{1/2}}- t^2 e^{2t r^{1/2}}]
$$
with both terms in square brackets being positive.
Similarly, $f(r):=\sigma^{(t)}_{N,N}(r^{1/2})$ yields
$$
4r^2 f''(r) = \Big(\frac{r^{1/2}}{\sin r^{1/2}}\Big)^2 + r^{1/2} \cot(r^{1/2}) 
-  tr^{1/2}\cot(tr^{1/2}) -  \Big(\frac{tr^{1/2}}{\sin tr^{1/2}}\Big)^2
$$
whose positivity follows from the monotonicity of $\theta \in (0,\pi)\mapsto  \theta\cot(\theta) + (\frac{\theta}{\sin\theta})^2$.

Finally, \eqref{eq:discgeo} holds by induction from \eqref{eq:iterintm}. 
\end{proof}
We are now ready to prove the main result of the section. In the statement below we need a few additional assumptions on the given mm spacetime: 
that it is forward, as in \Cref{D:spacetimes} (to ensure existence of suitable limits) and that $\ell_+$ is real valued (to be sure that it is bounded from above on emeralds so that suitable rigidity can be extracted from equality in the reverse triangle inequality).


\begin{theorem}[Step 3  --- passage to the limit]\label{Le:Existence test plans} With the same assumptions and notation of \Cref{Pr:Int pts} and assuming furthermore that $(\M,\uptau,\ell,\mm)$ is forward,  
that $\mm(E)<\infty$ for any emerald $E$, 
and that  $\ell$ is upper semicontinuous and does not take the value $+\infty$. 

Then there is a curve  $(\mu_t)\subset\pem$ that is a strongly timelike $\ell_q$-geodesic from $\mu$ to $\delta_{x_1}$ for any $0\neq q<1$ such that 
\begin{subequations}
\label{eq:insieme}\begin{align}
\label{eq:quantdensbound}
\mu_t  &\leq \tfrac{1}{(1-t)^N}\,\mathrm{e}^{Dt\sqrt{K_-(N-1)}}\,\Vert \rho\Vert_{L^\infty(\M,\meas)},\\
\label{eq:quantentbound}
\scrS_{N}(\mu_t)&\leq  -\int \tau_{K,N}^{(1-t)}\circ\ell(\cdot,x_1)\,\rho^{1-1/N}\d\meas,
\\ \label{eq:quantBS}
\scrS_\infty(\mu_t) &\le \scrS_\infty(\mu_0) 
- N \log \sigma^{(1-t)}_{K,N}\big(\|\ell(\cdot,x_1)\|_{L^2(\mu_0)}\big),
\end{align}
\end{subequations}
for any $t\in[0,1)$, where $D := \sup\ell(\supp\mu_0 \times \{x_1\})$.
\end{theorem}
\begin{proof} We shall build upon \Cref{Le:Densent} and borrow notation from it. Since $\mu_0\in\pem$, for suitable compact sets $K_0,K_1$ we have $\supp\mu_0\subset J(K_0,K_1)$. It is then clear that $\supp\nu_{\lambda,k}\subset E$ for $E:=J(K_0,\{x_1\})$ for any $\lambda\in(0,1)$, $k\in \N$. 
For each $T<1$,
the assumption $\mm(E)<\infty$ and  the estimate \eqref{eq:discbound} 
yield tightness of
the collection $\{\nu_{\lambda,k}:t_{\lambda,k}\leq T\}$. 

Now observe that for any $t\in[0,1]$ and   $\lambda_n\downarrow0$ there is  $(k_n)\subset\N$ so that $t_{\lambda_n,k_n}\to t$ as $n\to\infty$. This, the tightness just mentioned and a diagonalization argument imply that there is $\mathcal D\subset(0,1)$ dense and $\lambda_n\downarrow0$ such that for any $t\in\mathcal D$ there is $(k_n)\subset\N$ such that $t_{\lambda_n,k_n}\to t$  and $(\nu_{\lambda_n,k_n})$ narrowly converges to some $\eta_t$ as $n\to\infty$. We also include $0,1$ in $\mathcal D$ and define $\eta_0:=\mu_0$ and $\eta_1:=\delta_{x_1}$.  Since $\{\ell\geq 0\}$ is closed, we have $\eta_t\preceq \eta_{t'}$ for all  $t,t'\in\mathcal D$ with $t\leq t'$, thus by \Cref{prop:distord} (recall \Cref{prop:pmforward})   we see that there is a unique $(\mu_t)\in\CC([0,1];\Prob(\M))$ with $\eta_r\preceq\mu_t\preceq \eta_s$ for any $r<t\leq s$, $r,s\in\mathcal D$.

By the stability of the bounds \eqref{eq:insieme} w.r.t.\ narrow convergence it is easy --- starting from \eqref{eq:discbound}--\eqref{eq:discBS} ---  to deduce first that  \eqref{eq:insieme}  hold for $\eta_t$ with $t\in\mathcal D$ and then that these hold for $\mu_t$ for any $t\in[0,1)$.

Thus it only remains to prove that $(\mu_t)$ is a strongly timelike $\ell_q$-geodesic for any $0\neq q<1$. Fix $q$ and notice that from \eqref{eq:discgeo} and the very definition of $\intm_\lambda(\mu,\delta_x)$ it easily follows that $\ell_q(\nu_{\lambda,k},\nu_{\lambda,k'})\geq (t_{\lambda,k'}-t_{\lambda,k})\ell_q(\mu_0,\delta_{x_1})\in\R$. Then the upper semicontinuity in \Cref{le:USCRE} yields $\ell_q(\eta_t,\eta_s)\geq (s-t)\ell_q(\mu_0,\delta_{x_1})$  for any $t<s$, $t,s\in\mathcal D$. Now let $t,s\in[0,1]$ be with $t<s$ and find $t',s'\in\mathcal D$ with $t<t'<s'<s$. The construction of $(\mu_r)$ ensures that $\mu_t\preceq\eta_{t'}\preceq\eta_{s'}\preceq\mu_s$ which in turn implies $\ell_q(\mu_t,\mu_s)\geq\ell_q(\eta_{t'},\eta_{s'})\geq (s'-t')\ell(\mu_0,\delta_{x_1})$. Letting $t'\downarrow t$ and $s'\uparrow s$ we conclude from \Cref{Pr:l-geodesics maximize} that the curve $(\mu_r)$  is a causal  $\ell_q$-geodesic with $\ell_q(\mu_0,\mu_1)\in\R$. 

Also, $\mu_0\times\delta_{x_1}$ is the only admissible coupling of $(\mu_0,\delta_{x_1})$ and by assumption it is concentrated on $\{\ell\in(0,+\infty)\}$. It follows from \Cref{prop:ellqgeo} that any $\ell_q$-optimal coupling of $(\mu_0,\mu_1)$ is also concentrated on $\{\ell\in(0,+\infty)\}$, i.e.\ that $(\mu_t)$ is a strongly timelike $\ell_q$-geodesic, as desired.
\end{proof}

\begin{remark}[Variants] In relation to \Cref{re:vardef}, we notice that the statement of the previous \Cref{Le:Existence test plans} holds almost  unchanged by replacing every occurrence of $\smash{\TMCP^\h_+(K,N)}$ by $\smash{\TMCP_+^{\h,*}(K,N)}$. In this case only the slightly worse density bound
\begin{align*}
\mu_t  \leq \tfrac{1}{(1-t)^N}\,\mathrm{e}^{Dt\sqrt{K_-(N)}}\,\Vert \rho_0\Vert_{L^\infty(\M,\meas)},
\end{align*}
can be guaranteed. Similarly, if our space is $\TMCP^\h_+(K,N')$ for every $N'\geq N$ and an $\ell_q$-geodesic which satisfies each of the defining inequalities can be chosen independently of $N'$, then the same construction produces measures satisfying \eqref{eq:quantdensbound}--\eqref{eq:quantBS} with 
\[
\scrS_{N'}(\mu_t)\leq  -\int \tau_{K,N'}^{(t)}\circ\ell(\cdot,x_1)\,\rho_0^{1-1/N'}\d\meas,\qquad\forall N'\geq N,
\]
in place of \eqref{eq:quantentbound}. Similar comments apply to \Cref{Le:Existence test plans2} below.\hfill$\blacksquare$
\end{remark}
We shall typically use \Cref{Le:Existence test plans} above in conjunction with the lifting of $\ell_q$-geodesics given in  \Cref{Cor:Lifting geos}  to obtain:

\begin{corollary}[On existence of initial test plans with Dirac targets]
\label{cor:intestplan}
Let  $(\M,\uptau,\ell,\mm)$ be a $\TMCP^\h_+(K,N)$  forward mm spacetime so that $\mm(E)<\infty$ for any emerald $E$ and   $\ell$ is upper semicontinuous and does not take the value $+\infty$. Let $x_1\in \M$ and $\mu_0\in\pem$ with bounded compression satisfy $\log(\ell(\cdot,x_1))\in L^\infty(\mu_0)$. Assume also that $\M$ is timelike $q$-essentially  non-branching at 0 for some $0\neq q<1$ and that either $(A)$ or $(B)$ of \Cref{Th:Lifting} hold.

Then there exists an initial test plan $\ppi$ with $(\e_0)_*\ppi=\mu_0$ concentrated on timelike geodesics $\gamma$ from $\gamma_0$ to $x_1$.
\end{corollary}
\begin{proof}
Our assumptions allow to apply first \Cref{Le:Existence test plans} to find $(\mu_t)$ and then \Cref{Cor:Lifting geos} to obtain a lifting $\ppi\in\Prob(\CC([0,1];\M))$ of $(\mu_t)$ concentrated on timelike $\ell$-geodesics that lifts  $(\mu_t)$ and so that $(\e_0,\e_1)_*\ppi$ is $\ell_q$-optimal (for every $0\neq q<1$). Since $(\mu_t)$ is a $\ell_q$-geodesic from $\mu_0$ to $\delta_{x_1}$,  \Cref{Cor:Lifting geos} implies that $\ppi$ is  concentrated on  geodesics $\gamma$ from $\gamma_0$ to $x_1$. Also, from the bound \eqref{eq:quantdensbound} we see that $(\eval_t)_\push\bdpi \leq C\,\meas$ for every $t\in[0,1/2]$  and some $C>0$, so that to conclude it suffices to prove that $(\mu_t)$ narrowly converges to $\mu_0$ as $t\downarrow0$. In turn, this follows from \Cref{cor:contfromreg}, the $q$-essential timelike non-branching at 0 and the properties of $(\mu_t)$.
\end{proof}

A useful consequence of \Cref{Le:Existence test plans} is the following (compare with   \cite[Rem.~3.10]{CM:20}).

\begin{corollary}[Essential geodesy of $\supp\mm$]\label{Cor:Essgeo}  Let $(\M,\uptau,\ell,\meas)$ be  a  forward $\smash{\TMCP^\h_+(K,N)}$ mm  spacetime   in which $\ell$ is upper semicontinuous and does not take the value $+\infty$ and $\mm(E)<+\infty$ for every emerald $E \subset \M$.  Assume also that either $(A)$ or $(B)$ of \Cref{Th:Lifting} hold.

Then for every $y\in \supp\meas$ and $\meas$-a.e.~$x\in I^-(y)$ there exists a geodesic  $\gamma\in\TGeo(\M)$ from $x$ to $y$ with $\gamma_t\in\supp\mm$ for every $t\in[0,1]$.
\end{corollary}

\begin{proof}  Using that the background metric is separable and $\meas$ is locally finite, by the Lindel\"of property  we cover $I^-(y)$ with a countable number of finite $\meas$-measure open sets. Each of these sets is exhausted by a sequence of compact sets of positive $\meas$-measure where $\ell(\cdot,y)$ is bounded away from 0 and $+\infty$ (up to an $\meas$-negligible set). Let $\mu$ be the uniform distribution of any such compact set and  apply \Cref{Cor:Lifting geos} to the geodesic $(\mu_t)$ given by \Cref{Le:Existence test plans} to find a lifting $\ppi$ of $(\mu_t)$.  The $L^\infty$-bounds \eqref{eq:quantdensbound} ensure that $\supp\mu_t\subset\supp\mm$ for every $t\in[0,1)$ and thus that for $\ppi$-a.e.\ $\gamma$ we have $\gamma_t\in\supp\mm$ for every $t\in\Q\cap[0,1)$. Since $\ppi$ is concentrated on $\TGeo(\M)$, we just proved that there is $\gamma\in\TGeo(\M)$ with  $\gamma_t\in\supp\mm$ for every $t\in\Q\cap[0,1)$. By left continuity it follows that  $\gamma_t\in\supp\mm$ for every $t\in[0,1]$, as desired.
\end{proof}

{Another interesting consequence of  \Cref{Le:Existence test plans} is the following result. The argument for the proof is inspired from \cite{Gigli12a}. Surprisingly however, the non-branching assumption is only required in its weakest form, namely `essentially non-branching at 0', whereas the branching at later times that must be avoided in \cite{Gigli12a} is permitted in the present context.}


\begin{corollary}[Measurability of causal functions on $\TMCP^\h_+(K,N)$ spacetimes]\label{cor:fmeastmcp}
 Let $(\M,\uptau,\ell,\meas)$ be  a $\smash{\TMCP^\h_+(K,N)}$ forward mm  spacetime with $\uptau$ containing the chronological topology, $\mm(E)<\infty$ for every emerald $E$ and in which $\ell$ is upper semicontinuous and never $+\infty$. Assume also that it is $q$-essentially non-branching at 0 for some $0\neq q<1$.
 
 Let  $A\subset \M$ be achronal 
 with closure $\bar A$ disjoint from $\M_{\sf {fin}}$. Then  $\mm(\bar A)=0$.
 In particular, if $f:\M\to\bar\R$ is a rough (i.e.~possibly non-measurable) 
 causal function, then its restriction to  $\M\setminus \M_{\sf {fin}}$ is $\mm$-measurable.
 \end{corollary}
 
\begin{proof} The second statement is a consequence of the first and of \Cref{L:chronological continuity}, so we focus on the first. As $\uptau$ contains  the chronological topology, the closure of an achronal set is still achronal. Thus we can assume $A=\bar A$.

We argue by contradiction and assume  $\mm(A)>0$. From $ A\cap  \M_{\sf {fin}}=\emptyset$ we see that $A\subset \cup_{x\in\M} I^-(x)$, so using the Lindel\"of property of (the Polish space) $A$ we can find a countable collection $(x_n)$ so that $A\subset\cup_nI^-(x_n)$. Then by interior approximation we can find $ x_1\in\M$ and $C\subset A$ compact with $\mm(C)>0$ and  $\log(\ell(\cdot, x_1))\in L^\infty(C,\mm)$. Apply  \Cref{Le:Existence test plans}  with $\mu_0:=\mm(C)^{-1}\mm\mres C$ to find $(\mu_t)$ as in the statement. 

Then the uniform density bounds and the assumption of $q$-essentially non-branching at 0  (applied to $t\mapsto\mu_{ts}$ for some $s\in(0,1)$) implies that $(\mu_t)$ is narrowly continuous at $t=0$ (by \Cref{cor:contfromreg}).

Let $U\supset C$ be open with $\mm(U)<\tfrac32\mm(C)$. The narrow convergence implies $\lim_{t\downarrow0}\mu_t(U)= 1$ and letting $\mu_t=\rho_t\mm$,  the bound \eqref{eq:quantdensbound} gives $\limi_{t\downarrow0}\mm(\{\rho_t>0\})\geq \mm(C)$. It follows that for some $t>0$ we have $\mm(C\cap \{\rho_t>0\})>0$ and letting $\pi\in\Pi_\leq(\mu_0,\mu_t)$ (that exists, as $\mu_0\preceq\mu_t$), this means that  $\pi(\{(x,y): x,y\in C\})>0$. Since by \eqref{eq:tuttiinfila} we see that $\ell(x,y)=t\ell(x,x_1)$ for $\pi$-a.e.\ $(x,y)$ the assumption $\log(\ell(\cdot, x_1))\in L^\infty(C,\mm)$ implies that there are $x,y\in C$ with $\ell(x,y)>0$, contradicting the achronality of $C\subset A$.
\end{proof}

 \Cref{Le:Existence test plans} works with the target measure being a Dirac mass. An extension of this result holds for more general target measures, provided we add a suitable non-branching assumption to our spacetime and some further  assumptions on $\uptau$. Specifically, we will ask that either $(A)$ or $(B)$ of \Cref{Th:Lifting} hold: this is necessary as we will need to lift suitable geodesics appearing the statement and proof. Concerning non-branching: as shown by Gigli \cite{Gigli12a} in positive signature, a non-branching assumption together with a  lower Ricci bound in the form of a $\CD$ condition implies existence of optimal transport maps. This  has been extended to non-branching $\MCP$ spaces by Cavalletti--Mondino \cite{cavalletti-mondino2017} and then to timelike non-branching $\TMCP$ spacetimes in  \cite{CM:20}. This latter fact will be used in the proof of the next result, that closely follows the construction in positive signature given in  \cite[Prop.~4.3]{cavalletti-mondino2017}.
 
\begin{theorem}[On uniqueness of $\ell_q$-geodesics and existence of optimal maps]
\label{Le:Existence test plans2} Let $(\M,\uptau,\ell,\meas)$ be  a  forward $\smash{\TMCP^\h_+(K,N)}$ mm  spacetime  with  $\mm(E)<\infty$ for any emerald $E$ and in which $\ell_+$ is continuous and real valued.  Also, let $\mu_0,\bar\mu_1\in \pem$ with $\mu_0=\rho_0\mm$ having bounded compression and $0\neq q<1$.
Assume furthermore that:
\begin{itemize}
\item[-] either $(A)$ or $(B)$ of \Cref{Th:Lifting} hold,
\item[-] $\M$ is timelike $\QQ$-essentially forward non-branching,
\item[-] $\M$ is timelike $\QQ$-essentially non-branching at 0,
\item[-]  $\ell_q(\mu_0,\bar\mu_1)\in(0,+\infty)$ and  there is an 
 $\ell_\QQ$-optimal coupling ${\bar \pi}\in\Pi_\leq(\mu_0,{\bar \mu_1})$   concentrated on $\{\ell\in(c,\tfrac1c)\}$ for some $c>0$.
\end{itemize} 
Then:
\begin{itemize}
\item[i)] There is a unique $\ell_q$-geodesic $(\mu_t)$ from $\mu_0$ to $\bar \mu_1$ and it is strongly $\ell_q$-timelike;
\item[ii)] $(\mu_t)$ admits a unique lifting $\ppi\in\Prob(\CC([0,1];\M))$ and is induced by a map, i.e.\ there is $F:\M\to\CC([0,1];\M)$ with $\e_0\circ F$ being the identity $\mu_0$-a.e.\ and such that $\ppi=F_*\mu_0$;
\item[iii)] we have
\begin{equation}
\label{eq:sameell}
\ell(x,F_1(x))\in [c,\tfrac1c]\qquad \mu_0-a.e.\ x;
\end{equation}
\item[iv)] for  every $t\in[0,1)$ we have
\begin{subequations}
\label{eq:insieme2}
\begin{align}
\label{eq:densnonbr}\mu_t  &\leq \tfrac{1}{(1-t)^N}\,\mathrm{e}^{Dt\sqrt{K_-(N-1)}}\,\Vert \rho_0\Vert_{L^\infty(\M,\meas)}\mm,\\
\label{eq:entnonbr}
    \scrS_{N}(\mu_t) &\leq -\int \tau_{K,N}^{(1-t)}\circ\ell(\gamma_0,\gamma_1)\,\rho_0(\gamma_0)^{-1/N}\d\bdpi(\gamma),
\\ \label{eq:BSnonbr} \scrS_\infty(\mu_t) &\le \scrS_\infty(\mu_0) 
- N \log \min\{\sigma^{(1-t)}_{K,N}(c),\sigma^{(1-t)}_{K,N}(1/c)\}.
\end{align}
\end{subequations}
\end{itemize}
\end{theorem}

\begin{proof} We will   first consider the situation of $\bar\mu_1$ being a convex combination of Dirac masses, i.e. $\smash{\bar\mu_1 := \sum_i\lambda_i\,\delta_{y_i}}$ for given $\lambda_1,\dots,\lambda_n\in (0,1)$ and mutually distinct points $y_1,\dots,y_n\in\supp\meas$. For $i=1,\dots,n$, let $\pi_i:=\lambda_i^{-1}\pi\mres{\M\times\{y_i\}}$ and $\mu_{0,i}:=({\rm Pr}_1)_*\pi_i$. Then let $(\mu_{t,i})\subset \pem$ be given by \Cref{Le:Existence test plans} applied to $\mu_{0,i}$ and $\delta_{y_i}$ and then $\ppi_i\in \Prob(\CC([0,1];\M))$ be a lifting of it as in \Cref{Cor:Lifting geos}.  Also, let $\alpha_i:=(\e_1)_*\ppi_i\times\delta_{y_i}$ be the only coupling between $(\e_1)_*\ppi_i$ and $\delta_{y_i}$  (hence $\ell_q$-optimal). Then we have
\[
\begin{split}
\ell_q(\mu_{1,i},\delta_{y_i})&\geq\Big(\int \ell^q(\gamma_0,\gamma_1)\,\d\ppi_i(\gamma)\Big)^{\frac1q}+\Big(\int \ell^q(x,y)\,\d\alpha_i(x,y)\Big)^{\frac1q}\\
&\geq\ell_q(\mu_{1,i},\delta_{y_i})+\Big(\int \ell^q(x,y)\,\d\alpha_i(x,y)\Big)^{\frac1q}.
\end{split}
\]
The assumption  on $\pi$ ensures that $\ell_q(\mu_{1,i},\delta_{y_i})\in(0,+\infty)$, thus inspecting the equality case in \eqref{eq:revlq} we deduce that $\alpha_i$ is concentrated on $\{(x,y):\ell(x,y)=0\}$.

We claim that $\mu_i\perp\mu_j$ for $i\neq j$. In positive signature, this follows from the entropy estimates coming from lower Ricci bounds, a non-branching assumption and the narrow continuity of $W_2$-geodesics (see \cite{Gigli12a}). These arguments carry over also in our setting (see also  \cite[Thm.~3.20]{CM:20} and \cite[Thm.~4.16]{Braun:2023Renyi}), where we remark that the assumption of  timelike $\QQ$-essentially non-branching at 0 is used --- via \Cref{cor:contfromreg}  ---  to get the desired narrow continuity at $0$. Notice that with respect to the above references,  working with $q<0$ causes no additional difficulties and that the compactness of emeralds used in the proof of this result in \cite{CM:20, Braun:2023Renyi} is bypassed by our requirement that they have finite $\meas$-measure; c.f. Remark \ref{R:q<0,forward}.

Thus    $\mu_i\perp\mu_j$ for $i\neq j$ and since $y_i\neq y_j$ as well for $i\neq j$, a further use of the non-branching assumption tells that $(\eval_t)_*\ppi_i\perp (\eval_t)_*\ppi_j$ for every $t\in[0,1]$ and $i\neq j$. Putting $\ppi:=\sum_i\lambda_i\ppi_i$ and $\alpha:=\sum_i\lambda_i\alpha_i$, we thus have that $\alpha\in\Pi_\leq((\e_1)_*\ppi,\bar\mu_1)$ and $(\eval_t)_*\ppi=\rho_t\mm=\mu_t$ with
\[
\begin{split}
\|\rho_t\|_{L^\infty(\M,\mm)}&=\max_i\lambda_i\|\rho_{i,t}\|_{L^\infty(\M,\mm)},\qquad\text{ where }\rho_{i,t}:=\tfrac{\d(\eval_t)_*\bdpi_i}{\d\mm},
\\ \scrS_{N}(\mu_t)&=\sum_i \lambda_i^{1-1/N}\scrS_{N}(\mu_{i,t}),
\\ \scrS_{\infty}(\mu_t)&\le \sum_i \lambda_i \scrS_{\infty}(\mu_{i,t}),
 \end{split}
\]
so that $\ppi$ satisfies \eqref{eq:insieme2} as a consequence of the bounds \eqref{eq:insieme} for  the $\ppi_{i,t}$'s.  The fact that $(\mu_t)$ is a $\ell_q$-geodesic from $\mu_0$ to $\bar\mu_1$ follows from the construction (notice  that $\alpha$ is concentrated on $\{\ell=0\}$) and 
 \eqref{eq:sameell} follows from \Cref{prop:ellqgeo}.


Now assume that   $\supp\mu_0\times \supp\bar\mu_1\subset \{\ell\in(c,\tfrac1c)\}$. Since $\{\ell\in(c,\tfrac1c)\}$ is open, we can find a sequence   $(\bar\mu^n_1)$      of convex combinations of Dirac masses narrowly converging to  $\bar\mu_1$ and so that $\supp\mu_0\times \supp\bar\mu^n_1\subset \{\ell\in(c,\tfrac1c)\}$ for every $n\in\N$ so that there are $\ell_q$-optimal couplings $\bar \pi^n$ for $(\mu_0,\mu^n_1)$ narrowly converging to some $\bar \pi'$ that, by the continuity of $\ell$ in $\{\ell\in(c,\tfrac1c)\}$, is $\ell_q$-optimal. It is not hard to see that we can choose the $\bar\mu^n_1$'s so that they all have support in a fixed emerald $E$, that we can assume to contain also $\supp\mu_0$.  The construction and the continuity of $\ell_+$ ensure that $\ell_q(\mu,\bar\mu^n_1)\to\ell_q(\mu,\bar\mu_1)$ as $n\to\infty$. 
As before, there are $\ppi^n\in\Prob(\CC([0,1];\M))$  as above  for $\mu_0,\bar\mu^n_1$ and notice that the construction yields  $(\e_1)_*\ppi^n\preceq \bar\mu^n_1$.  We claim that these are tight. If $(A)$ holds, this is a quite direct consequence of the tightness of $(\bar\mu^n_1)$ and the same arguments in the proof of \Cref{Th:Lifting}. If $(B)$ holds we use  the uniform $L^\infty$ estimates \eqref{eq:densnonbr} and the fact that $\mm\mres E$ is tight to conclude that $((\e_t)_*\ppi^n)$ is tight for every $t<1$. This suffices, as in  the proof of \Cref{Th:Lifting}, to obtain tightness.

Hence a non-relabelled subsequence narrowly converges to a limit $\ppi$.  Since the relevant couplings are concentrated on $\{\ell\in(c,\tfrac1c)\}$ and $\ell_+$ is continuous,  using also  \Cref{prop-p-act-usc} we have
\[
\begin{split}
u_q\big(\ell_q(\mu_0,\bar\mu_1)\big)=\lim_{n\to\infty}u_q\big(\ell_q(\mu_0,\bar\mu^n_1)\big)=\lim_{n\to\infty}\int \KE_\QQ(\gamma)\,\d\ppi^n(\gamma)\leq\int \KE_\QQ(\gamma)\,\d\ppi(\gamma).
\end{split}
\]
Arguing as in the proof of \Cref{Th:Lifting} to pass to the limit in the marginals we see that  $(\e_1)_*\ppi\preceq\bar\mu_1$, thus the last claim in \Cref{Cor:Lifting geos} tells that  $t\mapsto\mu_t:=(\e_t)_*\ppi$ is a $\ell_q$-geodesic from $\mu_0$ to $\bar\mu_1$. 

Then again using the same techniques adopted in proving \Cref{Th:Lifting} when passing to the limit at the level of plans in $\Prob(\CC([0,1];\M))$, we can pass to the limit in \eqref{eq:insieme2}    stated for the $\ppi^n$'s and prove that the same bounds hold for $\ppi$: for \eqref{eq:densnonbr} this is trivial, passing to the limit in \eqref{eq:entnonbr} is a bit more technical, but nevertheless doable, see also the arguments in \cite[Lemma 3.3]{Sturm:2006b}. The fact that $\ppi$ is induced by a map will be (briefly) mentioned in a moment; from that the bound   \eqref{eq:sameell} is trivial from the construction
%

Removing the assumption that  $\supp\mu_0\times \supp\bar\mu_1\subset \{\ell\in(c,\tfrac1c)\}$  is done by decomposing the support of the given $\pi$  into countably many suitable  rectangles, see e.g.~the proof of \cite[Prop.~3.38]{Braun:2023Renyi}.

It thus remains to show the uniqueness properties in $(i),(ii)$. These, however, follow via the same arguments used to prove $\mu_i\perp\mu_j$ above, see for instance \cite{giglirajalasturm} for the proof in positive signature.
\end{proof}

\subsection{A converse Hawking--King--McCarthy theorem}
\label{S:Hawking-King-McCarthy}

Building on the results proved in the previous section, in this one   we study the interplay of our Sobolev calculus and  the ``metric'' features of the given metric measure spacetime $(\M,\uptau,\ell,\meas)$. Our motivation comes from a classical result of Hawking--King--McCarthy \cite{HKMcC:76}: if $(M_1,g_1)$ and $(M_2,g_2)$ are two spacetimes of the same dimension, the first being strongly causal, then every map $F\colon M_1\to M_2$ which preserves the respective time separations is in fact a smooth isometry, in particular $F_*g_{1} = g_{2}$. (The converse  is obvious.) 
Our analog of this result is stated in \Cref{Th:HawkingKingMcCarthy} below. In contradistinction to the smooth setting of Hawking--King--McCarthy,
 we use $\ell$ to define the (maximal weak sub)slope $|\rmd f|$
 instead of the other way around,  so it is the obvious direction from that setting which becomes challenging for us to prove. 
 
 The positive-signature predecessor of this result from metric measure geometry that inspires us is due to Gigli \cite[Lem.~4.19, Prop.~4.20]{gigli2013} in the context of the proof of the splitting theorem for $\RCD$ spaces. The so-called Sobolev-to-Lipschitz property introduced there \cite[§4.1.3]{gigli2013} was pioneered to characterize metric measure and Sobolev isometries. 
 
\medskip

We turn to the technical content. In positive signature, 1-Lipschitz functions fully characterize the underlying distance. Analogously, in our setting  1-steep functions fully characterize the underlying time separations, via the duality formula
\begin{equation}
\label{eq:dualsteep}
\ell(x,y)=\inf\{f(y)-f(x)\, :\, f:\M\to\bar\R\quad\text{ is 1-steep}\},\qquad\forall x,y\in \M,
\end{equation}
valid on any metric spacetime $(\M,\ell)$. Indeed, $\leq$ comes from the definition of 1-steepness, while for $\geq$ we pick $f(z):=\ell(x,z)$ (in the case of smooth spacetimes, the duality formula characterizes stability of the spacetime \cite[Thm.\ 4.6]{Minguzzi2019}).  If we are on a metric measure spacetime, then we know from \eqref{eq:dflsteep} that every 1-steep function $f$ satisfies $|\d f|\geq 1$ $\mm$-a.e., therefore in this case we have
\begin{equation}
\label{eq:dualsteep2}
\ell(x,y) \geq  \inf\{f(y) - f(x) \,:\quad 
f:\supp\meas\to\bar{\R}\textnormal{ causal, }\vert\rmd f\vert\geq 1\ \meas\textnormal{-a.e.}\},\qquad\forall x,y\in \M.
\end{equation}
Understanding whether equality holds here is relevant if one wants to use the Sobolev-like calculus we just developed in order to derive precise metric information on the underlying spacetime. Still, without further assumptions, we cannot expect equality to hold in \eqref{eq:dualsteep2}, as there could be a gap between the class of 1-steep functions and that of functions with $\vert\rmd f\vert\geq 1$ $\mm$-a.e. It is then natural to propose the following:
\begin{definition}[Sobolev-to-steepness property] Let $(\M,\uptau,\ell,\mm)$ be a mm spacetime.  We  say that it has   the \emph{Sobolev-to-steepness property} if every causal  function $f\colon \M\to \bar{\R}$ with $\vert\rmd f\vert\geq 1$ $\meas$-a.e.\ is  $1$-steep on 
$\supp\meas$.
\end{definition}
Notice that unlike the positive signature case, here  there is be no need to pass to $\meas$-representatives, as the property of being causal already depends on the value of the function at any point.

From the previous consideration it is now easy to see that the following holds:
\begin{proposition}[Duality formula]\label{Cor:Intrinsic} A metric measure spacetime $(\M,\uptau,\ell,\meas)$ satisfies  the Sobolev-to-steepness property if and only if for every $x,y\in\supp\meas$ we have
\begin{align}\label{essentially intrinsic}
\ell(x,y) = \inf\{f(y) - f(x) \,:\, 
f:\supp\meas\to\bar{\R}\textnormal{ causal, }\vert\rmd f\vert\geq 1\ \meas\textnormal{-a.e.}\}.
\end{align}
\end{proposition}

\begin{proof}  ``If''. Any causal function $f\colon\supp\meas\to\bar\R$ with $\vert\rmd f\vert\geq 1$ $\meas$-a.e.~is admissible in \eqref{essentially intrinsic}, hence all points $x,y\in \supp\meas$ satisfy the inequality $f(y) - f(x)\geq \ell(x,y)$. This is to say $f$ is $1$-steep on $\supp \mm$.

 ``Only if''. Inequality $\geq$ holds by \eqref{eq:dualsteep2}. For $\leq$ we notice that every function $f$ in the infimum is $1$-steep on $\supp\meas$ by the assumed Sobolev-to-steepness property, thus we conclude by applying \eqref{eq:dualsteep} on $\spt \mm$.
 \end{proof}
In the following, given a monotone map $T\colon \supp \meas_1 
\to \supp \meas_2$ between two given forward mm spacetimes $(\M_1,\uptau_1,\ell_1,\meas_1)$ and $(\M_2,\uptau_2,\ell_2,\meas_2)$ we define the map $\mathscr{T}:\CC([0,1];\supp\meas_1)\to \CC([0,1];\supp\meas_2)$ as the one sending $\gamma$ to the element of $\CC([0,1];\supp\meas_2)$ associated to the monotone map $T\circ\gamma$ via \Cref{prop:distord} (in other words, $\mathscr T(\gamma)_0:=T(\gamma_0)$ and $\mathscr T(\gamma)_t:=\lim_{s\uparrow t}T(\gamma_s)$ for $t>0$). Notice that by \Cref{L:countable discontinuities2} we see that $T(\gamma_t)=\mathscr T(\gamma)_t$ for $t=0$ and every $t\in(0,1]$ except at most a countable number, thus the very definition of the (distance ${\sfD}$ inducing the) topology of $\CC([0,1];\supp\meas_2)$ shows that if $T$ is Borel, then so is $\mathscr T$.

\begin{lemma}[One-sided nonsmooth Hawking--King--McCarthy theorem]\label{Le:One sid} Let $(\M_1,\uptau_1,\ell_1,\meas_1)$ and $(\M_2,\uptau_2,\ell_2,\meas_2)$ be two forward mm spacetimes and  $T\colon \supp\meas_1\to\supp\meas_2$ be a bijective Borel map. Assume that $T_\push\meas_1\leq C\,\meas_2$ for some constant $C>0$ and let $\mathscr{T}:\CC([0,1];\supp\meas_1)\to \CC([0,1];\supp\meas_2)$ be defined as above.

Consider the two statements:
\begin{enumerate}[label=\textnormal{\roman*)}]
\item $T:\supp\meas_1\to M_2$ is \emph{1-steep}, i.e.~for every $x,y\in\supp\meas_1$ we have $\ell_2(T(x),T(y))\geq \ell_1(x,y).$
\item If $g:\M_2\to\bar\R$ is causal then so is $g\circ T:\M_1\to\bar \R$ and $\vert\rmd (g\circ T)\vert_1 \geq \vert \rmd g\vert_2\circ T$  $\meas_1$-a.e.
\end{enumerate}
Then $(i)\Rightarrow(ii)$ and, conversely, if   $(\M_1,\ell_1,\meas_1)$ satisfies the Sobolev-to-steepness property we also have $(ii)\Rightarrow(i)$.
\end{lemma}

\begin{proof} (i) $\Longrightarrow$ (ii). That $g\circ T$ is causal is obvious. To show the stated bound, let $\ppi$ be a test plan on $\M_1$. We claim that $\mathscr T_*\ppi$ is a test plan on $\M_2$ and since it is clearly a measure on $\CC([0,1];\M_2)$ all we need to  do is to show that $(\e_t)_*\mathscr T_*\ppi\leq C'\mm_2$ for every $t\in[0,1]$ and some $C'>0$. We have already noticed  for every $\gamma\in\CC([0,1];\M_1)$ that $\mathscr T(\gamma)_0=T(\gamma_0)$ and $\mathscr T(\gamma)_t=T(\gamma_t)$ holds
except at a countable number of $t \in (0,1]$. It follows that $(\e_0)_*\mathscr T_*\ppi= T_*(\e_0)_*\ppi\leq CC''\mm_2$, where  $C''>0$ is so that $(\e_t)_*\ppi\leq C''\mm_1$ for every $t\in[0,1]$. Also, for $s,t\in[0,1]$ with $s<t$ we have $\int_s^t(\e_r)_*\mathscr T_*\ppi=\int_s^t T_*(\e_r)_*\ppi\leq(s-t)CC''\mm_2$, so that dividing by $s-t$ and letting $s\uparrow t$ using that $\mathscr T_*\ppi$ is concentrated on left continuous paths we conclude that $(\e_t)_*\mathscr T_*\ppi\leq CC''\mm_2$, establishing that $\mathscr T_*\ppi$ is a test plan, as desired.

Now the claim follows by duality. Let $g:\M_2\to\bar\R$ be causal, note that $g\circ T$ is a causal  function on  $\supp\meas_1$ by (i). For a test plan $\bdpi$ on $\M_1$ we just proved that  $\mathscr{T}_\push\bdpi$ is a test plan on $\M_2$,  therefore,
\begin{align*}
\int g\circ T(\gamma_1) - g\circ T(\gamma_0)\,\d\bdpi(\gamma) &= \int g(\sigma_1) - g(\sigma_0)\,\d\mathscr{T}_\push\bdpi(\sigma)\\
&\geq \iint_0^1 \vert\rmd g\vert_2(\sigma_r)\,\vert\dot{\sigma}_r\vert \,\d r\,\d\mathscr{T}_\push\bdpi(\sigma)\geq \iint_0^1 \vert \rmd g\vert_2( T(\gamma_r))\,\vert \dot\gamma_r\vert\,\d r\,\d\bdpi(\gamma).
\end{align*}
In the last step, we have employed the hypothesized noncontractivity (i) and \Cref{Pr:speed}. This shows $\vert \rmd  g\vert_2\circ T$ is a weak subslope of $g\circ T$. The desired inequality thus follows from the maximality asserted by \Cref{Th:MaxLoDiff}.

(ii) $\Longrightarrow$ (i). Let $x,y\in \M_1$ and then $g:\M_2\to\bar\R$ be defined as $g(z):=\ell_2(T(x),z)$. Then $g$ is 1-steep, hence (by \eqref{eq:dflsteep}) $|\d g|_2\geq1$ $\mm_2$-a.e.\ and thus our assumptions ensure that $g\circ T$ is causal with $|\d (g\circ T)|_1\geq 1$ $\mm_1$-a.e. Since $\M_1$ has the Sobolev-to-steepness property we deduce that $g\circ T$ is 1-steep on $\spt \mm_1$, thus in particular $g(T(y))-g(T(x))\geq \ell_1(x,y)$. Since the left hand side of this identity is equal to $\ell_2(T(x),T(y))$, the proof is complete.
\end{proof}

\begin{theorem}[Nonsmooth Hawking--King--McCarthy theorem]\label{Th:HawkingKingMcCarthy} Assume that $(\M_1,\uptau_1,\ell_1,\meas_1)$ and $(\M_2,\uptau_2,\ell_2,\meas_2)$ are forward mm spacetimes with  the Sobolev-to-steepness property. Let $T\colon \supp\meas_1\to\supp\meas_2$ be a surjective measure-preserving Borel map, i.e.~$T_\push\meas_1 = \meas_2$. 

Then the following are equivalent.
\begin{enumerate}[label=\textnormal{(\roman*)}]
\item $T$ is an isometry of metric measure spacetimes, i.e.~for every $x,y\in\supp\meas$,
\begin{align}\label{eq:isoommee}
\ell_2(T(x),T(y)) = \ell_1(x,y).
\end{align}
\item $f:\M_2\to\bar \R$ is causal if and only if so is $f\circ T:\M_1\to\bar\R$ and in this case $\vert\rmd (f\circ T)\vert_1 = \vert\rmd f\vert_2\circ T$ holds $\meas_1$-a.e.
\end{enumerate}
\end{theorem}
\begin{proof} We shall apply \Cref{Le:One sid} first to $T$ and then to its inverse. To do so we must first verify that $T$ is invertible. If $(i)$ holds, the condition $T(x) =T(y)$ and \eqref{eq:isoommee} imply $\ell_1(x,y) = \ell_1(y,x) = 0$, which forces $x=y$. If $(ii)$ holds a similar argument works from the causality preservation. Thus in either case $T$ is injective, thus bijective (as we assumed surjectivity) and the inverse is also Borel (see e.g.\ \cite[Cor.~15.2]{kechris1995}).  
\end{proof}

It remains to find sufficient conditions ensuring that   the Sobolev-to-steepness property holds. An answer is given by  the next result:

\begin{theorem}[Sobolev-to-steepness property from curvature conditions]\label{Th:SobtoSteep}  Let $(\M,\uptau,\ell,\meas)$ be  a $q$-essentially timelike non-branching at 0, forward $\smash{\TMCP^\h_+(K,N)}$ mm  spacetime with $\uptau$ containing the chronological topology,  $\mm(E)<\infty$ for each emerald $E$, and with $\ell$ upper semicontinuous and never $+\infty$.  Assume also that either $(A)$ or $(B)$ of \Cref{Th:Lifting} hold.

Let $f:\M\to\bar\R$ be causal with $|\d f|\geq 1$ $\mm$-a.e. Then for every $y\in\M$ we have
\begin{equation}
\label{eq:almoststeep}
f(y)-f(x)\geq\ell(x,y)\qquad\mm-a.e.\ x.
\end{equation}
In particular, if $\ell_+$ is continuous, {real valued and for all} $x\ll y$ with $x,y\in\supp\mm$ we have $\mm(U\cap J(x,y))>0$ for every neighbourhood $U$ of $x$, then $\M$ has the Sobolev-to-steepness property. 
\end{theorem}
\begin{proof} The second claim is a trivial consequence of the first, so we focus on this one. The claimed inequality $f(y) - f(x) \geq \ell(x,y)$ is clear whenever $\ell(x,y)\in\{ -\infty,0\}$, thus we can assume $x\ll y$. In this case, by our infinity conventions (\Cref{Sub:InfConv}) the conclusion holds if either $x$ or $y$ do not belong to $\dom(f)$. Thus, we only have to treat the case $x,y\in\dom(f)$ with $x\ll y$.

By \Cref{Cor:Essgeo} for $\mm$-a.e.\ $x\in I^-(y)$ there is  a timelike geodesic $\gamma$ from $x$ to $y$ with image  contained in $\supp\meas$. Fix such $x$ and $\gamma$: we shall prove that \eqref{eq:almoststeep} holds for such $x$ and this suffices to conclude.

By \Cref{L:chronological continuity}, $\gamma$ crosses the set of discontinuity points of $f$ only countably many times. Hence, fix $s,t\in[0,1]$ with $s<t$ such that $f$ is continuous at $\gamma_s$ and $\gamma_t$ and let  $\varepsilon> 0$ be with $s+\varepsilon < t$. Let the metric $\met$ metrize our Polish topology of $\M$. For sufficiently small $r>0$, the induced ball $B_r(\gamma_s)$ has finite and positive $\meas$-measure (since $\gamma_s\in\supp\meas$) and is  contained in  $I^+(\gamma_0)\cap I^-(\gamma_t)$, as this set is open. In particular, $f$ is bounded on  $B_r(\gamma_s)$.


Since $\gamma$ is timelike, interior approximation yields a compact set $C\subset B_r(\gamma_s)$ with $\mm(C)>0$ and $\log(\ell(\cdot,\gamma_t))\in L^\infty(C,\mm\mres C)$.   Apply  \Cref{cor:intestplan} with   $\mu_0$ being the uniform distribution  of $C$ and $x_1$ being $\gamma_t$ to get  the existence of a   plan $\smash{\bdpi}^r$ concentrated on timelike  geodesics $\eta$ from $\eta_0$ to $\gamma_t$. The $L^\infty$ bounds in \Cref{Le:Existence test plans}  ensure that for any $\xi\in(0,1)$ the plan $\smash{\bdpi_\xi := (\restr_0^{1-\xi})_\push\bdpi^r}$ is a test plan. Since   $f$ is causal we get
\begin{equation}
\label{eq:persobtosteep}
\begin{split}
    f(\gamma_t) - \int f\, \d(\eval_0)_\push\bdpi^r = f(\gamma_t) - \int f \,\d(\eval_0)_\push\bdpi_\xi&\geq  \int \big[f(\sigma_1) - f(\sigma_0)\big]\,\d\bdpi_\xi(\sigma)\\ 
 \text{($\bdpi_\xi$ is test)}\qquad   &\geq \iint_0^1 \vert\rmd f\vert(\sigma_u)\,\vert\dot\sigma_u\vert\,\d u\,\d\bdpi_\xi(\sigma)\\
 \text{($|\d f|\geq 1$ $\mm$-a.e.\ and $\bdpi_\xi[\TGeo(\M)]=1$ 
 )}\qquad     &\geq \int \ell(\sigma_0,\sigma_1)\,\d\bdpi_\xi(\sigma)\\
    &= (1-\xi) \int \ell(\cdot,\gamma_t) \,\d(\eval_0)_\push\bdpi^r.
\end{split}
\end{equation}
Now notice that for $r>0$ sufficiently small  the ball $\smash{\bar{B}_r(\gamma_s)}$ is contained in the chronological past of $\gamma_{s+\varepsilon}$, hence the  reverse triangle inequality tells that $\int \ell(\cdot,\gamma_t) \d(\eval_0)_\push\bdpi^r \geq \ell(\gamma_{s+\varepsilon},\gamma_t) = (t-s-\varepsilon)\,\ell(\gamma_0,\gamma_1)$ for $r$ small. Thus letting first $r\downarrow0$ then $\xi\downarrow0$ in \eqref{eq:persobtosteep} using also that  $\gamma_s$ is a continuity point of $f$ we get
\[
f(y) - f(x) \geq f(\gamma_t)-f(\gamma_s)\geq  (t-s-\varepsilon)\,\ell(\gamma_0,\gamma_1).
\]
First letting $\varepsilon \to 0$, then  $s\downarrow 0$ and $t\uparrow 1$  gives the claim.
\end{proof}

\subsection{A metric Brenier--McCann theorem}

The purpose of this part is to link optimal transport to our Sobolev calculus. More concretely, in \Cref{Th:Brenier} we prove that optimal geodesic plans represent the \fff gradients of their  Kantorovich potentials after \Cref{Def:represent gradient}. Ultimately, this will be used to prove d'Alembert comparison theorems for such potentials in the next section. \Cref{Th:Brenier} itself is a variant of Brenier's famous polar factorization theorem \cite{Brenier:1991} generalized by McCann to Riemannian manifolds~\cite{McCann:01}. Its metric counterpart is due to Ambrosio--Gigli--Savaré \cite{AGS:14a} and Gigli \cite{Gigli:2015}. 
Versions of related results appear implicitly in the recent literature about 
optimal transport in Lorentzian spacetimes: see Suhr \cite{Suh:18}, McCann \cite{McCann:2020}, and \Cref{Re:Comp smooth} below.

We first recapitulate some basic notions about Kantorovich duality in the Lorentzian setting. We refer to McCann \cite{McCann:2020}, Mondino--Suhr \cite{MS:22}, and Cavalletti--Mondino \cite{CM:20} for details (and to Villani \cite{Villani:2009} for classical Kantorovich duality), even though our presentation differs slightly from the one in these references, as we are going to allow extended real valued cost functions and make use of the conventions in Section \ref{Sub:InfConv} in formulas such as \eqref{eq:defcconc} below.

\medskip


Let us thus consider a  cost function $c:\M\times \M\to\bar\R$ on a given Polish space $\M$. For $\psi:\M\to\bar  \R$ we define the two functions $\psi_c,\psi^c:\M\to\bar \R$ as
\begin{equation}
\label{eq:defcconc}
\psi_c(x):=\inf_{y\in \M}(-c(x,y)+\psi(y))\qquad\text{ and }\qquad\psi^c(y):=\sup_{x\in \M}(c(x,y)+\psi(x)).
\end{equation}
A function $f:=\M\to\bar\R$ is called $c$-concave  if it is of the form $\psi_c$   for some $\psi$. If the cost function $c$ never attains the value $+\infty$, then it is easy to see that
\begin{equation}
\label{eq:psicc}
(\psi_c)^c\leq \psi\qquad \text{ and }\qquad (\psi^c)_c\geq \psi 
\end{equation}
 for any $\psi$ (because $a+(-a+b)\leq b$ and $(-a)+(a+b)\geq b$ for any $a\in \R\cup\{-\infty\}$, $b\in\bar\R$). It follows that $f$ is $c$-concave iff $f=(f^c)_c$. In particular, we have
\begin{equation}
\label{eq:csuperdiff}
f^c(y)\geq  c(x,y)+f(x)\qquad\forall x,y\in \M.
\end{equation}
The set of couples for which both sides are real numbers and equal is called the $c$-superdifferential of $f$ and denoted $\partial_cf\subset \M\times \M$. Equivalently, we have
\begin{equation}
\label{eq:cdiff}
(x,y)\in \partial_cf\quad\Leftrightarrow\quad f(x),-f^c(y)>-\infty,\ c(x,y)\in\R\quad\text{ and }\quad f(x)+c(x,y)\geq f^c(y).
\end{equation}

We are interested in Polish metric spacetimes $(\M,\uptau,\ell)$ and in the  cost $c_q:=u_q\circ\ell$ for $0\neq q<1$. It is clear that in this case functions of the form $\psi_{c_q}$ or $\psi^{c_q}$ are rough causal --- meaning causal functions except possibly without any measurability. Indeed, if $x \leq y$, the reverse triangle inequality implies $\ell(x,z)\geq \ell(y,z)$ for every $z\in \M$, thus $-u_q(\ell(x,z))\leq -u_q(\ell(y,z))$ and therefore
\begin{align*}
\psi_{c_q}(x) &= \inf_{z\in \M} \Big[ \big(-u_q(\ell(x,z))\big)+\psi (z)\Big]\leq   \inf_{z\in \M} \Big[ \big(-u_q(\ell(y,z))\big)+\psi (z)\Big]  = \psi_{c_q}(y),
\end{align*}
showing that $\psi_{c_q}$ is rough causal. One can argue similarly for $\psi^{c_q}$. Other than this, and the regularity this implies according to \Cref{L:chronological continuity} and \Cref{cor:fmeastmcp}, it seems hard to obtain any regularity at all, the problem being that  for  $q\in(0,1)$ the cost function is not lower semicontinuous (regardless of any assumed continuity of $\ell_+$).

However, for $q<0$ the rough causality of $c_q$-concave functions is upgraded to causality by:


\begin{proposition}[On upper semicontinuity of $c_q$-concave functions for $\QQ<0$]\label{P:c-concave usc}
Let $(\M,\uptau,\ell)$ be a Polish metric spacetime with $\ell_+$ continuous and  $q<0$. Also, let $f:\M\to\bar\R$ be $c_q$-concave. 

Then $f$ and $-f^{c_q}$ are upper semicontinuous and $\partial_{c_q}f\cap\{\ell\in(0,+\infty)\}$ is $\sigma$-closed in $\{\ell\in(0,+\infty)\}$.
\end{proposition}
\begin{proof}
For $q<0$ the function $-u_q:\bar\R\to\bar\R$ is continuous, non-increasing and equal to $+\infty$ in 0. Hence the continuity of $\ell_+$ ensures that  $(x,y)\mapsto   -u_q(\ell(x,y))$ is   upper semicontinuous. Since the infimum of an arbitrary family of upper semicontinuous functions is still upper semicontinuous, the claims about  $f$ and $-f^{c_q}$ follow. In particular, for every $n\in\N$ the sets $\{f\geq -n\}$ and $\{f^{c_q}\leq n\}$ are closed and since $u_q\circ\ell$ is continuous  in $\{\ell\in[\tfrac1n,n]\}$, the last claim also follows.
\end{proof}

The next theorem is the main result of the section.  Recall the definition of the backward slope $\smash{\vert\partial^{-}f\vert}$ from \eqref{eq:defsubslopes}  and the notion of a plan representing the initial $p$-gradient of a causal function from \Cref{Def:represent gradient}.
\begin{theorem}[A metric Brenier--McCann theorem]\label{Th:Brenier}
Let $(\M,\uptau,\ell,\mm)$ be a forward mm spacetime satisfying either $(A)$ or $(B)$ in \Cref{Th:Lifting} and  so that $\ell_+$ is 
continuous and real valued. Also, let  $\tfrac1p+\tfrac1q=1$ be with $0\neq p,q<1$ and   $f\colon \M\to \bar\R$ 
a $c_q$-concave and $\mm$-measurable function. 

Let $\ppi$ be an initial test plan concentrated on $\TGeo(\M)$ such that $(\e_0,\e_1)_*\ppi$ is concentrated on  $\partial_{c_q}f$ and assume that:
\begin{itemize}
\item[i)] for some Borel  set $E\subset \M$ the plan $\ppi$ stays initially in $E$ and $|\rmd f|^\PP\in L^1(E,\mm\mres E)$,
\item[ii)] $\int\ell(\gamma_0,\gamma_1)^\QQ\,\d\ppi(\gamma)<+\infty.$ 
\end{itemize}  
Then $\ppi$ represents the \fff $\PP$-gradient of $f$  and 
\begin{equation}
\label{eq:brmcc}
|\partial^{-}f |(\gamma_0)= |\rmd f|(\gamma_0)=\ell(\gamma_0,\gamma_1)^{\QQ-1}\qquad \ppi-a.e.\ \gamma.
\end{equation}
\end{theorem}
\begin{proof}
Let $(x,y)\in \p_{c_q}f$ with $\ell(x,y)>0$. As already mentioned, this implies $f(x)\in\R$
by \eqref{eq:csuperdiff}--\eqref{eq:cdiff}.
Since $f^+ \ge f \ge f^-$ from 
\Cref{L:chronological continuity},
for every $z\leq x$ we have $\ell(z,y)\geq\ell(x,y)>0$ and thus
\begin{align*}
f^+(x)-f^-(z)\ge f(x)- f(z)\stackrel{\eqref{eq:cdiff}}\ge u_q(\ell(z,y)) -u_q(\ell(x,y))&\geq u_q\big((\ell(x,y)+\ell(z,x)\big)-u_q(\ell(x,y)).
\end{align*}
Notice that for every $\eps>0$ the set $\{z:\ell(z,x)<\eps\}$ is a neighbourhood of $x$ by the assumed continuity   of $\ell_+$, hence in taking the limit in the definition \eqref{eq:defsubslopes} of  $|\partial^{-}f|$ we have $\ell(z,x)\to 0$. 
Thus, dividing the above by $\ell(z,x)$ and letting $z\uparrow x$ we have  $|\partial^{-}f|(x)\geq \ell(x,y)^{\QQ-1}$ and thus
\begin{equation}
\label{eq:boundsubslope}
\frac{1}{\PP}|\partial^{-}f|^\PP(x)\geq \frac{1}{\PP}\ell(x,y)^\QQ.
\end{equation}
Let now $\gamma$ be a geodesic such that $\ell(\gamma_0,\gamma_1)>0$ and $(\gamma_0,\gamma_1)\in \p_{c_q}f$,
which again implies $f(\gamma_0)\in\R$.  
By the same argument as above, we obtain the estimate
\[
f(\gamma_t)-f(\gamma_0)\leq u_q(\ell(\gamma_0,\gamma_1))-u_q(\ell(\gamma_t,\gamma_1))=\tfrac{1}{\QQ}\ell(\gamma_0,\gamma_1)^\QQ(1-(1-t)^\QQ).
\]
Integrating and dividing by $t$  we get
\[
\begin{split}
\lims_{t\downarrow0}\int\frac{f(\gamma_t)-f(\gamma_0)}t\d\ppi(\gamma)\leq \int\ell(\gamma_0,\gamma_1)^\QQ\d\ppi(\gamma)&\stackrel{\eqref{eq:boundsubslope}}\leq\int \tfrac{|\partial^{-}f|^\PP}p(\gamma_0)\d\ppi(\gamma_0)+\int\tfrac{\ell(\gamma_0,\gamma_1)^\QQ}{\QQ}\d\ppi(\gamma)\\
\text{(by \Cref{prop:subslopes2})}\qquad\qquad&\stackrel{\phantom{\eqref{eq:boundsubslope}}}\leq \int \tfrac{|\rmd f|^\PP}p(\gamma_0)\d\ppi(\gamma_0)+\int\tfrac{\ell(\gamma_0,\gamma_1)^\QQ}q\d\ppi(\gamma),
\end{split}
\]
where the use of \Cref{prop:subslopes2} is justified by the assumed continuity of $\ell_+$. This shows $\ppi$ represents the \fff $\PP$-gradient of $f$. Recalling the bound in \Cref{Pr:Lower bound} (which is applicable, as both the integrability requirements are satisfied), we see that the last three inequalities must be equalities. Since the integrals are finite, this easily yield the claim \eqref{eq:brmcc}. 
\end{proof}

\begin{remark}[Comparison with the smooth case]\label{Re:Comp smooth} \Cref{Th:Brenier} should be compared to the shape of optimal maps and $\ell_\QQ$-geodesics in smooth spacetimes, cf.~McCann \cite[Thm.~5.8, Cor.~5.9]{McCann:2020} and Mondino--Suhr \cite[Lem.~3.2]{MS:22}. Indeed, for simplicity assume $\mu$ is concentrated on a compact set $C \subset I^-(o)$, where $o$ is a given point in a globally hyperbolic smooth spacetime $M$. Setting $\nu := \delta_o$ and 
\begin{align*}
f^o := -u_q(\ell(\cdot,o))\qquad(=-\tfrac{1}{\QQ}\ell(\cdot,o)^\QQ\text{ on }C)
\end{align*}
it is clear that the unique $\smash{\ell_\QQ}$-geodesic from $\mu$ to $\nu$ is given by
\begin{align*}
\mu_t := \exp_\cdot(t\,\|\rmd f^o\|_*^{\PP-2}\,\nabla f^o)_\push\mu,
\end{align*}
where $\PP$ is the conjugate exponent to $\QQ$. This $\ell_\QQ$-geodesic is clearly induced by the push-forward $\bdpi$ of $\mu$ via  the map $x\mapsto \exp_x(\cdot\,\|\rmd f^o\|_*^{\QQ-2}\,\nabla f^o)$. Therefore, $\bdpi$-a.e.~$\gamma\in \TGeo(\M)$ satisfies $\ell(\gamma_0,\gamma_1) = \|\rmd f^o\|_*^{\PP-1}(\gamma_0)$, which is \eqref{eq:brmcc}.

An analogous argument applies to more general situations covered by the results of \cite{McCann:2020,MS:22} quoted above.\hfill$\blacksquare$
\end{remark}
In order to apply this last result we need to be sure that $(\e_0,\e_1)_*\ppi$ is concentrated on $\partial_{c_q}f$. To this aim, the following two simple lemmas are useful.

\begin{lemma}[A Kantorovich/Karush--Kuhn--Tucker characterization of optimal couplings]
\label{le:basebase}
Let $(\M,\uptau,\ell)$ be a Polish metric spacetime with $\ell$ upper semicontinuous and never $+\infty$. Let $0\neq q<1$, $f:\M\to\bar\R$ be a $c_q$-concave function and $\mu,\nu\in\pem$ be so that  $f,f^{c_q}$ are $\mu$-measurable and $\nu$-measurable, respectively, with $f\in L^1(\mu)$ and $f^{c_q}\in L^1(\nu)$.

Assume that there is an admissible coupling $\pi$ of $(\mu,\nu)$ that is concentrated on $\partial_{c_q}f$. Then $\pi$ is optimal and every other    $c_q$-optimal coupling of $(\mu,\nu)$ is also concentrated on $\partial_{c_q}f$.
\end{lemma}
\begin{proof} Let $\pi'\in\Pi_\leq(\mu,\nu)$ be arbitrary. The marginal condition and the  integrability assumption imply that $(x,y)\mapsto f^{c_q}(y)-f(x)$ is in $L^1(\pi')$ and  
\begin{equation}
\label{eq:piprimo}
\int f^{c_q}\,\d\nu-\int f\,\d\mu=\int  f^{c_q}(y)-f(x)\,\d\pi'(x,y)\geq \int c_q(x,y)\,\d\pi'(x,y),
\end{equation}
the inequality being a consequence of  \eqref{eq:csuperdiff}. For $\pi$ admissible and  concentrated on $\partial_{c_q}f$ the above becomes
\[
\int f^{c_q}\,\d\nu-\int f\,\d\mu=\int  f^{c_q}(y)-f(x)\,\d\pi(x,y)= \int c_q(x,y)\,\d\pi(x,y),
\]
showing that $\int c_q \,\d\pi' \leq \int c_q \,\d\pi$, i.e.\ the optimality of $\pi$. Also, if $\pi'$ is optimal equality must hold in \eqref{eq:piprimo} and by the integrability assumptions --- that in particular  ensure that $c_q(x,y),f(x),f^{c_q}(y)\in\R$ for $\pi'$-a.e.\ $(x,y)$ --- this can only occur if equality holds in  \eqref{eq:csuperdiff}  for $\pi'$-a.e.\ $(x,y)$, proving that $\pi'$ is concentrated on $\partial_{c_q}f$, as desired.
\end{proof}

\begin{lemma}[Heredity of a $c_q$-concave dual potential by geodesic endpoints]
\label{le:altrocdiff}
Let $(\M,\uptau,\ell)$ be a Polish metric spacetime with $\ell$ upper semicontinuous and not attaining $+\infty$. Let $0\neq q<1$, $f:\M\to\bar\R$ be a $c_q$-concave function and $\nu_0,\nu_1\in\pem$ be such that $\ell_q(\nu_0,\nu_1)\in(0,+\infty)$ and any $c_q$-optimal plan from $\nu_0$ to $\nu_1$ is concentrated on $\partial_{c_q}f$.

Then for every geodesic $(\mu_t)$ from $\nu_0$ to $\nu_1$, any $c_q$-optimal coupling of $(\mu_0,\mu_1)$ is concentrated on $\partial_{c_q}f$ as well.
\end{lemma}

\begin{proof}
The assumptions on $\ell$ and the measures ensure that $c_q$-optimal couplings $\pi^1,\pi^2,\pi^3\in\Prob(\M^2)$ of $(\nu_0,\mu_0)$, $(\mu_0,\mu_1)$, $(\mu_1,\nu_1)$ exist. Let $\pi\in\Prob(\M^4)$ be a gluing of them along the common marginals, so that for $\pi$-a.e.\ $(x_0,x_1,x_2,x_3)$ we have $x_0\leq x_1\leq x_2\leq x_3$ and thus
\[
\begin{split}
\ell_q(\nu_0,\nu_1)\geq \Big(\int\ell^q(x_0,x_3)\,\d\pi\Big)^{\frac1q}\geq\Big(\int\ell^q(x_1,x_2)\,\d\pi\Big)^{\frac1q}=\ell_q(\mu_0,\mu_1)\stackrel{\eqref{eq:geo}}=\ell_q(\nu_0,\nu_1).
\end{split}
\]
By our assumption on $\nu_0,\nu_1$ and $f$ we deduce that for $\pi$-a.e.\ $(x_0,x_1,x_2,x_3)$ we have $(x_0,x_3)\in\partial_{c_q}f$ and $\ell(x_0,x_3)=\ell(x_1,x_2)$ (by \Cref{prop:geodindq} for this having assumed $\ell_q(\nu_0,\nu_1)\in(0,+\infty)$ matters). Also, from \eqref{eq:cdiff} we thus see that $f(x_0)+c_q(x_0,x_3)\geq f^{c_q}(x_3)$ holds for $\pi$-a.e.\ quadruple, thus from the causality of $f,f^{c_q}$, and  the causal relation $x_0\leq x_1\leq x_2\leq x_3$  and identity $\ell(x_0,x_3)=\ell(x_1,x_2)$  valid for $\pi$-a.e.\ quadruple, we conclude that  $f(x_1)+c_q(x_1,x_2)\geq f^{c_q}(x_2)$ holds for $\pi$-a.e.\ quadruple, meaning that $\pi^2$ is concentrated on $\partial_{c_q}f$. Since $\pi^2$ was an arbitrary $c_q$-optimal coupling for $(\mu_0,\mu_1)$, the proof is complete.
\end{proof}

We conclude the section with a result closely related to the metric Brenier--McCann theorem and to the Sobolev-to-steepness property. It offers yet another viewpoint on the role that existence of good geodesics have in relation to Sobolev calculus: the `rigidity' encoded  in equality \eqref{eq:dell} as opposed to inequality \eqref{eq:semprevalido} should be compared to that in \Cref{Cor:Intrinsic} as opposed to inequality \eqref{eq:dualsteep2}. Recall that \eqref{eq:dflsteep} and the 1-steepness of $\ell(o,\cdot)$ and $-\ell(\cdot, o)$ give that
 \begin{equation}
\label{eq:semprevalido}
|\d \ell(o,\cdot)|,\,|\d (-\ell(\cdot,o))|\geq 1,\qquad\mm-a.e.,
\end{equation}
on any metric measure spacetime. From the $\TMCP^\h_+(K,N)$ condition we can --- via   \Cref{Le:Existence test plans} ---  get equality  for the function $-\ell(\cdot, o)$; (equality for $\ell(o,\cdot)$ follows from the $\TMCP^\h_-(K,N)$ condition, see Section \ref{Sub:Modiback}).

\begin{corollary}[Unit maximal weak subslope of time separation to a point] \label{Th:dftau_is_1_under_tcd} Let  $0\neq \QQ<1$, $K\in\R$,  $N>1$ and  a $\smash{\TMCP^\h_+(K,N)}$ forward mm spacetime $(\M,\uptau,\ell,\meas)$ with
$\ell$ upper semicontinuous and never $+\infty$ be timelike $q$-essentially non-branching at 0 and satisfy $\mm(E)<\infty$ for every emerald $E$.   
Assume also that either $(A)$ or $(B)$ of \Cref{Th:Lifting} holds.

Let  
$o\in \supp\mm$ and consider the functions 
\[
f^o:=-u_q\circ \ell(\cdot,0),\qquad\text{and}\qquad g^o := -\ell(\cdot,o).    
\]
Then
\begin{equation}
\label{eq:dell}
|\d f^o|=\ell(\cdot,o)^{q-1}\qquad\text{and}\qquad \vert\rmd g^o\vert = 1\quad\meas\textnormal{-a.e. on }I^-(o).
\end{equation}
\end{corollary}
\begin{proof} 
  Let $\smash{C \subset I^-(o)}$ be a compact set  with $\meas(C)>0$. Let $\mu_0\in\Prob(\M)$  denote the uniform distribution of $C$. By \Cref{cor:intestplan}  there exists an initial   test plan $\bdpi $ lifting a $\ell_q$-geodesic $(\mu_t)$ from $\mu_0$ to  $\delta_o$  concentrated on timelike geodesics $\gamma$ from $\gamma_0$ to $o$, thus  by  \Cref{Le:Distr der} we know  that given any $\varepsilon \in (0,1)$ sufficiently small, $\bdpi$-a.e.~$\gamma\in\CC([0,1];\M)$ satisfies
\begin{align*}
\varepsilon\,\ell(\gamma_0,o) &= g^o(\gamma_{\varepsilon}) - g^o(\gamma_0)\geq \int_0^{\varepsilon}  \vert\rmd g^o\vert(\gamma_r)\,\vert\dot\gamma_r\vert\d r= \ell(\gamma_0,o)\int_0^{\varepsilon}\vert\rmd g^o\vert(\gamma_r)\d r.
\end{align*}
It follows that $\tfrac1\eps\int_0^{\varepsilon}\vert\rmd g^o\vert(\gamma_r)\d r\leq1$, thus an integration and the  same arguments as for the proof of \eqref{Eq:IINT} give
\[
 \int\vert \rmd g^o\vert\,\d\mu_0= \int \vert \rmd g^o\vert(\gamma_0)\,\d\bdpi(\gamma) \leq\lim_{t\to 0} \frac{1}{t}\iint_0^t \vert \rmd g^o\vert(\gamma_r)\,\d r\,\d\bdpi(\gamma)\leq 1.
\]
By \eqref{eq:semprevalido}, this forces $\vert \rmd g^o\vert =1$ $\meas\mres C$-a.e. The arbitrariness of $C$ and inner regularity of $\mm$ conclude the proof of the claim for $g^o$. The conclusion for $f^o$ now follows from the chain rule in \Cref{Th:Chain rule II}.
\end{proof}

\subsection{Nonlinear $\PP$-d'Alembert comparison}\label{subsec-q-dal-comp}
We turn to our main result: the nonlinear $\PP$-d'Alembert comparison theorem in weak form.

%

\medskip

To state the result it is worth to introduce the function  $\smash{\tilde{\tau}_{K,N}(\theta)}$ as
\begin{align*}
\text{ for $\smash{\theta \in [0, \pi\sqrt{\tfrac{N-1}{ K_+}}\,)}$ we put}\quad  
  \tilde{\tau}_{K,N}(\theta) = \begin{cases}
        \displaystyle \frac1N+\frac{\theta}{N}\sqrt{K(N-1)}\cot\!\Big(\theta \sqrt{\frac{K}{N-1}}\Big) & \textnormal{if }K>0,\\
        1 & \textnormal{if }K=0,\\
        \displaystyle \frac1N+\frac{\theta}{N}\sqrt{-K(N-1)}\coth\!\Big(\theta\sqrt{\frac{-K}{N-1}}\Big)  & \textnormal{if }K<0,
    \end{cases}
\end{align*}
with $\tilde{\tau}_{K,N}\big(\pi\sqrt{\tfrac{N-1}{ K_+}}\big):=-\infty$ and $K_+ := K\vee 0$. 

Notice that   $\smash{\tilde{\tau}_{K,N}(\theta)}$ is the derivative at 1 of the function $\smash{r\mapsto \tau_{K,N}^{(r)}(\theta)}$ from \eqref{tau}. We  have:
\begin{theorem}[$\PP$-d'Alembert comparison for $-\tfrac1q\ell^q(\cdot,o)$]\label{Cor:Time I} Fix  $0\neq \QQ<1$, $K\in\R$,  $N>1$ and let  
a $\smash{\TMCP^\h_+(K,N)}$ forward mm spacetime $(\M,\uptau,\ell,\meas)$ satisfy:
\begin{itemize}
\item[-] $\mm(E)<\infty$ for every emerald $E$;
\item[-]   $\ell_+$ is  continuous and real valued;
\item[-] timelike $q$-essential non-branchingness at 0;
\item[-] either $(A)$ or $(B)$ of \Cref{Th:Lifting}, which we recall here for clarity:
\begin{itemize}
\item[A)] $\M$ is globally hyperbolic, i.e. emeralds are compact, or
\item[B)] the topology on $\M$ is locally causally convex.
\end{itemize}
\end{itemize}
Let $o$ be a given point in $\supp\meas$ and put $f := -u_q\big(\ell(\cdot,o)\big)$. Then for every  $\varphi\in\Pert(f)$   non-negative, bounded and with  support in an emerald contained in $I^-(o)$ we have
\begin{equation}
\label{eq:dalcomp}
\int \rmd^+\varphi(\nabla f)\,\vert\rmd f\vert^{\PP-2}\,\d\meas \leq N\int \tilde{\tau}_{K,N}\circ\ell(\cdot,o)\,\varphi\,\d\meas,
\end{equation}
where  $\PP^{-1}+\QQ^{-1}=1$. 
\end{theorem}

\begin{proof} Notice that the integral in the left hand side is well-defined (a priori possibly $+\infty$) by the bound \eqref{eq:unifdiffquot} and the identity $|\d f|^p=\ell(\cdot,o)^q$ in $I^-(o)$ that follows from \eqref{eq:dell} because on $\supp\varphi$ the function $\log\ell(\cdot,o)$ is bounded. The integral on the right is well-defined (a priori possibly $-\infty$)  because on $\supp\varphi$ the function $ \tilde{\tau}_{K,N}\circ\ell(\cdot,o)$ is bounded from above (if $K>0$  we use the Bonnet-Myers-type estimate that ensures that the timelike diameter of a $\smash{\TMCP_+(K,N)}$ spacetime is  $\leq \pi\sqrt{\tfrac{N-1}{K}}$ --- see \cite[Proposition 5.10]{CM:20}\cite[Cor.~3.14]{Braun:2023Renyi} and Remark \ref{R:q<0,forward}).


We only discuss the details of the proof in the situation $K>0$, as the degeneracy of the involved distortion coefficients close to the Bonnet--Myers $\ell$-diameter bound $\pi\sqrt{(N-1)/K}$ requires a small extra argument. Given $K'\in (0,K)$ fixed until the end, $(\M,\ell,\meas)$ obeys the slightly weaker property $\smash{\TMCP_+(K',N)}$ \cite[Prop.~4.6]{Braun:2023Renyi}.

As $\meas$ is  finite on emeralds,  the assumptions on $\varphi$ imply its $\meas$-integrability. The conclusion of the theorem immediately follows from the locality asserted by \Cref{Pr:Calc df}(iv) if $\meas(\supp \varphi) = 0$, hence we assume $\supp \varphi$ is not $\meas$-negligible. 

We start with some preparations. First, we notice that $f$ is $c_q$-concave (just pick $\psi$ to be equal to 0 on $o$ and to $+\infty$ everywhere else in formula \eqref{eq:defcconc}) and that $\supp\varphi\times\{o\}\subset \p_{c_q}f$ (directly from \eqref{eq:cdiff}, the bound $f^{c_q}\leq\psi$ that comes from \eqref{eq:psicc} and the fact that $c_q(\cdot,o)$ is finite on  $I^-(o)$).

Then, we ``raise'' $\varphi$ to make it uniformly bounded away from zero. To this aim, let $A:=J^+(\supp\varphi)\cap J^-(o)$ denote the emerald spanned by $\supp \varphi$ and $\{o\}$, so $\mm(A)<+\infty$ by assumption,  and notice that for every $\alpha>0$ the function $\varphi_\alpha := \varphi + \alpha\,1_A$ still belongs to $\Pert({f})$ (because $f=+\infty$ outside $J^-(o)$).  Now define 
 the Borel probability measure $\mu_0 := \rho_0\,\meas$, where
\[
\rho_0 := (c_{\alpha}\,\varphi_\alpha)^{\tfrac{N}{N-1}}\qquad\text{$c_\alpha$ being the normalization constant.}
\]
Let $\mu_1$ and $\bdpi\in\Prob(\CC([0,1];\M))$  be given by \Cref{cor:intestplan} (with $K'$ in place of $K$)  and let $(\mu_t) =((\eval_t)_* \bdpi))$ be the $\ell_\QQ$-geodesic from $\mu_0$ to $\delta_o$ induced by $\bdpi$. An application of \Cref{le:altrocdiff}, together with the fact that  $\mu_0\times\delta_{o}$ is the only admissible plan for $(\mu_0,\delta_{o})$ show that $(\e_0,\e_1)_*\ppi$ is concentrated on $\partial_{c_q}f$. Let also $\delta:=\inf_{\supp\varphi}\ell(\cdot,o)$, notice that $\delta>0$  by the continuity assumption on $\ell_+$ and put $E:=A\cap\{\ell(\cdot,o)>\tfrac\delta2\}$. Then $\ppi$ stays initially in $E$ and $|\d f|^p\in L^1(E,\mm\mres E)$ (as $\mm(E)<\infty$ and recalling \eqref{eq:dell}). We can thus apply \Cref{Th:Brenier} and conclude that $\ppi$ represents the initial $p$-gradient of $f$.

With this said, we know that \eqref{TMCP} holds with $K'$ in place of   $K$, thus subtracting $\scrS_N(\mu_0)$ from both sides and dividing by $t$ leads to
\begin{align*}
\frac{\scrS_N(\mu_t) - \scrS_N(\mu_0)}{t} \leq \int \frac{1-\tau_{K',N}^{(1-t)}\circ\ell(\gamma_0,\gamma_1)}{t}\,\rho_0(\gamma_0)^{-1/N}\d\bdpi(\gamma),
\end{align*}
and since --- by discussion made at the beginning --- the functions  $ \frac{1-\tau_{K',N}^{(1-t)}\circ\ell(\gamma_0,\gamma_1)}{t}$ are uniformly bounded from above on $\supp\ppi$, by Fatou's lemma for the $\limsup$ and the fact that $\ell(\gamma_0,\gamma_1)=\ell(\gamma_0,o)$ for $\ppi$-a.e.\ $\gamma$ (by \Cref{cor:intestplan})   we get
\begin{equation}
\label{eq:upperbounddalcomp}
\begin{split}
\limsup_{t\downarrow 0} \frac{\scrS_N(\mu_t) - \scrS_N(\mu_0)}{t} &\leq \int \tilde{\tau}_{K',N}\big( \ell(\gamma_0,\gamma_1)\big)\,\rho_0(\gamma_0)^{-\tfrac1N}\d\bdpi(\gamma)=c_{\alpha} \int \varphi_\alpha\,\tilde{\tau}_{K',N}\big(\ell(\cdot,o)\big)\d\meas.
\end{split}
\end{equation}
On the other hand, given any $t\in (0,1)$ the convexity of $z\mapsto \s_N(z):=-z^{1-\tfrac1N}$ gives
\begin{align*}
\frac{\scrS_N(\mu_t) -  \scrS_N(\mu_0)}{t} &= \int \frac{\s_N\circ \rho_t - \s_N\circ \rho_0}{t}\d\meas\\
&\geq \int \s_N'\circ \rho_0\,\frac{\rho_t - \rho_0}{t}\d\meas= \int \frac{\s_N'\circ \rho_0(\gamma_t) - \s_N'\circ\rho_0(\gamma_0)}{t}\d\bdpi(\gamma)
\end{align*}
so that  combining the metric Brenier--McCann \Cref{Th:Brenier}  with the first order differentiation formula in \Cref{thm:horver} (notice that  $\s'_N\circ\rho_0\in\Pert(f)$ by item $(iii)$ in \Cref{Pr:RelationsII} --- here and below  it matters having picked  $\alpha>0$, so that the relevant  functions are Lipschitz in the range of $\rho_0,\varphi_\alpha$ and thus an argument based on locality justifies the computations) we get
\begin{equation}
\label{eq:anchedopo3}
\liminf_{t\downarrow0}\frac{\scrS_N(\mu_t) -  \scrS_N(\mu_0)}{t}\geq \int \d^+(\s'_N\circ\rho_0)(\nabla f)\,|\d f|^{p-2}\,\rho_0\,\d\mm.
\end{equation}
It is now convenient to introduce the `pressure' ${\sf p}_N(z):=z\,\s_N'(z)-\s_N(z)=\tfrac1Nz^{1-\frac1N}$, so that we have ${\sf p}_N'(z)=z\s_N''(z)$ and ${\sf p}_N(\rho_0)=\tfrac1Nc_{\alpha}\,\varphi_\alpha$. Then  the chain rule \eqref{eq:chaing1} gives
\[
\begin{split}
\int \d^+(\s'_N\circ\rho_0)(\nabla f)\,|\d f|^{p-2}\,\rho_0\,\d\mm&=\int \rho_0\,\s_N''(\rho_0)\d^+\rho_0(\nabla f)\,|\d f|^{p-2}\,\d\mm\\
&=\int \d^+({\sf p}_N\circ\rho_0)(\nabla f)\,|\d f|^{p-2}\,\d\mm=\frac{c_{\alpha}}{N}\int  \rmd^+\varphi(\nabla f)\,\vert\rmd f\vert^{\PP-2}\d\meas,
\end{split}
\]
having used also \eqref{eq:locg} in the last step to replace $\varphi_\alpha$ with $\varphi$. Coupling all this with \eqref{eq:upperbounddalcomp} we get
\begin{align*}
\int  \rmd^+\varphi(\nabla f)\,\vert\rmd f\vert^{\PP-2}\d\meas \leq N\int \varphi_\alpha\, \tilde{\tau}_{K',N}\big(\ell(\cdot,o)\big)\,\d\meas.
\end{align*}
and letting first   $\alpha\downarrow 0$ and then $K'\uparrow K$, using  the monotone  convergence theorem (recall the comments at the beginning of the proof to see that the integrand in the right hand side is uniformly bounded from above in $\alpha\in(0,1)$, $K'\in(K/2,K)$) we conclude.
\end{proof}
From the above result and the chain rules we established in Section \ref{se:calcrules} we can derive similar comparison results for other functions of the time separation. The following one is particularly relevant:
\begin{corollary}[$\PP$-d'Alembert comparison for Lorentz distance]\label{Cor:Time dist II} Let $p,q$ and $(\M,\uptau,\ell,\meas)$ be as in \Cref{Cor:Time I} above,   $o$ be a point in $\supp\meas$ and set $g := -\ell(\cdot,o)$.

Then for every non-negative and bounded $\varphi\in \Pert(g)$ supported in an emerald inside $I^-(o)$ we have
\begin{equation}
\label{eq:daldist}
\int\rmd^+\varphi(\nabla g)\,\vert\rmd g\vert^{\PP-2}\,\d\meas \leq \int \frac{N\,\tilde{\tau}_{K,N}\circ \ell(\cdot,o) - 1}{\ell(\cdot,o)} \,\varphi\,\d\meas.
\end{equation}
\end{corollary}
\begin{proof} The fact that the integrals are well defined follows as in the proof of \Cref{Cor:Time I} above. Let $f$ be as in  \Cref{Cor:Time I}, notice that  $\varphi$ is emerald supported  in $I^-(o)$ so that  the continuity of $\ell_+$ ensures that there is a $c$-steep function $\psi_q:\R\to\R$ for some $c>0$ such that $g=\psi_q\circ f$ holds on $\supp(\varphi)$.  By direct computation we see that $(\psi_q')^{p-1}\circ f=\ell(\cdot,o)^{-1}$, thus from the chain rule \eqref{eq:chainf} and the locality property \eqref{eq:locf} we see that
\[
\rmd^+\varphi(\nabla g)\,\vert\rmd g\vert^{\PP-2}=\ell(\cdot,o)^{-1}\rmd^+\varphi(\nabla f)\,\vert\rmd f\vert^{\PP-2}\qquad\mm-a.e.\ on\ \supp(\varphi).
\]
Similar considerations justify the use of the Leibniz rule \eqref{eq:leib2} and the chain rules \eqref{eq:chaing1}, \eqref{eq:chainf} to get that
\[
\begin{split}
\ell(\cdot,o)^{-1}\rmd^+\varphi(\nabla f)\,\vert\rmd f\vert^{\PP-2}&\leq\rmd^+(\ell(\cdot,o)^{-1}\varphi)(\nabla f)\,\vert\rmd f\vert^{\PP-2}-\varphi\, \rmd^+(\ell(\cdot,o)^{-1})(\nabla f)\,\vert\rmd f\vert^{\PP-2}\\
\text{(using also \eqref{eq:dell})}\qquad&=\rmd^+(\ell(\cdot,o)^{-1}\varphi)(\nabla f)\,\vert\rmd f\vert^{\PP-2}-\varphi \,\ell(\cdot,o)^{-1}
\end{split}
\]
holds $\mm$-a.e.\ on $\supp(\varphi)$. Now we use \Cref{Pr:RelationsII} to see that the assumption $\varphi\in\Pert(g)$ implies that $\tfrac{\varphi}{\ell(\cdot,o)}\in\Pert(f)$, so that the conclusion follows from \Cref{Cor:Time I}.
\end{proof}

\begin{remark}[Eschenburg's d'Alembert comparison]\label{Re:Esch} 
The bound \eqref{eq:daldist} is a weak form of the heuristic inequality 
\begin{align*}
    \Box_\PP(-\ell(\cdot,o)) \leq \frac{N\,\tilde{\tau}_{K,N}\circ \ell(\cdot,o) - 1}{\ell(\cdot,o)}\quad\textnormal{on }I^-(o),
\end{align*}
where $\Box_\PP :=-\textnormal{div}(\vert\rmd \cdot\vert^{\PP-2}\,\nabla\,\cdot)$ is the \emph{$\PP$-d'Alembert operator}.  Notice also that  for $K=0$ the estimate \eqref{eq:daldist}   reads as
\begin{align*}
\int\rmd^+\varphi(\nabla g)\,\vert\rmd g\vert^{\PP-2}\,\d\meas \leq \int \frac{N-1}{\ell(\cdot,o)}\,\varphi\,\d\meas,
\end{align*}
which is already sharp \cite{Treude:2011,TreudeGrant:2013}. Since $\rmd g$ has unit magnitude $\meas$-a.e.~on $I^-(o)$ thanks to \Cref{Th:dftau_is_1_under_tcd}, this extends Eschenburg's  d'Alembert comparison theorem from \cite[§5]{Eschenburg:1988} across the past timelike cut locus of $o$.  {For smooth spacetimes, 
a 
simple direct proof of this extension using more classical techniques can be found in \cite{QuintetEllipticsplitting},  where the range of validity of the theorem is widened to include timelike geodesically complete spacetimes that need not be forward.} 
\hfill$\blacksquare$
\end{remark}

Theorem \ref{Cor:Time I} formally establishes  an upper bound for $\Box_\PP f^o$ and since $\Box_\PP$ is elliptic with $\Box_\PP f=\Box_\PP(f+c)$ for any $c\in\R$, we expect from the maximum principle to be able to establish upper bounds for $\Box_\PP f$ for any $c_q$-concave function $f$. This expectation turns out to be correct, but   rather than being based on the maximum principle, our argument is a variant of the one just used to prove \Cref{Cor:Time I}. To carry it out we shall need a non-branching assumption to apply \Cref{Le:Existence test plans2} and get suitable good geodesics which, as before, can be used to link  lower Ricci bounds and Sobolev calculus. Before stating the result we require a preliminary lemma.

\begin{lemma}[Timelike diameter bounds the steepness of $c_q$-concave functions]
\label{le:stimasteep}
Let $(\M,\ell)$ be a metric spacetime with $L:=\sup_{x\leq y}\ell(x,y)<+\infty$ and $0\neq p,q<1$ satisfy $\tfrac1p+\tfrac1q=1$.

Then any $c_q$-concave function is $L^{q-1}$-steep. In particular, if $(\M,\uptau,\ell,\mm)$ is a mm spacetime and $f$ is a  $\mm$-measurable $c_q$-concave  function, then
\[
|\d f|^{p-1}\leq  L\qquad\mm-a.e,\quad\text{where}\qquad L:=\sup_{x\leq y}\ell(x,y).
\]
\end{lemma}

\begin{proof}
The second claim follows from the first one and inequality \eqref{eq:dflsteep}. For the first we observe that, directly from the definition, we see that the infimum of an arbitrary family of $L^{q-1}$-steep functions is $L^{q-1}$-steep. Hence to conclude it suffices to show that for any $y\in\M$ and $c\in\R$ the function $x\mapsto -u_q(\ell(x,y))+c$ is $L^{q-1}$-steep. This, however, is obvious from the fact that $x\mapsto -\ell(x,y)$ is 1-steep and $z\mapsto -u_q(-z)$ has derivative bounded from below by $L^{q-1}$ (and thus is $L^{q-1}$-steep) on $[-L,0]$.
\end{proof}

For $\QQ<0$ the measurability hypotheses imposed on $f^{c_q}$ and $\bar F$
in \Cref{Th:Dalem comp}(i)-(ii) are easily satisfied by combining \Cref{P:c-concave usc} with a measurable selection theorem.
To focus on the new additional difficulties, we will ask in point $(iii)$ 
below an integrability assumption stronger than what is actually necessary; in particular, the result below does not extend Theorem \ref{Cor:Time I}  (compare with the definition of $A$ in the proof of Theorem \ref{Cor:Time I}).

\begin{theorem}[$\PP$-d'Alembert comparison for $c_q$-concave functions]\label{Th:Dalem comp}  Let $p,q$ and $(\M,\uptau,\ell,\meas)$ be as in \Cref{Cor:Time I} and assume also that $\M$ is  timelike $\QQ$-essentially non-branching.

Let $f:\M\to\bar\R$ be $c_q$-concave and  $\varphi\in\Pert(f)$ non-negative, bounded,  and emerald  supported. Assume  that:


\begin{itemize}
\item[i)] $f^{c_q}$ is  Borel and $f$ is bounded on $\supp\varphi$;
\item[ii)] there is a Borel $\bar F:\supp\varphi\to \M$ and an emerald $E$ with
$\bar F(\supp\varphi) \cup \supp\varphi \subset E$. Moreover, for some  $c>0$ we have  $(x,\bar F(x))\in\partial_{c_q}f$ and $\ell(x,\bar F(x))\in(c,\tfrac1c)$  for $\mm$-a.e.\ $x\in\supp\varphi$;
\item[iii)] 
we have  $\vert\rmd f\vert^p\in L^1(E, \meas\mres E)$.
\end{itemize}
Then
\begin{equation}
\label{eq:gendalcomp}
\int \rmd^+\varphi(\nabla f)\,\vert\rmd f\vert^{\PP-2}\,\d\meas \leq N\int \,\tilde{\tau}_{K,N}\circ\vert\rmd f\vert^{\PP-1}\,\varphi\,\d\meas.
\end{equation}
\end{theorem}
\begin{proof}
The  backbone of the proof is the same of that of \Cref{Cor:Time I}, thus we shall focus on the differences. 
Recall that $c_q$-concave functions are rough causal, as discussed right after \eqref{eq:cdiff}, and then 
notice the assumption $\ell(x,\bar F(x)) \ge c >0$ for $\mm$-a.e. $x \in \spt \varphi$ ensures $\varphi\mm$ is concentrated on  $\M\setminus \M_{\sf {fin}}$. \Cref{cor:fmeastmcp} therefore implies that $f$ and the integrands in \eqref{eq:gendalcomp} are well defined and $\mm$-measurable.  The integral on the left  is well defined (a priori with value $+\infty$) by the bound \eqref{eq:unifdiffquot} and the integrability assumption $\vert\rmd f\vert^p\in L^1(E)$, that in particular yields integrability of $|\d f|^p$ on $\supp\varphi \subset E$. The integral on the right hand side is also well-defined because $\tilde{\tau}_{K,N}\circ\vert\rmd f\vert^{\PP-1}$ is (essentially) uniformly bounded from above on $\{\varphi>0\}$. Indeed, in the course of the proof we will show that
\begin{equation}
\label{eq:dafarvedere}
|\d f|^{p-1}(x)\in[c,\tfrac1c]\qquad\mm-a.e.\ on\ \supp\varphi.
\end{equation}
If $K\leq0$ this suffices to conclude, as  $\tilde\tau_{K,N}:\R^+\to\R$ is continuous. If $K>0$ we notice on one side that $\tilde\tau_{K,N}$ is bounded from above on $[0,\pi\sqrt{\tfrac{N-1}{K}}]$ and on the other that $|\d f|^{p-1}\leq \pi\sqrt{\tfrac{N-1}{K}}$ $\mm$-a.e.\ on $\supp\varphi$, this latter bound being  a consequence of  the Bonnet-Myers-type estimate that ensures that the timelike diameter of a $\smash{\TMCP_+(K,N)}$ spacetime is  $\leq \pi\sqrt{\tfrac{N-1}{K}}$ (see \cite[Proposition 5.10]{CM:20} \cite[Cor.~3.14]{Braun:2023Renyi}) and Remark \ref{R:q<0,forward}) and \Cref{le:stimasteep} above.

With this said,  as in the previous case we deal with the technically slightly more involved case $K>0$ and fix $K'\in(0,K)$. Also as before, we know that  $\varphi\in L^1(\mm)$ and we can assume $\mm(\{\varphi>0\})>0$.

 Let $\smash{\tilde{f}\colon \M\to\bar{\R}}$ be equal to $f$ on $J^-( E  )$ and $+\infty$ otherwise and notice that since $f$ is causal   so is $\tilde{f}$. It is then clear that  for any $\alpha>0$  we have $\varphi_{\alpha} := 1_{E}(\varphi + \alpha)\in\Pert(\tilde{f})$.

We can  then define  the Borel probability measure $\mu_0 := \rho_0\,\meas$, where
\[
\rho_0 := (c_{\alpha}\,\varphi_{\alpha})^{\frac N{N-1}},\qquad\text{$c_{\alpha }$ being the normalization constant.}
\]
Let  $\bar\mu_1:=\bar F_*\mu_0$ and notice that it is emerald supported because  $\bar F(\supp\varphi)$ is contained in some emerald by assumption. The assumptions also assert $f\in L^\infty(\mu_0)\subset L^1(\mu_0)$, and that the function $x\mapsto f^{c_q}(\bar F(x))=c_q(x,\bar F(x))+f(x)$ is  bounded (here we used that $\ell(x,\bar F(x))\in[c,\tfrac1c]$ $\mu_0$-a.e.) and thus $f^{c_q}\in L^\infty(\bar\mu_1)\subset L^1(\bar\mu_1)$. 
Recalling the $\mu_0 \ll\mm\mres{(\M\setminus \M_{\sf {fin}})}$ measurability of $f$ from above,
the assumption that $f^{c_q}$ is Borel yields the $\bar\mu_1$-measurability of $f^{c_q}$, so
 \Cref{le:basebase} ensures
 that  the coupling  ${\bar \pi}:=({\rm id},\bar F)_*\mu_0$ is optimal. Also, it is clearly concentrated on $\{\ell>0\}$ and we have $\ell_q(\mu_0,\bar\mu_1)\in(0,+\infty)$. We can therefore  apply  \Cref{Le:Existence test plans2} (with $K'$ in place of $K$) and obtain an $\ell_q$-geodesic $(\mu_t)$ from $\mu_0$ to $\bar\mu_1$ and a lifting   $\bdpi\in\Prob(\TGeo(\M))$ of it satisfying  \eqref{eq:insieme2}. We claim that $\ppi$ represents the initial $p$-gradient of $f$ on the set $E$ given in the statement and to this aim we shall verify that it satisfies the assumptions on the metric Brenier--McCann \Cref{Th:Brenier}. We know from \Cref{le:basebase} that every $\ell_q$-optimal coupling of $(\mu_0,\bar\mu_1)$ is concentrated on $\partial_{c_q}f$ and from \Cref{le:altrocdiff} that the same holds for $(\e_0,\e_1)_*\ppi$ (which  is $\ell_q$-optimal for $(\mu_0,\mu_1)$ by construction). The identity \eqref{eq:sameell} together with the definition of $\bar \pi$ yield $\log(\ell(\gamma_0,\gamma_1))\in L^\infty(\ppi)$ and since $\ppi$ is concentrated on timelike geodesics, this suffices to prove that the integrability assumption $(ii)$ in \Cref{Th:Brenier} holds. Since assumption $(i)$ of \Cref{Th:Brenier}  holds by our hypotheses $(iii)$ and since $\ppi$ does not leave $E$ we can indeed apply \Cref{Th:Brenier} to conclude that $\ppi$  represents the initial $p$-gradient of $f$. We notice also that for $F:\M\to\CC([0,1];\M)$ inducing $\ppi$ as in \Cref{Le:Existence test plans2}  we have 
\[
|\d f|^{p-1}(x)\stackrel{\eqref{eq:brmcc}}=\ell(x,F_{1}(x))\stackrel{\eqref{eq:sameell}}\in[c,\tfrac1c] \qquad\mu_0-a.e.\ x\in\M,
\]
so that \eqref{eq:dafarvedere} holds. With this said, the same computations leading to \eqref{eq:upperbounddalcomp} yield  
\begin{align*}
\limsup_{t\downarrow 0} \frac{\scrS_N(\mu_t) - \scrS_N(\mu_0)}{t} \leq c_{\alpha} \int \varphi_{\alpha}\,\tilde{\tau}_{K',N}\big(\vert\rmd f\vert^{\PP-1}\big)\,\d\meas= c_{\alpha} \int_E(\varphi+{\alpha})\,\tilde{\tau}_{K',N}\big(\vert\rmd f\vert^{\PP-1}\big)\,\d\meas,
\end{align*}
where the use of the reverse Fatou's lemma to bring the $\limsup$ inside the integral is justified  as in the proof of \Cref{Cor:Time I}. 
  
Since $\ppi$ does not leave the set $E$ where we have $f=\tilde f$, by \Cref{Th:Brenier} we deduce also that $\ppi$ represents the initial $p$-gradient of $\tilde f$. Since, again as in the proof of \Cref{Cor:Time I},  we have    $\s_N'\circ\rho_0\in \Pert(\tilde f)$,   the same computations as in \eqref{eq:anchedopo3} and the identity below it give
\[
\liminf_{t\downarrow 0}\frac{\scrS_N(\mu_t) -  \scrS_N(\mu_0)}{t} \geq \frac{c_{\alpha}}{N}\int   \rmd^+\varphi_{\alpha}(\nabla \tilde f)\,\vert\rmd \tilde f\vert^{\PP-2}\d\meas= \frac{c_{\alpha}}{N}\int   \rmd^+\varphi(\nabla f)\,\vert\rmd f\vert^{\PP-2}\d\meas,
\]
where for the equality we used the locality properties \eqref{eq:locg} and \eqref{eq:locf}. Coupling these last two bounds we get
\[
\int   \rmd^+\varphi(\nabla f)\,\vert\rmd f\vert^{\PP-2}\d\meas\leq    N \int_E (\varphi+{\alpha})\,\tilde{\tau}_{K',N}\big(\vert\rmd f\vert^{\PP-1}\big)\,\d\meas .
\]
To conclude by letting $\alpha\downarrow0$ and then $K'\uparrow K$ it suffices to prove that  $\int_E \tilde{\tau}_{K',N}\big(\vert\rmd f\vert^{\PP-1}\big)\,\d\meas<+\infty$. For $K=0$ this follows from $\mm(E)<+\infty$ and for $K<0$ from the fact that $\theta\mapsto\tilde\tau_{K,N}(\theta)$ has linear growth together with the extra integrability assumption made in item $(iii)$. For $K>0$ we use the Bonnet-Myers-type of timelike diameter bound already recalled together with the fact that on $[0,\pi\sqrt{\tfrac{N-1}{K}}]$ the functions $\tilde\tau_{K',N}$ are uniformly bounded from above (in $K\in(0,K)$) and  \Cref{le:stimasteep} below.
\end{proof}

\begin{remark}[Nonsharp variants]\label{Re:Sharp impr} Assuming $\smash{\TMCP_+^{\h,*}(K,N)}$ instead of $\smash{\TMCP^\h_+(K,N)}$ in \Cref{Th:Dalem comp}, \Cref{Cor:Time I}, and \Cref{Cor:Time dist II}, the resulting estimates hold --- with the same proof --- with $\tilde \sigma_{K,N}$ in place of $\tilde \tau_{K,N}$, where 
\begin{align*}
\tilde{\sigma}_{K,N}(\theta)
= \begin{cases}\displaystyle\theta\sqrt{\frac{K}{N}}\cot\!\Big(\theta\sqrt{\frac{K}{N}}\Big) & \textnormal{if }K>0,\\
1 & \textnormal{if }K=0,\\
\displaystyle\theta\sqrt{\frac{-K}{N}}\cot\!\Big(\theta\sqrt{\frac{-K}{N}}\Big) & \textnormal{if } K<0,
\end{cases}
\end{align*}
is the derivative at 1 of the function $\smash{r\mapsto \sigma_{K,N}^{(r)}}$ from \eqref{sigma}.\hfill$\blacksquare$
\end{remark}

\subsection{Distributional $\PP$-d'Alembert operator}
\label{S:defining square}

The aim of this section is to propose a first answer to the question: what  is the $p$-d'Alembertian? By no means is the presentation here exhaustive: on the contrary, our aim is more to  show viability of this research direction in the nonsmooth setting. For some further results see \cite{braun+}.

 Taking inspiration from Gigli's theory in positive signature \cite{Gigli:2015}, we see that the calculus developed above hints at the possibility of defining the $p$-d'Alembert operator `distributionally'. Notice indeed that in the smooth category from the definition $\Box_pf=-\div(|\d f|^{p-2}\nabla f)$ we see that 
\[
\int \d\varphi(\nabla f)|\d f|^{p-2}\,\d\mm=\int \varphi \,\Box_pf\,\d\mm,
\]
so that $\Box_p f$ can be defined weakly as the operator sending suitable $\varphi$'s to $\int \d\varphi(\nabla f)|\d f|^{p-2}\,\d\mm$. Now, in our context a priori we do not have a definition for $ \d\varphi(\nabla f)|\d f|^{p-2}$, but only of its  `proxies' $ \d^+\varphi(\nabla f)|\d f|^{p-2}$ and $- \d^+(-\varphi)(\nabla f)|\d f|^{p-2}$.   Still, in infinitesimally Minkowskian spaces these two agree on $\{|\d f|>0\}$   (recall \Cref{T:VD=HD}) and denoting  their common value by $ \d\varphi(\nabla f)|\d f|^{p-2}$, from \eqref{eq:homg} and \eqref{eq:superaddg} we see that the linearity relation
\begin{equation}
\label{eq:lind}
\d(\alpha_1\varphi_1+\alpha_2\varphi_2)(\nabla f)|\d f|^{p-2}=\alpha_1\d\varphi_1(\nabla f)|\d f|^{p-2}+\alpha_2\d \varphi_2(\nabla f)|\d f|^{p-2}\quad\mm-a.e.\ on\ \{|\d f|>0\}
\end{equation}
holds for any $\alpha_1,\alpha_2\in\R$  whenever   $\varphi_1,\varphi_2:\M\to\R$ are so that  $\pm\varphi_1,\pm\varphi_2\in\Pert(f)$ (notice that in this case by \Cref{Pr:prec} we have   $\alpha_1\varphi_1+\alpha_2\varphi_2\in\Pert(f)$ and that we are insisting that the functions be real valued). Let us give a name to this space of functions: for $f:\M\to\bar\R$ causal we put
\begin{equation}\label{PertSym}
\Pert^\sym(f):=\big\{\varphi:\M\to \R\ :\ \varphi,-\varphi\in \Pert(f)\big\}.
\end{equation}
For $U\subset M$ open we also define
\[
{\Pert_\be^\sym(f, U)}:=\big\{\varphi\in\Pert^\sym(f)\ :\ \supp\varphi\subset U\text{ is contained in an emerald and } \varphi\text{ is bounded}\big\}.
\]
The following is now natural:
\begin{definition}[Distributional $p$-d'Alembertian]\label{def:distrdal}
Let $(\M,\uptau,\ell,\mm)$ be an infinitesimally Minkowskian mm  spacetime, $U\subset \M$  open, $0\neq p<1$ and  $f:\M\to\bar\R$ causal. We shall say that $f$ has a distributional $p$-d'Alembertian on $U$ provided $ \d^+ \varphi(\nabla f)|\d f|^{p-2}\in L^1(U,\mm\mres U)$ for every  $\varphi\in \Pert_\be^\sym(f, U)$. In this case  the map
\begin{equation}
\label{eq:distrdal}
\Pert_\be^\sym(f, U)\ni\varphi\qquad\mapsto\qquad \int \d^+ \varphi(\nabla f)|\d f|^{p-2}\,\d\mm\in\R
\end{equation}
 will be called the distributional $p$-d'Alembertian of $f$ in $U$.
\end{definition}
Notice that the integrability assumption  $ \d^+ (\pm\varphi)(\nabla f)|\d f|^{p-2}\in L^1(U,\mm\mres U)$ together with \eqref{eq:dfzero1} and \eqref{eq:dfzero2} imply that  \eqref{eq:dpdm2} for $g=\varphi$ extends to the whole of $\M$ (the common value of both sides still denoted by $\d \varphi(\nabla f)|\d f|^{p-2}$) and we conclude that the distributional $p$-d'Alembertian, when it exists, is a linear operator. Notice also  that if  $|\d f|^p\in L^1(E,\mm\mres E)$ for every emerald $E\subset U$, then the bound \eqref{eq:unifdiffquot} easily implies that $f$ has a distributional $p$-d'Alembertian in $U$. 

We can then interpret a one-sided bound like the one provided by  \Cref{Cor:Time I} as structural information on the distributional $p$-d'Alembertian, which can be used to begin a regularity theory for it. For instance:

\begin{proposition}[Existence and bounds for the distributional $p$-d'Alembert operator]
\label{Pr:A bounded} Using the same assumptions and notation of \Cref{Cor:Time I} and assuming the spacetime to also be infinitesimally Minkowskian, the following holds. Let $U\subset \M$ be the open set $I^-(o)$ if $K\leq 0$ and $I^-(o)\cap\{\ell(\cdot,o)<\pi\sqrt{(N-1)/K}\}$ if $K>0$.

Then $f$ has a distributional $p$-d'Alembertian on $U$ and for every  $\smash{\varphi\in\Pert_\be^\sym(f,U)}$ non-negative we have
\begin{equation}
\label{eq:distrdalbound}
\int \d \varphi(\nabla f)|\d f|^{p-2}\,\d\mm-\int \varphi\,\d\nu\leq 0, \qquad\text{where}\qquad     \nu := N\,\tilde\tau_{K,N}\circ\ell(o,\cdot)\,\meas\mres U
\end{equation}
(notice that $\nu$ is a Radon measure on $U$). Moreover, assume that for some emerald $E\subset U$ there exists a non-negative `cut-off' function $\eta\in \Pert_\be^\sym(f,U)$ that is $\geq 1$ on $E$. Then for every $\varphi\in\Pert_\be^\sym(f,U)$ with support in $E$ we have
\begin{equation}
\label{eq:bounddal}
\big|\int \d \varphi(\nabla f)|\d f|^{p-2}\,\d\mm\big|  \leq \sup_{x\in U}|\varphi(x)|\Big(2|\nu|(E)\sup_{x\in \M}\eta(x)+\int \d \eta(\nabla f)|\d f|^{p-2}\,\d\mm\Big).
\end{equation}
\end{proposition}
\begin{proof} From \eqref{eq:dell}  we see that $|\d f|^p=\ell (\cdot,o)^q$, so that $|\d f|^p\in L^1(E,\mm\mres E)$ for every emerald $E\subset U$ and the previous discussion ensures that $f$ has distributional $p$-d'Alembertian on $U$. The bound \eqref{eq:distrdalbound} is a restatement of \eqref{eq:dalcomp}, so we turn to the last claim.

Notice that for $\varphi\in \Pert_\be^\sym(f,U)$ item $(v)$ in \Cref{Pr:prec} ensures that its positive and negative parts $\varphi^+$ and $\varphi^-$, whose support is contained in that of $\varphi$, are also in $ \Pert_\be^\sym(f,U)$.  Thus writing  ${\sf s}:=\sup_{x \in U}|\varphi(x)|$ for brevity,  the functions ${\sf s}\eta-\varphi^\pm$ are non-negative and in $\Pert_\be^\sym(f,U)$. Thus putting, again for brevity, $L(\varphi):=\int \d \varphi(\nabla f)|\d f|^{p-2}\,\d\mm$, the bound \eqref{eq:distrdalbound} tells us that
\[
\begin{split}
L(\varphi^\pm)&\leq \int\varphi^\pm\,\d\nu\leq {\sf s}|\nu|(E),\\
L(\varphi^\pm)&=-L({\sf s}\eta-\varphi^\pm)+{\sf s}L(\eta)\geq -\int {\sf s}\eta-\varphi^\pm\,\d\nu +{\sf s}L(\eta)\geq -{\sf s}(|\nu|(E)\sup_{x\in \M}\eta(x)+L(\eta))
\end{split}
\]
and the claim \eqref{eq:bounddal} follows.
\end{proof}
We collect a few informal comments on this result:
\begin{itemize}
\item[a)] There is nothing special about the function $f$ in the above  beside the bound \eqref{eq:distrdalbound} established in  \Cref{Cor:Time I}:  analogous statements can be made for $f$'s as in \Cref{Cor:Time dist II} or \Cref{Th:Dalem comp}, in the latter case provided we also assume the spacetime to be non-branching as in that theorem. Similar results are also in place in `past' $\TMCP^\h_-(K,N)$  spacetimes as discussed in   the next section.
\item[b)] The definition we gave for the distributional $p$-d'Alembertian depends on the space  $ \Pert_\be^\sym(f,U)$, thus it is important to know that this space is rich. We do not have general results in this direction unless the spacetime is smooth, where they are provided by \Cref{ss:null distance}.
\item[c)] From bounds like \eqref{eq:distrdalbound} and \eqref{eq:bounddal} and results like that of Riesz  and Daniell one can surely hope that the distributional $p$-d'Alembertian can be represented by a unique measure. See \Cref{Cor:SignedR1} and \cite{braun+} for positive results in this direction (in the latter case obtained by more constructive arguments inspired by those from \cite{CavallettiMondino20} in positive signature) and 
Remark~\ref{re:measuresfunctionals} for a caveat. 
\item[d)] In order for the linearity in \eqref{eq:lind} to be in place, and thus for the map in \eqref{eq:distrdal} to be linear, it is not necessary to assume infinitesimal Minkowskianity. As the arguments leading to \eqref{eq:lind} show, it suffices that $\d^+\varphi(\nabla f)|\d f|^{p-2}$ coincides with $-\d^+(-\varphi)(\nabla f)|\d f|^{p-2}$ whenever $\pm\varphi\in\Pert(f)$. Spaces with this property are called \emph{infinitesimally strictly concave} in  \cite{braun+} (see also the concept of \emph{infinitesimal strict convexity} defined in \cite{Gigli:2015} for the positive signature case).
\item[e)] 
Without assuming infinitesimal strict concavity, after noticing that the concavity \eqref{eq:superaddg}  yields convexity of $\varphi\mapsto -\d^+(-\varphi)(\nabla f)|\d f|^{p-2}$, one might say that  a distributional $p$-d'Alembertian of $f$ in $U$ is any linear functional $L:{\Pert_\be^\sym(f, U)}\to\R$ such that
\[
\int \d^+ \varphi(\nabla f)|\d f|^{p-2}\,\d\mm\leq L(\varphi)\leq \int  -\d^+(-\varphi)(\nabla f)|\d f|^{p-2}\,\d\mm\qquad\forall \varphi\in \Pert_\be^\sym(f, U).
\]
While this is doable (see also  \cite{Gigli:2015}  for the positive signature case and \cite{braun+} for more in the spacetime setting), solely from a bound like this one cannot deduce any relation between $L$ and the right hand side in  \eqref{eq:dalcomp}. Thus if one looks for a link like that in \Cref{Pr:A bounded} above, some sort of selection procedure is necessary.
\end{itemize}

\subsection{Modifications for past $\TMCP$  spaces}\label{Sub:Modiback}
All results from the previous parts of \Cref{Section: Effects of curvature assumption} have counterparts for the past timelike measure contraction property $\TMCP^\h_-(K,N)$ from \Cref{Def:TMCP} and these lead to $p$-d'Alembertian estimates for functions of the kind $\psi^{c_q}$, rather than $\psi_{c_q}$, defined in \eqref{eq:psicc}.

Roughly speaking,  $\TMCP^\h_-(K,N)$  ``reverses'' the transport direction of $\TMCP^\h_+(K,N)$ by imposing convexity properties of the involved entropies along chronological transports \emph{from} Dirac masses \emph{to} $\meas$-absolutely continuous distributions. Because of this, most of the results follow via minor technical modifications --- see also Remark \ref{re:sign} --- so that below we only give a brief outline of the main arguments.

We stress that we will still work with  the original causal structure set up by $\ell$ (and not its time-reversal). In particular this means that we will still work with forward spacetimes and with left continuous causal curves, rather than on backward spacetimes and right continuous curves, hence strictly speaking the results collected here cannot be derived from those already proved by time-reversal. Still, the `sign choice' in our completeness  assumption is almost invisible through our constructions, and reduces to the (inessential) possible discontinuity at $t=0$ discussed in \Cref{re:indirdisc} and to the (also inessential) fact that test plans are automatically continuous at $t=1$, so that the continuity requirement in the definition of final test plan could be removed (see also \Cref{R:forward-backward}).

\medskip

The following calculus rules are valid in any forward metric measure spacetime $(\M,\uptau,\ell,\mm)$. Here $0\neq \PP,\QQ<1$ are conjugate, i.e.\ $\tfrac1\PP+\tfrac1\QQ=1$.
\begin{itemize}
\item[(A)] \emph{(Final version of nonsmooth Fenchel-Young inequality --- compare with  \Cref{Pr:Lower bound})} 

Let $f:\M\to\bar \R$ be causal, $\ppi$ a final test plan that stays finally in the Borel set $E\subset M$. Then 
\[
\liminf_{s\uparrow 1} \int \frac{f(\gamma_1) - f(\gamma_s)}{1-s}\d\bdpi(\gamma)  \geq \frac{1}{\PP}\int \vert\rmd f\vert^\PP(\gamma_1)\d\bdpi(\gamma) + \liminf_{s\uparrow 1} \frac{1}{(1-s)\QQ}\iint_s^1 \vert\dot\gamma_r\vert^\QQ\d r\d\bdpi(\gamma)
\]
holds whenever
\begin{enumerate}[label=\textnormal{(\roman*)}]
    \item $\vert \rmd f\vert^p\in L^1(E,\meas\mres E)$ in the case $p<0$,
    \item the latter limit inferior is not equal to $-\infty$ in the case $q<0$.
\end{enumerate}
\item[(B)]\emph{(Plans representing  final $p$-gradients  --- compare with \Cref{Def:represent gradient})} 

With  $f,\ppi,E$ as above, we say that $\ppi$ represents the final $p$-gradient of $f$ provided 
\begin{enumerate}[label=\textnormal{(\roman*)}]
\item $\vert \rmd f\vert^p\in L^1(E,\meas\mres E)$ 
\item$\limsup_{s\uparrow 1} \frac{1}{(1-s)\QQ}\iint_s^1 \vert\dot\gamma_r\vert^\QQ\d r\d\bdpi(\gamma)<+\infty$ 
\item we have
\[
\limsup_{s\uparrow 1} \int \frac{f(\gamma_1) - f(\gamma_s)}{1-s}\d\bdpi(\gamma)  \leq \tfrac{1}{\PP}\int \vert\rmd f\vert^\PP(\gamma_1)\d\bdpi(\gamma) + \liminf_{s\uparrow 1} \tfrac{1}{(1-s)\QQ}\iint_s^1 \vert\dot\gamma_r\vert^\QQ\d r\d\bdpi(\gamma).
\]
\end{enumerate}
\item[(C)]\emph{(First order differentiation formula  --- compare with \Cref{thm:horver})}

 Let $\ppi$ represent the final $p$-gradient of $f$ on $E$. Then for every  $-g\in\Pert(f)$ we have:
 \begin{itemize}
 \item[(i)] for any $s<1$ sufficiently big    the positive  part of the function $g\circ\eval_1-g\circ\eval_s$ is in $L^1(\ppi)$;
 \item[(ii)] the positive part of the function $-\rmd^+(-g)(\nabla f)\,|\rmd f|^{\PP-2}$ is in $L^1(\mm\mres E)$;
 \item[(iii)] we have
\begin{equation}
\label{eq:horverder2}
\lims_{s\uparrow1}\int\frac{g(\gamma_1)-g(\gamma_{s})}{1-s}\d\bdpi(\gamma)
 \leq \int- \rmd^+(-g)(\nabla f)\,|\rmd f|^{\PP-2}(\gamma_1)\d\bdpi(\gamma).
\end{equation}
 \end{itemize} 
 \item[(D)] \emph{(Metric Brenier--McCann theorem --- compare with \Cref{Th:Brenier})}

Assume also the spacetime is forward complete, that either $(A)$ or $(B)$ of \Cref{Th:Lifting} hold, that $\ell_+$ is continuous and real valued and let $f\colon \M\to \bar\R$ be $\mm$-measurable and of the form $f=\psi^c$ for some $\psi:\M\to\bar\R$ (recall \eqref{eq:psicc}). Let $\ppi$ be a final test plan concentrated on $\TGeo(\M)$ such that $(\e_0,\e_1)_*\ppi$ is concentrated on the set $\partial^{c_q}f$ of couples $(x,y)$ such that $f_c(x),c_q(x,y), f(y)\in\R$ and $f_c(x)+c_q(x,y)\geq f(y)$. 
Assume also that:
\begin{itemize}
\item[i)] for some $\mm$-measurable set $E\subset \M$ the plan $\ppi$ stays finally in $E$ and $|\rmd f|^\PP\in L^1(E,\mm\mres E)$,
\item[ii)] $\int\ell(\gamma_0,\gamma_1)^\QQ\d\ppi(\gamma)<+\infty.$ 
\end{itemize}  
Then $\ppi$ represents the final $\PP$-gradient of $f$ on $E$ and 
\[
|\partial^{+}f |(\gamma_1)= |\rmd f|(\gamma_1)=\ell(\gamma_0,\gamma_1)^{\QQ-1}\qquad \ppi-a.e.\ \gamma.
\]

\end{itemize}
The above calculus rules are now coupled with a curvature condition: in the results below we shall assume that $(\M,\uptau,\ell,\mm)$ is:
\begin{itemize}
\item[-] A forward  $\smash{\TMCP^\h_-(K,N)}$ mm spacetime,
\item[-] $q$-essentially non-branching at 1,
\item[-] such that $\mm(E)<+\infty$ for every emerald $E$,
\item[-] such that $\ell_+$ is continuous and does not take the value $+\infty$,
\item[-] such that either $(A)$ or $(B)$ of \Cref{Th:Lifting} hold.
\end{itemize} 
Then:
\begin{itemize}
\item[(E)] \emph{(Existence of good final test plans with Dirac source --- compare with Thm. \ref{Le:Existence test plans} and Cor. \ref{cor:intestplan})} 

Let $\mu_1 = \rho_1\,\meas\in \Prob(\M)$ be   supported  in an emerald and have bounded compression.  Let $x_0\in \supp\meas$ satisfy $\log(\ell(x_0,\cdot))\in L^\infty(\mu_1)$. Then there is a final test plan $\bdpi$ concentrated on timelike geodesics $\gamma$ from $x_0$ to $\gamma_1$ so that for  $\mu_t:=(\eval_t)_\push\ppi$ we have:
\[
\begin{split}
\mu_t  &\leq \tfrac{1}{t^N}\,\mathrm{e}^{Dt\sqrt{K_-(N-1)}}\,\Vert \rho_1\Vert_{L^\infty(\M,\meas)},\\
\scrS_{N}(\mu_t)&\leq  -\int \tau_{K,N}^{(t)}\circ\ell(\cdot,x_1)\,\rho_1^{1-1/N}\d\meas,
\\ \scrS_{\infty}(\mu_t )&\le \scrS_\infty(\mu) 
-N \log \sigma_{K,N}^{(t)}\big( \|\ell(x_0,\cdot)\|_{L^2(\d\mu)}\big),
\end{split}
\]
for any $t\in(0,1]$, where $D := \sup\ell(\{x_0\}\times\supp\mu_1)$.
\begin{remark}[The role of forward completeness]\label{re:indirdisc} In the proof of the above, and the analogous results \Cref{Le:Existence test plans} and \Cref{cor:intestplan}, forward completeness is used to perform suitable limiting procedures (e.g.\ in the proof of the lifting theorem) but is not at all related to curvature conditions nor to the fact that here we start from a Dirac mass and end up in a diffused measure (contrary to what happens in the case of $\TMCP^\h_+$ spaces). The fact that the plan $\ppi$ in the above has marginal at time 1 equal to given measure $\mu_1$ comes from the assumption of $q$-essentially non-branching at 1, the density bounds and finite mass of emeralds together with \Cref{cor:contfromreg}.

On the other hand, we notice that $\mu_0$ is precisely $\delta_{x_0}$, which is a non-essential difference with \Cref{Le:Existence test plans} and \Cref{cor:intestplan}, where we could have $\mu_1\neq\delta_{x_1}$. This happens  because the discrete procedure used to build the final geodesic keeps $\mu_0$ always fixed, and eventually re-defines $\mu_1$ as narrow forward limit of measures defined on a suitable dense set of times (and the previous considerations ensure that such new definition coincides with the initially given measure $\mu_1$). Notice that in all this it might be that $(\mu_t)$ does not weakly converge to $\mu_0$ as $t\downarrow0$ (not even if we further assume that a narrow limit exists).
\hfill$\blacksquare$
\end{remark}
 \item[(F)] \emph{($p$-d'Alembertian comparison for $\tfrac1q\ell^q(o,\cdot)$ --- compare with \Cref{Cor:Time I})}
 
Assume also that $\ell_+$ is  continuous, let $o\in \supp\meas$ and define $f := \frac{1}{\QQ}\ell(o,\cdot)^\QQ$.

Then for every $\varphi:\M\to\R^+$ bounded, with $\supp\varphi\subset I^+(o)$  and  $-\varphi\in \Pert(f)$ we have
\begin{equation}
\label{eq:dalcomppast}
\int -\rmd^+(-\varphi)(\nabla f)\,\vert\rmd f\vert^{\PP-2}\,\d\meas \geq -\int N\,\tilde{\tau}_{K,N}\circ\ell(o,\cdot)\,\varphi\,\d\meas.
\end{equation}
\end{itemize}
An analogous comparison result is in place for $\ell(o,\cdot)$ and, under appropriate non-branching assumptions, also for more general functions $f$ of the form $f=\psi^c$ in analogy with  \Cref{Cor:Time dist II} and \Cref{Th:Dalem comp} . All the above results can be proved following rather pedantically the analogous proofs in the $\TMCP^\h_+(K,N)$ case. Let us, for added clarity, quickly and formally discuss how the last statement $(F)$ follows from the previous ones.

Let $\rho_1:=(c\varphi)^{N/(N-1)}$, $c>0$ being the normalization constant, and let $\ppi$ be given by item $(E)$  above. Then the (time-reversal of the) bound \eqref{TMCP} and the identity $\tau_{K,N}^{(1)}(\theta)=1$ give
\[
\frac{\scrS_N(\mu_1)-\scrS_N(\mu_s)}{1-s}\geq\int-\frac{\tau^{1}_{K,N}-\tau^{(s)}_{K,N}}{1-s}\circ\ell(x_0,\cdot)\rho_1^{1-\frac1N}\,\d\mm\qquad\stackrel{s\uparrow1}\to\qquad-c\int \tilde{\tau}_{K,N}\circ\ell(o,\cdot)\,\varphi\,\d\meas.
\]
On the other hand the bound $\s_N(b)-\s_N(a)\leq \s_N'(b)(b-a)$ for the convex function $\s_N(z):=-z^{1-1/N}$ gives
\[
\begin{split}
\frac{\scrS_N(\mu_1)-\scrS_N(\mu_s)}{1-s}\leq\int\frac{\s_N'(\rho_1(\gamma_1))-\s_N'(\rho_1(\gamma_s))}{1-s}\,\d\ppi(\gamma),
\end{split}
\]
thus recalling \eqref{eq:horverder2} and then the chain rules established in \Cref{se:calcrules} we deduce
\[
\begin{split}
\limsup_{s\uparrow0}\frac{\scrS_N(\mu_1)-\scrS_N(\mu_s)}{1-s}&\leq \int -\d^+(-\s_N'(\rho_1))(\nabla f)|\d f|^{p-2}\rho_1\,\d\mm\\
&=\frac cN\int -\d^+(-\varphi )(\nabla f)|\d f|^{p-2} \,\d\mm.
\end{split}
\]
Coupling this  with the previously found lower bound gives the desired conclusion \eqref{eq:dalcomppast}.

We conclude pointing out that, again in analogy with the $\TMCP^\h_+(K,N)$ case, combining the metric Brenier--McCann theorem with the existence of good final test plans give the following two results, valid in any mm spacetime as in $(F)$ above (possibly relaxing the continuity assumption on $\ell_+$ to upper semicontinuity):
\begin{itemize}
\item[(G)] \emph{(Unit maximal weak subslope of time separation from a point --- compare with \Cref{Th:dftau_is_1_under_tcd})}

For every $o\in \supp\meas$, the  functions $f :=u_q\circ \ell(o,\cdot)$ and $g:=\ell(o,\cdot)$ satisfy
\begin{equation*}
    \vert\rmd f\vert = \ell(o,\cdot)^{q-1}\qquad\text{and}\qquad     \vert\rmd g\vert = 1\qquad\meas\textnormal{-a.e. on }I^+(o).
\end{equation*}
\item[(H)] \emph{(Sobolev-to-steepness property --- compare with \Cref{Th:SobtoSteep})}

Assume also that the spacetime is $q$-essentially timelike non-branching at 1, that $\ell_+$ is continuous and that for any $x\ll y$ with $x,y\in\supp\mm$ we have $\mm(U\cap J(x,y))>0$ for any $U$ neighbourhood of $y$. Then $(\M,\uptau,\ell,\mm)$ has the Sobolev-to-steepness property.
\end{itemize}

\appendix

\section{Appendices}

\subsection[Compatibility with the smooth setting]{Compatibility with the smooth setting}
\label{ss:compatibility}
In this section we show how our calculus notions from the non-smooth setting reduce to well known ones if the underlying space (but not the objects we are differentiating) is smooth. We shall work with strongly causal spacetimes, i.e.\ smooth spacetimes $M$ so that for every $p\in M$ and neighbourhood $U$ of $p$ there is another neighbourhood $V\subset U$ such that any causal curve with endpoints in $V$ remains in $U$.

 We start with differentiation of curves:
\begin{theorem}[Speed of causal curves: smooth vs non-smooth]\label{thm:curvediff}
Let $(M,g)$ be a smooth strongly causal spacetime and $\gamma:[0,1]\to M$ a causal curve. 

Then:
\begin{itemize}
\item[i)] $\gamma$ is differentiable at a.e.\ $t$; 
\item[ii)]  for a.e.\ $t$ the Lorentzian norm $\|\gamma'_t\|$ of its derivative coincides with the Lebesgue density $|\dot\gamma_t|$ of the causal speed from \Cref{Def:causal speed}.
\end{itemize} 
\end{theorem}
\begin{proof} Consider first Minkowski space $M$. Then in a basis $e_1,\ldots,e_n$ such that the cone $\{\sum\alpha_ie_i:\alpha_i\geq0\}$ contains the future cone, the  curve $\gamma$ is increasing in each component, hence a.e.\ differentiable.  In the general case we can cover $M$ with charts $(U_i,\varphi_i)$ such that $\d\varphi_i$ sends the future cone at any $x\in U$ inside the future cone in flat Minkowski space. Thus the post-composition $\varphi_i\circ\gamma$, where defined, is a causal curve in Minkowski and thus a.e.\ differentiable. At any differentiability (and thus also continuity) point $t\in[0,1]$ we have $\gamma_t'=\lim_{h\to 0}v_{t,h}$ where the future vectors $v_{t,h}$ are defined as $\frac{\exp_{\gamma_t}^{-1}(\gamma_{t+h})}{h}$ (because the differential of $\exp_x$ at $x$ is --- by construction --- the identity). Thus if $t\in[0,1]$ is such that also \eqref{eq:speedlimit} holds we have 
\[
|\dot\gamma_t|=\lim_{h\downarrow 0}\frac{\ell(\gamma_t,\gamma_{t+h})}h=\lim_{h\downarrow 0}\frac{\|\exp_{\gamma_t}^{-1}(\gamma_{t+h})\|_{g_{\gamma_t}}}h=\|\gamma_t'\|_{g_{\gamma_t}},
\]
concluding the proof.
\end{proof}
We now want to study the `dual' problem of differentiability of causal functions  on smooth spacetimes. To do so it is worth recalling few basic facts about Lorentzian scalar products.

Thus let $V$ be a finite dimensional real vector space and $g$ a scalar product on it  with signature $(+,-,\dots,-)$. We shall always think such $g$ as coming with a fixed half $F\subset V$ of the cone $\{v :g(v,v)\geq 0\}$ to be called the future; $g$ induces on $F$ a hyperbolic norm $\|\cdot\|$ (see also the terminology in \Cref{ss:hyperbolic norms}) via the formula 
\begin{equation}
\label{eq:normdag}
\|v\|:=\sqrt{g(v,v)}.
\end{equation}
It is well known that on $F$ we have the reverse Cauchy--Schwarz inequality:
\begin{equation}
\label{eq:revCS}
g(v,w)\geq \|v\|\,\|w\|\qquad\forall v,w\in F.
\end{equation}
A direct way of proving this is to identify $(V,F)$ with the standard Minkowski space $\R^{1,n}$, $n\geq 0 $, and the standard future cone, with coordinates $(t,x)$, so that $(t,x)$ is a future vector iff $t\geq \|x\|_{\R^n}$ and $g((t,x),(s,y))=ts-\langle x,y\rangle$, where  $\|\cdot\|_{\R^n},\langle\cdot,\cdot\rangle$ are the Euclidean norm and scalar product respectively. Then the standard Cauchy--Schwarz inequality yields $g((t,x),(s,y))\geq ts-\|x\|_{\R^n}\,\|y\|_{\R^n}$ and \eqref{eq:revCS} follows easily. This choice of coordinates also helps check that
\begin{equation}
\label{eq:charF}
v\in F\qquad\Leftrightarrow\qquad g(v,v')\geq 0\qquad\forall v'\in F.
\end{equation}
Indeed, $\Rightarrow$ follows from \eqref{eq:revCS}. For $\Leftarrow$ we notice that if $v=(t,x)$ is not in $F$, then $t<\|x\|_{\R^n}$, which  easily implies the existence of $s\geq \|x\|_{\R^n}$ such that $ts<\|x\|_{\R^n}^2$. Thus  the choice $v':=(s,x)\in F$ proves the claim.

As $g$ is non-singular, it induces a musical (or Riesz) isomorphism $V\ni v\mapsto v^\flat\in V^*$ via $v^\flat:=g(v,\cdot)$.  Letting $V^*\ni\omega\mapsto \omega^\sharp\in V$ be the inverse isomorphism, $g$ induces a bilinear form $g^*$ on $V^*$ by the formula
\[
g^*(\omega_1,\omega_2):=g(\omega_1^\sharp,\omega_2^\sharp)\qquad\forall  \omega_1,\omega_2\in V^*
\]
and it is  clear that $g^*$ still has signature $(+,-,\dots,-)$. The dual cone  $F^*\subset V^*$ of $F$ is the collection of $\omega$'s such that $\omega(v)\geq  0$ for any $v\in F$. Then from \eqref{eq:charF} it directly follows that
\[
\omega\in F^*\qquad\Leftrightarrow\qquad \omega^\sharp\in F\qquad\qquad\qquad\text{and}\qquad\qquad \qquad v\in F\qquad\Leftrightarrow\qquad v^\flat\in F^*.
\]
Calling $\|\cdot\|_*$ the norm induced by $g^*$ on $F^*$, we also point out the duality formulas
\begin{equation}
\label{eq:dualvar}
\|\omega\|_*=\inf_{v\in F\atop \|v\|\geq 1}\omega(v)\qquad\qquad\text{and}\qquad\qquad\|v\| =\inf_{\omega\in F^*\atop \|\omega\|_*\geq 1}\omega(v)
\end{equation}
valid for any $v\in F$ and $\omega\in F^*$. Indeed the reverse Cauchy--Schwarz inequality gives $\omega(v)=g(\omega^\sharp,v)\geq \|\omega^\sharp\|\|v\|=\|\omega\|_*\|v\|$, while the choice $v:=\tfrac{\omega^\sharp}{\|\omega\|_*}$ for $\|\omega\|_*>0$ provides equality in the first.
If $\|\omega\|_*=0$ we perturb $\omega^\sharp$ as $\omega^\sharp_\eps$ with $\|\omega^\sharp_\eps\|=1$ and $\omega(\omega^\sharp_\eps)\to 0$ as $\eps\searrow0$. The second claim follows in an analogous way. 

Now let $g_1,g_2$ be two Lorentzian scalar products on the same vector space $V$, with future cones $F_1,F_2$, respectively. Then the definition of dual cone immediately gives
\[
F_1\subset F_2\qquad\Leftrightarrow\qquad F_1^*\supset F_2^*
\]
and if this occurs, then \eqref{eq:dualvar} also gives
\begin{equation}
\label{eq:ggstar}
g_1\leq g_2\quad\text{ on } F_1\qquad\Leftrightarrow\qquad g_1^*\geq g_2^*\quad\text{ on }F_2^*.
\end{equation}
Let us say that $g_1\prec g_2$ if $F_1 \setminus\{0\}\subset {\rm int}(F_2)$ and $g_1(v,v)<g_2(v,v)$ for any $v\in F_1\setminus\{0\}$. With this definition it is easy to see, given $g_1$, that
\[
\{g_2\text{ Lorentzian scalar products on $V$}\ :\ g_1\prec g_2\}\quad\text{is open},
\]
w.r.t.\ the topology of uniform convergence on a compact subset of $V^2$ with non-empty interior (this is independent of the chosen subset and coincides with the topology of uniform convergence of the coefficients of the tensor in any given basis of $V$). Our nonstandard definition of $g_1 \prec g_2$ yields
\begin{equation}
\label{eq:g1inf}
g_1=\inf \{g_2\ :\ g_1\prec g_2\},\qquad \text{meaning that}\qquad  
\begin{cases}
 F_1=\cap_{g_2\succ g_1}F_2,\\
 g_1(v,v)=\inf_{g_1\prec g_2} g_2(v,v)\quad\forall v\in F_1.
\end{cases}
\end{equation}
This can be seen by identifying $(V,g_1)$ with the standard Minkowski space and then `slightly widening' the light cone.

\bigskip

We are ready to prove the second, and last,  main result of this section:
\begin{theorem}[Differential of causal functions: smooth vs non-smooth]\label{thm:diffcaus}
Let $(M,g)$ be a  smooth strongly causal   spacetime and $f:M\to\R$ a causal function.

Then $f$ is ${\rm vol}_g$-measurable and:
\begin{itemize}
\item[i)] $f$ is differentiable at ${\rm vol}_g$-a.e.\ point with  ${\rm D}f(x)$ in $F^*_xM\subset T^*_xM$ for ${\rm vol}_g$-a.e.\ $x\in M$,
\item[ii)]  the Lorentzian (dual) norm $\| {\rm D} f\|_*$     coincides with the maximal weak subslope  $|\d f|$ ${\rm vol}_g$-a.e.
\end{itemize} 
Furthermore, $f$ is locally a BV function and $ {\rm D} f$ is (a representative of) the density w.r.t.\ ${\rm vol}_g$ of the absolutely continuous part of its distributional derivative.
\end{theorem}
\begin{proof}\ \\
\noindent{\sc Step 1: smooth functions on Minkowski}. We claim that for $f:\R^{1,n}\to\R$ smooth and causal we have $|\d f|=\| {\rm D}f\|_*$ a.e. Indeed, for $\gamma:[0,1]\to \R^{1,n}$ causal, the function $f\circ\gamma$ is monotone, hence differentiable a.e.\ and denoting by $(f\circ\gamma)'$ its a.e.\ defined derivative we have $f(\gamma_1)-f(\gamma_0)\geq\int_0^1(f\circ\gamma)'_t\,\d t$. Since $f$ is smooth, using \Cref{thm:curvediff} we see that
\[
(f\circ\gamma)'_t= {\rm D}  f_{\gamma_t}(\gamma_t')\geq \| {\rm D} f\|_*(\gamma_t)\,\|\gamma_t'\|=\| {\rm D}  f\|_*(\gamma_t)\,|\dot \gamma_t|\qquad a.e.\ t\in[0,1].
\]
It follows from \Cref{Le:Distr der} that $\| {\rm D} f\|_*$ is a weak subslope of $f$ and thus by maximality that $|\d f|\geq \| {\rm D} f\|_*$ holds ${\rm vol}_g$-a.e. For the converse inequality let $\mu\in\mathscr P(\R^{1,n})$ be with ${\rm vol}_g\ll\mu\leq{\rm vol}_g$ and $v\in T\R^{1,n}\sim\R^{1,n}$ be a future vector with $\|v\|=1$. Let $T$ be the map sending $x\in\R^{1,n}$ to the causal curve $t\mapsto x+tv$ and notice that $\ppi:=T_*\mu$ is a test plan so that by \Cref{Le:Distr der}, since $t\mapsto f(x+tv)$ is smooth  we obtain
\begin{equation}
\label{eq:anchedopo2}
{\rm D}f(x+tv)(v)\geq  |\d f|(x+tv)\qquad \mathcal L^{n+1}-a.e.\ (x,t).
\end{equation}
By Fubini's theorem and the translation invariance of $\mathcal L^{n+1}$ this easily implies ${\rm D}f(v)\geq |\d f|$ $\mathcal L^{n+1}$-a.e.  Taking the inf on a countable dense set of $v$'s,  by \eqref{eq:dualvar} we conclude that $\|{\rm D}f\|_*\geq |\d f|$, as desired.

\noindent{\sc Step 2: causal functions on Minkowski space}. Let $f:\R^{1,n}\to\R$ be causal. Pick a basis $e_0,\ldots,e_n$ of $\R^{1,n}$ made of vectors in $F$, so that the cone generated by the $e_i$'s is contained in $F$. In this coordinate system $f$ is coordinate-wise monotone. By  \cite[Thm. 4]{ChabCrou09} this suffices to show measurability. Also, for any $a_0,\ldots,a_n\in[0,1]$ and $i=0,\ldots,n$, the distributional derivative of $t\mapsto f(\ldots,a_{i-1},t,a_{i+1},\ldots)$ is a non-negative measure of mass bounded by $f(1,\ldots,1)-f(0,\ldots,0)$. It follows from the characterization in \cite[Rem.\ 3.104]{AFP:00} that $f$ is locally a BV function, as claimed. In particular, it is approximately a.e.\ differentiable by  \cite[Thm.\ 3.83]{AFP:00}  and the approximate differentiability improves to actual differentiability thanks to the coordinate-wise monotonicity, see \cite[Thm.\ 14]{ChabCrou09} (see also the presentation in \cite[Thm.\ 1.19]{Minguzzi:19}). Thus calling $\mathcal Df$ the co-vector valued measure ($\sigma$-additive only on uniformly bounded subsets --- see also \Cref{re:measuresfunctionals}) representing the distributional derivative of $f$, the Lebesgue decomposition
\begin{equation}
\label{eq:lebesguedf}
\mathcal Df={\rm D}f\,\mathcal L^{1+n}+\mathcal Df^\perp\qquad\text{ with }\qquad \mathcal Df^\perp\perp\mathcal L^{1+n}
\end{equation}
follows again from \cite[Thm.\ 3.83]{AFP:00}.

We now claim that for any $E\subset \R^{1,n}$ Borel and bounded we have $\mathcal Df(E)\in F^*$. By definition of $F^*$ and an approximation argument this will follow if we show that for any $v\in F$ and $\varphi\in C^\infty_c(\R^{1,n})$ non-negative we have $\int \varphi\,\d\mathcal D f(v)\geq 0$ (here and below $\mathcal D f(v)$ is defined on bounded Borel sets as $\mathcal Df(v)(E):=\mathcal Df(E)(v)$ and ---  trivially --- coincides with the distributional derivative of $f$ in the direction $v$). We thus have
\[
\begin{split}
\int \varphi\,\d\mathcal D f(v)=\int- {\rm D} \varphi(v)\, f\,\d\mathcal L^{1+n}=\lim_{h\to 0}\int\tfrac{\varphi(\cdot)-\varphi(\cdot+hv)}{h}f\,\d\mathcal L^{1+n}=\lim_{h\to 0}\int\varphi \tfrac{f(\cdot+hv)-f(\cdot)}{h}\,\d\mathcal L^{1+n}
\end{split}
\]
and the latter quantity is non-negative as $f$ is causal, $v\in F$ and $\varphi\geq0$. Hence our claim is proved and from the decomposition \eqref{eq:lebesguedf} it follows that
\begin{equation}
\label{eq:partifstar}
{\rm D}f\in F^*\qquad \mathcal L^{1+n}-a.e.\ \qquad\text{ and }\qquad \mathcal Df^\perp(E)\in F^*\qquad\forall E\subset\R^{1,n}\text{ Borel and bounded}.
\end{equation}
It remains to prove that $\|{\rm D}f\|_*=|\d f|$.  Start by observing that  for any future $v\in F$, Fubini's theorem and the translation invariance of $ \mathcal L^{1+n}$ yield for a.e.\ $x\in \R^{1,n}$ and a.e.\ $t\in[0,1]$ that the point $x+tv$ is a differentiability point of $f$. Thus the arguments leading to \eqref{eq:anchedopo2} are still valid, hence so is \eqref{eq:anchedopo2},  and as above we conclude that $\|{\rm D}f\|_*\geq |\d f|$ holds $\mathcal L^{n+1}$-a.e.

For the converse inequality let $(\rho_\eps)$ be a family of mollifiers and notice that $f\ast\rho_\eps$ is still a causal function. Thus  from  the previous step and \Cref{Pr:USCdep} we have $|\d f|\geq\limi_{\eps\downarrow0}|\d(f\ast\rho_\eps)|=\limi_{\eps\downarrow0}\|{\rm D}(f\ast\rho_\eps)\|_*$ (here the $\limi$ are intended as $\mathcal L^{1+n}$-essential-$\limi$). Since `convolution' and `distributional differentiation' commute, recalling \eqref{eq:lebesguedf} we have
\[
\begin{split}
\|{\rm D}(f\ast\rho_\eps)\|_*= \|(\mathcal Df)\ast\rho_\eps\|_*=\|({\rm D}f)\ast\rho_\eps+(\mathcal Df^\perp)\ast\rho_\eps\|_*\qquad\text{on}\quad \R^{1,n}
\end{split}
\]
and since \eqref{eq:partifstar} implies that $({\rm D}f)\ast\rho_\eps,(\mathcal Df^\perp)\ast\rho_\eps\in F^*$, the above and the reverse triangle inequality give $\|{\rm D}(f\ast\rho_\eps)\|_*\geq \|({\rm D}f)\ast\rho_\eps\|_*$ on $\R^{1,n}$. Now we observe --- the computations being justified because ${\rm D}f$ is locally integrable --- that
\[
\begin{split}
\|({\rm D}f)\ast\rho_\eps\|_*=\Big\|\int ({\rm D}f)(\cdot-y)\rho_\eps(y)\,\d\mathcal L^{1+n}(y)\Big\|_*\geq \int \|{\rm D}f\|_*(\cdot-y)\rho_\eps(y)\,\d \mathcal L^{1+n}(y)=\|{\rm D}f\|_*\ast\rho_\eps,
\end{split}
\]
where the inequality follows from   the concavity and continuity of $\|\cdot\|_*$ on $F^*$. Since $\|{\rm D}f\|_*\ast\rho_\eps\to \|{\rm D}f\|_*$ $\mathcal L^{1+n}$-a.e., collecting what we proved we conclude that $|\d f|\geq \|{\rm D}f\|_*$ $\mathcal L^{1+n}$-a.e., as desired.

\noindent{\sc Step 3: general case}. We argue by comparison. Say that a coordinate chart $(U,\varphi)$ with $U\subset M$ open and $\varphi:U\to \R^{1,n}$ is `good' provided:
\begin{itemize}
\item[1)]  $U$ and $\varphi(U)$ are causally convex.
\item[2)] For any $x\in  U$ the pullback $\varphi^*g_{1,n}$ of the Minkowskian tensor at $\varphi(x)\in \R^{1,n}$ is such that  $(\varphi^*g_{1,n})^*_x\prec g^*_x$ (recall the discussion after \eqref{eq:ggstar}). 
\end{itemize}
Clearly good charts cover $M$, and recalling the dual relation \eqref{eq:ggstar} we see that if $(U,\varphi)$ is a good chart, then $f\circ\varphi^{-1}$ is a causal function on $\varphi(  U)\subset \R^{1,n}$, hence as already argued it is measurable and a.e.\ differentiable with differential in $F^*\subset(\R^{1,n})^*$. Thus $f$ is ${\rm vol}_g$-measurable, ${\rm vol}_g$-a.e.\ differentiable and for every good chart $(U,\varphi)$ we have ${\rm D}(f\circ\varphi^{-1})\in F^*\subset(\R^{1,n})^*$. Then an approximation argument based on \eqref{eq:g1inf} and the fact that $\prec$ is open shows that ${\rm D}f(x)\in F^*_xM\subset T^*_xM$, thus (i) is settled.

We now prove that $|\d f|\geq  \|{\rm D}f\|_*$ holds ${\rm vol}_g$-a.e. To see this, let $\ppi$ be a test plan on $M$ and notice that for $\ppi$-a.e.\ $\gamma$ the function $f\circ \gamma$ is monotone (trivially) and differentiable at $t$ with derivative given by ${\rm D}f_{\gamma_t}(\gamma_t')\geq \|{\rm D}f_{\gamma_t}\|_*\,\|\dot\gamma_t'\|$ (with an argument based on the a.e.\ differentiability of $f$,  Fubini's theorem and \eqref{eq:dualvar}). It follows from the maximality of $|\d f|$ and \Cref{Le:Distr der} that  $|\d f|\geq  \|{\rm D}f\|_*$ holds ${\rm vol}_g$-a.e., as desired.

For the opposite inequality notice that a simple  argument based  on \eqref{eq:g1inf} (applied for $g_1:=g^*$) and on the fact that the relation $\prec $ is open shows that we can find a countable collection $(U_i,\varphi_i)$ of good charts such that
\[
g^*_x=\inf_{i:x\in U_i}(\varphi^*g_{1,n})^*_x\qquad\text{ this being interpreted as in \eqref{eq:g1inf}.}
\]
Thus if we adopt the convention that $\|{\rm D}(f\circ\varphi^{-1})\|_*\circ\varphi$ is equal to $+\infty$ outside $U$, from the above we see that
\begin{equation}
\label{eq:perchiudere}
\|{\rm D}f\|_*=\inf_i \|{\rm D}(f\circ\varphi_i^{-1})\|_*\circ\varphi_i=\inf_i |\d(f\circ\varphi_i^{-1})|\circ\varphi_i\qquad\text{${\rm vol}_g$-a.e.,}
\end{equation}
having used --- in the second identity --- that we know the conclusion on flat Minkowski space. Now notice that since, for every good chart $(U,\varphi)$, the sets $U$ and $\varphi( U)$  are causally convex and the ambient spacetimes are globally hyperbolic, we have that both $U$ and $\varphi(U)$ are forward mm spacetimes when equipped with the structure induced by the ambient spaces.  Also, from the dual relation \eqref{eq:ggstar} we see that $\ell(\varphi_i^{-1}(p),\varphi_i^{-1}(q))\geq \|q-p\|$ 
for any $p,q\in \varphi_i(\bar U_i)\subset\R^{1,n}$, thus the implication $(i)\Rightarrow(ii)$ in   \Cref{Le:One sid} allows to deduce that  $|\d (f\circ\varphi^{-1})|\geq |\d f|\circ\varphi^{-1}$. Recalling \eqref{eq:perchiudere} we can conclude that $\|{\rm D}f\|_*\geq |\d f|$, as desired.
\end{proof}

An easy consequence of this last theorem, of  $\lim_{\eps\downarrow0}\frac{g^*(\omega+\eps\eta,\omega+\eps\eta)-g^*(\omega,\omega)}{2\eps}=g^*(\omega,\eta)=\omega^\sharp(\eta)$ valid for any cotangent vectors $\omega,\eta$, and of the chain rule is that the value of
\[
\d \varphi(\nabla f)|\d f|^{p-2}
\] 
on $\{|\d f|\in(0,+\infty)\}$ for  $f: M\to\R$ causal and $\varphi\in\Pert(f)$ is independent of the fact that we are considering it as in \Cref{Def:Vert} or directly as the coupling of the cotangent vector field $\d \varphi$ (well defined as $\varphi$ is the difference of two causal functions) with the vector field $\nabla f$ multiplied by $|\d f|^{p-2}$. This observation is relevant in interpreting identity \eqref{eq:mufnuf} in the next section.

\subsection{Null distance and perturbations of causal functions}
\label{ss:null distance}

This section deals with elementary properties of causal functions and the null distance. Our goal is to show that the latter can be used to produce `many' perturbations of the time separation and related functions. The technique is flexible and likely applicable to more general causal functions: we chose to focus on powers of time separation to a point to present a concrete case study, which can also be used to build a measure valued $p$-d'Alembertian  following the presentation in \Cref{S:defining square}.

The concept of  null distance  was originally introduced in the smooth setting by Sormani and Vega \cite{SV:16} as  an attempt at bridging the gap between topology and causality on Lorentzian manifolds. It has subsequently been studied in the synthetic setting of Lorentzian pre-length spaces in Kunzinger and Steinbauer \cite{KS:2022}. Let us tailor the discussion on this latter reference to our setting: 

\begin{definition}[Null distance]\label{Def:Null dstance}
Let $(\M,\ell)$ be  metric  spacetime and  $f\colon \M \to \bar\R$ a causal function. The null distance $\hat\sfd_f$ induced by $f$ is defined as
\[
\hat\sfd_f(x,y):=\inf \sum_{i=0}^{n-1}|f(x_i)-f(x_{i+1})|,
\]
where the $\inf$ is taken among all $n\in\N$ and sequences $x=x_0,\ldots,x_n=y$ such that for any $i=0,\ldots,n-1$ we either have $x_i\leq x_{i+1}$ or $x_{i+1}\leq x_i$.
\end{definition}
It is clear that $\hat\sfd_f:\M^2\to[0,+\infty]$ is symmetric, satisfies the triangle inequality and $\hat\sfd_f(x,x)=0$ for any $x$. In general, nothing more can be said, as it could both be that $\hat\sfd_f\equiv 0$ (e.g.\ if $f$ is constant) and that $\hat\sfd_f(x,y)=+\infty$ for any $x\neq y$ (if for no $x\neq y\in \M$ we have $x\leq y$).

We record the following simple general statement:
\begin{proposition}[$\hat\sfd_f$-Lipschitz functions are perturbations]\label{Pr:1.lip}
Let $(\M,\ell)$ be a metric  spacetime, $f\colon \M\to\R$ causal and $\hat\sfd_f$  the corresponding null distance. Let $g:\M\to\R$ be    $\hat\sfd_f$-Lipschitz, i.e.\ $|g(x)-g(y)|\leq L\hat\sfd_f(x,y)$ for some $L\geq 0$ and any $x,y\in \M$.

Then  $f+\tfrac1Lg:\M\to\R$ is causal. 
\end{proposition}
\begin{proof}  By scaling, we may and will assume that $g$ is $1$-Lipschitz with respect to $\smash{\hat{\met}_f}$. Then
\begin{align*}
-(g(y) - g(x)) \leq \vert g(y) -g(x)\vert\leq \hat{\met}_f(x,y) \leq f(y) - f(x),\qquad\forall  x\leq y,
\end{align*}
 proving  that $f+g$ is causal and thus giving the claim.
\end{proof}
Thanks to this result the question is now to find sufficient conditions that ensure that $\hat\sfd_f$-Lipschitz functions are  continuous and suitably dense in the space of continuous functions, so that the distributional $p$-d'Alembertian as studied in \Cref{Pr:A bounded}  can be represented by a measure via Riesz's theorem.

We do not have general conditions on non-smooth spacetimes, but the results in \cite{SV:16}  allow to quickly prove the following, showing that our theory is compatible with this basic case.
\begin{proposition}[Metrizing on manifolds by the null distance of time separation to a point]
\label{Pr:From point}
Let $(M,g)$ be a smooth, globally hyperbolic spacetime, let $o \in \M$ and consider the causal function $f:=\ell(o,\cdot)$. Then $\hat\sfd_{f}$ is a distance on $I^+(o)$ that induces the manifold topology. 

The same holds for the function $f:=-\ell(\cdot,o)$ on $I^-(o)$ and, more generally, for $\tfrac1q\ell(o,\cdot)^q $ and $-\tfrac1q\ell(\cdot,o)^q $ on $I^+(o)$ and $I^-(o)$ respectively for any $0\neq q\leq 1$.
\end{proposition}

\begin{proof} 
We shall consider the spacetime $\tilde M:=I^+(o)$. According to \cite[Proposition 3.15, Theorem 5.4]{SV:16} the conclusion for $f=\ell(o,\cdot)$ will follow if we show that $f$ is the cosmological time function of $\tilde M$ and that is regular in the terminology of \cite{AGH98} recalled below. 

 For $f$ to be the cosmological time function means that it satisfies  $f(x)=\sup_{y\in \tilde M, y\leq x}\ell(y,x)$: this follows from   the continuity of $f$ on $J^+(o)\supset I^+(o)$ (see \cite[Theorem 3.6]{McCann:2020}). 

For $f$ to be regular means that it is finite valued on $\tilde M$ (which it clearly is) and that if $\gamma:[0,1)\to \tilde M$ is any inextendible past curve, then $f(\gamma_t)\to 0$ as $t\to 1$. We verify this latter property and notice that for such $\gamma$ we have $\gamma_t\in J^-(\gamma_0)\cap J^+(o)\subset M$ for any $t\in[0,1)$. Thus using first  the global hyperbolicity of $M$ and then the causal character of $\gamma$ we see that for some $p\in \M$, $p\geq o$ we have $\gamma_t\to p$ as $t\to 1$. Since $\gamma$ is past inextendible on $\tilde M$, we must have $p\notin \tilde M$, that together with $p\geq o$ yields $f(p)=0$. It then follows from  the continuity of $f$ on $J^+(o)$ that $f(\gamma_t)\to 0$ as $t\to 1$.

The case of $f=-\ell(\cdot,o)$ is handled analogously, then the rest of the claim follows noticing that $u_q(z)=\tfrac1qz^q$ is smooth on $(0,\infty)$ and its inverse is smooth on $u_q((0,\infty))$.
\end{proof}
We illustrate how the above can be combined with the $p$-d'Alembertian estimates we proved in \Cref{subsec-q-dal-comp} and the concept of   distributional $p$-d'Alembertian discussed in \Cref{S:defining square}. Recall also the discussion at the end of \Cref{ss:compatibility} that ensure that the meaning of $\d\varphi(\nabla f)|\d f|^{p-2}$ appearing below is unambiguous. 
\begin{corollary}[Distributional $\PP$-d'Alembertian in the smooth setting]\label{Cor:SignedR1} Let $(M,g)$ be a smooth, globally hyperbolic spacetime of dimension $n$ whose timelike Ricci curvature is no smaller than $K\in\R$, endowed with its canonical volume measure $\meas$. 

Moreover, let $0\neq \QQ <1$ and $\PP^{-1} + \QQ^{-1}=1$. Given a point $o\in \M$, consider the open set $U\subset M$ defined as $I^-(o)$ if $K\leq 0$ and $I^-(o)\cap\{\ell(\cdot,o)<\pi\sqrt{(N-1)/K}\}$ if $K>0$. Also, consider the functions $f:=-\tfrac1q\ell(\cdot,o)^q$ and $h:=-\ell(\cdot,o)$ and the Radon measures $\nu_f,\nu_h$ on $U$ given by
\[
\nu_f:=N\,\tilde\tau_{K,N}\circ\ell(o,\cdot)\,\meas,\qquad\qquad\qquad \nu_h:=\frac{N\,\tilde{\tau}_{K,N}\circ \ell(\cdot,o) - 1}{\ell(\cdot,o)} \mm.
\]
Then $ f$ and $h$ both have distributional $p$-d'Alembertian in $U$ according to \Cref{def:distrdal} and there are unique non-positive Radon measures $\mu_f,\mu_h$ on $U$ such that 
\begin{equation}
\label{eq:mufnuf}
\int\varphi\,\d\mu_f=\int \d\varphi(\nabla f)|\d f|^{p-2}\,\d\mm-\int\varphi\,\d\nu_f\qquad\forall \varphi\in \Pert_\be^\sym(f, U)\cap C(U)
\end{equation}
and similarly for $h$ in place of $f$.
\end{corollary}
\begin{proof} The collection $ \Pert_\be^\sym(f, U)\cap C(U)$ defined after \eqref{PertSym} is a vector space and the map $L$ sending $\varphi\in  \Pert_\be^\sym(f, U)\cap C(U)$ to the right-hand side of \eqref{eq:mufnuf} is linear and sends non-negative functions to non-positive reals.

We claim that $L(\varphi_n)\uparrow0$ if $(\varphi_n)\subset  \Pert_\be^\sym(f, U)\cap C(U)$ is such that $\varphi_n(x)\downarrow 0$ as $n\to\infty$ for every $x$. By   Dini's theorem and the fact that $\supp\varphi_n\subset\supp\varphi_0$ (the latter being compact as the spacetime is globally hyperbolic) we get that $(\varphi_n)$ converges to 0 uniformly. Also, by \Cref{Pr:From point} for every $C\subset U$ compact,  there is $\eta\in \Pert_\be^\sym(f, U)\cap C(U)$   identically 1 on $C$ (pick $\eta:=(1-n\,\hat\sfd_f(\cdot,C))^+$ for $n\gg1$). Picking  $C:=\supp\varphi_0$  and using the bound \eqref{eq:bounddal} in conjunction with the uniform convergence just proved shows our claim.

We can thus apply \cite[Thm.\ 7.8.1]{Bog:07b} to $-L$ and deduce that $L(\varphi)=\int \varphi\,\d\mu_f$ for a unique non-positive measure $\mu_f$ on $U$ defined on the smallest $\sigma$-algebra $\mathcal A$ making all the functions in $ \Pert_\be^\sym(f, U)\cap C(U)$ measurable. Clearly $\mathcal A$ is contained in the Borel $\sigma$-algebra of $U$ and to show that they agree it suffices to prove that any function in $C_c(U)$ can be uniformly approximated by functions in $ \Pert_\be^\sym(f, U)\cap C(U)$. This, however, is obvious from the fact that the null distance induced by $f$ induces, in turn, the manifold topology.  The fact that $\mu_f$ is Radon now follows noticing that  --- by  \eqref{eq:bounddal} --- it is finite on compact sets.

The argument for $h$ is analogous.
\end{proof}

\begin{remark}[Differences of positive measures need not be measures]
\label{re:measuresfunctionals}Since $\mu_f\leq 0$, by \eqref{eq:mufnuf} it is surely tempting to say that the $p$-d'Alembertian of $f$ is represented by a measure $\leq\nu_f$, this  `measure' being $\mu_f+\nu_f$. The problem with this is that it might be that $\mu_f(U)=-\infty$ and $\nu_f(U)=+\infty$, so that their sum is not a measure on $U$ as classically intended. This happens because, more broadly speaking, the collection of Radon measures on a given topological space is not a vector space in general. 

To get around this technical issue, one possibility is to  keep in mind Riesz's representation theorem and to deal with the space of  so-called Radon functionals, as in \cite{CavallettiMondino20}. Another possibility is to `ignore' this issue and to notice that objects like $\mu_f+\nu_f$ are still well-defined and $\sigma$-additive as far as one works with uniformly relatively compact subsets of $U$. In particular, the `measure' of any relatively compact set is well defined and so is the integral of  compactly supported functions. This sort of consideration also allows to perform operations typically done on measures (e.g.\ push forward, Radon-Nikodym derivatives...) whose interpretation at the level of Radon functionals would be cumbersome. Because of this some authors (see e.g.\ \cite[Remark 2.12]{AB18}) adopt this point of view  and still call Radon measures objects such as $\mu_f+\nu_f$. Notice that this is what we also did in the course of the proof of \Cref{thm:diffcaus}, when we represented by the `measure' $\mathcal Df$ the distributional differential of the $BV_{loc}$ function $f$.
\hfill$\blacksquare$
\end{remark}

\subsection{{Hyperbolic norms and polarization on cones}}\label{ss:hyperbolic norms}

This section deals with a class of Lorentzian structures which arises on vector spaces from  hyperbolic norms, such as  Lorentz--Finsler norms.  We discuss polarization and weak curvature properties.

\begin{definition}[Hyperbolic norm]
\label{D:hyperbolic norm}
A \textit{hyperbolic norm} on a real vector space $V$ is a function $\n:V \to [0,+\infty) \cup \{-\infty\}$ such that
\begin{subequations}
\begin{align}
\label{eq:invtrn}
\n(\alpha v+\beta w)&\geq\alpha\n(v)+\beta\n(w)\qquad\forall v,w\in V,\ \alpha,\beta\geq0,\\
\label{eq:asimmn}
\n(v),\n(-v)\geq 0&\qquad\qquad \Leftrightarrow\qquad \qquad v=0.
\end{align}
\end{subequations}
\end{definition}

\begin{example}
\label{E:hyperbolic norms from scalar products} Let $p\in[1,+\infty)$, $d\in\N$, $d>0$ and define $\n_p$ on $\R^d$ as
\[
\n_p(v):=\Big(|v_0|^p - \sum_{i=1}^{d-1} |v_i|^p\Big)^{1/p}
\]
if $|v_0|^p \geq \sum_{i=1}^{d-1} |v_i|^p$ and $\n_p(v)=-\infty$ otherwise. Then this is a hyperbolic norm.

For $p=2$ this coincides with the norm induced by the Lorentzian scalar product $g(v,w)=v_0w_0-\sum_{i>0}v_iw_i$ as in formula \eqref{eq:normdag} and in this case (and only in this if $d>1$) the norm $\n_p$ is positively polarizable in the sense discussed below. For more about these  norms we refer to \cite{Gigli24+}.
\hfill$\blacksquare$
\end{example}
Hyperbolic norms naturally induce time separation functions via the formula
\begin{equation}
\label{eq:elln}
\ell(v,w):=\n(w-v)\qquad\forall v,w\in V,
\end{equation}
and it is clear from \eqref{eq:invtrn} that $\ell$ satisfies the reverse triangle inequality and from \eqref{eq:asimmn} that the causal order induced by $\ell$ is a partial order. Thus, much like in the non-smooth setting, $\n$ induces the causal and chronological  relations on $V$ and the latter one induces a topology. The following is worth noting:

\begin{proposition}[Hyperbolic normed vector spaces as globally hyperbolic metric spacetimes]
\label{Prop: structureofstrictlyconcavenorm}
Let $\n$ be an hyperbolic norm on $\smash{\R^N}$, where $N\in\N$. Assume that  there   exists $\smash{v \in \R^N}$ with $\n(v) > 0$ and that the positive part $\n_+$ of $\n$ is continuous (w.r.t.\ the Euclidean topology).

Then $(\R^N,\ell)$ is a globally hyperbolic metric spacetime, where $\ell$ is as in \eqref{eq:elln}, and the chronological topology coincides with the Euclidean one.
\end{proposition}

\begin{proof} The continuity of $\n_+$ implies that the chronological topology is weaker than the Euclidean one. For the converse inclusion it suffices to prove that there is a sufficient widening of the usual Minkowskian cones in some coordinate system that contain the future cone $\{v:\n(v)\geq 0\}$ (because the chronological relation induced by the Minkowskian norm coincides with the Euclidean topology). To see this, let $\smash{Y:= \{\n\geq 0\} \cap \bS^{N-1}}$. Then by \eqref{eq:invtrn}, $Y$ is a compact, convex subset of $\bS^{N-1}$ (considered as a Riemannian manifold) that, by \eqref{eq:asimmn}, does not contain antipodal points. Thus it is contained in a half-sphere and, by compactness, in an $\bS^{N-1}$-ball of radius $R < \pi/2$. This readily implies our claim.

For global hyperbolicity we now observe that for any $C_0,C_1\subset \R^N$ compact, what was already proved implies that $J(C_0,C_1) \subset \tilde{J}(C_0,C_1)$, where $\tilde{J}(C_0,C_1)$ is the emerald spanned by $C_0,C_1$ under this extra Minkowskian structure. Since the latter is compact and the former closed, the conclusion follows.
\end{proof}

We come to weak lower Ricci bounds, referring to \cite{Braun:2023Renyi}  for the concept of $\TCD_q(K,N)$ space. The proof below strongly imitates that of the analogous result in positive signature provided in \cite[p. 926]{Villani:2009}, thus we shall not give all the details.

\begin{proposition}[TCD condition for hyperbolic normed vector spaces]\label{Pr:TCD COND}
Let $\n$ be as in  \Cref{Prop: structureofstrictlyconcavenorm}. Then the induced globally hyperbolic metric measure spacetime structure on $\smash{\R^N}$ \textnormal{(}with $\meas$ being the Lebesgue measure $\smash{\Leb^N}$\textnormal{)} satisfies  $\smash{\TCD_\QQ(0,N)}$ for every $0 \ne \QQ < 1$. 
\end{proposition}

\begin{proof}We only sketch the argument freely using terminology and notation from \cite{BO:23}, to which the reader is referred for details. We first consider the case $\n|_{\{\n >0\}}$ is smooth and $\n$ is locally uniformly concave. Then the map $\smash{L\colon\R^{N}\times \R^N \to \R}$ defined by
\begin{align*}
    L(x,v):=\begin{cases}
        \n^2(v) & \textnormal{if }\n(v)>0,\\
        0 & \text{otherwise}
    \end{cases}
\end{align*}
defines a smooth Lorentz--Finsler structure on $\R^N$. Given any pair $(\mu,\nu)$ of compactly supported elements of $\Prob(\M)$ with $\mu$ being $\meas$-absolutely continuous which are $\QQ$-separated by $(\pi,u,v)$ \cite[Def.~4.8]{BO:23}, $\pi = (\textnormal{Id},\mathscr{F}_1)_\push\mu$ is the unique maximizer of $\ell_\QQ(\mu,\nu)$, where $\smash{\mathscr{F}_t\colon  \R^N \to \R^N}$ is the Lorentz--Finslerian displacement interpolation and is of simple linear form (both in its argument and in $t$; recall that we have unique geodesics given by straight lines) by \cite[Thm.~4.17]{BO:23}. Moreover, the unique $\smash{\ell_\QQ}$-geodesic from $\mu$ to $\nu$ is given by $\mu_t:=(\mathscr{F}_t)_\push \mu$ \cite[Cor. 4.18]{BO:23}. Keeping this in mind, the fact that this smooth Lorentz-Finsler spacetime satisfies $\TCD_\QQ(0,N)$ follows by the same (linear algebraic) arguments as the $\mathrm{CD}(0,N)$-condition for smooth, uniformly convex norms on $\smash{\R^N}$, cf.~\cite[p.~926]{Villani:2009} for details.

For general $\n$ we first `widen a bit the future cone' and then regularize the norm, thus obtaining approximating norms $\n_n$ whose future cones $F_n:=\{\n_n(v)\geq 0\}$ contain the limit one $F:=\{\n\geq 0\}$ in their interior. Given $\mu,\nu\in \Prob(\R^n)$ with compact support and causally ordered $\mu\preceq\nu$, we have $\mu\preceq_n\nu$ for every $n\in\N$, thus by the previous point we know there is a geodesic $(\mu_{n,t})$ (w.r.t.\ the $q$ cost induced by the $\ell_n$ coming from $\n_n$ via \eqref{eq:elln}) from $\mu$ to $\nu$ along which the $N$-Renyi entropy is convex. The measures $\{\mu_{n,t}:n\in\N,t\in[0,1]\}$ all have support in the same compact set, thus by diagonalization we can assume that $\mu_{n,t}\to\mu_t$ for every $t\in\Q\cap[0,1]$ and some measures $\mu_t$. The lower semicontinuity of the entropy grants that $\mu_t$ obeys the correct entropy bounds, while passing to the limit in
\[
\ell_{n,q}(\mu_{n,t},\mu_{n,s})=(s-t)\ell_{n,q}(\mu,\nu)\qquad\forall t,s\in[0,1],\ t<s
\]
we easily see that the $\mu_t$'s lie along an $\ell_q$-geodesic from $\mu$ to $\nu$. By narrow forward completeness we can extend the curve $(\mu_t)$ to all $t\in[0,1]$ resulting in a $\ell_q$-geodesic and then narrow lower semicontinuity of the entropy yields the desired conclusion.
\end{proof}

Let $\n\colon V \to \{-\infty\}\cup[0,+\infty)$ be  a hyperbolic norm on a vector space $V$ with domain (or future) $F:= \{x \in V \mid \n(x) \ge 0\}$.   
We say $\n(\cdot)$ is {\em positively polarizable} provided 
\begin{equation}\label{bilinear}
(x,y):=\frac12\big(\n(x+y)^2 - \n(x)^2 -\n(y)^2\big)
\end{equation}
acts positive bilinearly on $F$, or equivalently if 
\begin{equation}
\label{eq:poslin}
(\alpha_1x_1+\alpha_2x_2,y)=\alpha_1(x_1,y)+\alpha_2(x_2,y)\qquad\forall x_1,x_2,y\in F,\ \alpha_1,\alpha_2\geq0.
\end{equation}
Notice that by symmetry, if the above holds, a similar statement is in place for $y$. It is quite clear that if $\n:V\to\{-\infty\}\cup[0,+\infty)$ is positively polarizable, then the form  $(x,y)$ defined by \eqref{bilinear} for $x,y \in F$ extends uniquely to a indefinite inner product $\langle \cdot, \cdot\rangle$ on ${\rm span}(F)$. Indeed, uniqueness is clear as well as the fact that we must have
\[
\langle x_1-x_2,y\rangle=(x_1,y)-(x_2,y)\qquad\forall x_1,x_2,y\in F.
\]
The conclusion follows observing that this formula is well posed,  i.e.\ that if $x_1-x_2=z_1-z_2$, then the corresponding right hand sides in the above agree: this is a trivial consequence of  \eqref{eq:poslin}.

The next lemma  reinforces and explains our \Cref{Def:Inf Minkow} of infinitesimal Minkowskianity.
\begin{lemma}[Parallelogram law and polarization of hyperbolic norms]
\label{L:parallelogram law and polarization}
Let $\n \colon V\to\{-\infty\}\cup[0,+\infty)$ be a hyperbolic norm on a real vector space $V$. 

Then $\n $ is positively polarizable  if and only if
\begin{equation}\label{parallelogram}
    \n(x+2y)^2+\n(x)^2=2\n(x+y)^2+2\n(y)^2,\qquad\forall x,y\in V,\ \text{with }\n(x),\n(y)\geq 0.
\end{equation}
\end{lemma}
\begin{proof}
The \emph{only if} is a trivial consequence of the identity $\n(v)=(v,v)$ and of \eqref{eq:poslin}. For the  \emph{if} we argue as in the proof of \Cref{T:VD=HD}. Thus let $x,y,z\in V$ be with $\n(x),\n(y),\n(z)\geq 0$ and notice that \eqref{parallelogram} yields
\begin{align*}
    2\n(x+2y+z)^2+2\n(x+z)^2&=4\n(x+y+z)^2+4\n(y)^2,\\
    \n(2x+2y+z)^2+\n(2y+z)^2&=2\n(x+2y+z)^2+2\n(x)^2,\\
    \n(2y+z)^2+\n(z)^2&=2\n(y+z)^2+2\n(y)^2,\\
    \n(2x+2y+z)^2+\n(z)^2&=2\n(x+y+z)^2+2\n(x+y)^2.
\end{align*}
Adding the first two identities and then subtracting the second two we conclude that $(x+y,z)=(x,z)+(y,z)$. It then follows that $(\alpha x,y)=\alpha (x,y)$ holds for $\alpha\in\N$ and the for $\alpha\in\Q^+$. The monotonicity of $\R^+\ni \alpha\mapsto \n(\alpha x+y)$ that comes from \eqref{eq:invtrn} now allows to conclude that  $(\alpha x,y)=\alpha (x,y)$ holds for $\alpha\in\R^+$, as desired.
\end{proof}

Finally, let us demonstrate that if a symmetric bilinear form induces a ``norm" that satisfies either  the triangle inequality or the reverse triangle inequality, then the form is necessarily positive definite or of Lorentzian signature, respectively.

\begin{lemma}[Triangle inequalities indicate signature]
Let $V$ denote a finite-dimensional real vector space and $g\colon V \times V \to \R$ a non-degenerate symmetric bilinear form. We denote by $C$ a cone with non-empty interior contained in $\{v:g(v,v)>0\}$  and define $\| v \| :=\sqrt{g(v,v)}$ for $v \in C$. 

Then the following statements hold.
\begin{enumerate}[label=\textnormal{(\roman*)}]
    \item Assume that $\| \cdot \|$ satisfies the triangle inequality, i.e.\ $\|v + w \| \leq \|v\| + \|w\|$     for all $v,w \in C$. Then $g $ is positive definite.
    \item Assume that $\| \cdot \|$ satisfies the reverse triangle inequality, i.e.\ $\|v + w\| \geq \|v\| + \|w\|$ for all $v,w \in C$. Then $g$ has Lorentzian signature $(+,-,\dots,-)$.
\end{enumerate}
\end{lemma}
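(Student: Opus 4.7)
The plan is to invoke Sylvester's law of inertia to obtain a $g$-orthogonal decomposition $V = V_+\oplus V_-$ with $g|_{V_+}$ positive definite of dimension $p$ and $g|_{V_-}$ negative definite of dimension $q$, so that $p+q = \dim V$ by nondegeneracy. I will then show that the triangle inequality forces $q=0$, while the reverse triangle inequality forces $p=1$. Throughout I assume $C\neq \emptyset$ (equivalently $p\ge 1$), as otherwise $W=\{0\}$ and both claims are vacuous.

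For part (i), note that squaring the triangle inequality for $v,w,v+w\in C$ reduces to the standard Cauchy--Schwarz inequality $g(v,w)\le \|v\|\,\|w\|$. Assuming for contradiction $q\ge 1$, I would choose $e_+\in V_+$ and $e_-\in V_-$ with $g(e_\pm,e_\pm)=\pm 1$ and, for small $\eps\in(0,1)$, set $v_\pm := e_+\pm\eps e_-$. Both $v_\pm$ lie in $C$ with $\|v_\pm\|^2 = 1-\eps^2$, and $v_++v_- = 2e_+\in C$, so Cauchy--Schwarz applies. But $g(v_+,v_-)=1+\eps^2 > 1-\eps^2 = \|v_+\|\,\|v_-\|$ yields a contradiction. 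Hence $q=0$, $g$ is positive definite on $V$, and therefore also on $W$.

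For part (ii), squaring the reverse triangle inequality gives in turn the reverse Cauchy--Schwarz inequality $g(v,w)\ge \|v\|\,\|w\|$ for $v,w,v+w\in C$. Assuming $p\ge 2$, I would pick $g$-orthonormal $e_1,e_2\in V_+$. Both lie in $C$, as does $e_1+e_2$ since $g(e_1+e_2,e_1+e_2)=2>0$, so reverse Cauchy--Schwarz forces $0=g(e_1,e_2)\ge \|e_1\|\,\|e_2\|=1$, absurd. Hence $p=1$. It remains to verify $W=V$, so that the signature of $g|_W$ coincides with that of $g$ itself, namely $(1,\dim V-1)$. To this end, I would fix a unit $e_0\in V_+$; for any $v\in V$ and $T$ sufficiently large, $g(Te_0+v,Te_0+v) = T^2 + 2T\,g(e_0,v) + g(v,v) > 0$, so both $Te_0$ and $Te_0+v$ belong to $C$, and hence $v=(Te_0+v)-Te_0\in W$.

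The main obstacle is this last step: showing that $W$ is not confined to $V_+$ but spans the whole of $V$. It hinges on the fact that in Lorentzian signature any vector can be made timelike by adding a sufficiently large multiple of a timelike direction, a ``boost absorption'' phenomenon that is not needed in part (i) since positive definiteness of $g$ already gives $W=V$ directly. The arguments in both parts then reduce the signature question to checking standard versus reverse Cauchy--Schwarz inequalities on explicit pairs of test vectors.
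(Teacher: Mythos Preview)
Your proof is correct and takes a genuinely different route from the paper's. The paper fixes a unit vector $v\in C$, extends it to a $g$-orthonormal basis $\{e_0,\ldots,e_n\}$ with $e_0=v$, and then studies the one-variable function $f(t):=\|v+te_i\|=\sqrt{1+t^2 g(e_i,e_i)}$ for each $i\ge 1$. The triangle inequality forces $f$ to be convex, hence $f''(0)=g(e_i,e_i)\ge 0$, while the reverse triangle inequality forces concavity and $g(e_i,e_i)\le 0$; since the basis is orthonormal this pins down the sign. Your argument instead squares the (reverse) triangle inequality once to extract the (reverse) Cauchy--Schwarz inequality and then tests it on explicit pairs of vectors built from the Sylvester decomposition $V=V_+\oplus V_-$.

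What your approach buys: it is entirely algebraic (no calculus), and it handles the reduction to $W$ more transparently. In part (i) you actually prove the stronger fact that $q=0$ on all of $V$, so positive definiteness on $W$ is automatic; in part (ii) your ``boost absorption'' argument showing $W=V$ is explicit, whereas the paper simply declares ``we may and will assume $V=W$'' without justification. What the paper's approach buys: it is more uniform --- the same second-derivative computation handles both cases simultaneously --- and its differential flavor matches the ambient Lorentz--Finsler context of the appendix.
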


\begin{proof} Pick $v$ in the interior of $C$ and recall that, regardless of the signature of $g$, there is an orthonormal basis $e_1,\ldots,e_n$ of $V$   such that $g(e_i,e_j)=0$ for $i\neq j$ and $g(e_i,e_i)=\pm1$ and so that $e_1:=\tfrac{v}{\|v\|}$. This can be proved by induction by noticing that the orthogonal complement of $e_1$ is a subspace of dimension exactly $n-1$. Unless $n=1$, in such subspace there must be a vector $v'$ with $g(v',v')\neq 0$, so that an iteration proves the claim.

 The signature of $g$ is dictated by the signs of  $g(e_i,e_i)$ for $i=2,\ldots,n$.  Fix such $i$ and notice that for $|t|\ll1$ we have $e_1+te_i\in C$, thus $g(e_1+te_i,e_1+te_i)>0$, thus the function
    \begin{align*}
        f(t):=\|e_1 + t e_i\| = \sqrt{1 + t^2g(e_i,e_i)},
    \end{align*}
is well defined in a neighbourhood of 0. Then $f$ is smooth near $t=0$ with $f''(0) =g(e_i,e_i)$. Now, the triangle inequality for $\| \cdot \|$ implies convexity of $f$, hence $g(e_i,e_i)=1$. Likewise, the reverse triangle inequality implies concavity of $f$, so  $g(e_i,e_i)=-1$. This finishes the proof.
\end{proof}
\section*{Acknowledgments} The authors are grateful to Simone Vincini for comments on an early version of this work.

The authors are grateful for the support of the Fields Institute for Research in Mathematical Sciences 2022 Thematic Program on Nonsmooth Riemannian and Lorentzian Geometry, where this collaboration began. This research was also funded in part by the Austrian Science Fund (FWF) [Grants DOI \href{https://doi.org/10.55776/PAT1996423}{10.55776/PAT1996423}, \href{https://doi.org/10.55776/P33594}{10.55776/P33594}, \href{https://doi.org/10.55776/STA32}{10.55776/STA32}, \href{https://doi.org/10.55776/EFP6}{10.55776/EFP6} and \href{https://doi.org/10.55776/J4913}{10.55776/J4913}]. For open access purposes, the authors have applied a CC BY public copyright license to any author accepted manuscript version arising from this submission. 

MB was supported in part by the Fields Institute for the Mathematical Sciences. He gratefully acknowledges financial support by the EPFL through a Bernoulli Instructorship. A large part of this work was carried out while he held a Postdoctoral Fellowship at the University of Toronto. NG was supported in part by the MUR PRIN-2022JJ8KER grant ``Contemporary perspectives on geometry and gravity" (code 2022JJ8KER – CUP G53D23001810006)
and the Fields-McMaster Dean's Distinguished Visitorship.
RM was supported in part by the Canada Research Chairs program CRC-2020-00289, Natural Sciences and Engineering Research Council of Canada Discovery Grant RGPIN- 2020--04162,
and a grant from the Simons Foundation (923125, McCann).
AO was supported in part by the ÖAW scholarship of the Austrian Academy of Sciences. 
CS was also supported by the European Research Council (ERC), under the European’s Union Horizon 2020 research and innovation programme, via the ERC Starting Grant “CURVATURE”, grant agreement No.\ 802689.

\addcontentsline{toc}{section}{References}

\end{document}